\renewcommand{\@biblabel}[1]{[#1]}
\theoremstyle{plain}
\newtheorem{thm}{Theorem}[section]
\newtheorem{cor}[thm]{Corollary}
\newtheorem{conj}[thm]{Conjecture}
\newtheorem{lem}[thm]{Lemma}
\newtheorem{prop}[thm]{Proposition}
\theoremstyle{definition}
\newtheorem{defn}[thm]{Definition}
\newtheorem{cla}[thm]{Claim}
\numberwithin{equation}{section}
\def\R1{\widetilde{R}}
\def\T1{\widetilde{T}}
\def\dist{\operatorname{dist}}
\def\supp{\operatorname{supp}}
\def\Lip{\operatorname{Lip}}
\def\eps{\varepsilon}
\def\kap{\varkappa}
\def\R{\mathbb{R}}
\def\diam{\operatorname{diam}}
\def\dy{\mathbb{D}}
\def\dyup{\mathbb{D}_{\operatorname{up}}}
\def\wh{\widehat}
\def\wt{\widetilde}
\def\Lipodd{\Lip_0^{\operatorname{odd}}}
\def\VP{\mathcal{VP}}
\def\X{\mathbf{X}}
\def\Y{\mathbf{Y}}
\def\dist{\operatorname{dist}}
\def\0{\mathbf{0}}
\def\M{\mathcal{M}}
\def\Z{\mathbf{Z}}
\def\HSDC{{\bf HSDC}}
\def\En{\Lambda}
\def\Lipeven{\Lip_0^{\operatorname{even}}}
\def\XXint#1#2#3{{\setbox0=\hbox{$#1{#2#3}{\int}$}
     \vcenter{\hbox{$#2#3$}}\kern-.5\wd0}}
\author{John Hoffman}
\address{%
Department of Mathematics, FSU, Tallahassee, FL, USA\\
jh23bq@fsu.edu\\
https://orcid.org/0009-0006-3788-9677}
\author{Benjamin Jaye}
\address{%
School of Math, Georgia Tech, Atlanta, GA, USA\\
bjaye3@gatech.edu \\
https://orcid.org/0000-0001-6326-7392}
\subjclass{28,31,35,42,44,51}
\title[On the boundedness CZOs]
{On Singular Integrals and Quantitative Rectifiability in Parabolic Space and the Heisenberg Group}
\begin{document}

\begin{abstract} David and Semmes proved that if a large class of Calder\'on--Zygmund singular integral operators (of suitable homogeneity) are bounded with respect to an Ahlfors regular measure, then the measure is uniformly rectifiable. We extend this theorem to parabolic space and the first Heisenberg group. Notably, we show that the direct analog in parabolic space is false by way of counterexamples. However, we recover a theorem in parabolic space by imposing a ``Carleson packing'' condition, which stipulates that most scales of the measure are far away from the aforementioned counterexamples, in the sense of Tolsa's ``$\alpha$-numbers.'' In the Heisenberg group, we leverage previous work by Orponen to show that no such counterexamples exist, and hence we obtain a direct analog of the David--Semmes theorem in the first Heisenberg group. Our counterexamples in parabolic space demonstrate that there exist ADR measures on which the parabolic Riesz transform (associated to the gradient of the fundamental solution of the heat equation) is $L^2$-bounded, but which are not parabolic uniformly rectifiable.

\end{abstract}

\maketitle
\tableofcontents

\section{Introduction}

Understanding the geometry of a measure in $\R^n$ for which an associated singular integral operator is well behaved is a central problem in harmonic analysis, with its origins in understanding removable sets for analytic capacity \cite{Gar, Tol2, Tol3, Dav1, DM, NTV1, NTV2, NTV3} and boundary value problems for elliptic PDE \cite{AMT, AHMTV, HM, HMM, HMU}.  David and Semmes introduced the notion of uniform rectifiability, and discovered that it was a natural condition from the standpoint of singular integral operators:

\begin{thm}[David-Semmes]\label{DSthm} Suppose that $\mu$ is a $k$-Ahlfors regular (henceforth $k$-ADR) on $\mathbb R^n$ for $k\in \{1,\dots, n-1\}$.  Then every convolution singular integral operator associated with a kernel $K\in C^{\infty}(\R^n\backslash \{0\})$ satisfying
\begin{enumerate}
    \item $K$ odd,
    \item $|\nabla^j K(X)| \leq C(j)|X|^{-j-k}$
\end{enumerate}
is bounded on $L^2(\mu)$ if and only if $\mu$ is uniformly rectifiable.
\end{thm}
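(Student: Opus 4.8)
Since the statement is an equivalence, there are two implications to establish, and they are of very different difficulty: the forward direction (uniform rectifiability $\Rightarrow$ boundedness of every admissible operator) is the classical and comparatively routine half, while the converse is the deep half of the David--Semmes program. For the forward direction the plan is to invoke the corona decomposition characterization of uniform rectifiability: the dyadic cubes of $\supp\mu$ split into a family satisfying a Carleson packing condition and a complementary collection that organizes into coherent stopping-time regimes, on each of which $\supp\mu$ is confined to a thin neighborhood of a single Lipschitz graph of uniformly controlled slope. On an individual Lipschitz graph the operator is $L^2$-bounded by the boundedness of such operators on Lipschitz graphs --- the Coifman--McIntosh--Meyer theorem in dimension one, and its higher-dimensional counterpart obtained via the method of rotations together with David's $Tb$ theorem. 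One then reassembles the global bound on $L^2(\mu)$ from the per-regime bounds: the contributions of the bad cubes and of the transitions between regimes are square-summable thanks to the Carleson packing, by a standard good-$\lambda$ or square-function argument.

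For the converse --- boundedness of all admissible operators $\Rightarrow$ uniform rectifiability --- the plan is to reduce to a purely geometric characterization of UR (the bilateral weak geometric lemma, or equivalently the existence of big pieces of Lipschitz graphs) and then to contradict its failure using the operators. If the bilateral weak geometric lemma fails, then on a family of cubes of positive Carleson mass $\supp\mu$ stays uniformly far, in scale-invariant Hausdorff distance, from every $k$-plane; rescaling $\mu$ at these cubes and passing to a weak limit produces a $k$-ADR measure $\mu_\infty$, with the same regularity constant, for which every admissible operator is still bounded on $L^2(\mu_\infty)$ with the same norm --- this requires the stability of operator norms under weak convergence of uniformly ADR measures, which is standard but must be checked --- yet whose support is non-flat at unit scale. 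The contradiction is extracted from the richness of the kernel family: on a $k$-plane every odd operator annihilates constants by the symmetry $K(-X)=-K(X)$, and a quantitative version of this, obtained by testing the operators against bump functions adapted to $\mu_\infty$, forces some operator in the family to have large norm whenever the configuration is definitely non-flat.

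The main obstacle is precisely this last implication --- making quantitative the passage \textit{operator boundedness $\Rightarrow$ flatness}. A single operator such as the $k$-dimensional Riesz transform is known to be insufficient for this in general codimension, so the argument must genuinely use that the whole family of admissible kernels is available, choosing for each potential non-flat configuration a kernel tailored to detect it. Turning the local flatness so obtained into uniform rectifiability requires a stopping-time construction that manufactures the Lipschitz graphs underlying the big-pieces property, together with careful bookkeeping of all accumulated errors through Carleson-type estimates. This is the technical heart of the theorem, and it is also the part whose combinatorial and analytic infrastructure must be rebuilt --- rather than merely transported --- in the parabolic and Heisenberg settings treated in the rest of the paper.
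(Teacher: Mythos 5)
First, a point of context: the paper does not prove Theorem \ref{DSthm} at all --- it is quoted as background, with the ``only if'' direction attributed to \cite{DS1} and the ``if'' direction to the Calder\'on and Coifman--McIntosh--Meyer line of work. So your proposal can only be measured against the classical argument and against the scheme the paper builds for its analogues. Your sketch of the ``if'' direction (corona decomposition, $L^2$ bounds on Lipschitz graphs, reassembly via Carleson packing and good-$\lambda$) is the standard route and is fine in outline. The converse is where there are genuine gaps. The parenthetical ``the bilateral weak geometric lemma, or equivalently the existence of big pieces of Lipschitz graphs'' is wrong: big pieces of Lipschitz graphs is strictly stronger than uniform rectifiability (UR is equivalent to big pieces of big pieces of Lipschitz graphs, not to BPLG), so a plan whose endpoint is to ``manufacture the Lipschitz graphs underlying the big-pieces property'' from boundedness of all CZOs cannot succeed in general; the correct reduction is to the bilateral weak geometric lemma or to a corona-type approximation, and in \cite{DS1} it is the latter that is actually constructed.

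More seriously, the step you label as the main obstacle --- ``testing the operators against bump functions adapted to $\mu_\infty$ forces some operator in the family to have large norm whenever the configuration is definitely non-flat'' --- is an assertion of the conclusion rather than an argument, and it also mislocates what survives the blow-up. What passes to the weak limit (after pigeonholing cubes where the relevant square-function coefficients are small, which is where the Carleson packing supplied by the $L^2$ bounds enters) is not ``boundedness with the same norm'' but the much stronger pointwise statement that $\psi\ast\mu_\infty$ vanishes on $\supp(\mu_\infty)$ for every smooth compactly supported odd $\psi$. The crux of David--Semmes is then a classification theorem: every Ahlfors regular measure with this symmetry property in $\R^n$ is a constant multiple of Hausdorff measure on a $k$-plane, which contradicts the non-flatness retained in the limit. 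This classification of ``symmetric'' (in this paper's language, vampiric) measures is exactly the missing ingredient in your sketch, and it is also the point the present paper isolates and must redo in the parabolic and Heisenberg settings (Section \ref{vampsec}), where the classification fails to give planes and forces the additional $\HSDC$ hypothesis. Without naming and proving that classification step, the converse direction of your plan does not close.
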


David-Semmes introduced many characterizations of uniform rectifiability, with one being a Carleson condition for the Jones square function: there exists $C>0$ such that  
\begin{equation}\label{jonessquare}
    \int_0^R\int_{B_R(\X)} \beta_\mu(\Y,r)^2 \frac{d\mu(\Y) dr}{r} \leq CR^{k},\;\text{ for all }  X \in \supp(\mu) \text{ and } R>0.
\end{equation}
 Here $\beta_\mu(\Y,r)$ measures the deviation of the measure $\mu$ from an appropriate $k$-dimensional plane which optimally approximates (in the least squares sense) $\supp(\mu)$ in the ball $B(Y, r)$.  In \cite{DS1} they prove the `only if' direction of Theorem \ref{DSthm} (from the boundedness of CZOs to the uniform rectifiability), and it is this direction that we will refer to as the David-Semmes theorem in this paper.  On the other hand, the `if' direction of Theorem \ref{DSthm} is a culmination of foundational work studying Calder\'on-Zygmund operators on Lipschitz graphs by Calder\'{o}n and Coifman--McIntosh--Meyer.

David and Semmes asked whether $L^2(\mu)$-boundedness of the $k$-Riesz transform\footnote{The CZO with kernel $K(x) = \frac{x}{|x|^{k+1}}$ for $x\in \R^n$.} alone implies uniform rectifiability.  This question has been answered affirmatively in the case $k=1$ by Mattila-Melnikov-Verdera \cite{MMV}, and $k=n-1$ by Nazarov-Tolsa-Volberg \cite{NToV}.  These results, and refinements of them, have played an essential role in recent breakthroughs in the study of harmonic measure and free boundary problems (described later).  The David-Semmes question remains open for $k=2,\dots, n-2$.   The Nazarov-Tolsa-Volberg theorem \cite{NToV} has been extended to ``Riesz transforms" associated with gradients of fundamental solutions to more general classes of elliptic equations.  For example, Prat, Puliatti, and Tolsa extended the Nazarov-Tolsa-Volberg result to elliptic equations associated to matrices with H\"older coefficients in \cite{PPT}.  The current state-of-the-art is due to  Molero, Mourgoglou, Puliatti, and Tolsa in \cite{MMPT}, where they prove the Nazarov-Tolsa-Volberg result for elliptic equations associated to matrices with coefficients satisfying a Dini-mean oscillation condition.  Dabrowski and Tolsa have recently extended the Nazarov-Tolsa-Volberg theorem to measures with significantly weaker density hypotheses than ADR in a very significant work \cite{DT}.

Some very important machinery in the proof of the Nazarov-Tolsa-Volberg result was introduced by Eiderman, Nazarov, and Volberg in \cite{ENV}, where the authors show that the Riesz transform cannot be $L^2$-bounded with respect to a measure with poor lower-density properties.  The result of \cite{ENV} was later extended to ``Riesz transforms" associated to elliptic equations with H\"older coefficients by Conde-Alonso, Mourgoglou, and Tolsa in \cite{CMT}.  In \cite{MMPT}, Molero, Mourgoglou, Puliatti, and Tolsa extended the result further to operators with DMO-type (Dini mean oscillation) coefficients.

In a recent survey on rectifiability, Mattila highlights the scarcity of results concerning the geometric consequences of the boundedness of singular integrals in \emph{parabolic space} and in the \emph{Heisenberg group} (see Sections~11.5 and~11.6 of~\cite{Mat}). The goal of this paper is to take a step in this direction by developing David--Semmes-type theorems in these settings. In the Heisenberg group, we show that a result analogous to Theorem~\ref{DSthm} holds. In contrast, we find that parabolic space behaves quite differently: the analogue of Theorem~\ref{DSthm} in parabolic space is, in fact, \emph{false}. Although we provide results in all integer codimensions, we first describe the parabolic codimension-one case here; the higher-codimension case is more intricate and will be discussed later (Section \ref{parastate}). Specifically, there exist codimension-one Ahlfors regular measures in parabolic space for which a large class of parabolic Calderón--Zygmund operators (CZOs) are $L^2$-bounded with respect to the measure, yet the measures fail to be parabolically uniformly rectifiable. The ``large class of CZOs'' is defined precisely in Subsection~\ref{CZOdefs}. In particular, this class is designed to retain the essential features of the \emph{parabolic Riesz transform}---that is, the parabolic CZO obtained by convolving with the spatial gradient of the fundamental solution to the heat equation. These counterexample measures possess a peculiar structure: they are highly concentrated in the spatial component of parabolic space. We recover a David--Semmes-type theorem in the parabolic setting by imposing, in addition to the boundedness of singular integrals, a new Carleson set condition called the {\bf HSDC} (\emph{higher spatial dimension condition}). This condition asserts that it is rare for a measure to be close, in the sense of optimal transport distance, to one of these pathological examples. For codimension-one measures in parabolic space, we prove the following theorem (see Section \ref{parastate} for the precise statement).

\begin{thm}\label{pthm}
    Suppose that $\mu$ is a codimension-one Ahlfors regular measure on parabolic space $\mathbb R^{n+1}$ and there exists $\eps>0$ such that the $\eps$-{\bf HSDC} condition is satisfied by $\mu$.  If, additionally, all $(n+1)$-dimensional CZOs map $L^2(\mu)$ to itself boundedly, then $\mu$ is a uniformly rectifiable measure.
\end{thm}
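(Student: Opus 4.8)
The plan is to carry out the David--Semmes programme in the parabolic category, with the $\eps$-$\HSDC$ condition supplying precisely the rigidity that the pathological spatially-concentrated examples would otherwise destroy. By the known geometric characterizations of parabolic uniform rectifiability, it suffices to show that $\mu$ admits a corona decomposition by parabolic Lipschitz graphs (equivalently, that $\supp\mu$ has big pieces of parabolic Lipschitz graphs). Concretely, fix a parabolic Christ--David lattice $\mathcal D$ adapted to $\mu$ and a cube $Q_0\in\mathcal D$, write $\mathcal D(Q_0)=\{Q\in\mathcal D:Q\subseteq Q_0\}$, run a stopping-time construction on $\mathcal D(Q_0)$, and prove that the family $\mathcal S$ of stopping cubes is a Carleson family, i.e. $\sum_{Q\in\mathcal S:\,Q\subseteq R}\mu(Q)\lesssim\mu(R)$ for every $R\in\mathcal D(Q_0)$.

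The stopping time uses two small parameters $\delta\gg\eta>0$, with $\eta=\eta(\eps)$ fixed at the end. For a tree with top $R$, fix the parabolic plane $P_R$ and constant $c_R$ giving the best $L^2$ approximation of $\mu$ by $c_R\sigma_{P_R}$ on a fixed dilate of $R$, where $\sigma_P$ denotes parabolic surface measure on the parabolic plane $P$; a descendant $Q\le R$ stays in the tree as long as, at every intermediate cube $Q'$ with $Q\le Q'\le R$, (i) $\mu$ is within $\delta$ (in the $L^2$ sense, on a fixed dilate of $Q'$) of $c_R\sigma_{P_R}$, and (ii) the normalized restriction of $\mu$ to a fixed dilate of $Q'$ is \emph{not} within optimal-transport distance $\eta$ of any of the pathological spatially-concentrated examples. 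The stopping cubes are the maximal $Q$ at which (i) or (ii) first fails, and each such $Q$ spawns a new tree. Condition (i) keeps both the approximating plane and the density of $\mu$ essentially frozen along a tree, so a standard argument --- here invoking parabolic Lipschitz graph theory --- produces over each tree a single parabolic Lipschitz graph with small constant approximating $\mu$; thus only the packing of $\mathcal S$ remains.

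Split $\mathcal S=\mathcal S_{\HSDC}\cup\mathcal S_{\mathrm{flat}}$ according to whether the stop was triggered by (ii) or by (i). For $Q\in\mathcal S_{\HSDC}$, the $\eps$-$\HSDC$ condition --- with $\eta=\eta(\eps)$ small enough --- is exactly the assertion that these cubes pack. For $Q\in\mathcal S_{\mathrm{flat}}$ inside a tree with top $R$: the parent of $Q$ is $\delta$-flat relative to the frozen pair $(P_R,c_R)$ while $\mu$ at $Q$ is $\ge\delta$-far from $c_R\sigma_{P_R}$; since $Q\notin\mathcal S_{\HSDC}$, $\mu$ at $Q$ is also $\eta$-far from every pathological example; and because the parent is $\delta$-flat, $\mu$ at $Q$ cannot escape to a plane other than $P_R$, so $\mu$ at $Q$ is genuinely $\gtrsim\delta$-far from \emph{every} constant multiple of a plane. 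The crux is then the quantitative dichotomy: there is a finite family $\mathcal K$ of $(n+1)$-dimensional parabolic CZOs and a constant $c(\delta,\eta)>0$ such that any parabolic cube which is $\gtrsim\delta$-non-flat and $\eta$-far from the pathological family satisfies $\sum_{T\in\mathcal K}\osc_Q(T_\mu\mathbf 1)^2\ge c(\delta,\eta)$, where $\osc_Q(f)$ is the mean-square oscillation of $f$ over $Q$ with respect to $\mu$ and $T_\mu\mathbf 1$ is defined through regularized truncations of the kernel; its membership in $\mathrm{BMO}(\mu)$, hence the Carleson bound $\sum_{Q\subseteq R}\osc_Q(T_\mu\mathbf 1)^2\mu(Q)\lesssim\|T\|_{L^2(\mu)\to L^2(\mu)}^2\,\mu(R)$, is the standard consequence of $L^2(\mu)$-boundedness together with ADR. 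Granting the dichotomy, $\sum_{Q\in\mathcal S_{\mathrm{flat}}:\,Q\subseteq R}\mu(Q)\le c(\delta,\eta)^{-1}\sum_{Q\subseteq R}\sum_{T\in\mathcal K}\osc_Q(T_\mu\mathbf 1)^2\mu(Q)\lesssim_{\delta,\eta}\mu(R)$, so $\mathcal S$ is Carleson.

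The dichotomy is proved by contradiction through a parabolic blow-up: a violating sequence of cubes, after anisotropic parabolic rescaling and normalization (and the usual device of arranging the failure to propagate to all scales of the limit), produces a weakly convergent subsequence whose limit $\nu$ is $(n+1)$-ADR on parabolic space, is non-flat and $\eta$-far from the pathological family --- by lower semicontinuity under weak convergence of the $\beta$-number and of the optimal-transport distance, respectively --- yet satisfies $T_\nu\mathbf 1\equiv\mathrm{const}$ for \emph{every} $(n+1)$-dimensional parabolic CZO $T$, a ``reflectionless'' parabolic measure. The main obstacle, and the technical heart of the matter, is to rule this out: one must classify the reflectionless parabolic measures, showing that the only ones are constant multiples of surface measure on an $(n+1)$-dimensional parabolic plane together with parabolic rescalings of the spatially-concentrated examples, and one must supply the soft analysis --- control of the truncated operators $T_\mu^\eps$ and of the renormalizations of $T_\mu\mathbf 1$ along the limit, plus closedness of the pathological family under these limits --- that makes ``reflectionless'' a robust notion under parabolic blow-ups. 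This is precisely where the genuinely parabolic phenomenon enters: a parabolic CZO kernel is odd and homogeneous with respect to $(x,t)\mapsto(rx,r^2t)$, and such kernels cannot detect a measure whose mass is spread out essentially in the time variable, which is why the reflectionless class is strictly larger here than in the Euclidean case and why the $\HSDC$ is indispensable. With the classification in hand, the three ingredients --- the $\HSDC$ packing of $\mathcal S_{\HSDC}$, the $\mathrm{BMO}$/Carleson packing of the CZO oscillations, and the dichotomy relating non-flatness away from the pathological family to those oscillations --- combine to show that $\mathcal S$ is Carleson; hence $\mu$ admits a corona decomposition by parabolic Lipschitz graphs, that is, $\mu$ is parabolically uniformly rectifiable.
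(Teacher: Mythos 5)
There is a genuine gap, and it sits exactly at the point the paper identifies as the obstruction to running the David--Semmes scheme. Your argument rests on the claim that the reflectionless (in the paper's language, \emph{vampiric}) parabolic measures are exactly the constant multiples of surface measure on vertical $(n+1)$-planes together with the spatially concentrated pathologies. That classification is false. Any product $\mathcal{H}^{n-1}|_{L}\times\nu$, with $L$ an $(n-1)$-plane in $\R^n$ and $\nu$ a $2$-ADR but highly non-uniform measure on the time axis, is vampiric (its support lies in the vertical hyperplane $L\times\R$ and every spatially odd kernel integrates to zero against it at points of its support; cf.\ Lemma \ref{planeallslices} and Section \ref{examples}, where all CZOs are shown to be bounded on such measures). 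Such a measure is far, in your $L^2$/transport sense, from \emph{every} constant multiple $c\,\sigma_P$ of planar surface measure, and it is also far from the spatially concentrated family $\mathcal{M}_n\cup\mathcal{M}_{n+1}$, since its spatial dimension is the correct one, $n-1$. So it survives your stopping rule as a cube in $\mathcal{S}_{\mathrm{flat}}\setminus\mathcal{S}_{\HSDC}$, it can arise as the weak limit in your blow-up, and yet every CZO oscillation on it vanishes. Hence the quantitative dichotomy at the crux of your packing of $\mathcal{S}_{\mathrm{flat}}$ is false as stated, and the Carleson bound for the stopping cubes does not follow. Note that these measures are not counterexamples to the theorem --- their supports lie in vertical planes, so $\beta\equiv 0$ --- they only defeat a stopping condition that demands closeness of the \emph{measure} (rather than of its support) to planar surface measure. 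This is precisely the ``bilateral approximation'' limitation of \cite{DS1} that the paper flags, and it is why the paper abandons the corona-by-graphs route in favor of the scheme of \cite{JNT}: flatness is tracked by the coefficient $\alpha_{\mu,k}$, i.e.\ transport distance to the \emph{whole} product class $\mathcal{M}_k$ with arbitrary time measure; the cylinder blow-up (Theorem \ref{step1thm}, Proposition \ref{finalprop}) converts Carleson packing of cubes with large $\alpha_{\mu,k}$ into the $\beta$-number square function; and the vampiric-measure analysis (Proposition \ref{vampprop}) shows that a vampiric measure with $\alpha_{\mu,k}>0$ must be close to $\mathcal{M}_{k+1}\cup\mathcal{M}_{k+2}$, which is exactly what the $\HSDC$ excludes. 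Your proposal is missing both the enlarged reflectionless class and the (nontrivial) step converting ``close to $\mathcal{M}_k$ at most scales'' into the estimate \eqref{flatsquare}.

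A secondary, but still substantive, problem is your opening reduction. In parabolic space a corona decomposition by parabolic Lipschitz graphs does not yield \eqref{flatsquare}: graphs of parabolic Lipschitz functions need not be parabolically uniformly rectifiable (the paper cites \cite{BHHLN1} for a counterexample), so the characterization requires \emph{regular} parabolic Lipschitz graphs; moreover ``big pieces of Lipschitz graphs'' is strictly stronger than uniform rectifiability even in the Euclidean setting, so the parenthetical equivalence is also incorrect. One could hope to patch this by controlling $\beta$-numbers directly within each tree, but that patch still founders on the dichotomy failure described above.
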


Mattila \cite{Mat} has emphasized that understanding the consequences of the boundedness of the parabolic Riesz transform is a key step toward clarifying the relationship between absolute continuity of caloric measure and parabolic rectifiability.  The examples we construct show that the na\"ive translation of the Nazarov-Tolsa-Volberg theorem to the parabolic setting is false, while Theorem \ref{pthm} gives us hope that a sharp theorem can still be recovered through the {\bf HSDC} condition.  We conjecture the following.

\begin{conj}\label{parads}
Suppose that $\mu$ is a codimension-one Ahlfors regular measure on parabolic space $\mathbb R^{n+1}$.  If the parabolic Riesz transform is $L^2(\mu)$-bounded, and $\mu$ satisfies the $\eps$-{\bf HSDC} for some $\eps>0$, then $\mu$ is a uniformly rectifiable measure.
\end{conj}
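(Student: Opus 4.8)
The plan is to transplant the Nazarov--Tolsa--Volberg argument of \cite{NToV} to the parabolic dilation structure $\delta_\lambda(x,t)=(\lambda x,\lambda^2 t)$ and to fuse it with the corona and Carleson-set machinery already used here to prove Theorem~\ref{pthm}. Since the parabolic analogue of uniform rectifiability is characterized by a Carleson condition of the form \eqref{jonessquare} for parabolic $\beta$-numbers (balls and scales measured in the parabolic metric), it suffices to establish that estimate for $\mu$. Building a parabolic Christ--David lattice for the ADR measure $\mu$ and running a stopping-time decomposition, this reduces to a single-scale claim: there is $\eta>0$ such that the family of parabolic lattice cubes $Q$ with $\beta_\mu(Q)>\eta$ is a Carleson family.

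To handle those cubes I would argue by a dichotomy. As in \cite{NToV}, ADR together with $L^2(\mu)$-boundedness of the parabolic Riesz transform $\mathcal R$ forces the localized oscillation of $\mathcal R\mu$ on cubes to be a Carleson measure; so it is enough to show that whenever this localized oscillation is small on $Q$, either $\beta_\mu(Q)$ is small, or $\mu|_Q$ is $\eps$-close, in the normalized optimal-transport distance, to one of the pathological ``spatially concentrated'' model measures. The $\eps$-\HSDC\ hypothesis is precisely the assertion that the second alternative can occur only for a Carleson family of cubes, so adding the two Carleson families together yields the desired bound.

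The heart of the matter is the implication: small localized oscillation of $\mathcal R\mu$ on $Q$, plus $\mu|_Q$ not close to a spatially concentrated model, implies $\beta_\mu(Q)$ small. I would prove it by contradiction and a parabolic blow-up. Negating it produces a sequence of cubes; rescaling by parabolic dilations and passing to a weak-$*$ limit produces a parabolically ADR measure $\nu$ that is non-flat, stays uniformly away in optimal-transport distance from every spatially concentrated model, and for which $\mathcal R\nu$ is constant $\nu$-a.e.\ (the limiting form of the vanishing oscillation). Writing $\mathcal R\nu=\nabla_x(W*\nu)$ with $W$ the heat kernel, the caloric function $u=W*\nu$ then has essentially affine spatial behavior across $\supp\nu$, and a parabolic maximum-principle argument---the analogue of the harmonic-function argument in \cite{NToV}, now using $\partial_t u=\Delta_x u$---should force $\supp\nu$ to lie in a time-independent hyperplane unless $\nu$ degenerates into one of the product measures $\mathcal L^n_x\otimes\sigma$ with $\sigma$ a $\tfrac12$-dimensional ADR measure in $t$ (for which, consistently, $\mathcal R\nu\equiv0$ because $\int\nabla_xW(x,t)\,dx=0$). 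The first option contradicts non-flatness; the second is exactly the spatially concentrated model that the persistence of the \HSDC\ bound along the blow-up is designed to exclude.

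The principal obstacle I anticipate is this blow-up/rigidity step, which is genuinely harder than in the Euclidean setting for two reasons. First, $\nabla_xW$ is supported in $\{t>0\}$ and is not odd, so the symmetrization and antisymmetry cancellations of \cite{NToV} are unavailable; recovering enough cancellation will likely require working with $\mathcal R$ together with its time-reversal and a few companion operators from the class in Subsection~\ref{CZOdefs}. Second, and more seriously, one must prove a \emph{quantitative} classification of parabolically ADR measures whose parabolic Riesz transform is constant on their support---showing that, up to small errors, the only possibilities are time-independent hyperplanes and the spatially concentrated products---and must verify that the $\eps$-\HSDC\ condition is stable under parabolic blow-up, so that no exotic ``partially concentrated'' limit can survive. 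Establishing this rigidity for caloric single-layer potentials, quantitatively enough to feed back into the Carleson scheme, is where the real difficulty lies; the elliptic analogue is already delicate, and the loss of ellipticity (time carries a sign) is the crux.
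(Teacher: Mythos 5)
You should first note that the statement you were asked to prove is Conjecture~\ref{parads}: the paper does not prove it, and explicitly presents it as open. What the paper proves is the weaker Theorem~\ref{pthm}/\ref{parathm}, in which the hypothesis is the $L^2(\mu)$-boundedness of \emph{all} admissible spatially antisymmetric CZOs, not just the parabolic Riesz transform. That stronger hypothesis is used in an essential way: via the randomization argument of Lemma~\ref{geoCZbdd} it yields square-function control for \emph{every} smooth spatially odd kernel, and the limiting objects in the compactness arguments are then \emph{vampiric} measures, i.e.\ measures annihilating all spatially odd Lipschitz kernels, whose structure is completely classified in Section~\ref{vampsec} (Proposition~\ref{vampprop}). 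With only the Riesz transform as input, none of this machinery is available, and that is precisely why the single-operator statement remains a conjecture.

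Your proposal does not close this gap: it is a program in the style of \cite{NToV}, and the step you yourself flag as the ``heart of the matter''---the quantitative rigidity/classification of parabolically ADR measures whose parabolic Riesz transform is constant on their support, showing the only possibilities are vertical hyperplanes and the spatially concentrated products excluded by the $\eps$-\HSDC---is asserted (``a parabolic maximum-principle argument \ldots should force''), not proved. This is exactly where the known Euclidean proof does not transplant: the kernel $\nabla_x W$ is supported in $\{t>0\}$ and carries no time antisymmetry, so the symmetrization identities, the variational/maximum-principle scheme, and the ENV-type non-degeneracy estimates \cite{ENV} underlying \cite{NToV} all break down, and no parabolic substitutes are currently known (the paper cites extensions of \cite{ENV} only for elliptic operators). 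In addition, your blow-up step quietly assumes that smallness of the localized oscillation of $\mathcal R\mu$ passes to a pointwise statement ($\mathcal R\nu$ constant $\nu$-a.e.) for the weak limit, and that the \HSDC\ Carleson condition is stable under the rescalings---both plausible but themselves requiring proof. So the proposal is a reasonable sketch of how one might eventually attack Conjecture~\ref{parads}, but it is not a proof, and it does not coincide with (nor replace) the paper's argument, which proves only the all-CZO version by an entirely different route through vampiric measures.
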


In the case of a parabolic, codimension-one Ahlfors regular measure $\mu$, the universally agreed upon definition of uniform rectifiability is estimate \eqref{jonessquare}, with $\beta_\mu$ measuring the ``least squares" distance to an optimal \emph{vertical hyperplane}, i.e. a codimension-one hyperplane containing a line parallel to the $t$-axis (see Section \ref{URdefn}).  This definition of parabolic uniform rectifiability was introduced by Hofmann, Lewis, and Nystr\"om in \cite{HLN}.  To our knowledge, there has been no prior work about parabolic uniform rectifiability in codimension greater than one.  In the sequel we will define parabolic uniformly rectifiable measures on $\mathbb R^{n+1}$ of higher codimension as Ahlfors regular measures (with appropriate homogeneity) satisfying a version of \eqref{jonessquare} where the $\beta_\mu$ coefficients optimize the distance to a higher-codimensional vertical plane.

In the case of the first Heisenberg group, we prove the following theorem.

\begin{thm}\label{hthm}
    Suppose $\mu$ is a $3$-ADR measure on $\mathbb H$ on which Calder\'on-Zygmund singular integral operators of dimension $3$ are bounded from $L^2(\mu)$ to $L^2(\mu)$.  Then $\mu$ is a uniformly rectifiable measure (i.e. (\ref{flatsquare}) holds).
\end{thm}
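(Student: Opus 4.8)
The plan is to follow the architecture of the David--Semmes proof of the ``only if'' direction of Theorem~\ref{DSthm}, transplanted to the metric measure space $(\mathbb H, d_{\mathbb H}, \mu)$, where $d_{\mathbb H}$ is the Kor\'anyi distance. This is a space of homogeneous type, $\mu$ is $3$-ADR, and a dimension-$3$ CZO here is a group convolution $T_Kf(p)=\int K(q^{-1}p)\,f(q)\,d\mu(q)$ with $K\in C^\infty(\mathbb H\setminus\{0\})$ odd ($K(p^{-1})=-K(p)$) and satisfying $|\nabla^j K(p)|\lesssim|p|^{-3-j}$; so the whole Calder\'on--Zygmund toolkit --- Christ--David cubes, Cotlar's inequality, good-$\lambda$ inequalities, and the $L^2$ square-function estimates that follow from operator boundedness --- transfers essentially unchanged. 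The target \eqref{flatsquare} is a Carleson packing estimate for the \emph{vertical} $\beta_2$-numbers $\beta_{\mu,2}(x,r)$, measuring the mean-square deviation of $\supp\mu$ in a Kor\'anyi ball $B_r(x)$ from the nearest \emph{vertical plane} (a $2$-plane containing a line parallel to the center, equivalently a translate of a $2$-dimensional homogeneous subgroup $\mathbb V\cong\R^2$). So it suffices to prove
\[
  \int_0^R\!\!\int_{B_R(x)}\beta_{\mu,2}(y,r)^2\,\frac{d\mu(y)\,dr}{r}\ \lesssim\ R^3,\qquad x\in\supp\mu,\ R>0.
\]

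First, the upper half. Fix an admissible $T_K$ bounded on $L^2(\mu)$ and let $\Theta^K_r$ be a smooth, scale-$r$ truncation--mollification of $T_K$ (still odd), so that $\Theta^K_r\mu(y)$ is a regularized truncation of $T_K\mathbf 1(y)$. From $\|T_K\|_{L^2(\mu)\to L^2(\mu)}<\infty$ one obtains, by the standard Littlewood--Paley/$TT^\ast$ argument on a space of homogeneous type, the local square-function bound
\[
  \int_0^R\!\!\int_{B_R(x)}\big|\Theta^K_r\mu(y)\big|^2\,\frac{d\mu(y)\,dr}{r}\ \lesssim_{K}\ R^3 .
\]
Oddness enters through the observation that if $\sigma$ is $\mathcal H^3$ restricted to a vertical plane then $\Theta^K_r\sigma\equiv0$: for $y,z$ in a common translate of a vertical subgroup $\mathbb V$ one has $y^{-1}z\in\mathbb V$, so $\Theta^K_r\sigma(y)$ equals the integral of $\Theta^K_r$'s kernel over $\mathbb V$ against Lebesgue measure, which vanishes since $\mathbb V$ is inversion-symmetric and $K$ is odd. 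Hence $\Theta^K_r\mu(y)$ is small wherever $\mu$ is quantitatively close to a vertical plane inside $B_r(y)$, up to a tail disposed of by one further truncation; this is the easy ``geometric lemma'' certifying that the square functions do not register genuinely flat regions.

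The crux is the converse geometric lemma: for every $\eta>0$ there are $\delta>0$ and admissible kernels $K_1,\dots,K_N$ such that, for all $x\in\supp\mu$ and $r>0$, if the $\mu$-averages of $|\Theta^{K_j}_\rho\mu|^2$ over $\rho$ comparable to $r$ and base points near $x$ are all below $\delta$, then $\beta_{\mu,2}(x,r)<\eta$. I would prove this by a compactness/blow-up contradiction: were it false, there would be normalized $3$-ADR measures $\mu_m$ staying $\eta$-far from every vertical plane at unit scale while $\Theta^K_\rho\mu_m\to0$ (over the relevant band of $\rho$) for a countable dense family of admissible odd $K$, hence for all of them; a weak-$\ast$ limit $\nu$ is then a nonzero $3$-ADR measure, still $\eta$-far from every vertical plane, that is annihilated by $\Theta^K_\rho$ for every admissible odd $K$ and every $\rho$ (using the dilation-invariance of the kernel class). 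The decisive ingredient is the \emph{rigidity statement}: \emph{a nonzero $3$-ADR measure on $\mathbb H$ annihilated by every odd dimension-$3$ CZO is a constant multiple of $\mathcal H^3$ on a vertical plane.} This is where $\mathbb H$ differs from abelian parabolic space on two counts. First, unlike the restricted class behind Theorem~\ref{pthm}, here the \emph{full} dimension-$3$ class is available, and it contains odd kernels with non-vanishing ``spatial average'' $\int K(z,s)\,dz$ --- e.g.\ $K(z,s)=s(|z|^4+s^2)^{-5/4}$, the only constraint from oddness being that this average is an odd function of $s$ --- and such kernels already detect the parabolic-type product measures concentrated in the horizontal variable. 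Second, and reinforcing this, the non-abelian group law twists the central coordinate by the symplectic form under translation, so translated non-vertical configurations (a horizontal $2$-plane being the basic example) are detected as well; by contrast a vertical subgroup produces cancellation against \emph{every} odd kernel because $y^{-1}z$ then ranges over $\mathbb V$. Carrying out the rigidity argument, I would extract from the annihilation property approximate reflection symmetries of $\nu$ at every point and scale, force $\supp\nu$ into a single vertical translate, and pin down the density via $3$-ADR and self-similarity.

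Finally, combining the upper bounds for $K_1,\dots,K_N$ with the converse geometric lemma, integrating $d\mu(y)\,dr/r$ over $B_R(x)\times(0,R)$, and running a stopping-time/corona argument to absorb the residual additive error (the usual device for upgrading a single-scale flatness estimate carrying an $\eta$-loss to a genuine Carleson bound), yields the displayed estimate and hence \eqref{flatsquare}. I expect the rigidity statement --- and, technically entangled with it, the blow-up step itself (transferring vanishing of the $\Theta^{K_j}$ to the tangent measure, and controlling tails of degree-$(-3)$ kernels against $3$-ADR measures in the Heisenberg metric) --- to be the main obstacle. The secondary effort goes into developing enough Heisenberg uniform rectifiability theory (in the spirit of the intrinsic-Lipschitz-graph and corona-decomposition characterizations) that \eqref{flatsquare} is known to be equivalent to the vertical $\beta_2$ Carleson condition established above, and into the non-commutative bookkeeping pervading the Calder\'on--Zygmund estimates.
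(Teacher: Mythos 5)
Your plan is the classical David--Semmes scheme, and it founders at the step you yourself identify as decisive: the rigidity statement is \emph{false} for the operator class actually assumed in the theorem. The hypothesis of Theorem \ref{hthm} is the boundedness of CZOs with \emph{spatially} antisymmetric kernels, $K(X,t)=-K(-X,t)$ (the class in Subsection \ref{CZOdefs}, modeled on the Heisenberg Riesz transform), whereas your ``odd'' class $K(p^{-1})=-K(p)$ is a different symmetry: your key kernel $K(z,s)=s(|z|^4+s^2)^{-5/4}$ is even in $z$ and odd in $s$, so it is not admissible here and its $L^2(\mu)$-boundedness is simply not part of the hypothesis. With the admissible class, the measures annihilated by all the relevant test kernels (the ``vampiric'' measures of Section \ref{vampsec}, which are exactly what your weak-$\ast$ blow-up produces) are \emph{not} constant multiples of $\mathcal H^3$ on vertical planes: by Theorem \ref{Hvampchar} they are precisely the products $\mathcal H^1\vert_L\times\nu$ with $L$ a line and $\nu$ an \emph{arbitrary} $2$-ADR measure on $\mathbb R$ (e.g.\ carried by a Cantor-type set in time), and such a measure stays far from every vertical plane in the $\beta_{\mu,2}$ sense at most scales. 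This is exactly the obstruction the paper flags (``vampiric measures that are strict subsets of vertical planes''), and it is why the converse geometric lemma you propose --- small square-function averages force small $\beta_{\mu,2}$ --- cannot be extracted from the assumed operator bounds: the blow-up limit need not be flat, so the contradiction never materializes.

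For contrast, the paper's route is built to survive precisely this failure. First, a conditional result (Theorem \ref{step1thm}) is proved via the ``cylinder blow-up'' adapted from \cite{JNT}: goodness for spatially antisymmetric Littlewood--Paley kernels, together with a Carleson packing condition on the cubes where the transport distance $\alpha_{1,\mu}$ to the product class $\mathcal M_1$ is large, implies \eqref{flatsquare}; since the limit object is only known to be close to $\mathcal M_1$ rather than to a plane, one stretches transversally by the factor $1/\beta$ and shows the stretched limit is an affine graph, which replaces the bilateral plane approximation of \cite{DS1}. Second, the packing condition is verified by compactness: a cube with $\alpha_{1,\mu}$ large must carry a noticeable square-function coefficient, because otherwise a rescaled weak limit would be a vampiric $3$-ADR measure with $\alpha_{1}>0$, contradicting Theorem \ref{Hvampchar} (whose proof combines Orponen's symmetric-measure theorem \cite{Orp} with the parabolic time-slice analysis); this is exactly why no \textbf{HSDC}-type hypothesis is needed in $\mathbb H$, unlike the parabolic case. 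To rescue your outline you would either have to strengthen the hypothesis to include space-even, time-odd kernels (a different theorem, and one detached from the Riesz-transform motivation), or reformulate your converse geometric lemma with ``close to $\mathcal M_1$'' in place of ``close to a vertical plane'' and then supply the additional mechanism converting proximity to $\mathcal M_1$ into the vertical $\beta_2$ Carleson estimate --- which is precisely the content of the cylinder blow-up you would be re-deriving.
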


Notice that we do not need any condition in addition to boundedness of CZOs in the Heisenberg group case.  This shows that, from the perspective of singular integrals, uniform rectifiability is remarkably different in the Heisenberg group and parabolic space.  

As far as we are aware, there is no standard definition of ``uniform rectifiability" in the Heisenberg group literature.  In analogy with the definition in parabolic space, we say that a $3$-ADR measure is uniformly rectifiable if it satisfies a version of estimate \eqref{jonessquare} where $\beta_\mu$ measures the distance to an optimal vertical hyperplane (Section \ref{URdefn}).  Recent works by Chousionis, Li, and Young provide context for Theorem \ref{hthm} and the importance of the condition \eqref{jonessquare} in the first Heisenberg group.  In \cite{CLY1}, the authors construct an intrinsic Lipschitz graph (i.e., a graph which is defined with respect to the exterior cone condition in the Heisenberg metric) on which the Riesz transforms associated to fundamental solution of the sub-Laplacian is not $L^2(\mu)$ bounded.  They also show that the Lipschitz constant of a given intrinsic Lipschitz graph cannot control the constant of the right-hand side of inequality \eqref{jonessquare} with the exponent $2$ replaced by any $p \in [2,4)$.  In \cite{CLY3}, they show that any given intrinsic Lipschitz graph in $\mathbb H$ satisfies \eqref{jonessquare} with exponent $2$ replaced by $4$, with the constant on the right-hand-side depending only the Lipschitz constant of the graph.  Both of these results are proved using techniques developed in \cite{NY2}, which we discuss below.  This is in contrast to higher Heisenberg groups $\mathbb H^n$, where, in \cite{CLY2}, the same authors show that intrinsic Lipschitz graphs automatically satisfy an estimate like \eqref{jonessquare}.  Thus, in the presence of the Ahlfors regularity condition, it is natural to seek ``$1$-Heisenberg variants" of \eqref{jonessquare} from hypotheses involving boundedness of singular integrals.

The behavior of singular integrals on the Heisenberg group have been heavily studied, with important foundational work by Chousionis and Mattila \cite{CM}.  Significant results regarding sufficient conditions for the boundedness of all CZOs have been developed in Chousionis, F\"assler and Orponen \cite{CFO1, FO}.  In \cite{CLY1, CLY2}, Chousionis, Li and Young conjecture that uniform rectifiability is sufficient for the boundedness of all CZOs (Theorem \ref{hthm} provides the necessity) -- we do not consider this interesting question here.

In terms of prior work concerning Theorem \ref{hthm}, Orponen \cite{Orp} had previously verified a weaker conclusion that for every $\X\in \supp(\mu)$ and $R>0$,
$$\int_0^R\mathcal{H}^3(\{\Y\in \supp(\mu): \beta_{\mu}(\Y,r)\geq \eps\})\frac{dr}{r}\leq CR^3.
$$
This conclusion is the Heisenberg group analogue of the weak geometric lemma of David-Semmes, which is considerably weaker than uniform rectifiability.  The proof of Theorem \ref{hthm} does build upon some elements of Orponen's analysis (in particular regarding the structure of vampiric measures in Section \ref{vampsec}), but a new overall proof scheme is required.

The proof of Theorem \ref{hthm} appears to use properties particular to the first Heisenberg group, but the result may very well extend to all the Heisenberg groups (or more general Carnot groups).

The Nazarov-Tolsa-Volberg theorem (\cite{NToV})  has had profound consequences for the study of free boundary problems involving elliptic measures, and, relatedly, connections between continuity of elliptic measures and rectifiability (both qualitative and quantitative).  We hope that our results will be an important first step toward establishing analogs of the Nazarov-Tolsa-Volberg theorem in parabolic space and the Heisenberg group. We also think that the {\bf HSDC} condition should play an important role in the theory of caloric and parabolic measures and parabolic rectifiability.

In 1992 Bishop (\cite[Conjecture 10]{Bi}) conjectured that absolute continuity of harmonic measure with respect to surface measure should imply rectifiability of the boundary. Bishop's conjecture was solved in 2015 in \cite{AHMTV} where the authors effectively used estimates on the Green function to control the Riesz transform and verify the hypotheses of the codimension-one David-Semmes conjecture.  Later, in \cite{MT}, Mourgoglou and Tolsa quantified the main result of \cite{AHMTV} using extremely technical arguments involving a corona decomposition with respect to harmonic measure.  In \cite{GMT}, Garnett, Mourgoglou, and Tolsa show that, under the codimension-one Ahlfors–regularity condition, if one also assumes either ``Carleson measure estimates for harmonic functions" or ``$\varepsilon$-approximability"—two analytic conditions closely related to the absolute continuity of harmonic measure— then one can conclude uniform rectifiability by establishing that the associated Riesz transforms are $L^2$-bounded.

``Two-phase" free boundary problems for harmonic (or, more generally, elliptic measures) are problems where one assumes that two harmonic measures in distinct connected components of a domain enjoy some mutual absolute continuity, in order to conclude that the boundary of the domain enjoys some regularity.  In \cite{Bi}, Bishop also conjectured that mutual absolute continuity of harmonic measures in separated connected components of a domain should imply rectifiability of the common boundary of those components.  Two-phase problems are more delicate than one phase problems, but similar (and more difficult) arguments can be used to tackle them.  In particular, Girela-Sarri\'on and Tolsa proved a more delicate version of the codimension-one David-Semmes problem in \cite{GT} which is suitable for applications to two-phase problems.  The main result of \cite{GT} was later used in solutions to qualitative and quantitative two-phase problems harmonic measure, we refer the reader to \cite{AMT}, \cite{AMT2}, and \cite{AMTV} (the first two authored by Azzam, Mourgoglou, and Tolsa, and the last authored by Azzam, Mourgoglou, Tolsa, and Volberg).  In particular, \cite{AMTV} resolves the ``two-phase" conjecture of Bishop (\cite{Bi}).  Very recently, Merlo, Mourgoglou, and Puliatti (\cite{MMP}) have extended many of the results above (one-phase problems, two-phase problems, and the result of Girela-Sarri\'on and Tolsa) to elliptic measures with coefficients satisfying a ``Dini-mean oscillation" condition.  We also record that, prior to \cite{MMP}, Puliatti (\cite{P}) had extended the result of Girela-Sarri\'on and Tolsa to elliptic equations associated to matrices with H\"older coefficients.  

In the parabolic case, a converse to Theorem \ref{pthm} is known.  In \cite{BHHLN1,BHHLN2,BHHLN3} Bortz, Hofmann, Luna-Garc\'ia and Nystr\"om, together with the first author of this paper, developed several foundational quantitative tools for the theory of parabolic quantitative rectifiability.  In particular, in \cite{BHHLN2} and \cite{BHHLN3} together imply the following

\begin{thm}
    Suppose $\mu$ is a codimension-one parabolic uniformly rectifiable measure on $\mathbb R^{n+1}$.  Then, if $T$ is a singular integral operator associated to a kernel $K: \mathbb R^{n+1} \setminus \{\0\}\rightarrow \mathbb R$ such that 
    \begin{itemize}
        \item (spatial oddness) $K(X,t) = -K(-X,t)$
        \item (gradient homogeneity) $|\nabla^j\partial_t^k K(X,t)| \lesssim \|(X,t)\|^{-n-1-j-2k}$
    \end{itemize}
    then $T$ maps $L^2(\mu)$ to itself boundedly.
\end{thm}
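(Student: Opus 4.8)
The plan is to deduce the $L^2(\mu)$ bound from a corona decomposition of $\mu$ by regular parabolic Lipschitz graphs together with the boundedness of the operators on a single such graph; these are precisely the two inputs furnished by \cite{BHHLN2,BHHLN3}, and the remaining content is an assembly argument of the type pioneered by David and Semmes in the Euclidean case. Throughout, one first reduces $L^2(\mu)$ boundedness of $T$ to a uniform estimate for the truncations $T_{\mu,\eps}$, which in turn (the kernel being spatially odd, hence generating a reasonable antisymmetric part) is a local testing statement amenable to the machinery below.

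First I would recall the parabolic corona decomposition. Since $\mu$ is parabolic uniformly rectifiable, for any prescribed tolerances $\eta,\theta>0$ the dyadic lattice $\mathbb{D}(\mu)$ on $\supp\mu$ splits as $\mathbb{D}(\mu)=\mathcal{B}\cup\mathcal{G}$, where $\mathcal{B}$ is a Carleson family (so $\sum_{Q\in\mathcal{B},\,Q\subset R}\mu(Q)\lesssim\mu(R)$) and $\mathcal{G}$ is organized into coherent stopping-time trees $\{\mathcal{S}\}$ whose maximal cubes $\{Q(\mathcal{S})\}$ also form a Carleson family, with each $\mathcal{S}$ assigned a \emph{regular parabolic Lipschitz graph} $\Gamma_{\mathcal{S}}$ --- a graph $\{t=\psi(x)\}$ with $\psi$ Lipschitz in the spatial variable, $\tfrac12$-H\"older in time, and with a half-order time derivative of controlled BMO/Carleson norm --- such that $\supp\mu$ lies within $\eta\,\diam Q$ of $\Gamma_{\mathcal{S}}$ throughout a fixed dilate of each $Q\in\mathcal{S}$. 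I would then isolate the second input: for a single regular parabolic Lipschitz graph $\Gamma$ with its natural surface measure $\sigma=\mathcal{H}^{n+1}|_{\Gamma}$, every $T$ with a kernel obeying the spatial-oddness and parabolic gradient-homogeneity hypotheses is bounded on $L^2(\sigma)$, with norm controlled by the graph constants and the kernel constants. This is the parabolic analogue of the Coifman--McIntosh--Meyer theorem; here the spatial oddness is what kills the contribution of the (vertical) tangent hyperplane, while the half-order time regularity of the graph --- not merely Lipschitz dependence on $(X,t)$ --- is what makes the operator bounded at all in the parabolic scaling.

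With these two facts in hand the assembly proceeds as in the David--Semmes scheme adapted to parabolic space. One route is to verify the hypotheses of a non-homogeneous $Tb$-theorem for the codimension-one ADR measure $\mu$: for each cube $Q$ one builds a test function $b_Q$ from the smooth, graph-derived paraaccretive function attached to the tree containing $Q$, and checks accretivity together with the local $L^2$ testing inequalities, using the graph approximation to compare $T_\mu$ acting near $Q$ with $T_\sigma$ on $\Gamma_{\mathcal{S}}$ and using the two Carleson packing conditions to absorb the cubes of $\mathcal{B}$ and the tree tops. Alternatively one argues directly: decompose $f\in L^2(\mu)$ along martingale differences adapted to the trees, estimate $T_\mu f$ tree by tree by transplanting to $\Gamma_{\mathcal{S}}$ and invoking the single-graph bound, and sum the transplantation errors via a square-function estimate, the error density being a Carleson measure because the $\beta$-numbers of $\mu$ and the stopping data of the corona are. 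In either route one then passes to the principal-value operator in the standard way.

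The main obstacle is the interface between the single-graph bound and the corona, i.e.\ controlling the error incurred when $\mu$ on a tree $\mathcal{S}$ is replaced by $\sigma$ on $\Gamma_{\mathcal{S}}$ and then summing these errors over all trees. This is more delicate than in the Euclidean David--Semmes argument because the parabolic metric is anisotropic: spatial deviation and time deviation from $\Gamma_{\mathcal{S}}$ enter with different scaling weights, and the admissible approximating objects are genuinely parabolic (regular) Lipschitz graphs rather than tilted planes, so one must ensure the corona decomposition delivers graphs in exactly the class for which the Coifman--McIntosh--Meyer-type estimate holds, and that spatial oddness is the correct substitute for full oddness of the kernel. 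Precisely this package of estimates is what \cite{BHHLN2} and \cite{BHHLN3} supply, so in practice the proof reduces to quoting those two results and checking that the kernel class in the statement --- spatial oddness plus $|\nabla^j\partial_t^k K(X,t)| \lesssim \|(X,t)\|^{-n-1-j-2k}$ --- is exactly the one they accommodate.
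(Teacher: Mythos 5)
The paper does not actually prove this theorem: it is stated as a known result imported directly from \cite{BHHLN2} and \cite{BHHLN3}, which are exactly the two inputs (parabolic singular integrals with spatially odd, parabolically homogeneous kernels, and corona decompositions by regular parabolic Lipschitz graphs for parabolic uniformly rectifiable sets) around which you organize your sketch, so your proposal follows essentially the same route as the paper's treatment. One small correction: a regular parabolic Lipschitz graph is of the form $\{(x,A(x,t),t)\}$ with $A$ Lipschitz in the spatial variables and with half-order time derivative in parabolic BMO, i.e.\ a spatial coordinate as a function of the remaining space and time variables, not a graph $\{t=\psi(x)\}$ of time over space as written in your second paragraph.
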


The theory of one and two-phase free boundary problems for \emph{caloric} measure is very undeveloped compared to its elliptic counterpart.  Caloric measure is defined in a completely analogous way to harmonic measure, but the underlying PDE is the heat equation instead of Laplace's equation.  Recently, Bortz, Hofmann, Martell, and Nystr\"om have produced two important results regarding one-phase problems for caloric measure.  

\begin{thm}[\cite{BHMN}]
Suppose that $\Gamma = \{(x,A(x,t),t)\} \subset \mathbb R^{n+1}$ is a codimension-one parabolic Lipschitz graph, and let $\Omega := \{(x,x_n,t): x_n>A(x,t)\}$.  If caloric measure satisfies the weak-$A_\infty$ property with respect to parabolic surface measure on $\Gamma$, then $\Gamma$ is a parabolic uniformly rectifiable set.
\end{thm}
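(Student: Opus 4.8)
The essential point is that a parabolic Lipschitz graph is \emph{not} automatically parabolically uniformly rectifiable: the defining function $A$ is Lipschitz in the spatial variables but only $\tfrac12$-H\"older in $t$, whereas parabolic uniform rectifiability of $\Gamma$ is equivalent, by the theorem of Hofmann--Lewis--Nystr\"om, to the stronger requirement that $A$ be a \emph{regular} parabolic Lipschitz function---i.e.\ that its half-order time derivative $D^{t}_{1/2}A$ satisfy a Carleson measure condition---which in turn is equivalent to the square function estimate \eqref{jonessquare} with $\beta_\mu$ measuring distance to vertical hyperplanes. So the whole content of the theorem is to extract this extra time regularity from the weak-$A_\infty$ hypothesis on caloric measure. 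My plan is: (i) reduce to verifying \eqref{jonessquare} for the parabolic surface measure $\sigma=\mathcal H^{n+1}|_\Gamma$; (ii) turn weak-$A_\infty$ into a scale-localized reverse H\"older inequality for the caloric kernel $k=d\omega^{X}/d\sigma$, where $X$ is a corkscrew point lagged suitably far into the future of the surface ball in question; (iii) convert this into a Carleson measure estimate for the gradient of an appropriate caloric function on $\Omega$; and (iv) read off \eqref{jonessquare}.

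For step (iii) I would use the standard boundary dictionary for the heat operator on $\Omega$---the Caffarelli--Fabes--Mortola--Salsa-type relations between $\omega^{X}(\Delta)$, the Green function $G(X,\cdot)$ and $\dist(\cdot,\Gamma)$, the backward/forward boundary Harnack principle, and the identity $\partial_\nu G(X,\cdot)\,d\sigma = d\omega^{X}$, all available for parabolic Lipschitz cylinders from the work of Nystr\"om and of Hofmann--Lewis---to run a stopping-time/corona decomposition of the surface cubes of $\Gamma$. One sorts them into a tree of ``good'' regions, on which the graph is $\eps$-close to a vertical hyperplane and $\omega^{X}$ is comparable to $\sigma$, and a family of maximal ``bad'' cubes, on which either the Lipschitz--H\"older character of $A$ or the flatness of $\Gamma$ degenerates. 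On a bad cube one argues by contradiction: an excess of oscillation of $A$ over the cube forces the heat kernel seen from the time-lagged corkscrew point to be incompatible with surface measure on a definite portion of the cube---either it concentrates or it decays too fast---violating the reverse H\"older bound from step (ii). Hence the bad cubes obey a Carleson packing estimate; combined with the flatness on the good regions and the Hofmann--Lewis--Nystr\"om Littlewood--Paley / parabolic-$\mathrm{BMO}$ machinery that reconstructs $D^{t}_{1/2}A$ from gradient data, this yields \eqref{jonessquare}, which is exactly parabolic uniform rectifiability of $\Gamma$. (Equivalently, one may first pass from weak-$A_\infty$ to $L^{p}$-solvability of the caloric Dirichlet problem in $\Omega$ for some $p$, and then invoke the parabolic square-function / nontangential-maximal-function estimates to obtain the same Carleson measure bound.)

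The main obstacle is the one-sidedness in time inherent to the heat equation. Caloric measure with pole $X=(x_0,t_0)$ charges only the part of $\Gamma$ lying in $\{t<t_0\}$, so every comparison-principle and Harnack-chain argument has to be run with time-lagged Carleson regions and carefully placed poles, which makes both the corkscrew construction and the localization of the reverse H\"older inequality substantially more delicate than in the elliptic case. Moreover the object one ultimately must control, $D^{t}_{1/2}A$, is genuinely nonlocal, so the final passage from the gradient Carleson estimate to \eqref{jonessquare} is not pointwise and must go through the parabolic $\mathrm{BMO}$ theory; and since only weak-$A_\infty$ (not full $A_\infty$) is assumed, the reverse H\"older inequality is available only in its interior, scale-localized form, so the stopping time must be arranged so that the bad cubes still satisfy a packing bound despite this loss.
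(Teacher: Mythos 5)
This statement is not proved in the paper at all: it is quoted verbatim as background from \cite{BHMN} (Bortz--Hofmann--Martell--Nystr\"om), so there is no ``paper's own proof'' to compare your strategy against. Your outline is a reasonable reconstruction of the general architecture one expects (weak-$A_\infty$ $\Rightarrow$ local weak reverse H\"older for the time-lagged caloric kernel $\Rightarrow$ Carleson-type estimates $\Rightarrow$ the square-function condition \eqref{jonessquare}, equivalently the regularity of $A$ in the sense of a Carleson condition on $D^{t}_{1/2}A$), and you correctly identify the two real sources of difficulty: the one-sidedness in time of caloric measure and the nonlocality of the half-order time derivative.

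However, as a proof it has a genuine gap precisely where the theorem's content lies. Steps (ii)--(iv) are asserted rather than carried out: the claim that on a ``bad'' cube an excess of oscillation of $A$ makes the heat kernel ``incompatible'' with surface measure, hence that the bad cubes pack, is exactly the hard quantitative step, and nothing in the standard parabolic boundary dictionary (CFMS-type comparisons, backward Harnack, $\partial_\nu G\,d\sigma=d\omega$) delivers it for free --- indeed Kaufman--Wu \cite{KW} show caloric measure on a non-regular parabolic Lipschitz graph can behave very badly, so the dividing line is delicate and must be extracted by a substantial argument (in \cite{BHMN} this occupies most of the paper and leans on the parabolic singular-integral and corona machinery of \cite{BHHLN2,BHHLN3}). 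Likewise, the final passage from a gradient/square-function Carleson bound to the Carleson measure condition on the nonlocal quantity $D^{t}_{1/2}A$ (and thence to \eqref{jonessquare}) is invoked as ``Hofmann--Lewis--Nystr\"om machinery'' without specifying how the weak (non-doubling, dilated-ball) form of the reverse H\"older inequality survives the stopping-time localization. So what you have is a plausible research plan consistent with the known proof's spirit, not a proof; the missing ingredient is the actual mechanism converting the weak-$A_\infty$ hypothesis into a packing condition strong enough to control the time regularity of $A$.
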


We remark that graphs of parabolic Lipschitz functions need not be parabolic uniformly rectifiable (see \cite{BHHLN1} for a counter-example).   This is in contrast to the Euclidean setting, where Lipschitz graphs are a prototypical example of a uniformly rectifiable set.  Very recently, the authors have extended their result to the case when $\Gamma$ is a \emph{time-symmetric} ADR set\footnote{Time-symmetric ADR is a stronger condition than the $\HSDC$, see Corollary \ref{timesym}}.  The time symmetric ADR condition is a condition that forces every surface ball to have ample intersection with the set in forward and backward in time.  
\begin{thm}[\cite{BHMN2}]
Suppose that $\Omega \subseteq \mathbb R^{n+1}$ is an open subset of parabolic space, that $\Sigma = \partial \Omega$ is time-symmetric ADR, and that $\Omega$ satisfies a uniform interior corkscrew condition.  If caloric measure in $\Omega$ satisfies the weak-$A_\infty$ condition with respect to parabolic surface measure on $\partial \Omega$, then $\Sigma$ is a parabolic uniformly rectifiable set. 
\end{thm}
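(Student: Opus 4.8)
The plan is to reduce parabolic uniform rectifiability of $\Sigma$ to the Carleson estimate \eqref{jonessquare} for the parabolic $\beta$-numbers of $\sigma$, and to extract that estimate from the heat equation by a corona/stopping-time argument patterned on the elliptic quantitative Bishop theorem (\cite{AHMTV}, \cite{HMU}, \cite{MT}) and on the parabolic Lipschitz graph case \cite{BHMN}. First I would fix a Christ--David parabolic dyadic lattice on $\Sigma$ and recall that, since $\Sigma$ is ADR, \eqref{jonessquare} is equivalent (see \cite{BHHLN1, BHHLN2, BHHLN3}) to the existence of a bilateral corona decomposition of $\Sigma$ into coherent stopping-time regimes, on each of which $\Sigma$ is trapped in a thin slab around a parabolic Lipschitz graph whose defining function additionally carries the half-order-in-time smoothness needed for parabolic uniform rectifiability. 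Thus it suffices to produce, for each dyadic cube $Q_0$, such a decomposition with the total $\sigma$-mass of the top cubes bounded by $\sigma(Q_0)$. The shortcut through Theorem~\ref{pthm} is unavailable here: the hypotheses only give access to the caloric Riesz transform, and the implication ``boundedness of that operator plus the $\eps$-\HSDC\ $\Rightarrow$ uniform rectifiability'' is merely Conjecture~\ref{parads}, so a hands-on argument is forced even though, by Corollary~\ref{timesym}, time-symmetric ADR does supply the $\eps$-\HSDC.

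The analytic engine is a package of boundary estimates for caloric measure on $\Omega$. Using the interior corkscrew condition together with the \emph{time-symmetric} ADR hypothesis --- the latter furnishing corkscrew points both forward and backward in time, hence parabolic Harnack chains in all time directions --- I would establish: (i) a Carleson/boundary-Harnack estimate for caloric measure $\omega$; (ii) doubling of $\omega$ at the relevant scales, together with the change-of-pole formula; and (iii) a Caffarelli--Fabes--Mortola--Salsa-type comparison between the Green function $G$ of $\Omega$ and $\omega$, valid for points $Y$ well inside the Carleson box over a cube $Q$ in terms of $\dist(Y,\Sigma)$ and $\omega^{X_Q}$ of the surface ball centered near $Y$. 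These are the parabolic analogues of the estimates used in \cite{BHMN}, and time-symmetry is precisely what repairs the lack of time-reversibility of the heat equation.

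Next I would convert the weak-$A_\infty$ hypothesis into geometry. Weak-$A_\infty$ of $\omega$ with respect to $\sigma$ yields, on a Carleson-large family of cubes $Q$, mutual absolute continuity of $\omega^{X_Q}$ and $\sigma$ on $Q$ with a density obeying a scale-invariant reverse H\"older inequality; equivalently $\log(d\omega/d\sigma)$ has small parabolic-$\mathrm{BMO}$ norm on most subscales, which is a Carleson-measure statement. Feeding this together with the CFMS comparison into a blow-up/compactness argument shows that over the good cubes the normalized Green function behaves like the parabolic distance to a vertical hyperplane, and that its spatial gradient $\nabla_x G(X_Q,\cdot)$ is $\eps$-close to a fixed unit vector; this forces $\Sigma$ to be $\eps$-close (in the flat sense) to a vertical hyperplane there, and, after the stopping-time bookkeeping, to a regular parabolic Lipschitz graph on each coherent regime. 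Summing the resulting local estimates against the Carleson packing of the top cubes yields \eqref{jonessquare}, hence parabolic uniform rectifiability of $\Sigma$.

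The main obstacle is the parabolic PDE layer. Boundary Harnack and CFMS-type comparison estimates for the heat equation on rough, genuinely time-dependent domains are far more delicate than their elliptic counterparts: Harnack chains cost time, caloric measure can fail to be doubling, and one-sided-in-time behaviour is genuinely pathological --- indeed this is the very phenomenon underlying the parabolic counterexamples of the present paper. Securing estimates (i)--(iii) under only interior corkscrew plus \emph{time-symmetric} ADR, rather than a two-sided or Lipschitz hypothesis, is the crux; once they are available, the corona machinery, though technically heavy, follows the by-now standard template. A secondary difficulty is the compactness/blow-up step, which must be carried out with the correct anisotropic parabolic scaling and with enough uniformity to pass from ``good on a Carleson-large set of cubes'' to a genuine corona decomposition.
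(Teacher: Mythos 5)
First, a point of comparison: this statement is not proved in the paper at all --- it is an external result quoted from \cite{BHMN2} for context (the surrounding discussion even stresses that its proof does not go through Riesz transform analysis, consistent with your observation that Theorem \ref{pthm}/Conjecture \ref{parads} cannot be used as a shortcut). So there is no in-paper argument to measure your route against; your proposal has to stand on its own, and as written it does not. It is a strategy outline rather than a proof: the entire PDE package (i)--(iii) that drives the argument is postulated, and you yourself identify securing it as ``the crux.'' Sketching the corona template of \cite{AHMTV, HMU, MT, BHMN} and deferring the parabolic boundary estimates is precisely deferring the content of the theorem.

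Beyond incompleteness, several intermediate claims would fail as stated. (a) You ask for doubling of caloric measure and a change-of-pole formula, but under only an interior corkscrew condition and a weak-$A_\infty$ hypothesis there is no exterior access to the boundary, and doubling is exactly what the weak-$A_\infty$ framework is designed to dispense with; caloric measure has additional intrinsic non-doubling behaviour in the time direction, so this step cannot be taken for granted and in fact should be avoided altogether. (b) Weak-$A_\infty$ gives only local absolute continuity $\omega\ll\sigma$ with a weak reverse H\"older inequality for the density on a dilate; it does \emph{not} give mutual absolute continuity of $\omega^{X_Q}$ and $\sigma$, and the claim that $\log(d\omega/d\sigma)$ has small parabolic BMO norm on most scales is a far stronger (essentially small-constant $A_\infty$/VMO) statement that simply does not follow. (c) Time-symmetric ADR is a condition on the boundary measure (ample mass of surface balls both forward and backward in time); it does not produce Harnack chains in $\Omega$ in both time directions, and the Harnack inequality for the heat equation only propagates forward in time, so the CFMS-type comparison and backward-in-time estimates you invoke need a genuinely different mechanism --- this is where the real work of \cite{BHMN2} lies. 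Until (a)--(c) are replaced by arguments that use only the stated hypotheses, the proposal is a plausible roadmap but not a proof.
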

The proofs of the two theorems above do not rely on analysis of the Riesz transform.

In the past decade there have been some extraordinary results concerning quantitative rectifiability and Heisenberg groups.  In \cite{NY1}, Naor and Young resolved several open problems by proving a discrete ``vertical versus horizontal" isoperimetric inequality in a discrete Heisenberg group $\mathbb H^n_\mathbb Z$, $n\geq 2$.  In order to prove this inequality, they prove a corresponding inequality in the continuous Heisenberg group $\mathbb H^n$, and use the continuous case to deduce the discrete case.  The methods of their proof utilize many tools from the area of quantitative rectifiability, in particular the ``corona decomposition" type arguments introduced by David and Semmes in \cite{DS1}.  This discrete isoperimetric inequality is used to find an optimal (up to a constant) lower bound on the integrality gap of the Goemans–Linial semidefinite program for the Sparsest Cut
Problem, and also establish a lower bound on the distortion of any embedding from the ball of radius $r$ centered at the origin in $(\mathbb H_\mathbb Z^2, d_W)$ (the second discrete Heisenberg group equipped with the ``word metric") is at least $C\sqrt{r}$. 
In \cite{NY1}, Naor and Young prove a ``vertical-versus-horizontal" Poincar\'e inequality in the first Heisenberg group.  As a corollary, they obtain, in the first Heisenberg group, bounds on the distortion of embeddings of balls centered at the origin.  In this work they develop a new multiscale decomposition for intrinsic Lipschitz graphs, which was later modified in the aforementioned works \cite{CLY1,CLY2}.

There has also been significant advances in extending Preiss' theorem to the Heisenberg group and parabolic space.  These have been developed in the codimension-one setting by Merlo (\cite{M}), and Merlo, Mourgoglou, and Puliatti (\cite{MMP}), respectively.

\subsection{Horizontally antisymmetric kernels} \label{CZOdefs} We first motivate the symmetry condition we will impose on the kernels.  

In either geometric model, we will be working in $\R^{n+1}$, and will write $\mathbf X = (X,t)$ where $X\in \R^n$ and $t\in \R$.  We think of $X$ as a spatial variable and $t$ as a time variable.  

The Heisenberg group $\mathbb H$ is then the case $n=2$, where we equip $\R^3$ with the group action given for $\mathbf X = (X,t)$ and $\mathbf Y = (Y,s)$ by 
$$\mathbf X\cdot \mathbf Y = \bigl(X+Y, t+s+\frac{1}{2}(X_1Y_2-X_2Y_1)\bigl).$$

The parabolic group is $\R^{n+1}$ where addition is given by 
$\mathbf X = (X,t)$ and $\mathbf Y = (Y,s)$ by 
$$\mathbf X\cdot \mathbf Y = \mathbf X+\mathbf Y = \bigl(X+Y, t+s\bigl).$$

In either case we can define a norm-like\footnote{In what we do the precise form of the ``norm" will not be so important} quantity 
$$\|\mathbf X\|^2 = |X|^2+|t|,$$
and then a left-shift invariant distance
$$
d(\X,\Y) = \|\Y^{-1}\cdot \X\|.$$

Notice that $\|(\lambda X, \lambda^2 t)\| = |\lambda|\|(X,t)\|$ for $\lambda \in \R$.

With applications in mind, we identify the analogue of the Riesz transform as the distinguished operator with the class of singular integrals we consider.  In both the Heisenberg and parabolic settings the Riesz transform has the property that it is odd in the spatial variables with time held fixed \cite{BHHLN3, CLY1, CM, Ho, LM}.  Consequently, we make the following definitions:

\textbf{\emph{Admissible kernels in the Parabolic space.}}
    A smooth function $K: \mathbb R^{n+1}\setminus \{\0\} \rightarrow \mathbb R$ is an \emph{admissible kernel} of dimension $k+2$, $k \in \{1,...,n-1\}$ if 
    \begin{itemize}
        \item $K(X,t)=-K(-X,t)$ for all $(X,t) 
        \in \mathbb R^{n+1} \setminus \{\0\}$,
        \item for every $\ell,k\geq0$, $$|\nabla_X^{\ell}\partial_t^j K(\X)| 
        \lesssim_{j,k}\|\X\|^{-(k+2)-\ell-2j}.$$
    \end{itemize}

\textbf{\emph{Admissible kernels in the Heisenberg group.}} A smooth function $K:\mathbb{H}\backslash \{\0\}$ is 
an \emph{admissible kernel} (of dimension $3$) if
  \begin{itemize}
        \item $K(X,t)=-K(-X,t)$ for all $(X,t) 
        \in \mathbb{H} \setminus \{\0\}$,
        \item for all $\ell\geq 0$,
        $$|\nabla^\ell_{\mathbb{H}} K(\X)| 
        \lesssim_{j}\|\X\|^{-3-\ell},$$
    \end{itemize}
    where $\nabla_{\mathbb{H}}$ is the horizontal gradient\footnote{We shall not use the formula for the horizontal gradient explicitly, but rather some few elementary consequences of its boundedness.  Section 3 of \cite{CM} is a useful guide of the role of the horizontal gradient in verifying `standard properties' of singular integral kernels.} 
$$\nabla_{\mathbb{H}}f(X_1,X_2,t) = (\partial_{X_1}f+2X_2\partial_t f, \partial_{X_2}f-2X_1\partial_t f).$$

Given an admissible kernel $K$ of homogeneity $k$, the  ``Calder\'on-Zygmund singular integral operator" associated to $K$ adapted to $\mu$ is defined through the following $\epsilon$-truncations
$$T_{\epsilon}f(\mathbf{X}) = \int_{\|\Y^{-1}\cdot \X\| > \epsilon} K\big(\Y^{-1}\cdot \X)f(\mathbf{Y}) d\mu(\mathbf{Y})$$

We say that $T$ is bounded from $L^2(\mu)$ to $L^2(\mu)$ if there is a constant $C>0$ such that for every $f\in L^2(\mu)$
$$\int \sup_{\epsilon>0} |T_{\epsilon} f(\mathbf{X})|^2 d\mu(\mathbf{X}) \leq C \|f\|_{L^2(\mu)}^2.$$

\subsection{Vertical $\beta$-numbers and uniform rectifiability}\label{URdefn} We call $L\subset \R^{n+1}$ a $(k+2)$-dimensional vertical plane if it is an affine plane 
whose spatial projection onto $\R^n$ is a $k$-dimensional affine Euclidean plane and it contains a line in $t$.  We denote by $\VP_k$ the collection of $(k+2)$-dimensional vertical planes.  Put

$$\beta_{\mu,k}(\X,r)^2= \inf_{L\in \VP_k}\frac{1}{\mu(B_r(\mathbf X))}\int_{B_r(\X)}\Bigl(\frac{\dist(\Y, L)}{r}\Bigl)^2d\mu(\Y).$$
We call a $(k+2)$-dimensional ADR measure is \emph{uniformly rectifiable} if there is a constant $C>0$ such that 
\begin{equation}\label{flatsquare}
    \int_0^R\int_{B_R(\mathbf X)} \beta_\mu(\mathbf Y,r)^2 \frac{d\mu(\mathbf Y) dr}{r} \leq CR^{k+2}\,\text{ for all } \mathbf X \in \supp(\mu),\, R>0.
\end{equation}

In the parabolic setting, this definition was introduced by Hofmann-Lewis-Nystrom \cite{HLN} in the codimension one setting.  In the Heisenberg setting it does not appear to be standard, but we adopt it for our purposes here.

\subsection{The precise result in the parabolic case}\label{parastate} In this section introduce the precise formulation of our results in the parabolic case.  In order to state our theorem it is instructive to consider a particular example: 

Fix $k \in \{1,...,n-1\}$.  For $m\in \{k,k+1,k+2\}$, consider the collection of measures $\mu = \mathcal{H}^{m}_{|L}\times \nu$ where $L$ is an $m$-plane and  $\nu((t-r,t+r))\approx r^{k+2-m}$ for every $t\in \supp(\nu)$ and $r>0$.  If $m\in \{k+1,\min(k+2,n+1)\}$, such a measure $\mu$ satisfies:
\begin{itemize}
    \item $\mu$ is $(k+2)$-ADR,
    \item all $(k+2)$-dimensional CZOs are bounded in $L^2(\mu)$, but
    \item the spatial projection of the support of $\mu$ is $L$, so $\beta_{\mu}(\X,R)\gtrsim 1$ for all $\X\in \supp(\mu)$.  As a consequence, (\ref{flatsquare}) fails spectacularly.  
    \end{itemize}
We label the measures of the form above $\mathcal M_m$.  See Section \ref{examples} for details and additional examples.  From the perspective of uniform rectifiability, the measures in $\mathcal{M}_m$ are pathologies, as the spatial component has too high dimension.    Our main result states that provided we quantitatively divorce ourselves from this particular collection of measures, we can obtain an analogue of the David-Semmes theorem.  There are a few options for how to do this, and we have opted for introducing transportation coefficients ($\alpha$-numbers).  
Transportation coefficients will play a key role in the analysis throughout this paper at several places in both parabolic and Heisenberg group cases.

Put
$$d_{\X, r}(\mu, \nu) = \sup_{f\in \Lip_0(B(\X,r))\,:\, \|f\|_{\Lip}\leq 1/r}\frac{1}{r^{n+1}}\Bigl|\int_{\R^{n+1}}fd(\mu-\nu)\Bigl|.$$
Then
$$\alpha_{\mu, m}(\X,r) =  \inf_{\nu\in \mathcal{M}_m}d_{\X,r}(\mu, \nu)$$
is the scaled transportation distance from $\mu$ to $\mathcal{M}_m$ in the ball $B(x,r)$.  We define $\alpha_{\mu, n+1}(B_r(\X))=\infty$ for all $\X\in \supp(\mu)$ and $r>0$.

We emphasize that the definition of $\alpha_{\mu,3}$ will also be relevant in the Heisenberg group setting.

Our assumption on the measure is that the scales $r$ such that $\alpha_{\mu, m}(\X,r)$ is small are sparse for $m\in \{k+1, \min(k+2)\}$.  As is now standard, this is stated in terms of a Carleson packing condition.  We say that $\mu$ satisfies the ($k$-dimensional) $\eps$-$\HSDC$ condition if $\X\in \supp(\mu)$ and $R>0$,
\begin{equation}\label{kdimpack}\int_0^R\int_{B_R(\X)}\mathbf{1}_{\{(\Y,r)\,:\, \alpha_{\mu, k+1}(\Y,r)<\eps \text{ or }\alpha_{\mu, k+2}(\Y,r)<\eps\}}d\mu(\Y)\frac{dr}{r}\leq CR^{k+2}.
\end{equation}

\begin{thm}\label{parathm} Suppose that $\mu$ is an $(k+2)$-ADR measure, and there exist $\eps>0$ and $C>0$ such that $\mu$ satisfies the $\eps$-$\HSDC$ condition.

If all $(k+2)$-dimensional CZOs are bounded in $L^2(\mu)$, then $\mu$ is uniformly rectifiable.
\end{thm}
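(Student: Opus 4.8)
The plan is to run a David--Semmes-style corona decomposition, but with the $\alpha$-numbers (transportation coefficients) playing the role that $\beta$-numbers play in the classical argument, and using the $\HSDC$ to excise the pathological scales. The starting point is a standard consequence of $L^2(\mu)$-boundedness of all CZOs together with the antisymmetry hypothesis: by testing against (smooth truncations of) the admissible kernels and exploiting spatial oddness exactly as in \cite{DS1, NToV, ENV}, one obtains a Carleson-type estimate controlling a square function built from the \emph{flatness with respect to vertical planes and the measures in $\mathcal M_m$ simultaneously}. Concretely, I would first prove that boundedness of CZOs forces
\[
\int_0^R\int_{B(\X,R)} \alpha_{\mu}(\Y,r)^2\,\frac{d\mu(\Y)\,dr}{r}\;\lesssim\;R^{k+2},
\]
where $\alpha_\mu(\Y,r)$ measures the scaled transportation distance from $\mu$ to the set of \emph{all} measures of the form $\mathcal H^m_{|L}\times\nu$ with $m\in\{k,k+1,\dots,\min(k+2,n+1)\}$ (i.e.\ including genuine vertical $(k+2)$-planes as the case $m=k+2$, $\nu=\mathcal H^1$). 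This is the heart of the analytic input: one shows that if $\mu$ were \emph{far} from every such product measure at a large-measure family of Carleson scales, one could build a function (a sum of bumps oriented by the antisymmetry directions) on which some CZO is unbounded, contradicting the hypothesis. The ENV/NToV machinery adapts because the only structural feature used there is the cancellation coming from oddness in the spatial variable, which admissible kernels retain by design.

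The second step is to combine this with the $\HSDC$. The Carleson bound above says the bad scales are those where $\mu$ is close to \emph{some} product measure $\mathcal H^m_{|L}\times\nu$; the $\HSDC$ (\ref{kdimpack}) says that among these, the scales where $m\in\{k+1,\min(k+2,n+1)\}$ form a Carleson-sparse set. Subtracting, one concludes that off a Carleson-null set of scales, $\mu$ is close (in $d_{\X,r}$) to a product measure with $m=k$ or $m=k+2$ — but $m=k$ combined with the $(k+2)$-ADR hypothesis forces $\nu$ to behave like $\mathcal H^2$ on a line in $t$, which after absorbing into the spatial plane is just a $(k+2)$-dimensional vertical plane up to controlled error; and $m=k+2$ is a vertical plane outright. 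Hence $\alpha_{\mu,\text{vert}}(\Y,r)<\eps'$ off a Carleson-null set, where $\alpha_{\mu,\text{vert}}$ is the transportation distance to vertical $(k+2)$-planes. A short geometric argument (closeness in transportation distance at a single scale, \emph{plus} smallness of the same quantity at nearby scales and points, implies smallness of the least-squares $\beta$-number, via the ADR lower bound on $\mu$ to convert an $L^1$-transport bound into an $L^2$-flatness bound) upgrades this to a Carleson bound for $\beta_{\mu,k}^2$, which is exactly (\ref{flatsquare}).

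I would organize the $\alpha\Rightarrow\beta$ passage as a self-contained lemma: if $d_{\X,r}(\mu,\mathcal H^{k+2}_{|L}\times\mathcal H^1)$ is small and $\mu$ is $(k+2)$-ADR, then $\beta_{\mu,k}(\X,r/2)$ is small, and more quantitatively the Carleson functional for $\alpha_{\mu,\text{vert}}^2$ dominates that for $\beta_{\mu,k}^2$ up to a fixed constant; this is the parabolic analogue of Tolsa's comparison between $\alpha$- and $\beta$-coefficients and should go through with the usual covering/Fubini bookkeeping. The main obstacle, I expect, is the first step — extracting the quantitative ``$\mu$ is close to a product measure'' dichotomy from boundedness of CZOs. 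In the Euclidean NToV argument the competing model is a single hyperplane; here the family of models $\mathcal M_m$ is a moving target parametrized by both the spatial plane $L$ \emph{and} the (nearly arbitrary) one-dimensional factor $\nu$, so the variational/contradiction argument must be set up to detect deviation from this whole family at once, and the bump functions used to contradict boundedness must be chosen compatibly with the anisotropic (parabolic) scaling $\|(\lambda X,\lambda^2 t)\|=|\lambda|\,\|(X,t)\|$. Handling the time-direction degeneracy of the parabolic metric in the stopping-time/corona construction — making sure Christ-type dyadic cubes and the associated packing estimates respect parabolic scaling — is the other place where care is needed, though this is by now fairly routine in the parabolic literature (\cite{HLN, BHHLN1, BHHLN2, BHHLN3}).
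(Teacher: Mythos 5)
There is a genuine gap, and it sits exactly where you place ``the heart of the analytic input.'' Your first step asserts that $L^2(\mu)$-boundedness of all CZOs yields the \emph{quadratic} Carleson estimate $\int_0^R\int_{B(\X,R)}\alpha_\mu(\Y,r)^2\,\frac{d\mu\,dr}{r}\lesssim R^{k+2}$ for the transport distance to the whole family $\bigcup_m\mathcal M_m$. The contradiction scheme you sketch (``if $\mu$ were far from every product measure on a non-Carleson family of scales, build a function on which some CZO is unbounded'') is a compactness-type argument, and such arguments only produce the level-set statement: for each $\lambda>0$, the cubes with $\alpha>\lambda$ form a Carleson family with constant depending on $\lambda$ in an uncontrolled way. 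That is all the paper ever extracts from the CZO hypothesis (via the square-function coefficients $\Theta_{\mu,\varphi,A}$ and a blow-up whose limit objects are the \emph{vampiric} measures, whose reflection/lattice/time-slice structure occupies Section \ref{vampsec}; this is not a routine adaptation of the ENV/NToV Riesz-transform machinery, which relies on harmonicity rather than mere spatial oddness). Without the quadratic estimate, your final Tolsa-style domination of the $\beta^2$ Carleson functional by the $\alpha^2$ Carleson functional has no input. And if you retreat to the level-set version, your second and third steps only give: off a Carleson family (depending on $\lambda<\eps$), $\alpha_{\mu,k}\le\lambda$ and hence $\beta_{\mu,k}\lesssim\sqrt\lambda$ at that scale. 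Uniform smallness of $\beta$ off a $\lambda$-dependent Carleson family does \emph{not} sum to the square-function bound (\ref{flatsquare}); the sum of $\mu(E)$ over all good cubes inside $E_0$ is infinite, so no choice of $\lambda$ closes the argument.

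The paper's way around this is precisely the content you are missing: Theorem \ref{step1thm} (the cylinder blow-up). One does not convert small $\alpha$ into small $\beta$ scale by scale; instead one restricts to cubes that cannot be ``dominated from above'' (the $\dyup$ decomposition, Lemma \ref{dyuplem}) and proves, by a blow-up along a stretched limit measure, that on such a cube with small $\alpha_{\mu,k}$ the Littlewood--Paley coefficient $\Theta_{\mu,\varphi,2\En}(E)$ dominates $\beta_\mu(E)^2\ell(E)^{k+2}$ (Proposition \ref{finalprop}). The CZO hypothesis is thus used a second time, quantitatively, to make the $\beta^2$ sum Carleson; the $\HSDC$ and the vampiric-measure dichotomy (Proposition \ref{vampprop}) are then used only to show that cubes with noticeable $\alpha_{\mu,k}$ are themselves Carleson. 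Your proposal has the right first-order intuition (use $\alpha$-numbers, use $\HSDC$ to excise the high-spatial-dimension models), but it lacks both the mechanism that upgrades level-set information to the quadratic estimate and the structural analysis of the limiting (vampiric) measures that makes the compactness step, and hence the dichotomy exploited by the $\HSDC$, actually work.
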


In the codimension-one case $k=n-1$, display (\ref{kdimpack}) is just the condition 
\begin{equation}\label{co1pack}\int_0^R\int_{B_R(\X)}\mathbf{1}_{\{(\Y,r)\,:\, \alpha_{\mu, n}(\Y,r)<\eps\}}d\mu(\Y)\frac{dr}{r}\leq CR^{n+1}.
\end{equation}

We emphasize that if $\mu$ is uniformly rectifiable, then $(\ref{kdimpack})$ holds, so it is a necessary condition whose role is to rule out examples when the spatial component of the measure $\mu$ is too high.\\

 As we have seen, an additional condition that has been imposed upon Ahlfors regularity when studying parabolic problems (for instance in \cite{BHMN}) is \emph{time symmetric  ADR}.  Here we remark that time symmetric ADR measures satisfy the $\HSDC$ condition, and in fact one only requires `one-sided' symmetry either backwards or forwards in time.  For $\X=(X,t)$ and a ball $B_r(\X)$, we define $B_r(\X)_- = \{\Y=(Y,s)\in B(\X,r)\, :\, s\leq t\}$. An ADR measure $\mu$ is backwards one-sided ADR if $\mu(B_r(\X)_-)\gtrsim r^{k+1}$ for all $\X\in \supp(\mu)$.  If $\mu$ is backwards one-sided ADR, then the condition (\ref{co1pack}) is satisfied\footnote{The key point here is that if $\nu\in \mathcal{M}_{k+1}$ or $\nu \in \mathcal{M}_{k+2}$, then the time measure $\nu$ must have large gaps in its support, and the backward one-sided ADR condition does not allow this.}, so we obtain the following corollary.

\begin{cor}\label{timesym}Suppose that $\mu$ is a backward one-sided $(k+2)$-ADR measure. If all $(k+2)$-dimensional CZOs are bounded in $L^2(\mu)$, then $\mu$ is uniformly rectifiable.
\end{cor}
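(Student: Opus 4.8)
The plan is to deduce Corollary~\ref{timesym} from Theorem~\ref{parathm}: it suffices to verify that a backward one-sided $(k+2)$-ADR measure automatically satisfies the $\eps_0$-$\HSDC$ condition for some $\eps_0>0$. In fact I will establish the sharpest form of this, namely that there is $\eps_0>0$, depending only on the ADR and one-sided-ADR constants, with $\alpha_{\mu,m}(\X,r)\ge\eps_0$ for \emph{every} $\X\in\supp(\mu)$, every $r>0$, and $m\in\{k+1,k+2\}$. (When $k+2=n+1$ this is automatic since $\alpha_{\mu,n+1}\equiv\infty$, so below I assume $k+2\le n$.) Granting this, the indicator in \eqref{kdimpack} (with $\eps=\eps_0$) is identically $0$, so the left-hand side of \eqref{kdimpack} vanishes and the $\eps_0$-$\HSDC$ condition holds with $C=1$; Theorem~\ref{parathm} then gives that $\mu$ is uniformly rectifiable.

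To prove the displayed lower bound, suppose for contradiction that $\alpha_{\mu,m}(\X,r)<\eps$ for some $\X\in\supp(\mu)$, $r>0$ and $m\in\{k+1,k+2\}$, where $\eps>0$ is small and to be fixed. Since $(k+2)$-ADR, backward one-sided ADR, and the $\alpha_{\mu,m}$ are all invariant under parabolic dilations, rescale so that $r=1$, and select $\sigma=\mathcal H^{m}_{|L}\times\nu\in\M_m$ with $d_{\X,1}(\mu,\sigma)<2\eps$. The crucial structural point concerns the time factor $\nu$: recall $\nu((t-r,t+r))\approx r^{k+2-m}$ with $r$ a parabolic scale, so in the Euclidean metric $\nu$ has dimension $(k+2-m)/2\le\tfrac12<1$. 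An ADR measure on $\R$ of dimension strictly below $1$ has \emph{porous} support; hence inside the time window of $B(\X,1/2)$ there is a Euclidean interval $(a,b)$ with $b-a\gtrsim 1$, $b\in\supp(\nu)$, and $(a,b)\cap\supp(\nu)=\emptyset$. This is precisely the ``large gap in the time measure'' referred to in the footnote, and backward one-sided ADR will be incompatible with it.

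Choose $Y\in L$ with $|Y-X|\le 1/2$ (possible since $\sigma(B(\X,1/2))\approx 1$), so $\Y':=(Y,b)\in\supp(\sigma)$; as $d_{\X,1}(\mu,\sigma)<2\eps$ and both measures are $(k+2)$-ADR near $\X$, there is $\Y\in\supp(\mu)$ with $d(\Y,\Y')\le\eps^{c_1}$ for a dimensional constant $c_1>0$, so the time coordinate $\tau$ of $\Y$ satisfies $|\tau-b|\lesssim\eps^{2c_1}$. Now take $\rho=\eps^{1/N}$ for a large fixed $N=N(n,k)$; then $\eps\ll\rho\ll 1$, $B(\Y,\rho)\subset B(\X,1)$, and the backward part $B(\Y,\rho)_-$ lies in the time slab $(\tau-\rho^2,\tau]\subset(a,b+\eps^{2c_1})$, on which $\supp(\nu)$ is contained in $[b,b+\eps^{2c_1}]$. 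The product structure of $\sigma$ then gives $\sigma(B(\Y,\rho)_-)\lesssim\rho^{k+2}+\rho^{k+1}\eps^{c_1}$, which is $o(\rho^{k+1})$. A routine localization of the transportation estimate --- testing $d_{\X,1}(\mu,\sigma)<2\eps$ against a Lipschitz bump adapted to $B(\Y,\rho)$, with the attendant loss of a power of $\rho$ --- then yields $\mu(B(\Y,\rho)_-)\le\sigma(B(\Y,2\rho)_-)+o(\rho^{k+1})=o(\rho^{k+1})$ as $\eps\to0$. This contradicts the backward one-sided ADR lower bound $\mu(B(\Y,\rho)_-)\gtrsim\rho^{k+1}$ once $\eps$ is small enough, proving the bound and hence the corollary.

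The only genuinely delicate step is this last localization: $d_{\X,1}(\mu,\sigma)<2\eps$ is information at the unit scale, and one must carefully quantify how testing it against bumps at the much smaller scale $\rho$ degrades, because of the $1/r$ Lipschitz normalization in the definition of $d_{\X,r}$, in order for the resulting error to still beat the $\rho^{k+1}$ supplied by the one-sided ADR hypothesis --- this is why $\rho$ is taken to be a small fixed power of $\eps$. The remaining ingredients are standard: porosity of low-dimensional ADR subsets of $\R$, the fact that the backward half of a small ball in an $\M_m$ example sees essentially only its thin time factor near one of its gaps, and the parabolic scale-invariance used to reduce to $r=1$. I anticipate no conceptual obstruction beyond correctly exploiting the parabolic homogeneity, which is exactly what forces the time factor of every $\M_m$ example to be too thin to spread along the time axis.
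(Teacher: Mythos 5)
Your proposal is correct and follows essentially the route the paper intends: deduce Corollary~\ref{timesym} from Theorem~\ref{parathm} by showing that backward one-sided ADR rules out small $\alpha_{\mu,m}$ coefficients for $m\in\{k+1,k+2\}$, which is exactly the mechanism the paper records in its footnote (the time factor of any measure in $\mathcal{M}_m$ has macroscopic gaps, incompatible with the backward one-sided lower bound). Your write-up simply supplies the quantitative details (porosity of the sub-one-dimensional time factor, locating a point of $\supp(\mu)$ at the trailing edge of a gap, and the transport localization at scale $\rho=\eps^{1/N}$), and the numerology there indeed closes.
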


\subsection{The proof scheme}  There is a difficulty in proving Theorems \ref{hthm} and \ref{parathm} by directly adapting the David-Semmes proof scheme from \cite{DS1}.  This difficulty is owing to the \emph{vampiric measures} associated to the problem.  A vampiric measure is a $(k+2)$-Ahlfors regular measure such that for every smooth compactly supported function $\psi$ that is odd in space, $\int_{\R^{n+1}}\psi(\Y^{-1}\cdot \X)d\mu(\Y)=0$ for every $X\in \supp(\mu)$.  In the Euclidean version of this problem studied in \cite{DS1}, every vampiric measure is (a constant multiple of) the Hausdorff measure of a plane, but in both the parabolic and Heisenberg cases there are vampiric measures that are strict subsets of a vertical planes. This is also mentioned in \cite{Orp}.  As a consequence, the proof scheme in \cite{DS1}, which hinges on a bilateral approximation argument for planes, has a limitation in how it can extend.   Nazarov, Tolsa and the second author developed an alternative approach in \cite{JNT}, using ideas developed in \cite{JN1, JN2, JNRT}, and here we implement this basic scheme.  

The crucial observation is that to run the algorithms developed in \cite{JNT}, one can leverage knowing less about the time component of the measure for more knowledge of the spatial component -- in particular by making significant use of the $\alpha$-numbers introduced in Section \ref{generalalphanumbers}.  After preliminaries in Sections 2 and 3, we carry out a modification of the cylinder blow-up argument from \cite{JNT}.  The conclusion of the cylinder blow-up is that if a Carleson packing condition involving the $\alpha_{\mu,k}$-numbers is satisfied by a $(k+2)$-ADR measure $\mu$, then $L^2(\mu)$-boundedness of SIOs with admissible kernels implies uniform rectifiability (Theorem \ref{step1thm}).  This result holds in both the parabolic and Heisenberg settings.  The second part of the paper (Section \ref{vampsec})) then carries out further analysis of vampiric measures in order to verify the hypothesis of Theorem \ref{step1thm}.

\subsection{Acknowledgements}
Research supported in part by NSF grants DMS-2453251 and DMS-2049477 to B.J.  This research was undertaken in part while B.J. was a Simons’ Fellow.

\section{Preliminaries and main results}

\subsection{Balls, cubes and cylinders}\label{ballnotation}
Fix $k\in \{1,\dots, n-1\}$.  We define a distance
$d(\X, \Y) = \|\X\cdot\Y^{-1}\| \text{ for }\X, \Y\in \R^{n+1}.$ An open ball takes the form
$$B_r(\X) = \{\Y\in \R^{n+1}: \|\bf X\cdot \bf Y^{-1}\|<r\}.
$$
We will also make use of cylinders $$C_r(\mathbf X) :=\{\mathbf Y = (Y,s): |X-Y| < r, |t-s|^{\frac{1}{2}}<r\},$$
and cubes
$$Q_r(\mathbf X) = \{\mathbf Y = (Y,s): \|X-Y\|_\infty < r, |t-s|^{\frac{1}{2}} < r\},$$
where $\|X\|_\infty= \max_{1\leq j \leq n} |X_j|$.

We will call the coordinate axis corresponding to the last component of $\mathbb R^{n+1}$ the ``$t$-axis."  We will also refer to $\mathbb R^{n}$ as ``space" and $\mathbb R$ as ``time.". For a cube $Q_r(\X)$ we will write $\wt{Q}_r(\X)$ to be the spatial projection of $Q_r(\X)$, i.e. $\wt Q_r(\X)=\wt Q_r(X) = \{Y\in \R^n\,:\, \|Y-X\|_{\infty}<r\}$

Notice that, in the parabolic case, if $\X_Q$ is the center of $Q$, then $Q\subset B_Q=B_{(n+1)\ell(Q)}(\X_Q)$, but this is not the case in the Heisenberg group setting (consider the diameter of the cube $Q_1(N,0,0))$ for large $N$).  However, one can compare balls centered at the origin in the Heisenberg group to cubes and cylinders.   Indeed, we have, for either group structure we have
\begin{equation}\label{origincubescyllinders} C_r(\0)\subset Q_r(\0)\subset B_{(n+4)r}(\0)\end{equation}
The structure of convolution singular integrals means we inherit a natural translation invariance of the problem.

\subsection{Lipschitz functions}\label{lipfncts}
Given a function $f: \mathbb R^{n+1} \rightarrow \mathbb R$,
we say that $f$ is Lipschitz if 
$$|f(\mathbf X)-f(\mathbf Y)| \leq C \|\mathbf X\cdot \mathbf Y^{-1}\|.$$
We call the smallest constant $C$ for which the display above holds the ``Lipschitz norm of $f$", and adopt the notation $\|f\|_{\Lip}$ for this quantity.  

For an open set $U\subset \R^{n+1}$, we define $\Lip_0(U)$ to be the set of Lipschitz continuous functions supported in $U$.  Then 
$$\Lipodd(U) = \bigl\{\varphi\in \Lip_0(U):
\varphi(X,t)=-\varphi(-X,t), \,\,\, \forall(X,t) \in \mathbb R^{n+1}\bigl\},$$
and
$$\Lipeven(U) = \bigl\{\varphi\in \Lip_0(U):
\varphi(X,t)=\varphi(-X,t), \,\,\, \forall(X,t) \in \mathbb R^{n+1}\bigl\}.$$

\subsection{Measures}

All constants in the paper may depend on the dimension $n$ and the CZO dimension $k+2$.

Throughout the paper, by ``measure" we mean locally finite Borel measure.

For $k\in \{1,\dots, n-1\}$ we are interested in measures $\mu$ which are $(k+2)$-ADR, $k \in \{1,...,n-1\}$, in the sense that
\begin{align}\label{ADRdef}
C_0^{-1} r^{k+2} \leq \mu(B_r(\mathbf X)) \leq C_0r^{k+2}
\end{align}
for some uniform constant $C_0>0$ and all $\mathbf X \in \supp(\mu)$ and $r \in (0,\diam(\supp(\mu))).$ The constant $C_0$ is called the ADR constant.   

To avoid repetition in the statements of theorems, we make the following convention: we will make a fixed choice of constant $C_0$ and every $(k+2)$-ADR regular measure denoted by $\mu$ in the paper has an ADR constant (at most) $C_0$.    All constants can depend on $C_0$ without further mention.  

Sometimes a $(k+2)$-ADR measure shall take the form $\mu = \mathcal{H}^{\ell}_{|L}\times\nu$,  where $\ell\in \{k, k+1,k+2\}$, $L\subset \R^n$ an $\ell$-plane, and $\nu$ a Borel measure in $\R$ (with metric $d(a,b) = \sqrt{|a-b|})$).  Then $\nu$ is a $(k+2-\ell)$-ADR measure on $\R$ ($\frac{1}{A_0}r^{k+2-\ell}\leq \nu((a-r,a+r))\leq A_0r^{k+2-\ell}$ for all $a\in \supp(\mu)$ and $r\in (0, \text{diam}(\supp(\nu))$, and its ADR constant $A_0$ may be chosen depending on $C_0$.

Recall that a sequence $\mu_\ell$ converges weakly to a measure $\mu$ if
$$\int_{\R^{n+1}}\varphi(\X)d\mu_n(\X) = \int_{\R^{n+1}}\varphi(\X)d\mu(\X)\text{ for all }\varphi\in C_0(\R^{n+1}).
$$

The following simple lemma is very well-known.

\begin{lem}\label{adrwl}Suppose $\mu_j$ is a sequence of $(k+2)$-ADR measures with constant $C_0$ that converges weakly to $\mu$.  Then $\mu$ is $(k+2)$-ADR with constant $C_0$.
\end{lem}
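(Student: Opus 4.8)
The plan is the standard lower/upper semicontinuity (portmanteau) argument, so I will only indicate its structure and the one point that requires a little care. The only inputs are the two portmanteau properties of weak convergence of (locally finite Borel, hence Radon) measures on $\R^{n+1}$: for every open $U$, $\mu(U)\le\liminf_n\mu_n(U)$, and for every compact $K$, $\mu(K)\ge\limsup_n\mu_n(K)$. Both follow directly from the definition of weak convergence by sandwiching $\mathbf{1}_U$ (resp.\ $\mathbf{1}_K$) between Urysohn functions and invoking inner/outer regularity; this is legitimate since $\R^{n+1}$ with either the parabolic or the Heisenberg metric is a locally compact, $\sigma$-compact metric space whose metric induces the usual topology, so in particular every closed ball $\overline{B_r(\X)}$ is compact.

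First I would relate the supports. If $\X\in\supp(\mu)$ and $\eta>0$, then $\mu(B_\eta(\X))>0$, so by lower semicontinuity $\mu_n(B_\eta(\X))>0$ for all $n\ge N(\eta)$, and we may pick $\X_n\in\supp(\mu_n)\cap B_\eta(\X)$ for such $n$. (Running this over $\eta=1/j$ and diagonalizing produces $\X_n\to\X$, though only the fixed-$\eta$ version is used below.) The same reasoning gives $\liminf_n\diam(\supp(\mu_n))\ge\diam(\supp(\mu))$, so that for a fixed $r<\diam(\supp(\mu))$ the $(k+2)$-ADR bounds for $\mu_n$ are available at scales slightly above $r$ once $n$ is large; I will not belabor this point.

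Now fix $\X\in\supp(\mu)$, $0<r<\diam(\supp(\mu))$, and small $\eta>0$, with $\X_n\in\supp(\mu_n)$, $d(\X_n,\X)<\eta$ as above. For the upper bound, $B_r(\X)\subseteq B_{r+\eta}(\X_n)$, hence $\mu_n(B_r(\X))\le\mu_n(B_{r+\eta}(\X_n))\le C_0(r+\eta)^{k+2}$ by ADR of $\mu_n$; lower semicontinuity on the open ball then gives $\mu(B_r(\X))\le\liminf_n\mu_n(B_r(\X))\le C_0(r+\eta)^{k+2}$, and $\eta\downarrow0$ yields $\mu(B_r(\X))\le C_0 r^{k+2}$. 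For the lower bound, take $\eta<r/10$; then $B_{r-3\eta}(\X_n)\subseteq\overline{B_{r-2\eta}(\X)}\subseteq B_r(\X)$, and since $\overline{B_{r-2\eta}(\X)}$ is compact, upper semicontinuity gives
\[\mu(B_r(\X))\ \ge\ \mu\bigl(\overline{B_{r-2\eta}(\X)}\bigr)\ \ge\ \limsup_n\mu_n(B_{r-3\eta}(\X_n))\ \ge\ C_0^{-1}(r-3\eta)^{k+2},\]
and $\eta\downarrow0$ finishes the proof.

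There is no real obstacle here — the lemma is ``well known'' — but the one thing to watch is that the ADR constant must be preserved \emph{exactly} ($C_0$, not a dimensional multiple of it); this is precisely why one works with support points $\X_n$ of $\mu_n$ that converge to the given $\X\in\supp(\mu)$, rather than applying ADR at a fixed center, and inserts the $\pm\eta$ buffers before passing to the limit. The only other minor point is the comparison of diameters of supports recorded above, ensuring that ADR for $\mu_n$ genuinely applies at the slightly enlarged/reduced scales.
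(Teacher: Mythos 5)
Your argument is correct: the paper states this lemma without proof (calling it ``very well-known''), and the standard proof it implicitly invokes is exactly your portmanteau argument --- lower semicontinuity on open balls for the upper bound, upper semicontinuity on a fixed compact closed ball for the lower bound, together with approximating each $\X\in\supp(\mu)$ by points $\X_n\in\supp(\mu_n)$ and the remark on diameters so that the ADR bounds for $\mu_n$ apply at the slightly perturbed scales. The $\pm\eta$ buffers indeed preserve the constant $C_0$ exactly, which is all the paper needs.
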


\subsection{Dyadic cubes} \label{dyadsec}
Since $(\R^{n+1}, \|\,\cdot\,\|, \mu)$ is a space of homogeneous type, following \cite{Chr,DS1,DS2} we decompose $\supp(\mu)$ into ``dyadic cubes\footnote{It is perhaps a little unfortunate that a dyadic cube need not be a cube.}''.  To attempt to foreshadow, we will do most of our significant analysis on cubes or cylinders as defined above.  Dyadic cubes will be primarily used as a bookkeeping mechanism to isolate scales which are particularly essential to the contribution of the Jones square function.  As such, we preserve $Q$ to denote a cube and use a different letter to denote dyadic cubes.

Specifically, for each $j \in \mathbb Z$, we can construct a family $\mathbb D_j$ of Borel subsets of $S = \supp(\mu)$ (the 
dyadic cubes of the $j$\textsuperscript{th} generation) such that
\begin{itemize}
	\item Each $\mathbb D_j$ is a partition of $S$, i.e. $S = \cup_{E \in \mathcal D_j} E$ and $E \cap E' \neq \emptyset$ whenever $E,E' \in \mathbb D_j$ and $E \neq E'$;
	\item if $E \in \mathbb D_j$ and $E' \in \mathbb D_k$, with $k \leq j$, then either $E \subseteq E'$ or $E \cap E' \neq \emptyset$;
	\item if $E \in \mathbb D_j$, we have $2^{-j} \lesssim \diam(E) \leq 2^{-j+2}$ and $\mu(E) \approx 2^{-j(k+2)}$;
	\item if $E \in \mathbb D_j$, there is a point $\mathbf Z_E =(Z_E,\tau_E)\in E$ (called the ``center" of $E$) such that $\dist\big(\mathbf Z_E,\supp(\mu)\setminus E\big) \gtrsim 2^{j}$.
    \item cubes of $\mu$ have ``small boundaries", i.e. if $E \in \mathbb D(\mu)$ and $\tau>0$, then
    $\mu(\{x\in E: \dist(x,\supp(\mu)\setminus E) \leq \tau \ell(E)\}) \leq C\tau^\omega \mu(E)$ for some uniform $C,\omega>0$.
\end{itemize}
We denote $\mathbb D = \bigcup_{j \in \mathbb Z} \mathbb D_j$.  Given $E \in \mathbb D_j$, the unique dyadic cube $E' \in \mathbb D_{j+1}$ which contains $E$ is called the parent of $E$.  We say
that $E$ is a sibling of $E'$ if $E$ and $E'$ both belong to $\mathbb D_j$ for some $j \in \mathbb Z$.  If $E$ is from generation $j$, we write $J(E) = j$.  If $J(E)=j$, we write $\ell(E) = 2^{j}$.  

The properties of the dyadic grid above imply that,
for each $E \in \mathbb D(\mu)$, there is a constant
$c_*$ such that (notice this was false with the cylinder)
\begin{align}\label{c*}
B_{c_*\ell(E)}(\mathbf Z_E) \cap \supp(\mu)\subset E.
\end{align}
Notice also that (because $\diam E \leq \frac{1}{4}\ell(E)$)
$$E \subset C_{\ell(E)}(\mathbf Z_E) \subset Q_{\ell(E)}(\mathbf Z_E) \cap \supp(\mu).$$

We now record a standard reformulation of uniform rectifiability in terms of dyadic cubes.

\begin{lem}\label{dyadicUR} A $(k+2)$-ADR measure $\mu$ is uniformly rectifiable if and only if there are constants $A\geq 1$ and $C>0$ such that for every $E_0\in \dy(\mu)$,
$$\sum_{E\in \dy(\mu): E\subset E_0} \beta_{\mu}(\Z_E,A\ell(E))^2\mu(E)\leq C \mu(E_0).$$
\end{lem}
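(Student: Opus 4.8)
The plan is to prove the equivalence between the two forms of the Carleson condition \eqref{flatsquare} and the dyadic sum condition by a standard comparison argument, exploiting the fact that the $\beta$-numbers are almost monotone under controlled changes of ball and center, and that the dyadic cubes form a net at each scale. I would establish both directions separately.

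\emph{From \eqref{flatsquare} to the dyadic sum.} Fix $E_0 \in \dy(\mu)$ and a cube $E \subset E_0$. The key point is a \emph{stability estimate} for $\beta$-numbers: if $\X, \X' \in \supp(\mu)$ with $d(\X,\X') \leq A r$ and $r \leq r' \leq A r$, then $\mu(B(\X,r)) \beta_\mu(\X,r)^2 \lesssim_A \mu(B(\X',r'))\beta_\mu(\X',r')^2$, which follows immediately from the ADR property (to compare the measures and the $r^{-2}$ normalizations) and the fact that enlarging the ball only enlarges the domain of integration while an infimum over a fixed plane family can only decrease when we pass to the larger normalization — more carefully, one picks the optimal plane for the larger ball and restricts. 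Using \eqref{c*} and the inclusion $E \subset C_{\ell(E)}(\Z_E) \subset Q_{\ell(E)}(\Z_E)$, for each $E$ and each $\Y \in E$, $r \in [c_* \ell(E)/2, c_* \ell(E)]$ we have $B(\Y, r) \subset B(\X_E, A\ell(E))$ for a suitable $A$, so $\beta_\mu(\X_E, A\ell(E))^2 \mu(E) \lesssim \int_E \int_{c_*\ell(E)/2}^{c_*\ell(E)} \beta_\mu(\Y,r)^2 \, \frac{d\mu(\Y)\, dr}{r}$. Since the scale intervals $[c_*\ell(E)/2, c_*\ell(E)]$ have bounded overlap as $E$ ranges over subcubes of $E_0$ of a fixed generation (and distinct generations give genuinely disjoint scale ranges up to bounded multiplicity), summing over all $E \subset E_0$ and using that the $E$'s partition $E_0$ within each generation, the right-hand side telescopes into $\int_0^{C\ell(E_0)} \int_{B(\X_{E_0}, C\ell(E_0))} \beta_\mu(\Y,r)^2 \frac{d\mu(\Y)\, dr}{r}$, which is $\lesssim \ell(E_0)^{k+2} \approx \mu(E_0)$ by \eqref{flatsquare} and ADR.

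\emph{From the dyadic sum to \eqref{flatsquare}.} Conversely, fix $\X \in \supp(\mu)$ and $R > 0$. Choose a dyadic cube $E_0 \ni \X$ with $\ell(E_0) \approx R$ and also finitely many neighboring cubes of the same generation so that $B(\X,R) \cap \supp(\mu)$ is covered by $\bigcup B(\X_{E_i}, A\ell(E_i))$ with the $E_i$ drawn from a bounded-cardinality family (this uses that $\supp(\mu)$ is ADR, hence doubling, so only boundedly many generation-$J(E_0)$ cubes meet $B(\X,R)$). It then suffices to bound $\int_0^R \int_{B(\X_{E_0},R)} \beta_\mu(\Y,r)^2 \frac{d\mu\, dr}{r}$ for a single $E_0$. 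For $\Y \in \supp(\mu)$ and $r \leq R$, let $E = E(\Y,r)$ be the dyadic cube containing $\Y$ with $\ell(E) \approx r$ (say the smallest one with $\ell(E) \geq r$); then $B(\Y,r) \subset B(\X_E, A\ell(E))$, so by the stability estimate $\beta_\mu(\Y,r)^2 \lesssim \beta_\mu(\X_E, A\ell(E))^2$. Integrating $dr/r$ over the range where $E(\Y,r) = E$ contributes a bounded factor $\log 2$, and summing over $\Y$ and $E$ reorganizes the integral as $\lesssim \sum_{E \subset C E_0} \beta_\mu(\X_E, A\ell(E))^2 \mu(E) \lesssim \mu(E_0) \approx R^{k+2}$, using the dyadic hypothesis applied to the boundedly many ancestors $CE_0$ needed to contain everything.

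The main obstacle — really the only place requiring care — is the \textbf{stability/almost-monotonicity of the $\beta$-numbers} under changing the center, the radius, and (implicitly) the normalizing measure, all simultaneously, since in the parabolic and Heisenberg settings balls of comparable radius but far-apart centers need not be comparable as sets in the way Euclidean balls are. One must check that the relevant inclusions among balls, cubes and cylinders (display \eqref{origincubescyllinders} and the paragraph around \eqref{c*}) suffice, and in particular be mindful of the warning in the text that a dyadic cube need not be contained in a ball centered at its center of comparable radius in the Heisenberg case — but since here all cubes and balls we compare are of comparable \emph{generation/radius} and anchored near $\supp(\mu)$, the inclusion $E \subset Q_{\ell(E)}(\Z_E)$ together with $Q_r(\0) \subset B_{(n+4)r}(\0)$ and left-translation invariance resolves this. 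Everything else is the routine Carleson-measure-to-dyadic-sum bookkeeping, standard since \cite{DS1}.
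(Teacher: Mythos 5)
Your second direction (from the dyadic sum to \eqref{flatsquare}) is the standard argument and is essentially sound: for $\Y\in\supp(\mu)$ and $r\leq R$ you choose the cube $E\ni\Y$ with $\ell(E)\approx r$, use the containment $B(\Y,r)\subset B(\Z_E,A\ell(E))$ plus ADR to get $\beta_\mu(\Y,r)\lesssim\beta_\mu(\Z_E,A\ell(E))$, and reorganize; the paper gives no proof of this lemma (it is quoted as a standard reformulation), so the only question is whether your write-up is correct.

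In the first direction, however, there is a genuine error: you invoke the almost-monotonicity of the $\beta$-numbers backwards. The inclusion $B(\Y,r)\subset B(\X_E,A\ell(E))$ for $\Y\in E$, $r\in[c_*\ell(E)/2,c_*\ell(E)]$ yields $\beta_\mu(\Y,r)\lesssim_A\beta_\mu(\X_E,A\ell(E))$ (restrict the optimal plane of the large ball), and nothing in the reverse direction: flatness of $\mu$ in every small ball says nothing about flatness at scale $A\ell(E)$. Consequently your displayed inequality $\beta_\mu(\X_E,A\ell(E))^2\mu(E)\lesssim\int_E\int_{c_*\ell(E)/2}^{c_*\ell(E)}\beta_\mu(\Y,r)^2\,\frac{d\mu(\Y)\,dr}{r}$ is false. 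For a concrete counterexample, let $\mu$ be the sum of the natural flat $(k+2)$-ADR measures on two parallel vertical $(k+2)$-planes at spatial separation $s$ with $2c_*\ell(E)<s\leq\ell(E)$: then $\beta_\mu(\Y,r)=0$ for every $\Y\in\supp(\mu)$ and $r\leq c_*\ell(E)$, while $\beta_\mu(\X_E,A\ell(E))\gtrsim_{A}c_*$ because $B(\X_E,A\ell(E))$ charges both planes; so the left-hand side is $\gtrsim_{A,c_*}\mu(E)$ and the right-hand side vanishes. The repair is routine and is what the standard proof does: compare $\beta_\mu(\X_E,A\ell(E))$ with $\beta_\mu(\Y,r)$ for $\Y\in E$ and $r$ in a band of scales slightly \emph{above} $A\ell(E)$, say $r\in[(A+1)\ell(E),(A+2)\ell(E)]$, so that now $B(\X_E,A\ell(E))\subset B(\Y,r)$ and the monotonicity runs the right way; disjointness of same-generation cubes inside $E_0$ and the bounded overlap of these $r$-bands across generations then bound the dyadic sum by the Carleson integral in \eqref{flatsquare} at radius $C_A\ell(E_0)$, which is $\lesssim\ell(E_0)^{k+2}\approx\mu(E_0)$. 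Two smaller points: your stability estimate needs the actual containment of the balls (the hypotheses $d(\X,\X')\leq Ar$, $r\leq r'\leq Ar$ do not guarantee $B(\X,r)\subset B(\X',r')$), and in the converse direction one should say a word about why the hypothesis for the given $A$ suffices for the $A$ forced by the geometry (with the grid's property $\diam E\leq\ell(E)/4$ and $A\geq1$ this is immediate; otherwise pass to a bounded ancestor). With those adjustments the bookkeeping you describe goes through.
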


\subsection{Carleson Families} For an $(k+2)$-ADR-measure $\mu$ in $\R^{n+1}$, and $C>0$, we call a family of dyadic cubes $\mathcal{Q}\subset \dy(\mu)$ $C$-\emph{Carleson} if
$$\sum_{E\in \mathcal{Q}: E\subset E_0} \mu(E)\leq C\mu(E_0) \text{ for every }E_0\in \dy(\mu).
$$
$\mathcal{Q}$ is called Carleson if it is $C$-Carleson for some $C>0$.  The least constant $C$ such that $\mathcal{Q}$ is $C$-Carleson is called the Carleson norm.

\subsection{The Geometric Littlewood-Paley Condition} As in \cite{DS1}, we will find the condition of boundedness of a class of Littlewood-Paley operators to be more convenient than CZOs.

For a function $\varphi$, put $\varphi_\lambda = \varphi\bigl(\frac{\cdot}{\lambda}\bigl).$

Suppose $\mu$ is a measure on $\mathbb R^{n+1}$.  We say that $\mu$ is ``good for spatially antisymmetric Littlewood-Paley kernels" if, for each $\varphi \in \Lipodd(\mathbb R^{n+1})\cap C^{\infty}(\R^{n+1})$, 
we have the following square function estimate for all $R\in(0,\diam(\supp(\mu)))$ and $\mathbf X \in \supp(\mu)$
\begin{align}\label{C2}
\sum_{2^{-j} \leq R}\int_{B_R(\mathbf X)} \big(\frac{1}{2^{j(k+2)}}\varphi_{2^{-j}} \ast \mu(\mathbf Y)\big)^2 d\mu(\mathbf Y) \leq C(\varphi)R^{k+2}.
\end{align}

\begin{lem}\label{geoCZbdd} Suppose that $\mu$ is a $(k+2)$-ADR measure for which the CZO associated to every admissible $(k+2)$-dimensonal kernels $K$ is bounded in $L^2(\mu)$, then $\mu$ is good for spatially anti-symmetric Littlewood-Paley kernels.    
\end{lem}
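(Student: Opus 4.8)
The plan is to run the David--Semmes argument from \cite{DS1}: one produces the Littlewood--Paley square function in \eqref{C2} from a \emph{single} scalar Calder\'on--Zygmund operator by randomizing over the dyadic scales. Fix $\varphi\in\Lipodd(\R^{n+1})\cap C^{\infty}(\R^{n+1})$; after a harmless rescaling assume $\supp\varphi\subset B(\0,1)$ (the support radius is absorbed into the $\varphi$-dependent constants). Set $\psi_j:=2^{\,j(k+2)}\varphi_{2^{-j}}$, so that $\theta_j\mu:=\psi_j\ast\mu=T_{\psi_j}(\mathbf 1)$ is precisely the summand in \eqref{C2}. Then $\supp\psi_j\subset\overline{B(\0,2^{-j})}$, Ahlfors regularity gives $|\theta_j\mu(\Y)|\le C_0\|\varphi\|_{\infty}$ for $\Y\in\supp\mu$, and $\|\nabla_X^{\ell}\partial_t^{m}\psi_j\|_{\infty}\lesssim_{\varphi,\ell,m}2^{\,j((k+2)+\ell+2m)}$ (in the Heisenberg case, $\|\nabla_{\mathbb H}^{\ell}\psi_j\|_{\infty}\lesssim_{\varphi,\ell}2^{\,j((k+2)+\ell)}$, using that $|X|\lesssim 2^{-j}$ on $\supp\psi_j$ so the $X\partial_t$ part of $\nabla_{\mathbb H}$ also costs just $2^{j}$ per derivative).

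Next I would build the randomized kernels. For $N\in\mathbb N$ and signs $\eps=(\eps_j)_{|j|\le N}\in\{-1,1\}$ set $K_{\eps}^{N}:=\sum_{|j|\le N}\eps_j\psi_j$. Summing the geometric series $\sum_{j<\log_2(1/\|\X\|)}2^{\,j((k+2)+\ell+2m)}\lesssim\|\X\|^{-((k+2)+\ell+2m)}$ shows that $K_{\eps}^{N}$ is a smooth, compactly supported, spatially odd function which is an \emph{admissible $(k+2)$-dimensional kernel with admissibility constants bounded in terms of $\varphi$ alone} -- uniformly in $\eps$ and $N$ (and the same computation works with $\nabla_{\mathbb H}$ in the Heisenberg setting). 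Since $K_{\eps}^{N}$ is bounded, $T_{K_{\eps}^{N}}g$ is an absolutely convergent integral and $T_{K_{\eps}^{N}}g=\sum_{|j|\le N}\eps_j\,T_{\psi_j}g$.

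I would then establish the key uniform bound $C_{\varphi}:=\sup_{N,\eps}\|T_{K_{\eps}^{N}}\|_{L^2(\mu)\to L^2(\mu)}<\infty$. By hypothesis $T_K$ is bounded for \emph{every} admissible kernel $K$, so $K\mapsto T_K$ is an everywhere-defined linear map on the Fr\'echet space of admissible kernels (topologized by the admissibility seminorms); it has closed graph, since convergence of kernels forces convergence of the associated operators tested against Lipschitz functions with separated supports, and these suffice to identify the limit. The closed graph theorem makes this map continuous, hence bounded on the bounded family $\{K_{\eps}^{N}\}$. (Equivalently, one invokes the standard fact from the $T(1)$-theory on spaces of homogeneous type that, for an ADR measure, the $L^2(\mu)$ operator norm of a CZO is controlled by the admissibility constants of its kernel together with the ADR constant, once it is known to be finite.) This is the step I expect to be the main obstacle: it is soft in spirit, but making it rigorous requires the usual care in defining CZOs against an infinite-mass ADR measure -- truncations, and the ambiguity of $T\mathbf 1$ modulo bounded functions -- all of which is standard and handled as in \cite{DS1}.

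Finally I would assemble the estimate. Fix $\X_0\in\supp\mu$ and $R\in(0,\diam\supp\mu)$, and put $g:=\mathbf 1_{B(\X_0,CR)}$ with $C$ depending only on the quasi-triangle constant. If $2^{-j}\le R$ and $\Y\in B_R(\X_0)$, the support condition on $\psi_j$ forces $\theta_j\mu(\Y)=\psi_j\ast\mu(\Y)=T_{\psi_j}g(\Y)$, so, enlarging the domain of integration and the range of summation, $\sum_{2^{-j}\le R}\int_{B_R(\X_0)}\theta_j\mu(\Y)^2\,d\mu(\Y)\le\sum_{j\in\mathbb Z}\int_{\R^{n+1}}|T_{\psi_j}g|^2\,d\mu$. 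For the partial sums, averaging over the signs and using $\mathbb E[\eps_i\eps_j]=\delta_{ij}$ gives $\sum_{|j|\le N}\int|T_{\psi_j}g|^2\,d\mu=\mathbb E_{\eps}\int|T_{K_{\eps}^{N}}g|^2\,d\mu\le C_{\varphi}^{2}\,\|g\|_{L^2(\mu)}^{2}\le C_{\varphi}^{2}C_0(CR)^{k+2}$, and letting $N\to\infty$ yields $\sum_{j\in\mathbb Z}\int|T_{\psi_j}g|^2\,d\mu\lesssim_{\varphi}R^{k+2}$. Chaining the last estimates gives \eqref{C2} with $C(\varphi)\lesssim_{\varphi}1$, i.e. $\mu$ is good for spatially antisymmetric Littlewood--Paley kernels. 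The remaining ingredients -- the kernel estimates, the localization, and the randomization identity -- are all routine.
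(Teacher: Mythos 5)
Your argument is essentially the paper's own proof: the (sketched) proof of Lemma \ref{geoCZbdd} in the paper is exactly this randomization of signs over the dyadic dilates of $\varphi$, the observation that the randomized sum is an admissible kernel with constants depending only on $\varphi$, an application of the assumed $L^2(\mu)$ boundedness followed by taking expectations, and testing with the indicator of a ball. The only point where you go beyond the sketch is the uniformity of the operator norms over the family of randomized kernels, which you correctly flag and handle by a closed-graph/Baire-category argument on the space of admissible kernels; this is the standard way to make the step rigorous and is left implicit in the paper's sketch (your parenthetical appeal to the $T(1)$ theorem alone would not suffice, but your primary route does).
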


\begin{proof}[Proof Sketch] The proof of this lemma is identical to the proof in Euclidean space, so we briefly sketch the idea. Pick $\varphi\in \Lipodd(\R^{n+1})\cap C^{\infty}_0(\R^{n+1})$, and $\eps_j$ a random sequence of $\pm 1$ with mean-zero.  Then 
$$K(\X) = \sum_{j\in \mathbb{Z}}\eps_j2^{-j(k+2)}\varphi_{2^{-j}}(\X)$$ is an admissible kernel.  Using the assumption on the boundedness of CZOs and then taking the expectation of the resulting quantity yields that
$$\sum_j\int_{\R^{n+1}}\bigl(\frac{1}{2^{j(k+2)}}\varphi_{2^j} *(fd\mu)\bigl)^2d\mu\lesssim \|f\|_{L^2(\mu)}^2 \text{ for every }f\in L^2(\mu).
$$
Testing with $f=\chi_{B_R(\X)}$ yields the condition (\ref{C2}).\end{proof}

We also define, for $E \in \mathbb D(\mu)$
\begin{align}\label{thetaquant}
\Theta_{\mu,\varphi,A}(E) = \int_{B_{A\ell(E)}(\Z_E)} \big(\ell(E)^{-(k+2)}\varphi_{\ell(E)}\ast \mu(\mathbf Y)\big)^2 d\mu(\mathbf Y).
\end{align}
We refer to $\Theta_{\mu, \varphi, A}(E)$ as a square function coefficient.  Since $\mu$ is ADR, the balls $\{B(Z_E, A\ell(E)):E\in \dy(\mu)$ have bounded overlap, and we infer from (\ref{C2}) the following lemma:

\begin{lem}\label{CZcoef} Suppose that $\mu$ is good for all $(k+2)$-dimensional Littlewood-Paley kernels, then for every $\varphi\in \Lipodd(\R^{n+1})$, $A>1$ and $E_0\in \dy(\mu)$,
\begin{align}\label{GSALP}
\sum_{E\in \dy(\mu):E\subset E_0}\Theta_{\mu, \varphi, A}(E)\lesssim C(A,\varphi) \ell(E_0)^{k+2}.
\end{align}
The constants $C(A,\varphi)$ will be called the ``Littlewood-Paley" constants of $\mu$.
\end{lem}

We will use Lemma \ref{CZcoef} to find Carleson families via the following lemma.  This is the basic principle that underpins the papers \cite{JNRT, JNT}.

\begin{lem}\label{findCarleson} Suppose that $\mu$ is good for all $(k+2)$-dimensional Littlewood-Paley kernels, and that $\mathcal{Q}\subset \dy(\mu)$ is a family of dyadic cubes with the following property:  there exists $\Delta>0$, $A>0$ and a finite family $\mathcal{F}\subset \Lipodd(\R^{n+1})$ such that
$$\max_{\varphi\in \mathcal{F}}\Theta_{\mu, \varphi, A}(E)\geq \Delta \ell(E)^{k+2}\text{ for all }Q\in \mathcal{Q}. $$
Then $\mathcal{Q}$ is Carleson with norm depending on $\Delta, \,A$ and $\mathcal{F}.$
\end{lem}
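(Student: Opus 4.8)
The plan is a short pigeonholing argument that reduces everything to Lemma~\ref{CZcoef}. First I would decompose $\mathcal{Q}$ according to which kernel realizes the lower bound: for each $\varphi\in\mathcal{F}$ set
$$\mathcal{Q}_\varphi=\bigl\{E\in\mathcal{Q}\,:\,\Theta_{\mu,\varphi,A}(E)\geq\Delta\,\ell(E)^{k+2}\bigr\}.$$
By the hypothesis, every $E\in\mathcal{Q}$ lies in at least one $\mathcal{Q}_\varphi$, so $\mathcal{Q}=\bigcup_{\varphi\in\mathcal{F}}\mathcal{Q}_\varphi$. Since a finite union of Carleson families is again Carleson, with Carleson norm at most the sum of the norms, it suffices to prove that each $\mathcal{Q}_\varphi$ is Carleson with norm controlled by $\Delta$, $A$ and $\varphi$.

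Second, I would fix $\varphi\in\mathcal{F}$ and $E_0\in\dy(\mu)$. Using the ADR normalization $\mu(E)\approx\ell(E)^{k+2}$ for dyadic cubes together with the defining inequality of $\mathcal{Q}_\varphi$,
$$\sum_{E\in\mathcal{Q}_\varphi:\,E\subset E_0}\mu(E)\;\lesssim\;\sum_{E\in\mathcal{Q}_\varphi:\,E\subset E_0}\ell(E)^{k+2}\;\leq\;\frac1\Delta\sum_{E\in\mathcal{Q}_\varphi:\,E\subset E_0}\Theta_{\mu,\varphi,A}(E).$$
Enlarging the range of the last sum to all of $\{E\in\dy(\mu):E\subset E_0\}$ and applying Lemma~\ref{CZcoef} bounds it by $\lesssim_{A,\varphi}\ell(E_0)^{k+2}\approx\mu(E_0)$, so $\mathcal{Q}_\varphi$ is Carleson with norm $\lesssim_{A,\varphi}\Delta^{-1}$. (If the given $A$ is $\leq 1$, I would first replace it by some $A'>1$ before invoking Lemma~\ref{CZcoef}; since $\Theta_{\mu,\varphi,A}(E)$ is nondecreasing in $A$, the defining inequality of $\mathcal{Q}_\varphi$ is preserved.) Summing over the finitely many $\varphi\in\mathcal{F}$ then gives that $\mathcal{Q}$ is Carleson with norm depending only on $\Delta$, $A$ and $\mathcal{F}$.

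There is no genuine obstacle here; the argument is pure bookkeeping. The only points needing a moment of care are the monotonicity of $\Theta_{\mu,\varphi,A}$ in $A$ (so that Lemma~\ref{CZcoef}, phrased for $A>1$, applies for any admissible $A$), the elementary fact that a finite union of Carleson families is Carleson, and the observation that the comparability $\mu(E)\approx\ell(E)^{k+2}$ involves only the fixed ADR constant $C_0$. All the analytic content is already packaged in Lemma~\ref{CZcoef}, which itself repackages the Geometric Littlewood--Paley bound \eqref{C2}.
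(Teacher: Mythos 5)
Your proposal is correct and follows essentially the same route as the paper: the paper simply bounds $\Delta\,\mu(E)\lesssim\max_{\varphi\in\mathcal F}\Theta_{\mu,\varphi,A}(E)\leq\sum_{\varphi\in\mathcal F}\Theta_{\mu,\varphi,A}(E)$, sums over $E\subset E_0$, and invokes Lemma~\ref{CZcoef} for each of the finitely many $\varphi$, which is the same bookkeeping as your pigeonholing into the subfamilies $\mathcal Q_\varphi$. Your added remarks on monotonicity in $A$ and the ADR comparability $\mu(E)\approx\ell(E)^{k+2}$ are fine and only make explicit what the paper leaves implicit.
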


\begin{proof} The proof is very straightforward:  Appealing directly to the hypothesis and Lemma \ref{CZcoef} we see that
$$\Delta\sum_{E\in \mathcal{Q}:E\subset E_0}\mu(E)\leq\sum_{\varphi\in \mathcal{F}:E\subset E_0}\Theta_{\mu, \varphi, A}(E)\lesssim \ell(E_0)^{k+2}$$
where the implicit constant depends on $A$ and $\mathcal{F}$.
\end{proof}

We will typically verify the hypothesis of Lemma \ref{findCarleson} with the aid of compactness arguments.

\subsection{Hausdorff Measure}

We shall only have reason to use Hausdorff measure in the spatial component of $\R^{n+1}$.  The $k$-dimensional Hausdorff measure of a set $A\subset \R^n$ is defined by
$$\mathcal{H}^k(A) =\lim_{\delta\to 0^+}\mathcal{H}_{\delta}(A),$$
where
$$\mathcal H^\eta_{\delta}(A) = \inf \sum_k \diam(A_k)^\eta$$
where the infimum runs over all countable coverings of $A$, denoted $(A_k)_k$, with $\diam(A_k) \leq \delta$.

If $L$ is a $k$-dimensional plane, then, when restricted to $L$, $\mathcal{H}^k_{|L}$ is a constant multiple of the $k$-dimensional Lebesgue measure on $L$.

\subsection{Alpha Numbers}\label{generalalphanumbers}

For $m\in \mathbb{N}$, put $\wt{\mathcal M}_m$ to be the collection of $(k+2)$-ADR measures of the form $\mathcal H^m\vert_{ _L} \times \nu$ for a linear $m$-dimensional subspace $L\subset \R^n$ and a Borel measure $\nu$ on $\R$.  We note that $m \in \{k,k+1,k+2\}$ in the case that $k<n-1$, and $m \in \{n-1,n\}$ in the case $k=n$.

We now introduce all translates of these measures:
$$\M_{m}:=\bigl\{\text{ Borel measures 
}\omega(E)=\mu\big(E\cdot \mathbf X\big) \;:\;\mu
\in \wt{\mathcal M}_m, 
;\mathbf X \in \R^{n+1}\bigl\}.
$$

In the parabolic case, measures in $\mathcal{M}_m$ are still product measures, but this is no longer the case in the Heisenberg group.

Given $r>0$ and $\mathbf X \in \mathbb R^{n+1}$, define
\begin{align}\label{alphak}
\alpha_{m,\mu}(\mathbf X,r)
= \frac{1}{r^{k+3}}\inf_{\substack{\omega \in \mathcal M_m}} \dist_B(\mu,\omega),
\end{align}
where
\begin{align}\label{measdist}
\dist_B(\mu,\omega)
= \sup\bigg\{\bigg|\int f(d\mu-d\omega) \bigg|:
\supp(f) \subset B_{3r}(\X),\, \|f\|_{\Lip} \leq 1\bigg\}.
\end{align}

The following simple lemma will be useful in what follows:

\begin{lem}\label{alphaenlarge} There are constants $C, c>0$ such that the following holds:
Fix $M>0$, $m\in \mathbb{N}$, and $\mu$ a $(k+2)$-ADR measure\footnote{We are treating the ADR constant as fixed: $C$ and $c$ depend on the ADR constant $C_0$ of the measure.}.  Suppose that, for some $\lambda>0$ the collection $E\in \dy(\mu)$ such that $\alpha_{\mu,m}(\Z_E, \ell(E))<\lambda$ is Carleson.  Then the collection $E\in \dy(\mu)$ such that $\alpha_{\mu,m}(\Z_E, M\ell(E))<c\lambda$ is Carleson.

Similarly, suppose that, for some $\lambda>0$ the collection $E\in \dy(\mu)$ such that $\alpha_{\mu,m}(\Z_E, \ell(E))>\lambda$ is Carleson.  Then the collection $E\in \dy(\mu)$ such that $\alpha_{\mu,m}(\Z_E, M\ell(E))>C\lambda$ is Carleson.

\end{lem}

%\begin{proof}\ben{add proof}
%\end{proof}

The following lemma follows by definition chasing, but we record it explicitly here for future reference.%\ben{check this}

\begin{lem}\label{alphawc} Suppose that $\mu_{\ell}$ is a sequence of measures converging weakly to a measure $\mu$.  Then
$$\alpha_{\mu_{\ell}, m}(\X,r)\to \alpha_{\mu, m}(\X,r).
$$
\end{lem}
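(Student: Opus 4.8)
The plan is to prove the two one-sided estimates $\limsup_{\ell\to\infty}\alpha_{\mu_\ell,m}(B(\X,r))\le\alpha_{\mu,m}(B(\X,r))$ and $\liminf_{\ell\to\infty}\alpha_{\mu_\ell,m}(B(\X,r))\ge\alpha_{\mu,m}(B(\X,r))$ separately. Write $B=B(\X,r)$ and let $\mathcal F=\{f\in\Lip_0(B_{3r}(\X)):\|f\|_{\Lip}\le 1\}$ be the class of competitors in the definition of $\dist_B$. (If $\M_m=\emptyset$, e.g. $m=n+1$, both sides are $+\infty$ by convention and there is nothing to prove, so we assume $\M_m\ne\emptyset$.) The single technical input, used repeatedly, is the following uniformization of weak convergence: if $\sigma_\ell$ is a sequence of measures with $\sup_\ell\sigma_\ell(\overline{B_{4r}(\X)})<\infty$ converging weakly to $\sigma$, then $\sup_{f\in\mathcal F}\bigl|\int f\,d(\sigma_\ell-\sigma)\bigr|\to 0$. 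This follows from Arzel\`a--Ascoli: the family $\mathcal F$ is uniformly bounded in $C(\R^{n+1})$ (each $f\in\mathcal F$ vanishes off $B_{3r}(\X)$ and is $1$-Lipschitz, so $\|f\|_\infty$ is bounded by a multiple of $r$), equicontinuous, and supported in the fixed compact set $\overline{B_{3r}(\X)}$, hence relatively compact in the uniform norm; covering $\mathcal F$ by finitely many uniform $\delta$-balls about $f_1,\dots,f_N\in\mathcal F$ and using the uniform mass bound to estimate $\int(f-f_i)\,d\sigma_\ell$ reduces the claim to the finitely many weak convergences $\int f_i\,d\sigma_\ell\to\int f_i\,d\sigma$. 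Weak convergence $\mu_\ell\rightharpoonup\mu$ automatically supplies the required mass bound for $\sigma_\ell=\mu_\ell$ (test against a continuous compactly supported bump equal to $1$ on $\overline{B_{4r}(\X)}$). A consequence is the \emph{joint continuity of $\dist_B$}: if $\mu_\ell\rightharpoonup\mu$ and $\omega_\ell\rightharpoonup\omega$ with uniform local mass bounds, then by the triangle inequality for the supremum $|\dist_B(\mu_\ell,\omega_\ell)-\dist_B(\mu,\omega)|\le\sup_{f\in\mathcal F}|\int f\,d(\mu_\ell-\mu)|+\sup_{f\in\mathcal F}|\int f\,d(\omega_\ell-\omega)|\to 0$.

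For the $\limsup$ bound, fix $\omega\in\M_m$. Applying the above with $\omega_\ell\equiv\omega$ gives $\dist_B(\mu_\ell,\omega)\to\dist_B(\mu,\omega)$, hence $\limsup_\ell\alpha_{\mu_\ell,m}(B)\le\frac1{r^{k+3}}\dist_B(\mu,\omega)$; taking the infimum over $\omega\in\M_m$ yields $\limsup_\ell\alpha_{\mu_\ell,m}(B)\le\alpha_{\mu,m}(B)$.

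For the $\liminf$ bound we may assume $c:=\liminf_\ell\alpha_{\mu_\ell,m}(B)<\infty$ and, passing to a subsequence, that $\alpha_{\mu_\ell,m}(B)\to c$; choose near-optimal $\omega_\ell\in\M_m$ with $\frac1{r^{k+3}}\dist_B(\mu_\ell,\omega_\ell)\le\alpha_{\mu_\ell,m}(B)+2^{-\ell}$. The heart of the argument is the compactness claim that, along a further subsequence, either (a) $\omega_\ell\rightharpoonup\omega$ for some $\omega\in\M_m$, or (b) $\omega_\ell\lfloor B_{4r}(\X)\rightharpoonup\0$ (all mass escapes the ball). Every $\omega_\ell$ is $(k+2)$-ADR with the fixed constant $C_0$, so $\omega_\ell(\overline{B_{4r}(\X)})$ is uniformly bounded (if $\supp\omega_\ell$ meets $\overline{B_{4r}(\X)}$ then this set lies in a ball of radius comparable to $r$ centered on $\supp\omega_\ell$, on which the mass is $\lesssim r^{k+2}$; otherwise the mass is $0$), so a weakly convergent subsequence exists. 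If the mass does not all escape, we identify the limit using the structure of $\M_m$: writing $\omega_\ell$ in the parabolic case as a translate of $\mathcal H^m_{|L_\ell}\times\nu_\ell$, the direction of $L_\ell$ lies in a compact Grassmannian, the translation vector may be taken bounded whenever $\supp\omega_\ell$ meets $\overline{B_{4r}(\X)}$, and $\nu_\ell$ is $(k+2-m)$-ADR on $\R$ with a fixed constant, so after a further subsequence $\nu_\ell\rightharpoonup\nu$ with $\nu$ still $(k+2-m)$-ADR by Lemma~\ref{adrwl}; one checks $\mathcal H^m_{|L_\ell}\times\nu_\ell\rightharpoonup\mathcal H^m_{|L}\times\nu$ (the slice integrals $s\mapsto\int_{L_\ell}f(\cdot,s)\,d\mathcal H^m$ converge uniformly with common compact support) and that the bounded translations converge, whence $\omega\in\M_m$; the Heisenberg case is identical with left translation in place of ordinary translation. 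In case (a), joint continuity gives $c=\lim_\ell\alpha_{\mu_\ell,m}(B)\ge\frac1{r^{k+3}}\dist_B(\mu,\omega)\ge\alpha_{\mu,m}(B)$. In case (b), since a test function in $\mathcal F$ integrates $\omega_\ell$ only over $B_{3r}(\X)$, joint continuity gives $\dist_B(\mu_\ell,\omega_\ell)\to\dist_B(\mu,\0)$, and $\dist_B(\mu,\0)=\dist_B(\mu,\omega')$ for any $\omega'\in\M_m$ whose support misses $\overline{B_{3r}(\X)}$ (such $\omega'$ exist in $\M_m$), so again $c\ge\frac1{r^{k+3}}\dist_B(\mu,\0)\ge\alpha_{\mu,m}(B)$. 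Combining the two bounds gives the lemma.

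I expect the compactness claim in the $\liminf$ step to be the only real obstacle: one must correctly account for the degenerate regime in which the approximating plane (or its left-translate) and the one-dimensional factor $\nu_\ell$ drift off, and verify that limits of the (twisted) product measures remain inside $\M_m$. The Arzel\`a--Ascoli uniformization and the resulting joint continuity of $\dist_B$ are then routine.
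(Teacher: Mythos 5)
Your proposal is correct in substance, but note that the paper itself gives no argument for Lemma \ref{alphawc}: it is recorded with the remark that it ``follows by definition chasing,'' so there is no proof to compare against line by line. Your write-up actually supplies the nontrivial content that the throwaway remark hides. The $\limsup$ half really is definition chasing once you have the Arzel\`a--Ascoli uniformization (weak convergence upgraded to convergence uniform over the $1$-Lipschitz test class with fixed compact support, using the local mass bound that weak convergence provides); this part is routine and your argument is fine. The $\liminf$ half is where the lemma ceases to be purely formal, and you correctly identify the two ingredients: (i) compactness of $\M_m$ under local weak limits, which hinges on the paper's standing convention that all ADR measures (including the factors $\nu_\ell$) carry a fixed regularity constant $C_0$, so that near-optimal competitors $\omega_\ell$ have uniform local mass bounds, re-center at a support point in $\overline{B_{4r}(\X)}$, and converge (after extracting directions, bounded translations, and $\nu_\ell$) to a member of $\M_m$ via Lemma \ref{adrwl}; and (ii) the escaping-mass alternative, handled by comparing with a far-away translate so that $\dist_B(\mu,\0)$ still dominates the infimum. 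One caveat you should make explicit: the re-centering step ``translate a product measure by a point of its own support and stay inside $\wt{\M}_m$'' is clean in the parabolic case and in the Heisenberg case when the spatial plane is the one on which $\eta$ vanishes (the case $m=k$, which is the only one the Heisenberg arguments of the paper use); for $m\geq 2$ in $\mathbb H$ a left translation of $\mathcal H^m\vert_L\times\nu$ by a support point introduces a shear in $t$ that destroys the product structure, so ``identical with left translation'' is too quick there and you should either restrict the statement to the cases actually invoked or argue closure of $\M_m$ directly on the translated measures. With that caveat addressed, your proof is complete and, if anything, more honest than the paper's one-line dismissal.
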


\section{The main theorems}

We now state our main results using dyadic cube structure.

\subsection{The parabolic case} \label{parabolictheorems} Fix $k\in \{1,\dots, n-1\}$, and suppose $\mu$ is $(k+2)$-ADR.    We say that $E\in \dy(\mu)$ satisfies the $\lambda$-$\HSDC$ (High Spatial Dimension Condition) if $$\alpha_{\mu,k+1}(\Z_E,\ell(E))<\lambda \text{ or }\alpha_{\mu, k+2}(\Z_E,\ell(E))\leq \lambda.$$
When $k=n-1$ (the codimension-one case), then the condition $\alpha_{\mu, k+2}(E)\leq \lambda$ is vacuous as the set $\mathcal{M}_{n+1}$ is empty (recall that $\R^{n+1}$ is $(n+2)$-dimensional).

\begin{thm}\label{parabolicthm}
Fix $k\in \{1,\dots, n+1\}$.  Suppose that $\mu$ is $(k+2)$-ADR, and that there exists $\lambda$ such that the collection of $\lambda$-$\HSDC$ cubes is Carleson.  If $\mu$ is good for all smooth spatially antisymmetric Littlewood-Paley kernels, then $\mu$ is uniformly rectifiable.    
\end{thm}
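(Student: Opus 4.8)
The plan is to reduce uniform rectifiability, via Lemma \ref{dyadicUR}, to showing that for a fixed (large) parameter $A$, the family of ``bad'' cubes
\[
\mathcal{B} = \bigl\{E\in \dy(\mu) \,:\, \beta_\mu(\Z_E, A\ell(E))^2 \geq \delta\bigr\}
\]
is Carleson for every $\delta>0$ (the complementary cubes contribute a convergent sum automatically, and then one sums a geometric-type series as in the standard dyadic reformulation). Equivalently, by the contrapositive and the Carleson packing of the $\lambda$-$\HSDC$ cubes, it suffices to show that every cube $E\in\mathcal{B}$ which is \emph{not} a $\lambda$-$\HSDC$ cube (for a $\lambda$ to be chosen depending on $\delta$ and the structural constants) must satisfy a square-function lower bound: there exist $\Delta>0$, $A'>0$, and a \emph{finite} family $\mathcal{F}\subset\Lipodd(\R^{n+1})$, all depending only on $\delta$ and the ADR constant, such that
\[
\max_{\varphi\in\mathcal{F}}\Theta_{\mu,\varphi,A'}(E) \geq \Delta\,\ell(E)^{k+2}.
\]
Once this implication is in hand, Lemma \ref{findCarleson} (applicable because $\mu$ is good for spatially antisymmetric Littlewood-Paley kernels) gives that $\{E\in\mathcal{B} : E \text{ not } \lambda\text{-}\HSDC\}$ is Carleson, and adding the Carleson family of $\lambda$-$\HSDC$ cubes finishes the proof.

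The core of the argument is a \emph{compactness / blow-up} proof of the boxed implication, carried out in the spirit of \cite{JNT} (this is the ``cylinder blow-up'' referred to in the proof-scheme subsection, and presumably is Theorem \ref{step1thm} in the body). Suppose the implication fails: then there is $\delta_0>0$ and, for each $\ell\in\mathbb{N}$, an ADR measure $\mu_\ell$ (with the fixed constant $C_0$) and a cube $E_\ell$ with $\beta_{\mu_\ell}(\Z_{E_\ell},A\ell(E_\ell))^2\geq\delta_0$, with $E_\ell$ not $\lambda_\ell$-$\HSDC$ for $\lambda_\ell\to 0$, yet $\Theta_{\mu_\ell,\varphi,A'}(E_\ell)\to 0$ for every fixed $\varphi\in\Lipodd\cap C^\infty$ and every $A'$. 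Using translation invariance and the parabolic scaling $\|(\lambda X,\lambda^2 t)\|=|\lambda|\|(X,t)\|$, normalize so $\Z_{E_\ell}=\0$ and $\ell(E_\ell)=1$. By Lemma \ref{adrwl} and weak compactness of ADR measures, pass to a subsequence with $\mu_\ell\rightharpoonup\mu_\infty$, a $(k+2)$-ADR measure. The vanishing of all square-function coefficients in the limit forces $\varphi_\lambda * \mu_\infty \equiv 0$ on $\supp(\mu_\infty)$ for all odd smooth $\varphi$ and all scales, i.e. $\mu_\infty$ is \emph{vampiric}. The structural analysis of vampiric measures (Section \ref{vampsec}, building on \cite{Orp}) then pins down $\mu_\infty$: its support lies in a vertical plane, and moreover $\mu_\infty$ is a product $\mathcal{H}^m_{|L}\times\nu$ for some $m\in\{k,k+1,k+2\}$ and some $1$-dimensional ADR-type measure $\nu$. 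If $m=k$, the support is a genuine $(k+2)$-dimensional vertical plane (time-direction times a $k$-plane), so $\beta_{\mu_\infty}(\0,A)=0$; but $\beta$ is weakly continuous-ish / lower semicontinuous enough (via Lemma \ref{alphawc}-type reasoning and the uniform ADR lower bound) that $\beta_{\mu_\infty}(\0,A)^2 \geq \delta_0/2 > 0$, a contradiction. If instead $m\in\{k+1,k+2\}$, then $\mu_\infty\in\mathcal{M}_m$, so $\alpha_{\mu_\infty,m}(B(\0,1))=0$; by Lemma \ref{alphawc} $\alpha_{\mu_\ell,m}(B(\0,1))\to 0$, contradicting that $E_\ell$ is not $\lambda_\ell$-$\HSDC$ with $\lambda_\ell\to 0$. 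Either way we reach a contradiction, proving the boxed implication with constants depending only on $\delta_0$ and $C_0$; a standard argument then extracts a \emph{finite} testing family $\mathcal{F}$ and uniform $\Delta, A'$ (if no finite family worked, a diagonal subsequence would again produce a vampiric limit as above).

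The main obstacle is the structural classification of vampiric measures in the parabolic setting and the precise extraction of the dichotomy $m\in\{k,k+1,k+2\}$ — this is exactly where parabolic space differs from the Euclidean case of \cite{DS1}, in which every vampiric measure is flat. Controlling the time-component measure $\nu$ (showing it is ADR of the complementary dimension, with no control better than that) and verifying that the spatial part is forced to be Hausdorff measure on a linear subspace requires genuine work with the oddness condition and the heat-type scaling; this is deferred to Section \ref{vampsec}. A secondary technical point is the semicontinuity of the $\beta$-numbers under weak convergence against the uniform ADR lower bound, needed to transfer $\beta_{\mu_\ell}(\0,A)^2\geq\delta_0$ to the limit; this is routine but must be stated carefully, and is the reason one works with $\delta_0/2$ rather than $\delta_0$. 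Finally, one should check the bookkeeping that reduces Lemma \ref{dyadicUR} to the Carleson property of $\mathcal{B}$ — standard, using that $\sum_{E\subset E_0}\beta_\mu(\Z_E,A\ell(E))^2\mu(E) \leq \delta\,\mu(E_0)\cdot(\text{overlap}) + \sum_{E\in\mathcal{B}, E\subset E_0}\mu(E)$ and letting the Carleson bound on $\mathcal{B}$ absorb the second term.
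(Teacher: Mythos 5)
Your overall two-stage architecture (a square-function lower bound on non-$\HSDC$ cubes obtained by a compactness/blow-up to a vampiric limit, then Lemma \ref{findCarleson} plus the Carleson hypothesis on $\HSDC$ cubes) is in the right spirit, but the opening reduction is where the argument breaks. You reduce uniform rectifiability to showing that, for every $\delta>0$, the family $\mathcal B=\{E:\beta_\mu(\Z_E,A\ell(E))^2\ge\delta\}$ is Carleson, asserting that ``the complementary cubes contribute a convergent sum automatically.'' They do not: for the cubes with $\beta_\mu(E)^2<\delta$ one only gets $\sum_{E\subset E_0}\beta_\mu(E)^2\mu(E)\le\delta\sum_{E\subset E_0}\mu(E)$, and the latter sum diverges (each dyadic generation contributes $\approx\mu(E_0)$). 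To recover \eqref{flatsquare} from superlevel-set packing you would need the Carleson norm $C(\delta)$ of $\mathcal B$ to be integrable in $\delta$ near $0$, and your compactness argument gives no control whatsoever on how $\Delta$ (hence $C(\delta)$) depends on $\delta$. What your soft blow-up actually yields is the analogue of the weak geometric lemma --- precisely the weaker conclusion Orponen \cite{Orp} had already obtained in the Heisenberg setting, which the introduction stresses is considerably weaker than uniform rectifiability. The paper avoids this by proving the much stronger Proposition \ref{finalprop}: for cubes $E\in\dyup(\mu)$ (the upward-domination device of Lemma \ref{dyuplem}) with small $\alpha_{k,\mu}$ at an enlarged scale, one has $\max_{\varphi\in\mathcal F}\Theta_{\mu,\varphi,2\En}(E)\ge\Delta\,\beta_\mu(E)^2\ell(E)^{k+2}$, a lower bound \emph{proportional to} $\beta_\mu(E)^2$ rather than a fixed constant on a superlevel set; this proportionality is exactly what lets one sum $\beta_\mu(E)^2\mu(E)$ over all cubes. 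Its proof is not the soft vampiric-limit contradiction you describe: it requires rescaling by the optimal plane's flatness $\beta_\ell$ via the stretch map $\mathcal S_{\beta}$, showing the stretched limit is supported on an affine graph, and deducing $\beta_{\mu_\ell}(\0,1)=o(\beta_{\mu_\ell}(\0,\En))$, which contradicts the up-domination inequality \eqref{enbeta}. Nothing of this sort can be extracted from a limit that is merely vampiric.

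A secondary inaccuracy: your classification of the vampiric limit as a global product $\mathcal H^m|_L\times\nu$ with $m\in\{k,k+1,k+2\}$ is false in the parabolic setting --- example (B) of Section \ref{examples} (a lattice of parallel $k$-planes times a time measure) is vampiric and is not of this form, and at unit scale it may have $\alpha_{\mu,k+1}$ and $\alpha_{\mu,k+2}$ both bounded below. The correct statement is the dichotomy of Proposition \ref{vampprop}: if $\alpha_{\mu,k}(B(\0,r))>0$ then $\alpha_{\mu,k+1}$ or $\alpha_{\mu,k+2}$ is small at the \emph{enlarged} scales $M_1r$ or $M_2r$. Because of this, the contradiction with ``$E_\ell$ is not $\lambda_\ell$-$\HSDC$'' (a condition at scale $\ell(E)$) does not come for free; one must formulate the bad condition at the enlarged scales, as in \eqref{badcond} of Proposition \ref{lipcoefnonflat}, and use Lemma \ref{alphaenlarge}/Lemma \ref{hdsc1} to know that the enlarged-scale $\HSDC$-type family is still Carleson. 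This second point is repairable with the paper's machinery, but the first gap --- deducing \eqref{flatsquare} from $\delta$-dependent superlevel-set packing --- is fatal to the proposal as written.
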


Theorem \ref{parabolicthm} recovers Theorem \ref{parathm} in the introduction due to Lemma \ref{dyadicUR}.

There are straightforward counterexamples to this theorem without the additional condition of the $\HSDC$, as we shall see in Section \ref{examples}.

\subsection{The Heisenberg Case} The issue of $\HSDC$  families does not arise in the Heisenberg case, and we have the following result:

\begin{thm}\label{heisenbergthm} Suppose that $\mu$ is $3$-ADR in $\mathbb{H}$.  If $\mu$ is good for all smooth spatially antisymmetric Littlewood-Paley kernels, then $\mu$ is uniformly rectifiable.    
\end{thm}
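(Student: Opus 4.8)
The plan is to deduce Theorem~\ref{heisenbergthm} from Theorem~\ref{parabolicthm}. In $\mathbb H$ we have $n=2$, hence $k=1=n-1$: this is the codimension-one situation, in which $\mathcal M_3$ is empty (there is no $3$-dimensional subspace of $\R^2$), so $\alpha_{\mu,3}\equiv+\infty$ and a dyadic cube $E$ satisfies the $\lambda$-$\HSDC$ precisely when $\alpha_{\mu,2}(B(\Z_E,\ell(E)))<\lambda$. Thus, given that $\mu$ is good for spatially antisymmetric Littlewood--Paley kernels, it suffices to produce some $\lambda>0$ for which
$$\mathcal Q_\lambda:=\bigl\{E\in\dy(\mu):\ \alpha_{\mu,2}(B(\Z_E,\ell(E)))<\lambda\bigr\}$$
is a Carleson family; then Theorem~\ref{parabolicthm} yields uniform rectifiability.

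To see $\mathcal Q_\lambda$ is Carleson I will apply Lemma~\ref{findCarleson} with $A=2$. Fix $\varphi\in\Lipodd(B(\0,1))\cap C^\infty(\R^3)$ and $E\in\mathcal Q_\lambda$; rescaling by $\ell(E)$ and left-translating so that $\Z_E=\0$, $\ell(E)=1$, the function $\mathbf Z\mapsto\varphi(\mathbf Y^{-1}\cdot\mathbf Z)$ is Lipschitz and supported in $B_3(\0)$ for every $\mathbf Y\in B_2(\0)$, so the definition of $\alpha_{\mu,2}(B(\0,1))<\lambda$ directly gives $|\varphi\ast\mu(\mathbf Y)-\varphi\ast\omega(\mathbf Y)|\lesssim_\varphi\lambda$, and hence $|\Theta_{\mu,\varphi,2}(E)-\int_{B_2(\0)}(\varphi\ast\omega)^2\,d\omega|\lesssim_\varphi\lambda$ (with a routine mollification of the cutoff to handle $\partial B_2(\0)$), where $\omega\in\mathcal M_2$ is $\lambda$-close to $\mu$ near $\0$ — translating $\omega$ slightly we may take $\0\in\supp\omega$. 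So the whole matter reduces to the following \emph{rigidity} assertion: \emph{there exist $\delta>0$ and a finite family $\mathcal F\subset\Lipodd(B(\0,1))\cap C^\infty(\R^3)$ such that $\max_{\varphi\in\mathcal F}\int_{B_2(\0)}(\varphi\ast\omega)^2\,d\omega\ge\delta$ for every $\omega\in\mathcal M_2$ with $\0\in\supp\omega$.} Indeed, then $\max_{\varphi\in\mathcal F}\Theta_{\mu,\varphi,2}(E)\ge\tfrac{\delta}{2}\ell(E)^3$ for every $E\in\mathcal Q_\lambda$ once $\lambda$ is small, and Lemma~\ref{findCarleson} applies.

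The rigidity statement I will establish by contradiction via the compactness scheme indicated after Lemma~\ref{findCarleson}. If it fails, then letting $\mathcal F$ run through an increasing exhaustion of a countable dense subset of $\Lipodd\cap C^\infty$ and $A$ run through $B_N(\0)$, we obtain $\omega_N\in\mathcal M_2$ with $\0\in\supp\omega_N$ and $\sup_{\varphi\in\mathcal F_N}\int_{B_N(\0)}(\varphi\ast\omega_N)^2\,d\omega_N\to0$. Passing to a weak limit $\omega_N\rightharpoonup\omega_\infty$: by Lemma~\ref{adrwl}, $\omega_\infty$ is $3$-ADR with the same constant and $\0\in\supp\omega_\infty$; weak continuity of $\varphi\ast(\cdot)$ and of integration over balls of $\omega_\infty$-null boundary force $\varphi\ast\omega_\infty\equiv0$ on $\supp\omega_\infty$ for every $\varphi$ in the dense set, hence, by continuity of convolution, $\omega_\infty$ is a vampiric $3$-ADR measure. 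Simultaneously, using compactness of the Grassmannian of lines in $\R^2$, weak precompactness of the time-measures $\nu$, the translation parameters, and closedness of $\mathcal M_2$ under these limits (together with Lemma~\ref{alphawc}), along a further subsequence $\omega_\infty$ agrees on a neighbourhood of $\0$ with some $\omega\in\mathcal M_2$. But any $\omega\in\mathcal M_2$ with $\0\in\supp\omega$ has, near $\0$, spatial projection equal to a full $2$-dimensional ball, whereas the structure theory of vampiric $3$-ADR measures on $\mathbb H$ carried out in Section~\ref{vampsec} shows that $\omega_\infty$ must be supported in a vertical hyperplane, whose spatial projection is at most $1$-dimensional. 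These two facts are incompatible, which is the desired contradiction.

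I expect the structure theorem for vampiric $3$-ADR measures on $\mathbb H$ invoked in the last step to be the main obstacle, and it is exactly the feature that separates $\mathbb H$ from parabolic space, where the measures in $\mathcal M_m$ \emph{are} vampiric, so no such rigidity holds and the $\HSDC$ must be assumed. The mechanism behind it is the noncommutativity of the group law: a left translation acts on spatially odd convolution kernels through a shear of the form $(A,b)\mapsto(A,b+P_1A_2-P_2A_1)$, which cannot be reconciled with a measure whose spatial marginal is all of $\R^2$ unless its time-marginal $\nu$ is locally a multiple of one-dimensional Lebesgue measure, and this is impossible because $\nu$ is genuinely $1$-ADR for the metric $d(a,b)=\sqrt{|a-b|}$. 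This analysis builds on Orponen's study of vampiric measures in \cite{Orp}; the remaining ingredients above — the reduction through Theorem~\ref{parabolicthm}, the application of Lemma~\ref{findCarleson}, and the weak-compactness limiting argument — are routine once it is in place.
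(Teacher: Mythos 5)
Your plan hinges on applying Theorem \ref{parabolicthm} to a measure on $\mathbb H$, but that theorem is only proved for the parabolic group law: its proof goes through Proposition \ref{lipcoefnonflat} and Proposition \ref{vampprop}, and those rest on the reflection/lattice lemmas of Section \ref{vampsec}, all carried out with the abelian operation $(X,t)\cdot(Y,s)=(X+Y,t+s)$. It is not available in the Heisenberg setting, and making it available is exactly as hard as the step you defer: one must show that a $3$-ADR vampiric measure on $\mathbb H$ through the origin has vanishing $\alpha_{\mu,1}$, i.e.\ the structure theorem $\mu=\mathcal H^1\vert_L\times\nu$ (Theorem \ref{Hvampchar}, obtained in the paper from Orponen's symmetric-measure theorem together with Lemma \ref{planestructure} and Lemma \ref{planeallslices} applied inside the vertical plane, where the group law becomes parabolic). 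You explicitly leave this as ``the main obstacle'' and offer only a heuristic, and the heuristic is aimed at the wrong statement: what is needed is the characterization of \emph{all} vampiric $3$-ADR measures as $\mathcal H^1\vert_L\times\nu$, not merely that measures with full $2$-dimensional spatial marginal fail to be vampiric. Once Theorem \ref{Hvampchar} is in hand, the paper's own route is shorter and bypasses the $\HSDC$ entirely: a compactness argument (Proposition \ref{lipcoefnonflatheis}) shows that cubes with $\alpha_{\mu,1}(B(\Z_E,\ell(E)))>\lambda$ carry a noticeable Littlewood--Paley coefficient, Lemma \ref{findCarleson} makes them Carleson for every $\lambda>0$, and Theorem \ref{step1thm} (which, unlike Theorem \ref{parabolicthm}, is established for both group laws) concludes.

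The auxiliary ``rigidity'' step is also off target. In the Heisenberg metric the class $\mathcal M_2$ is empty: for $\mathcal H^2\vert_{\R^2}\times\nu$, a ball $B((X_0,t_0),r)$ meets the fibre over a spatial point $Z$ in a time interval of length about $r^2$ whose center is sheared linearly in $Z$ with slope comparable to $|X_0|$; $3$-regularity at support points with $|X_0|\lesssim r$ forces $\nu$ of an interval of length $h$ to be comparable to $\sqrt h$, while $3$-regularity at support points with $|X_0|\gg r$ forces it to be comparable to $h$ at the same scales, and no $\nu$ does both. Since left translations are isometries but the ADR requirement is part of the definition of $\wt{\mathcal M}_2$, no element of $\mathcal M_2$ exists, so $\alpha_{\mu,2}\equiv+\infty$, your family $\mathcal Q_\lambda$ is empty, and the rigidity assertion is vacuous rather than the engine of a proof. (Even if $\mathcal M_2$ were nonempty, your compactness argument ignores the unbounded shear coming from the translation parameter in the Heisenberg group law, which destroys both the closedness of $\mathcal M_2$ under weak limits and the ``full $2$-dimensional spatial projection near $\0$'' picture you invoke.) In short, the genuinely Heisenberg-specific content --- Theorem \ref{Hvampchar} and its use in packing the cubes with large $\alpha_{\mu,1}$ --- is missing, and the detour through the $\HSDC$ and Theorem \ref{parabolicthm} does not substitute for it.
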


Again, this theorem recovers Theorem \ref{hthm} of the introduction because of Lemma \ref{dyadicUR}

\section{The cylinder blow-up:  from qualitative control to quantitative control} \label{cylsection}

The main result of this section is that, if a measure is good for $k$-dimensional Littlewood-Paley kernels, then a Carleson measure condition for dyadic cubes with noticeable $k$-dimensional $\alpha$-numbers implies uniform rectifiability.  This is a modification of an analogous argument carried out in the Euclidean setting in \cite{JNT}.

\subsection{The set-up}

In order to simultaneously handle the parabolic and Heisenberg settings, it will be convenient to represent the group operation, for $\mathbf X = (X,t) \in \mathbb R^{n} \times \mathbb R$, $\mathbf Y = (Y,s) \in \mathbb R^n \times \mathbb R$, by
$$\mathbf X \cdot \mathbf Y = (X,t) \cdot (Y,s) = (X+Y,t+s +\eta(X,Y)),$$
where $\eta:\mathbb R^n \times \mathbb R^n \rightarrow \mathbb R$ satisfies the following  conditions for some $k \in \{1,...,n-1\}$
\begin{enumerate}
    \item $\eta(\lambda_1 X,\lambda_2 Y) = \lambda_1\lambda_2\eta(X,Y)$, and $|\eta(X,Y)|\leq |X||Y|$,
    \item $\eta(X,Y) = 0$ when $X$ and $Y$ are in the same $k$-dimensional subspace of $\mathbb R^n$.
    \item if $A\in O(n)$, then
    $$\eta(O(X),O(Y)) = \eta(X,Y) \text{ for all }X,Y\in \R^n.
    $$
\end{enumerate}
These conditions are very restrictive, but we only have two cases in mind.  In the case of parabolic space, $k \in \{1,...,n-1\}$, and also in this case we will have $\eta \equiv 0$.  In the case of the Heisenberg group, $n=2$, $k=1$, and
$$\eta((x_1,x_2),(y_1,y_2)) = \frac{1}{2}(x_1y_2-x_2y_1).$$  
To verify property (3) in this Heisenberg group case, note that
$$\eta(A(X),A(Y)) = \det(A)\eta(X,Y)$$
for any $X,Y\in \R^2$ and $A$ is a $2\times 2$ matrix.

Notice that $\eta(X,-X)=0$ for all $X \in \mathbb R^n.$  Thus, the inverse of $(X,t)=\mathbf X \neq (0,0)$ with respect to $\cdot$, denoted $\mathbf X^{-1}$, is 
$$\mathbf X^{-1} = (-X,-t).$$
We can then define a shift invariant metric by
$$\|\X\cdot \Y^{-1}\| = \sqrt{|X-Y|^2+|t-s+\eta(X, -Y)|}$$

Observe that
\begin{equation}\label{cubecontain}[-a,a]^n\times [-b,b]\subset B(\0, \sqrt{2}\sqrt{na^2+b}).\end{equation}

We call $\mathcal{O}:\R^{n+1}\to \R^{n+1}$ a horizontal rotation if it is of the form
$$\mathcal{O}(\X) = (O(X),t)$$
where $O\in O(n)$.  Property (4) of $\eta$ ensures that horizontal rotations are isometries, a fact which we record as a lemma: 

\begin{lem}\label{horizontalrotation} If $\mathcal{O}$ is a horizontal rotation, then
$$\|\mathcal{O}(\X)\cdot[\mathcal{O}(\Y)]^{-1}\|=\|\X\cdot\Y^{-1}\| \text{ for all }\X,\Y\in \R^{n+1}$$
\end{lem}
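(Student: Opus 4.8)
The plan is to unwind both sides of the claimed identity directly from the formula for the metric and the definition of a horizontal rotation, reducing everything to two elementary facts about $O \in O(n)$: that it preserves Euclidean norms, and that it leaves $\eta$ invariant by property (3) of $\eta$.

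First I would write $\X = (X,t)$ and $\Y = (Y,s)$, so that $\mathcal{O}(\X) = (O(X),t)$ and $\mathcal{O}(\Y) = (O(Y),s)$, and hence $[\mathcal{O}(\Y)]^{-1} = (-O(Y),-s)$. Applying the formula for the group law and the definition
$$\|\mathbf{A}\cdot\mathbf{B}^{-1}\| = \sqrt{|A-B|^2 + |a - b + \eta(A,-B)|}$$
gives
$$\|\mathcal{O}(\X)\cdot[\mathcal{O}(\Y)]^{-1}\|^2 = |O(X)-O(Y)|^2 + \bigl|t - s + \eta\bigl(O(X),-O(Y)\bigr)\bigr|.$$

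Next I would simplify the two terms. For the spatial term, since $O$ is linear and orthogonal, $|O(X)-O(Y)| = |O(X-Y)| = |X-Y|$. For the term inside the absolute value, linearity of $O$ gives $-O(Y) = O(-Y)$, and then property (3) of $\eta$ (applied with the orthogonal matrix $A = O$) yields $\eta(O(X),O(-Y)) = \eta(X,-Y)$. Substituting both simplifications produces
$$\|\mathcal{O}(\X)\cdot[\mathcal{O}(\Y)]^{-1}\|^2 = |X-Y|^2 + |t-s+\eta(X,-Y)| = \|\X\cdot\Y^{-1}\|^2,$$
and taking square roots finishes the proof.

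There is no real obstacle here: the only non-formal input is the rotation invariance of $\eta$, which is exactly hypothesis (3) imposed on $\eta$ in the set-up, and in the two concrete cases of interest this is immediate ($\eta \equiv 0$ in parabolic space, and $\eta(A(X),A(Y)) = \det(A)\,\eta(X,Y) = \eta(X,Y)$ for $A \in SO(2)$ in the Heisenberg group, with the orthogonal-but-orientation-reversing case handled by property (3) as stated). The one minor point worth stating explicitly is the use of linearity of $O$ to move the sign through, i.e. $-O(Y) = O(-Y)$, so that property (3) applies verbatim.
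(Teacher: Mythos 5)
Your proof is correct and is exactly the argument the paper intends: the lemma is recorded as an immediate consequence of the rotation-invariance hypothesis on $\eta$ (stated as property (3) in the set-up), and your computation simply spells out that orthogonality preserves $|X-Y|$ while linearity plus that hypothesis gives $\eta(O(X),-O(Y))=\eta(X,-Y)$. Nothing further is needed.
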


\begin{lem}\label{closestpoint}  Suppose that $\X=(X,t)\in \R^{n+1}$ and $L$ is a $(k+2)$-dimensional vertical plane. Put $X^L = X-X_L$, where $X_L$ is be the closest point on $\wt{L} = \operatorname{Proj}_{\R^n}(L)$ to $X\in \R^n$ (in the Euclidean metric on $\R^n$).  Then
$$\dist(\X, L) = |X^L|.
$$
\end{lem}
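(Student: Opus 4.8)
The plan is to unwind the definitions and reduce the computation of $\dist(\X,L)$ to a splitting of the ambient metric into a spatial part and a vertical (time) part. Recall that the metric is $\|\X\cdot\Y^{-1}\| = \sqrt{|X-Y|^2 + |t-s+\eta(X,-Y)|}$. Since $L$ is a $(k+2)$-dimensional vertical plane, its spatial projection $\wt L = \operatorname{Proj}_{\R^n}(L)$ is a $k$-dimensional affine plane in $\R^n$, and $L$ contains a full line parallel to the $t$-axis; concretely $L = \{(Y,s)\,:\, Y\in \wt L,\ s\in\R\}$ (the time coordinate is unconstrained). So $\dist(\X,L) = \inf_{Y\in\wt L,\, s\in\R}\sqrt{|X-Y|^2 + |t-s+\eta(X,-Y)|}$.

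First I would observe that, for any fixed $Y$, the time coordinate $s$ ranges over all of $\R$, so $\inf_{s\in\R}|t-s+\eta(X,-Y)| = 0$, attained at $s = t+\eta(X,-Y)$. Hence the inner infimum over $s$ kills the vertical term entirely, leaving $\dist(\X,L) = \inf_{Y\in\wt L}|X-Y| = \dist_{\R^n}(X,\wt L)$, the Euclidean distance from $X$ to $\wt L$. Then by definition $X_L$ is the (unique) closest point of $\wt L$ to $X$ in the Euclidean metric, so $\dist_{\R^n}(X,\wt L) = |X - X_L| = |X^L|$, which is exactly the claimed identity.

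A couple of small points to be careful about: one should note that the use of property (2) of $\eta$ — that $\eta(X,Y)=0$ when $X,Y$ lie in a common $k$-dimensional subspace — is not actually needed for this lemma, since the $s$-minimization eliminates the $\eta$ term regardless; the only structural fact used is that $L$ is genuinely vertical (invariant under translation in $t$). One should also confirm that the infimum defining $\dist(\X,L)$ is attained, so that calling $X_L$ ``the closest point'' is legitimate — this is immediate since $\wt L$ is a nonempty closed affine subspace of $\R^n$ and Euclidean projection onto it is well-defined and unique.

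There is essentially no obstacle here: the statement is a direct consequence of the product-like structure of the metric (vertical translations are free, so they contribute nothing to the distance to a vertical plane). The ``hardest'' part is merely being precise that a vertical plane, by the definition in Section~\ref{URdefn}, contains an entire line in the $t$-direction and hence, being affine, contains a translate of the $t$-axis through each of its points, so its time coordinate is genuinely unconstrained over $\wt L$; once that is spelled out the result follows in two lines.
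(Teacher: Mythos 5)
Your proof is correct and follows essentially the same route as the paper's: both arguments observe that for a point $(Y,s)\in L$ the distance to $\X$ is at least the spatial distance $|X-Y|\geq |X^L|$, and that equality is attained by taking $Y=X_L$ and choosing $s=t+\eta(X,-Y)$ to annihilate the time term. Your two-step phrasing (infimum over $s$ first, then over $Y\in\wt L$) and your remark that property (2) of $\eta$ is not needed are harmless refinements of the same argument.
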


\begin{proof}
Without loss of generality, we may assume that $0\in \wt L$.  Observe that, for any $\Z=(Z,s)\in L$
$$\dist(\X,\Z) = |X-Z|+|t-s-\eta(X, -Z)|.
$$
This quantity is clearly $\geq |X^L|$ for any $\Z\in L$ and equals $|X^L|$ if $Z=X_L$ and $s = t+\eta(X,-Z)$.
\end{proof}

\subsection{Main Theorem}

For a measure $\mu$ on $\mathbb R^{n+1}$ with dyadic grid $\mathbb D(\mu)$, define
$$\mathbb B_{\lambda,k}(\mu) := \{E \in \mathbb D(\mu): \alpha_{k,\mu}(\Z_E, \ell(E)) > \lambda \}$$
and define $\mathbb G_{\lambda,k}(\mu) := \mathbb D(\mu) \setminus \mathbb B_{\lambda,k}(\mu)$.  We aim to prove the following theorem.
\begin{thm}\label{step1thm}
    Suppose $\mu$ is a $(k+2)$-regular measure on $\mathbb R^{n+1}$.  If $\mu$ is good for spatially anti-symmetric Littlewood-Paley kernels, and if, for every $\lambda>0$, the collection $\{E\in \dy(\mu): \alpha_{k, \mu}(E)>\lambda\}$ is Carleson,
    then $\mu$ is uniformly rectifiable, i.e. it satisfies \ref{flatsquare}.  
\end{thm}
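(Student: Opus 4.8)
The plan is to verify the dyadic square-function form of uniform rectifiability from Lemma~\ref{dyadicUR}: one must produce $A\geq 1$ and $C>0$ with $\sum_{E\in\dy(\mu),\,E\subset E_0}\beta_\mu(\Z_E,A\ell(E))^2\mu(E)\leq C\mu(E_0)$ for every $E_0\in\dy(\mu)$. Fix a small parameter $\lambda>0$ and split the sum along $\dy(\mu)=\mathbb B_{\lambda,k}(\mu)\cup\mathbb G_{\lambda,k}(\mu)$. Since $\beta_\mu\lesssim 1$ by Ahlfors regularity and $\mathbb B_{\lambda,k}(\mu)$ is Carleson by hypothesis, the cubes in $\mathbb B_{\lambda,k}(\mu)$ contribute at most $C(\lambda)\mu(E_0)$, and by Lemma~\ref{alphaenlarge} the same is true after replacing $\ell(E)$ by any fixed multiple of it. So everything reduces to the good cubes: those $E$ for which, at scale $\approx\ell(E)$ near $\Z_E$, $\mu$ lies within transportation distance $\lambda$ of some $\omega\in\mathcal M_k$. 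Since a measure in $\mathcal M_k$ is supported on a $(k+2)$-dimensional vertical plane (its spatial part is a translate of a $k$-plane $L$, and, by the computation behind Lemma~\ref{closestpoint}, a right-translate of $L\times\R$ is again a vertical plane), every good cube is $\eta(\lambda)$-flat with $\eta(\lambda)\to 0$; crucially, $\alpha_k$-smallness gives more --- it forces $\mu$ on $B(\Z_E,A\ell(E))$ to be close to a genuine \emph{product} $\mathcal H^k_{|L}\times\nu$ with $\nu$ comparable to Lebesgue, rather than to a thin subset of a plane.

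Flatness alone does not give uniform rectifiability: smallness of $\beta_\mu$ on the good cubes is not the same as square-summability of $\beta_\mu^2$ over all scales. The role of the cylinder blow-up of \cite{JNT} is precisely to convert the \emph{qualitative} hypotheses --- good for Littlewood--Paley kernels with \emph{some} finite constant, and $\alpha_k$-Carleson with \emph{some} finite constant for each $\lambda$ --- into this \emph{quantitative} estimate. I would argue by contradiction: if the good-cube sum were unbounded, rescale $\mu$ by the anisotropic dilations $(X,t)\mapsto(\rho X,\rho^2 t)$ and by translations to obtain measures $\mu_j$, all sharing the same Ahlfors regularity constant, the same Littlewood--Paley constants $C_\mu(\varphi)$ (the convolution structure is dilation invariant), and the same $\alpha_k$-Carleson constants, for which the dyadic square sum of $\beta$ over a unit cube tends to infinity; a Chebyshev/pigeonhole selection over generations (discarding scales of negligible contribution and diagonalizing) arranges that the blow-up sees non-flatness of a definite size at a positive density of scales, so that the failure of \eqref{flatsquare} will survive passage to a weak limit.

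Pass to a weakly convergent subsequence $\mu_j\rightharpoonup\mu_\infty$. By Lemma~\ref{adrwl} the limit is $(k+2)$-ADR; it remains good for spatially antisymmetric Littlewood--Paley kernels (with constant $C_\mu(\varphi)$, since $\varphi_\rho\ast\mu_j\to\varphi_\rho\ast\mu_\infty$ and \eqref{C2} is stable under weak limits); and by Lemma~\ref{alphawc} it inherits the $\alpha_k$-Carleson property. Because the $\mathbb B_{\lambda,k}$-part of the failure is bounded by $C(\lambda)$, the surviving failure is carried by good cubes, so in fact $\alpha_{k,\mu_\infty}(B(\Z_E,\ell(E)))\leq\lambda$ for all $E$ in the relevant range --- while $\mu_\infty$ still fails \eqref{flatsquare}. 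For $\lambda$ small, this $\alpha_k$-control at every scale, together with Ahlfors regularity, forces (via Reifenberg-flatness/chord-arc theory) that $\supp\mu_\infty$ is locally a small-constant Lipschitz graph over a vertical plane --- an intrinsic Lipschitz graph in the Heisenberg case --- and, crucially, one whose local model is the full product $\mathcal H^k_{|L}\times\nu$, \emph{not} a vampiric subset of a plane on which the odd kernels vanish identically. On such a graph, being good for the odd Littlewood--Paley kernels is exactly the extra regularity that makes it uniformly rectifiable: this is the classical Calder\'on, Coifman--McIntosh--Meyer theory in the Euclidean model, the Hofmann-Lewis-Nystr\"om theory (\cite{HLN}) in parabolic space, and the corresponding one-Heisenberg-group statement built on \cite{NY1,CLY1,CLY3}. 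Hence $\mu_\infty$ satisfies \eqref{flatsquare} after all, with a constant depending only on the a priori data --- contradicting the construction.

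The main obstacle is the compactness step just sketched, and within it two points. First, the bookkeeping that makes a \emph{spread-out} failure of \eqref{flatsquare} survive the weak limit: the scale selection and diagonalization over generations must be arranged so that non-flatness of a fixed size appears at each of infinitely many bounded scales of $\mu_\infty$. Second, and more essentially, the vampiric-measure obstruction: one must extract from the hypotheses more than flatness, namely the product structure that the $\alpha_k$-numbers provide, because a blow-up limit that were merely a proper subset of a vertical plane would carry no Littlewood--Paley signal and the argument would collapse --- this is the precise reason the $\alpha_k$-Carleson hypothesis, rather than only a weak geometric lemma, appears in Theorem~\ref{step1thm}. The remaining ingredients --- the reductions via Lemmas~\ref{dyadicUR},~\ref{CZcoef},~\ref{alphaenlarge}, the weak-limit continuity of Lemmas~\ref{adrwl},~\ref{alphawc}, the flat-plane geometry of Lemma~\ref{closestpoint}, and the import from the Lipschitz-graph theory --- are, in each of the three geometries, either routine or available in the literature.
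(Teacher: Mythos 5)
There is a genuine gap, and it sits precisely where you defer to ``the main obstacle.'' The paper does not try to make a failure of \eqref{flatsquare} survive a weak limit; instead it builds a per-cube quantitative inequality. It introduces the stopping family $\dyup(\mu)$ of cubes not dominated from above (so that Lemma \ref{dyuplem} reduces the full $\beta^2$-packing to a packing over $\dyup(\mu)$), and then proves Proposition \ref{finalprop}: there are $\Lambda,\Delta,\rho$ and a finite family $\mathcal F\subset\Lipodd$ such that every $E\in\dyup(\mu)$ with $\alpha_{k,\mu}(\Z_E,\Lambda\ell(E))\le\rho$ satisfies $\max_{\varphi\in\mathcal F}\Theta_{\mu,\varphi,2\Lambda}(E)\ge\Delta\,\beta_\mu(E)^2\ell(E)^{k+2}$. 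Combined with Lemma \ref{CZcoef} and the Carleson hypothesis on large-$\alpha$ cubes, this converts the Littlewood--Paley hypothesis into square summability of $\beta^2$. The compactness argument is run cube-by-cube in the proof of Proposition \ref{finalprop}, where the quantities passed to the limit (finitely many coefficients $\Theta$, one $\alpha$-number) are weakly continuous, and the contradiction is obtained against the upward-domination inequality \eqref{enbeta} after the stretch-map analysis of the limit. Your scheme has no analogue of this inequality: you never produce any bound of the Littlewood--Paley coefficients from below by $\beta^2$, and ``failure of \eqref{flatsquare}'' is not a property that is weakly closed or that a single normalized blow-up can be made to retain by a Chebyshev/pigeonhole selection --- the $\beta$-sum is quadratic, spread over all scales, and not weakly continuous, so the contradiction you aim for is never actually set up.

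The concluding step is also not available. First, the claim that in the limit $\alpha_{k,\mu_\infty}(B(\Z_E,\ell(E)))\le\lambda$ ``for all $E$ in the relevant range'' does not follow from the hypothesis: Carleson families are sparse, not empty, and sparseness of bad cubes for $\mu_j$ does not upgrade to absence of bad cubes for $\mu_\infty$. Second, even granting a small-constant graph structure for $\supp\mu_\infty$, the literature you invoke goes the wrong way or is false at the needed strength: Calder\'on and Coifman--McIntosh--Meyer (and \cite{HLN}, \cite{BHHLN2,BHHLN3}) prove boundedness of singular integrals \emph{from} (uniform) rectifiability, not \eqref{flatsquare} from goodness for odd kernels; the paper itself notes that parabolic Lipschitz graphs need not be parabolically uniformly rectifiable (\cite{BHHLN1}); and in $\mathbb H$ intrinsic Lipschitz graphs satisfy the vertical $\beta$-Carleson estimate only with exponent $4$ (\cite{CLY3}), the exponent-$2$ constant being uncontrollable by the Lipschitz constant (\cite{CLY1}). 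So the final appeal is essentially circular in the Euclidean model and unavailable in the parabolic and Heisenberg settings --- the entire difficulty of the theorem is delegated to results that do not exist. Your correct instincts (split along $\mathbb B_{\lambda,k}$ versus $\mathbb G_{\lambda,k}$, use weak continuity of $\alpha$-numbers, recognize that the $\alpha_k$-hypothesis is what rules out vampiric degenerations) are all present in the paper, but the quantitative engine --- $\dyup(\mu)$, Proposition \ref{finalprop}, and the stretched-measure blow-up proving it --- is missing from your argument and cannot be replaced by the route you sketch.
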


For the remainder of section \ref{cylsection} we will fix a measure $\mu$ satisfying the hypotheses of Theorem \ref{step1thm}.

\subsection{Doubling $\beta$-number} We fix a parameter $\Lambda$, which we assume is a power of two.  Its choices will not be too subtle and can be fixed at this point, but we leave it free for now.

For $E \in \mathbb D(\mu)$, we define the following quantity
\begin{align}\label{dyadicbeta}
\beta_\mu(E) := \beta_\mu(\mathbf Z_E,\ell(E)).
\end{align}
We emphasize that we constructed our grid to have the (notationally counterintuitive) property $\diam(E) \leq \ell(E)/4$, so in the integral defining $\beta_\nu(E)$ integrates over a ball containing $E$.  \\

\begin{defn} We say that $E\in \dy(\mu)$ is $\beta$-doubling if 
$$\beta_\mu(\Z_E,\Lambda\ell(E)))\leq \Lambda \beta_{\mu}(E).$$
We denote by $\mathbb D_{up}(\mu)$ the collection of $\beta$-doubling dyadic cubes.
\end{defn}

Our goal is to prove the following lemma.

\begin{lem} \label{dyuplem} There is a constant $C(\Lambda)$ such that for every $E_0\in \dy(\mu)$
$$
 \sum_{\substack{E\in \mathbb D(\mu) \\E\subset E_0}}\beta_\mu^2(E)\mu(E) \leq C(\Lambda) \Bigl(\sum_{\substack{E\in \mathbb D_{up}(\mu) \\E\subset E_0}}\beta_\mu^2(E)\mu(E)+ \mu(E_0)\Bigl).
$$
\end{lem}

In verifying this lemma, it will be convenient, for $E\in \dy(\mu)$, to denote by $E_{\Lambda}$ the dyadic cube $E_{\Lambda}\in \dy(\mu)$ with $\ell(E_{\Lambda})=2\Lambda\ell(E)$.  Since $B_{\ell(E_\Lambda)}(\Z_{E_{\Lambda}})\supset B_{\Lambda \ell(E)}(\Z_{E})$,  if $E$ is not $\beta$-doubling, then there is an absolute constant $C>0$ such that
\begin{equation}\label{dyancest1}\beta_{\mu}(E)\leq \frac{1}{\Lambda}\beta_{\mu}(\Z_E, 3\Lambda\ell(E))\leq \frac{C\ell(E)}{\ell(E_{\Lambda})}\beta_{\mu}(E_{\Lambda})\leq\Bigl(\frac{\ell(E)}{\ell(E_{\Lambda})}\Bigl)^c\beta_{\mu},(E_{\Lambda}).\end{equation}
and therefore, provided $\Lambda$ is chosen appropriately, there is a constant $c>0$ such that
\begin{equation}\label{dyancest}\beta_{\mu}(E)\leq\Bigl(\frac{\ell(E)}{\ell(E_{\Lambda})}\Bigl)^c\beta_{\mu}(E_{\Lambda}).\end{equation}
We call $E'$ a $\Lambda$-ancestor of $E$ if $E'\supset E$, $\ell(E')/\ell(E) = (2\Lambda)^d$
for a non-negative integer $d$.  

Suppose $E\in \dy(\mu)\backslash \dyup(\mu)$.  If $\beta_{\mu}(E)>0$, then since also $\beta_{\mu}(E')\lesssim 1$ for every $E'\in \dy(\mu),$ there is a smallest $\Lambda$-ancestor $E_d$ of $E$ with $E_d\in \dyup(\mu)$.  By iterating (\ref{dyancest}) we have
$$\beta_{\mu}(E)\leq \Bigl(\frac{\ell(E)}{\ell(E')}\Bigl)^c\beta_{\mu}(E')$$
whenever $E'$ is a $\Lambda$-dyadic ancestor of $E$ and $E'\subset E_d$.  But, now if $E'$ is any dyadic ancestor of $E$ with $E\subset E'\subset E_d$, then by choose $E''$ to be the largest $\Lambda$-dyadic ancestor of $E$ with $E''\subset E'$, we see that $\beta_{\mu}(E'')\lesssim _{\Lambda}\beta_{\mu}(E')$, and therefore
\begin{equation}\label{updoub}\beta_{\mu}(E)\lesssim_{\Lambda}\Bigl(\frac{\ell(E)}{\ell(E')}\Bigl)^c\beta_{\mu}(E')\text{ whenever }E'\in \dy(\mu)\text{ satisfies } E\subset E'\subset E_d.\end{equation}

We have now collected everything required to complete the proof of the lemma. Given $E \in \mathbb D(\mu) \setminus \mathbb D_{up}(\mu)$, $\wt{E} = E_d$ if $E_d\subset E_0$, and let $\wt{E} = E_0$ otherwise.  
      Consider the cubes $E$ for which $\wt{E}=F$ for a fixed $F$.  Employing (\ref{dyancest}), we find that
    \begin{align*}
        \sum_{\substack{E \in \mathbb D(\mu)\setminus \mathbb D_{up}(\mu)\\
        \wt E =F}} \beta_\mu(E)^2\mu(E) 
        =& \sum_{m \geq 1}\sum_{\substack{E \in \mathbb D(\mu)\setminus \mathbb D_{up}(\mu)\\
        \ell(E) = 2^{-m}\ell(F),\,\wt E =F}} \beta_\mu(E)^2\mu(E) \\
        \lesssim_{\Lambda}& \sum_{m \geq 1} 2^{-c m} \beta_\mu(F)^2
        \bigg[\sum_{\substack{E \in \dy(\mu), E \subset F \\ \ell(E)=2^{-m}\ell(F)}} \mu(E)\bigg]\\
        \leq_{\Lambda}& \beta_\mu(F)^2 \mu(F)
    \end{align*}
    The conclusion of the lemma follows from summing over all possible $F$.

\subsection{Planar concentration via square function coefficients}

The results in this section provide essential concentration results around approximating vertical planes.

We fix a function $\varphi(X,t) = X \psi(X,t)$ where $\psi:\mathbb R^{n+1} \rightarrow \mathbb R$ is Lipschitz with $\|\psi\|_{\Lip} \lesssim 1$, supported in $B_{9}(\0)$, and equal to $1$ on $B_6(\0)$.

Given a constant $\lambda>0$ and a vertical hyperplane $H$, we set
$$H_\lambda = \{\mathbf Y: \dist(\mathbf Y,H) < \lambda\}.$$
\begin{lem}
Fix a point $\mathbf Z \in \supp(\mu)$ and a scale $r \in (0,\diam \supp(\mu))$.  Suppose that for some vertical hyperplane $H$ we have
\begin{align}\label{lessbeta}
\frac{1}{\mu(B_r(\Z))} \int_{B_{10r}(\mathbf Z)}
\left(\frac{\dist(\mathbf X,H)}{r} \right)^2 d \mu(\mathbf X) \leq \beta^2
\end{align}
for some $\beta>0$.  Then, we have
\begin{align*}
\int_{B_{2r}(\mathbf Z)\setminus H_{3\beta}} \left(\frac{\dist(\mathbf X,H)}{r} \right)^2 d\mu(\mathbf X) \leq C \Theta_{\varphi,\mu,2}(\mathbf Z,r).
\end{align*}
\end{lem}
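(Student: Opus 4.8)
The plan is to exploit the convolution $\varphi_r*\mu$ and show that, away from the hyperplane $H$, this convolution is bounded below by a multiple of $\dist(\mathbf X,H)/r$, which is exactly what is needed to control the left-hand side integral by the square function coefficient $\Theta_{\varphi,\mu,2}(\mathbf Z,r)$. By translation and scaling invariance of the problem (which holds because of the group structure and the homogeneity of $\|\cdot\|$ and of $\eta$), I would normalize to $\mathbf Z = \mathbf 0$ and $r = 1$, so that $\varphi_r = \varphi$ is supported in $B_9(\mathbf 0)$ and equals $X\psi$ with $\psi\equiv 1$ on $B_6(\mathbf 0)$.

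First I would fix a point $\mathbf X \in B_2(\mathbf 0)\setminus H_{3\beta}$ and compute $\varphi*\mu(\mathbf X) = \int \varphi(\mathbf Y^{-1}\cdot\mathbf X)\,d\mu(\mathbf Y)$. The key algebraic point is that, by Lemma \ref{closestpoint}, the $\R^n$-component of $\mathbf Y^{-1}\cdot\mathbf X$ relevant to the odd factor $X$ in $\varphi = X\psi$ is essentially $X - Y$, and after projecting onto the normal direction $\nu$ of the vertical hyperplane $H$ (a unit vector in $\R^n$, since $\widetilde H$ is a $k$-dimensional affine plane in $\R^n$ and $H$ contains the $t$-direction) one gets that $\nu\cdot(X-Y)$ splits as $\dist(\mathbf X,H) - \dist(\mathbf Y,H)$ up to signs. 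I would therefore test the convolution against the linear functional $\mathbf W \mapsto \nu\cdot W$; that is, consider $\nu\cdot(\varphi*\mu(\mathbf X))$. On the region where $\psi = 1$, i.e. $\mathbf Y^{-1}\cdot\mathbf X\in B_6(\mathbf 0)$, which certainly contains all $\mathbf Y\in B_4(\mathbf 0)$ when $\mathbf X\in B_2(\mathbf 0)$, the integrand is exactly $\nu\cdot(X-Y) = \dist(\mathbf X,H)\,\mathrm{sgn} - (\text{signed }\dist(\mathbf Y,H))$. Integrating, the first term contributes $\dist(\mathbf X,H)\cdot\mu(\text{that region}) \gtrsim \dist(\mathbf X,H)$ by ADR, while the second term is controlled in absolute value by $\int_{B_{10}(\mathbf 0)}\dist(\mathbf Y,H)\,d\mu(\mathbf Y) \lesssim \beta\,\mu(B_1(\mathbf 0))\lesssim\beta$ using hypothesis \eqref{lessbeta} and Cauchy--Schwarz. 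There is also an error from the region where $0<\psi<1$, i.e. where $\mathbf Y^{-1}\cdot\mathbf X$ lies in the annulus $B_9(\mathbf 0)\setminus B_6(\mathbf 0)$; there I use that $|\nu\cdot(\text{first component})|\le \|\mathbf Y^{-1}\cdot\mathbf X\|\lesssim 1$ and that $\mathbf Y$ ranges over a ball of bounded radius, so this contributes $O(1)$ — but more carefully I want this error to be absorbed, so I would instead note that on that annulus $\dist(\mathbf Y,H)$ and $\dist(\mathbf X,H)$ are comparable up to additive $O(1)$, hence controllable again by $\beta$ plus the genuine main term. Combining, for $\mathbf X\in B_2(\mathbf 0)\setminus H_{3\beta}$, since $\dist(\mathbf X,H)\ge 3\beta$ we get $|\varphi*\mu(\mathbf X)| \ge |\nu\cdot(\varphi*\mu(\mathbf X))| \gtrsim \dist(\mathbf X,H) - C\beta \gtrsim \dist(\mathbf X,H)$.

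From this pointwise lower bound, squaring and integrating over $B_2(\mathbf 0)\setminus H_{3\beta}$ gives
\begin{align*}
\int_{B_{2}(\mathbf 0)\setminus H_{3\beta}} \dist(\mathbf X,H)^2\, d\mu(\mathbf X) \lesssim \int_{B_2(\mathbf 0)} |\varphi*\mu(\mathbf X)|^2\, d\mu(\mathbf X) \le \Theta_{\mu,\varphi,2}(\mathbf 0,1),
\end{align*}
where the last inequality is just the definition \eqref{thetaquant} after undoing the normalization (the scaling $r^{-(k+2)}$ in $\Theta$ matches the homogeneity bookkeeping, and $\ell(E)$ there is replaced by $r$ here; one checks the radii $2r < A\ell(E)$ with $A=2$ is exactly the hypothesis). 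Rescaling back and restoring $\mathbf Z$ yields the claimed estimate.

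The main obstacle I anticipate is handling the transition region where $\psi$ is neither $0$ nor $1$ (the annulus $B_9\setminus B_6$ in rescaled coordinates) cleanly enough that the error is genuinely dominated by $C\beta$ rather than by an uncontrolled $O(1)$ term; the resolution is that on $B_{2r}(\mathbf Z)$ one only sees $\mathbf Y$ within distance $\lesssim r$ of $\mathbf Z$, and for such $\mathbf Y$ either $\dist(\mathbf Y, H)\lesssim \beta r$ contributes to the small term, or $\dist(\mathbf Y,H)$ is comparable to $\dist(\mathbf X,H)$ and the same-sign cancellation structure applies — so in all cases the estimate closes. A secondary technical point is verifying that the normal direction of a \emph{vertical} hyperplane is a genuine spatial unit vector and that the odd component $X$ of $\varphi$ interacts with it as claimed; this follows directly from the structure of $\VP_k$ and Lemma \ref{closestpoint}, together with $\eta(X,-Y)$ playing no role in the spatial projection.
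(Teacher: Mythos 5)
Your overall strategy is the same as the paper's: reduce to $\Z=\0$, $r=1$, project the odd factor $X-Y$ onto the (spatial) unit normal of the vertical hyperplane $H$, prove a pointwise lower bound $|\varphi\ast\mu(\X)|\gtrsim \dist(\X,H)$ for $\X\in B_2(\0)\setminus H_{3\beta}$, then square and integrate. The gap is in the absorption step. Writing $\nu\cdot(X-Y)=s(\X)-s(\Y)$ with $s$ the signed distance to $H$, you bound the \emph{entire} term $\int s(\Y)\psi(\Y^{-1}\cdot\X)\,d\mu(\Y)$ by Cauchy--Schwarz over $B_{10}(\0)$, getting an error of size $C\beta$ where $C\approx\sqrt{\mu(B_{10})/\mu(B_1)}$ times ADR constants. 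Your final step ``$\dist(\X,H)-C\beta\gtrsim\dist(\X,H)$ since $\dist(\X,H)\geq 3\beta$'' requires $C<3$ (up to the constant in your main term), and nothing in your argument gives that: $C$ depends on $C_0$ and the dimension and is in general much larger than $3$. So for points with $3\beta\leq\dist(\X,H)\leq C\beta$ the lower bound does not follow, and what you actually prove is the lemma with $H_{3\beta}$ replaced by $H_{C\beta}$ for a large structural constant $C$ --- not the statement as given. (Your side worry about the annulus where $0<\psi<1$ is not the real issue: with $0\leq\psi\leq1$ the $s(\X)\psi$ part is nonnegative there and the $s(\Y)$ part is already included in your error; the muddled ``comparable up to additive $O(1)$'' remark is unnecessary.)

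The paper closes this at the stated threshold by splitting according to the size of $s(\Y)$ rather than estimating the whole $s(\Y)$-term crudely. The positive contribution is taken only over $\{\dist(\Y,H)<2\beta\}$, where the integrand $(X_n-Y_n)\psi$ is at least $X_n/3$; by Chebyshev applied to \eqref{lessbeta}, this set carries at least half of $\mu(B_r(\Z))$, so the main term is $\geq \frac{X_n}{6}\mu(B_r(\Z))$. The only genuinely negative contribution comes from $\{Y_n>X_n\}\subset\{Y_n>3\beta\}$, where one uses $|Y_n-X_n|\leq|Y_n|\leq \frac{Y_n^2}{3\beta}$ and again \eqref{lessbeta} to bound it by $\frac{\beta}{3}\mu(B_r(\Z))\leq\frac{X_n}{9}\mu(B_r(\Z))$. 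Because both the main and error terms are measured against the \emph{same} quantity $\mu(B_r(\Z))$, the comparison ($\tfrac16$ versus $\tfrac19$) is purely numerical and independent of the ADR constant, which is exactly what makes the constant $3$ in $H_{3\beta}$ work. If you replace your Cauchy--Schwarz absorption by this Chebyshev splitting, your argument becomes the paper's proof; as written, it does not establish the lemma with the stated constant.
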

\begin{proof}
After an appropriate rescaling we are free to assume $r=1$.  We will write $X=(X',X_n)$ where $X_n\in \R$.  After a translation and a horizontal rotation, we are free to assume that
$$H = \{(X',0,t): X' \in \mathbb R^{n-1}, t \in \mathbb R\}.$$ Lemma \ref{closestpoint} states that $\dist(\Y,H)=|Y_n|$.  Observe that
$$\left|\int (X-Y)\psi(\mathbf X \cdot \mathbf Y^{-1}) d\mu(\mathbf Y) \right|
\geq \left|\int (X_n - Y_n) \psi(\mathbf X \cdot \mathbf Y^{-1}) d\mu(\mathbf Y) \right|.$$
Fix $\mathbf X=(X,t) \in B_{2}(\mathbf Z)$ with $X_n>3\beta$.  Then
\begin{align*}
    \int_{\R^{n+1}} (X_n-Y_n) \psi(\mathbf X \cdot \mathbf Y^{-1}) d\mu(\mathbf Y) &\geq
    \int_{\{Y_n < 2\beta\}} (X_n-Y_n) \psi(\mathbf X \cdot \mathbf Y^{-1}) d\mu(\mathbf Y)\\
    -& \int_{\{Y_n>X_n\}}|Y_n-X_n| \psi(\mathbf X \cdot \mathbf Y^{-1}) d\mu(\mathbf Y).
\end{align*}
Since $\psi \equiv 1$ on $B_6(\0)$, the assumption \eqref{lessbeta}) ensures that the first integral on the right hand side above is at least $\frac{X_n}{3}\mu(B_{r}(\Z)\cap \{Y_n < 2\beta\})$, which in turn is greater than $\frac{X_n}{6}\mu(B_r(\Z))$.  On 
the other hand, the second integral is at most
\begin{align*}
\int_{B_{10}(\mathbf Z)\cap\{Y_n >3\beta\}} |Y_n| d\mu(\mathbf Y) &\leq 
\frac{1}{3\beta} \int_{B_{10}(\mathbf Z)\cap\{Y_n > 3\beta\}} |Y_n|^2 d\mu(\mathbf Y)
\leq \frac{\beta}{3}\mu(B_r(\Z)) \\&\leq \frac{X_n}{9}\mu(B_r(\Z)).
\end{align*}
Thus,
$$\bigg| \int (X-Y) \psi(\mathbf X \cdot \mathbf Y^{-1}) d\mu(\mathbf Y) \bigg| \geq \frac{|X_n|}{18}\mu(B_r(\Z)).$$
Squaring the quantity above and integrating  over $B_{2}(\mathbf Z)\setminus H_{3\beta}$ finishes the proof.
\end{proof}

Iterating this lemma yields the following corollary.

\begin{cor}\label{prunalt}
Let $\Delta>0$, $r>0$, and $\mathbf Z \in \supp(\nu)$.  Suppose that, for some vertical $k$-plane $L$, we have
$$\frac{1}{r^{k+2}} \int_{B_{10 r}(\mathbf Z)}\bigg(\frac{\dist(\mathbf X,L)}{r}\bigg)^2 d\mu(\mathbf X) \leq \beta^2.$$
Then either
$$\Theta_{\mu,\varphi,2}(\mathbf Z,r) \geq \Delta\beta^2r^{k+2}$$
or
$$\int_{B_{2r}(\mathbf Z) \setminus L_{9(n-k)\beta}} 
\bigg(\frac{\dist(\mathbf X,L)}{r}\bigg)^2 d\mu(\mathbf X) 
\leq C\Delta\beta^2r^{k+2}.$$
\end{cor}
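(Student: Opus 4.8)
The plan is to iterate the preceding lemma, peeling off one spatial direction at a time. Write $X = (X_1, \dots, X_n)$ and recall that $L$ is a vertical $k$-plane, so after a horizontal rotation and translation we may assume $\wt L = \{X_{k+1} = \dots = X_n = 0\}$, whence by Lemma \ref{closestpoint} we have $\dist(\X, L)^2 = X_{k+1}^2 + \dots + X_n^2$. For each $j \in \{k+1, \dots, n\}$ let $H^{(j)}$ denote the vertical hyperplane $\{X_j = 0\}$; then $L = \bigcap_{j=k+1}^n H^{(j)}$ and $\dist(\X,H^{(j)})^2 = X_j^2 \le \dist(\X,L)^2$. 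The idea is to apply the Lemma to each hyperplane $H^{(j)}$ in turn, using a rotated copy of $\varphi$ that picks out the $j$-th spatial coordinate (the hypotheses on $\varphi$, namely $\varphi(X,t) = X\psi(X,t)$ with $\psi$ radial-ish and equal to $1$ on $B_6$, are rotation-symmetric, so this is legitimate and the resulting $\Theta$-coefficient is still of the form $\Theta_{\mu,\varphi,2}$ up to a horizontal rotation which does not change its value by Lemma \ref{horizontalrotation}).

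First I would dispose of the easy case: if $\Theta_{\mu,\varphi,2}(\Z, r) \ge \Delta \beta^2 r^{k+2}$ we are in the first alternative and done, so assume $\Theta_{\mu,\varphi,2}(\Z,r) < \Delta\beta^2 r^{k+2}$. Now run the following inductive scheme. At stage $j = k+1$, the hypothesis gives $\frac{1}{\mu(B_r(\Z))}\int_{B_{10r}(\Z)} (\dist(\X,H^{(k+1)})/r)^2 \, d\mu(\X) \le \frac{1}{\mu(B_r(\Z))}\int_{B_{10r}(\Z)} (\dist(\X,L)/r)^2 \, d\mu(\X) \lesssim \beta^2$ (using ADR to pass from $r^{k+2}$ to $\mu(B_r(\Z))$, changing $\beta$ by a bounded factor, which I absorb into constants). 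Applying the Lemma with this $\beta$ and the hyperplane $H^{(k+1)}$ yields
$$\int_{B_{2r}(\Z) \setminus H^{(k+1)}_{3\beta}} \Bigl(\frac{\dist(\X,H^{(k+1)})}{r}\Bigr)^2 d\mu(\X) \le C\,\Theta_{\mu,\varphi,2}(\Z, r) \le C\Delta\beta^2 r^{k+2}.$$
In particular, off the strip $H^{(k+1)}_{3\beta}$ the contribution of the $X_{k+1}$-coordinate to $\dist(\cdot,L)^2$ is already $\lesssim \Delta\beta^2 r^{k+2}$; inside the strip we have the pointwise bound $|X_{k+1}| < 3\beta$, but that alone is not small enough, so we must continue. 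At stage $j$, work on $B_{2^{-(j-k-1)+1}r}(\Z)$... actually, to keep the balls from shrinking below usable size I would instead fix all balls to be $B_{2r}(\Z)$ and absorb the nested applications by noting $B_{2r}(\Z) \subset B_{10 r}(\Z)$ and rescaling is harmless; the cleaner bookkeeping is to simply observe that $\int_{B_{2r}(\Z)}(\dist(\X,H^{(j)})/r)^2 d\mu \le \int_{B_{2r}(\Z)}(\dist(\X,L)/r)^2 d\mu$, so the Lemma hypothesis for $H^{(j)}$ at radius $r$ holds with the same $\beta$, and we may apply it for each $j \in \{k+1, \dots, n\}$ independently rather than sequentially.

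Summing the $n - k$ resulting estimates, and using $\dist(\X,L)^2 = \sum_{j=k+1}^n \dist(\X,H^{(j)})^2$ together with the fact that $\X \notin L_{9(n-k)\beta}$ forces $\X \notin H^{(j)}_{3\beta}$ for at least one $j$ — more precisely, if $\dist(\X,L) \ge 9(n-k)\beta$ then $\max_j \dist(\X, H^{(j)}) \ge \dist(\X,L)/\sqrt{n-k} \ge 9\sqrt{n-k}\,\beta \ge 3\beta$, and in fact $\dist(\X,L)^2 \le (n-k)\max_j \dist(\X,H^{(j)})^2$ — I would conclude
$$\int_{B_{2r}(\Z)\setminus L_{9(n-k)\beta}}\Bigl(\frac{\dist(\X,L)}{r}\Bigr)^2 d\mu(\X) \le (n-k)\sum_{j=k+1}^n \int_{B_{2r}(\Z)\setminus H^{(j)}_{3\beta}}\Bigl(\frac{\dist(\X,H^{(j)})}{r}\Bigr)^2 d\mu(\X) \le C(n-k)^2\Delta\beta^2 r^{k+2},$$
and renaming the constant gives the second alternative. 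The main obstacle — really the only subtle point — is making sure the geometry lines up: that $L$ is genuinely the intersection of the coordinate hyperplanes $H^{(j)}$ after the normalization, that the rotated test functions $\varphi$ used for each $H^{(j)}$ still satisfy the Lemma's hypotheses and still produce the \emph{same} square-function coefficient $\Theta_{\mu,\varphi,2}(\Z,r)$ (this is where Lemma \ref{horizontalrotation} and the rotational symmetry of $\psi$'s support are used), and that replacing the strip radius $3\beta$ by $9(n-k)\beta$ correctly captures the pointwise-vs-integrated trade-off across all $n-k$ directions. Everything else is just ADR bookkeeping and summing a fixed finite number of inequalities.
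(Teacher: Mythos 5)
Your proposal is correct and is essentially the paper's proof, which simply applies the preceding lemma to the $(n-k)$ vertical hyperplanes $H^{(j)}$ whose intersection is $L$ and sums, using $\dist(\X,L)^2=\sum_j\dist(\X,H^{(j)})^2$ exactly as you do. The only cosmetic difference is that you rotate $\varphi$ for each coordinate direction, which is unnecessary: since $\varphi(X,t)=X\psi(X,t)$ is vector-valued, the lemma as stated already applies to an arbitrary vertical hyperplane with the same fixed $\varphi$ and the same coefficient $\Theta_{\mu,\varphi,2}(\Z,r)$, and the remaining normalization discrepancy ($r^{k+2}$ versus $\mu(B_r(\Z))$) is the harmless ADR bookkeeping you already flagged.
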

\begin{proof}
This corollary follows from applying the previous lemma to $(n-k)$ vertical hyperplanes whose intersection is $L$.
\end{proof}

\subsection{The Cylindrical Blow-Up}
The result of the paper will be devoted to proving the following proposition, which will yield the proof of Theorem \ref{step1thm}.
\begin{prop}\label{finalprop}
There exists $\Delta>0$, $\rho>0$, and $N \in \mathbb N$, 
along with a family $\mathcal F \subset \Lipodd(B_{\Lambda}(\0) )$, where $\#\mathcal F \leq C(N)$ and $\max_{\varphi\in \mathcal F}
\|\varphi\|_{\Lip} \leq C(N)$, such that the following holds.  Let $\Lambda \geq 2^{12} = 4096$ be a power of $2$.  Then, for every dyadic cube $E \in \mathbb D_{up}(\mu)$, if
$$\alpha_{k,\mu}(\Z_E, \En  \ell(E)) \leq \rho,$$ then
\begin{align}\label{finalpropcon}
    \max_{\varphi\in \mathcal F} \Theta_{\mu,\varphi,2\En}(E)
    \geq \Delta \beta_\mu(E)^2 \ell(E)^{k+2}.
\end{align}
\end{prop}

Let us first see why the proposition above furnishes the proof of Theorem 
\ref{step1thm}.  

\begin{proof}[Proof of Theorem \ref{step1thm}]  

Pick a cube $E_0 \in \mathbb D(\mu)$ and put $$\mathcal{Q}_{\rho} = \{E\subset \dy(\mu): \, \alpha_{k, \mu}(\Z_E,\Lambda \ell(E))>\rho\}.$$
Reasoning in an analogous way to how Lemma \ref{findCarleson} follows from Lemma \ref{CZcoef}, we see that
\begin{align}\label{dyadicC2}
\Delta\sum_{E\in \dyup(\mu)\backslash \mathcal{Q}_{\rho}}\beta_{\mu}(\Z_E, \ell(E))^2\mu(E)\lesssim_{\mu, \mathcal{F}}\mu(E_0) .
\end{align}
But since $\mathcal{Q}_{\rho}$ is Carleson (by hypothesis and Lemma \ref{alphaenlarge}), and $\beta_{\mu}(\Z_E, \ell(E))\lesssim 1$,
$$\sum_{E\in \dyup(\mu)}\beta_{\mu}(\Z_E, \ell(E))^2\mu(E)\lesssim \ell(E_0)^{k+2},
$$
where the implicit constant now can depend on the Carleson norm of $\mathcal{Q}_{\rho}$ in addition to $\Delta$, $A$, and $\mathcal{F}$.  Finally, an application of Lemma \ref{dyuplem} gives
$$\sum_{\substack{E\in \dy(\mu)\\ E \subset E_0}}\beta_{\mu}(\Z_E, \ell(E))^2\mu(E)\lesssim \ell(E_0)^{k+2}.
$$
\end{proof}

In preparation for the proof of Proposition \ref{finalprop}, we will require some Lipschitz functions.  We will fix $\En >1$, which will be taken to be large.  Choose $\psi_0\in \Lip_0(B(\0, \En))$ with $\psi_0 \equiv 1 \text{ on }B(\0,3\En/4)$ and $\psi_0\lesssim 1$.   Select a countable dense set $\{\psi_j\}_{j\geq 0}$ in the collection (we are forcing $\psi_0$ to be the function described above)
\begin{multline*}
\Lipeven(B(\0, \En)) \\ =\big\{\psi\in \Lip_0(B(\0, \En)): \psi(X,t)=\psi(-X,t) \text{ for all }(X,t)\in B(\0, \En)\bigl\}\cap\{\|\psi\|_{\Lip}\leq1\}.
\end{multline*}
Then we consider the functions $\wt\psi(\X)=X\psi(\X)$.  For each $j\geq 0$, $\wt\psi_j\in \Lipodd(B(\0, \En))$,  with\footnote{The product of two bounded Lipschitz functions is a Lipschitz function.} $\|\wt\psi_0\|_{\Lip}\lesssim_{\En}1$.  We will set $\mathcal{F}_{\ell} = \{\wt\psi_0, \dots, \wt\psi_{\ell}\}.$

\subsubsection{The Limit Measure}
We now suppose that Proposition \ref{finalprop} fails.  Then, for each $\ell \in \mathbb N$, we have a $(k+2)$-ADR measure
$\wt \mu_\ell$ (with uniformly bounded $ADR$ constants over $\ell$), and a dyadic cube $\wt E_\ell \in \mathbb D_{up}(\wt \mu_\ell)$
such that $\alpha_{k,\wt \mu_{\ell}}(\Z_{\wt{E_{\ell}}}, \En\ell(\wt{E_{\ell}})) \leq \frac{1}{\ell}$ and
$$\max_{\varphi \in \mathcal F_{\ell}}
\Theta_{\wt\mu_{\ell},\varphi,\ell}(\wt E_\ell)
\leq \frac{1}{\ell} \beta^2_{\wt \mu_{\ell}}(\wt E_{\ell}) \ell(E_{\ell})^{k+2}.$$

We rescale and re-normalize the measure $\wt \mu_{\ell}$ to a $(k+2)$-ADR measure $\mu_{\ell}$ and such
that $\wt E_{\ell}$ gets mapped to cube $E_{\ell}$ with $\ell(E_{\ell})=1$ and $\Z_{\wt{E}_{\ell}}$ is mapped to the origin $\0$.  This rescaling and renormalization is applied to the grid $\mathbb D(\wt \mu_{\ell})$ to produce a new grid $\mathbb D(\mu_{\ell})$.  The image of $\wt E_{\ell} \in \mathbb D(\wt \mu_{\ell})$ is $E_{\ell} \in \mathbb D(\mu_{\ell})$ where $\Z_{E_{\ell}}=\0$.
We have
\begin{align}\label{betadomlp}
\max_{\varphi \in \mathcal F_{\ell}} \Theta_{\mu_{\ell},\varphi,\ell}(E_{\ell})
\leq \frac{1}{\ell} \beta^2_{\mu_{\ell}}(E_{\ell}) = \frac{1}{\ell}\beta_{\mu_{\ell}}^2(\0,1).
\end{align}

Because the measures $\mu_{\ell}$ enjoy the ADR property with uniform bounds on
the ADR constants, we can pass to a subsequence which converges to some measure $\mu$ such
that
$$\varphi \ast \mu =0 \text{ for every } \varphi \in \bigcup_{\ell}
\mathcal F_{\ell}.$$%\john{write up a lemma about weak convergence of measures in appendix?}

Since $\alpha_{k,\mu_{\ell}}(\0, \En)\leq\frac{1}{\ell}$, we have that $\alpha_{\mu}(\0, \En)=0$ (Lemma \ref{alphawc}).  Therefore, $\supp(\mu)\cap B(\0,\En)$ is contained in a vertical $k$-plane, and therefore $\beta_{\mu}(\0,1)=0.$ Thus, we may assume that $\lim_{l \rightarrow \infty} \beta_{\mu_{\ell}}(\0,1)=0$.  

Since $E_{\ell} \in \mathbb D_{up}(\mu_{\ell})$, we next observe that, for some $\epsilon>0$
\begin{equation}\label{enbeta}\beta_{\mu_{\ell}}(\0, \En) \lesssim \En \beta_{\mu_{\ell}}(\0,1).\end{equation}

Our ultimate goal will be to show that (\ref{enbeta}) fails to hold for large $\ell$. 

Set $\beta_{\ell} = \beta_{\mu_{\ell}}(\0,\En)$.  Let $L_{\ell}$ be an optimizing plane for $\beta_{\mu_{\ell}}(\0,\En)$.  By passing to a subsequence, if necessary,
we may assume that the planes $L_{\ell}$ converge to a plane $L$ locally in the Hausdorff metric.  
As a consequence of Corollary \ref{prunalt}, there is a constant $C_1>0$ such that 
\begin{equation}\label{notmuchoutsidestrip}\int_{B_{\En/5}(\0)\setminus L_{\ell,C_1\beta_{\ell}}}
\bigg(\frac{\dist(\Y,L_{\ell})}{\beta_{\ell}} \bigg)^2 d\mu_{\ell}(\Y) \lesssim \frac{1}{\ell}.\end{equation}

\textbf{The stretched measure $\sigma$:} Let $\mathcal R^{\ell}$ be the composition of translation and horizontal rotation 
(with $\mathcal O^{\ell}$ denoting the horizontal rotation) such that 
$\mathcal R^{\ell}(L_{\ell}) = \{0_{n-k}\} \times \mathbb R^{k}\times \R$, where $0_{n-k}$ is the origin of $\in \mathbb R^{n-k}$.
We can pass 
to another subsequence to ensure that $\mathcal O^{\ell}$ converges to a horizontal rotation 
$\mathcal O$.  Since $\beta_{\ell}\to 0$ as $\ell\to \infty$ and $0\in \supp(\mu_{\ell})$, we also have $\mathcal{R}^{\ell}\to \mathcal{O}$ as $\ell\to \infty$.  We introduce (spatial) coordinates $X = (X',X'') \in \mathbb R^{n-k}\times \mathbb R^k$.%\ben{need to specify convergence}

If $\ell$ is sufficiently large, the plane $L_{\ell}$ goes very close to to the origin (recall (\ref{notmuchoutsidestrip})), and since horizontal rotations are isometries we see that for all large $\ell$, 
$(\mathcal{R}^{\ell})^{-1}B_{\En/6}(\0)\subset B_{\En/5}(\0)$. Therefore, recalling Lemma \ref{closestpoint}, we deduce for all sufficiently large $\ell$ that
\begin{equation}\label{rotnutmuch}\int_{B_{\En/6}(\0)\backslash ([-C_1\beta_{\ell},C_1\beta_{\ell}]^{n-k}\times \R^k\times \R)}\Bigl(\frac{|Y'|}{\beta_{\ell}}\Bigl)^2d\mu_{\ell}((\mathcal{R}^{\ell})^{-1}(\Y))\lesssim \frac{1}{\ell}.
\end{equation}

Define the ``stretch mapping" $\mathcal S_\beta(\X) = (\beta X',X'',t)$ for $\beta>0$,  
and the stretched measures
\begin{align}\label{nuldef}
\sigma_{\ell} = \mu_{\ell}((\mathcal R^{\ell})^{-1} \circ \mathcal S_{C_1\beta_{\ell}}(\,\cdot\,)).
\end{align}

With this new notation, (\ref{rotnutmuch}) implies
\begin{align}\label{ampleboxportion}
\sigma_{\ell}(\mathcal S_{1/(C_1\beta_{\ell})}(B_{\Lambda/6}(\0)) \setminus \bigl([-1,1]^{n-k}\times \R^k\times \R \bigl)) \lesssim \frac{1}{\ell}.
\end{align}
We take the weak limit (via a subsequence) to a non-degenerate measure $\sigma$ supported in $[-1,1]^{n-k}\times \R^k \times \R$.

\textbf{The cylindrically vampiric property of $\sigma$:} We next examine how the square function coefficients behave under the stretch mapping.  For all $\ell$ sufficiently large and all $\psi \in \mathcal F_{\ell}$, owing to \eqref{betadomlp}  we have
$$\int_{B_\Lambda(\0)} \bigg| \int_{\mathbb R^{n+1}} (X-Y)\psi(\Y^{-1}\cdot \X) d\mu_{\ell}(\Y) \bigg|^2 d\mu_{\ell}(\X) \lesssim \frac{\beta_{\ell}^2}{\ell}.$$
This implies
\begin{multline*}
    \int_{\mathcal R^{\ell}(B_{\Lambda}(\0) \cap L_{\ell,C_1\beta_{\ell}})}
    \bigg|\int_{\mathbb R^{n+1}} \frac{(X'-Y')}{\beta_{\ell}} \cdot
    (\psi \circ (\mathcal O^{\ell})^{-1})(X'-Y',X''-Y'',t-s + \eta((X',X''),(Y',Y'')))\\
    d\mu_{\ell}((\mathcal R^{\ell})^{-1}(Y',Y'',s))\bigg|^2 d\mu_{\ell}((\mathcal R^{\ell})^{-1}(X',X'',t)) \lesssim \frac{1}{\ell}.
\end{multline*}
Hence, 
\begin{multline*}
    \int_{B_{\En/2}(\0)}
    \bigg|\int_{\mathbb R^{n+1}} (X'-Y') \cdot
        (\psi \circ (\mathcal O^{\ell})^{-1})\big(C_1\beta_{\ell}(X'-Y'),X''-Y'',t-s+\\
    \eta((C_1\beta_{\ell}X',X''),(C_1\beta_{\ell}Y',Y''))\big)
    d\sigma_{\ell}(Y',Y'',s)\bigg|^2 d\sigma_{\ell}(X',X'',t) \lesssim \frac{1}{\ell}.
\end{multline*}
Hence, the measure $\sigma$ satisfies (using that 
$\eta((0,X''),(0,Y''))=0$ and the continuity of $\eta$)
\begin{align}\label{nuvamp}
    &\int_{\mathbb R^{n+1}} (X'-Y')(\psi \circ \mathcal O^{-1})(0,X''-Y'',t-s) 
    d\sigma(Y',Y'',s) = 0,\\
    &\;\;\text{ for every } (X',X'',t) \in \supp(\sigma)\cap B_{\En/2}(\0) \text{ and }\psi\in \Lipeven(B_{\En}(\0)).
\end{align}
Since the horizontal rotation $\mathcal{O}$ is an isometry, (\ref{nuvamp}) holds with $\psi \circ \mathcal O^{-1}$ replaced by $\psi$.

\textbf{The support of $\sigma$ is a graph:} Consider the minimizing measures $\zeta_{\ell}(\,\cdot \,\mathbf X_{\ell}) \in \mathcal M_k$ for the quantity $\alpha_{k,\mu_{\ell}}(Q_{\ell})$, where $\zeta_{\ell} \in \mathcal M_k$.  Let $\zeta_{\ell} = \mathcal H^k\vert_{P_{\ell}} \times \gamma_{\ell}$, where $P_{\ell} \subset \mathbb R^n$ is a $k$-plane containing the origin, and $\gamma_{\ell}$ is a $2$-ADR measure on $\mathbb R$ containing $0$ (and whose ADR constant may be chosen depending on the ADR constant of $\mu_{\ell}$, which is uniformly bounded over $\ell$).  After passing to a subsequence, we are free to assume
$$\zeta_{\ell} \rightharpoonup \zeta = \mathcal H^k\vert_P \times \gamma$$
for some vertical $k$-plane $P$ containing the origin and some $2$-ADR measure $\zeta$ containing $0$.  After passing to another subsequence, we can assume that
$$\zeta_{\ell}(\,\cdot\, \mathbf X_{\ell}) \rightharpoonup \zeta( \, \cdot \,\mathbf X)$$
for some $\mathbf X \in \mathbb R^{n+1}$.

Because $\0 \in \supp(\mu)$ and $\beta_\mu(\0,\En)=0$, it is easy to see that $L$ is a vertical $k$-plane containing the origin, and moreover that $\supp(\mu) \cap B_{\Lambda}(\0)\subset L$.  Since $\alpha_{k,\mu}(\0,\Lambda)=0$, it must be the case that $\supp(\zeta(\,\cdot\, \mathbf X)) \subset L$ as well.  
Write $\mathbf X = (X,t) \in \mathbb R^n \times \mathbb R$ and suppose, for contradiction, that $X \notin P$.  Then $\supp(\zeta(\,\cdot\, \mathbf X))$ cannot be a vertical plane through the origin, as the spatial projection of its support is $P - X$.  But this contradicts that $\supp(\zeta(\, \cdot \, \mathbf X)) \subset L$.  So, $X \in P$, and it easily follows that $\supp(\zeta(\, \cdot \, \mathbf X)) \subset P\times \mathbb R$.  Hence, $L = P \times \mathbb R$.  Moreover, using that $\eta(X,Y)=0$ whenever $X$ and $Y$ are in the same $k$-dimensional subspace of $\mathbb R^n$, we see that 
$$\zeta(\, \cdot \, \mathbf X) = \zeta( \, \cdot \, (0,t)) = \mathcal H^k \vert_P \times \gamma(\, \cdot +t).$$ 
In the sequel, we abuse notation and relabel the measure $\gamma(\,\cdot\,+t)$ as $\gamma$, as we will use $t$ to denote other points of $\mathbb R$.

As a consequence, we conclude that
for every $f \in \Lip_{0}(B_{\En}(\0))$ satisfying
$\|f\|_{\Lip}\leq 1$, it holds that
$$\int f d(\mu-\mathcal H^k\vert_{P} \times \gamma)=0.$$

\begin{cla}\label{lipcylsymcla} For all $f\in \Lip_0(B_{\En/6}(\0))$
\begin{equation}\label{lipcylsym}\int_{B_{\En/6}(\0)} f(0,X'',t) d\big(\sigma - (\mathcal H^k\vert_{\{0_{n-k}\} \times \mathbb R^{k}} \times \gamma)\big) =0.\end{equation}
\end{cla}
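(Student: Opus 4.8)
The plan is to exploit the fact that the integrand $(X',X'',t)\mapsto f(0,X'',t)$ is independent of the $X'$-variable, and hence is left unchanged by the stretch maps $\mathcal S_\beta(\X) = (\beta X', X'', t)$. This lets us trade the stretched measures $\nu_l$ for the merely rotated/translated measures whose weak limit we have already computed. Write $\pi(X',X'',t):=(0,X'',t)$, so the integrand is $f\circ\pi$, and put $\lambda_l := (\mathcal R^l)_\#\mu_l$, so that by \eqref{nuldef} we have $\nu_l = (\mathcal S_{1/(C_1\beta_l)})_\#\lambda_l$. Since $\mathcal S_\beta$ fixes the $(X'',t)$-coordinates, $(f\circ\pi)\circ\mathcal S_{1/(C_1\beta_l)} = f\circ\pi$, so, modulo the domain of integration, $\int (f\circ\pi)\,d\nu_l = \int (f\circ\pi)\,d\lambda_l$. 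Thus it suffices to show that $\lambda_l$ converges weakly, on $B_\En(\0)$, to $\mathcal H^k\vert_{\{0_{n-k}\}\times\R^k}\times\gamma$, and then to justify passing to the limit.

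The weak limit of $\lambda_l$ is read off from data already on the table. We have established (along the chosen subsequence) that $\mu_l\rightharpoonup\mu$ with $\mu=\mathcal H^k\vert_P\times\gamma$ on $B_\En(\0)$, that $\mathcal R^l\to\mathcal O$ for a horizontal rotation $\mathcal O(X,t)=(O(X),t)$, and that the optimizing planes satisfy $L_l\to L$ with $\mathcal R^l(L_l)=\{0_{n-k}\}\times\R^k\times\R$ and $L = P\times\R$; passing to the limit, $\mathcal O(P\times\R)=O(P)\times\R=\{0_{n-k}\}\times\R^k\times\R$, so $O(P)=\{0_{n-k}\}\times\R^k$. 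Because $\mathcal R^l\to\mathcal O$ uniformly on compacta and the $\mathcal R^l$ are (asymptotically) isometries (Lemma \ref{horizontalrotation}), weak convergence is stable under pushforward, so $\lambda_l\rightharpoonup(\mathcal O)_\#\mu$ on $B_\En(\0)$. Finally a horizontal rotation preserves $k$-dimensional Hausdorff measure in the spatial variable and fixes the time factor, so $(\mathcal O)_\#(\mathcal H^k\vert_P\times\gamma)=\mathcal H^k\vert_{O(P)}\times\gamma=\mathcal H^k\vert_{\{0_{n-k}\}\times\R^k}\times\gamma$ — precisely the model measure appearing in \eqref{lipcylsym}.

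The step I expect to be the real obstacle is making the limiting argument rigorous: $f\circ\pi$ is bounded and Lipschitz but, viewed on $\R^{n+1}$, is \emph{not} compactly supported (it is constant along $X'$), while the stretched $\nu_l$ are not uniformly Ahlfors regular, so one cannot simply quote weak convergence. The remedy is that $\nu_l$ essentially lives in the slab $[-1,1]^{n-k}\times\R^k\times\R$: by \eqref{ampleboxportion}, together with the elementary inclusion $B_{\En/6}(\0)\subset\mathcal S_{1/(C_1\beta_l)}(B(\0,\En/6))$ valid once $C_1\beta_l\le1$, the $\nu_l$-mass of $B_{\En/6}(\0)\setminus([-2,2]^{n-k}\times\R^k\times\R)$ is $\lesssim 1/l\to0$; and $\supp\nu\subset[-1,1]^{n-k}\times\R^k\times\R$. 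Multiplying $f\circ\pi$ by a fixed cutoff supported in $[-2,2]^{n-k}\times\overline{B(\0,\En/6)}$ and equal to $1$ on a neighborhood of $[-1,1]^{n-k}\times\overline{B_{\En/6}(\0)}$ therefore turns $\int_{B_{\En/6}(\0)}(f\circ\pi)\,d\nu_l\to\int_{B_{\En/6}(\0)}(f\circ\pi)\,d\nu$ into a genuine compactly-supported weak-limit statement (after replacing $\En/6$ by a radius on which $\nu$ charges no sphere, which costs nothing), and the same truncation — now using $\lambda_l=(\mathcal S_{C_1\beta_l})_\#\nu_l$ and $\mathcal S_{C_1\beta_l}(B(\0,R))\subset B(\0,R)$ — legitimizes the passage to the limit on the $\lambda_l$ side. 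Chaining the three identities and using that $\mathcal H^k\vert_{\{0_{n-k}\}\times\R^k}\times\gamma$ is carried by $\{X'=0\}$, where $f\circ\pi=f$, then yields \eqref{lipcylsym}; the rest — the stretch-invariance and the weak-limit identification — is soft.
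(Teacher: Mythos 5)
Your argument is correct and is essentially the paper's own proof: both rest on the change of variables relating $\mu_\ell\circ(\mathcal R^\ell)^{-1}$ to the stretched measures $\nu_\ell$, the weak limits $\nu_\ell\rightharpoonup\nu$ and $\mu_\ell\circ(\mathcal R^\ell)^{-1}\rightharpoonup\mu\circ\mathcal O^{-1}$, and the identification $\mathcal O(P\times\R)=\{0_{n-k}\}\times\R^k\times\R$, which turns $\mathcal O_\#\mu$ into $\mathcal H^k\vert_{\{0\}\times\R^k}\times\gamma$ on the relevant ball. The only difference is bookkeeping: you keep the fixed test function $f\circ\pi$ and use its invariance under the stretch map (with a cutoff and \eqref{ampleboxportion} to control tightness), whereas the paper lets the test function $f(\beta_\ell X',X'',t)$ vary with $\ell$; this is the same mechanism, with your version slightly more explicit about the non-compactly-supported integrand.
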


\begin{proof}[Proof of Claim \ref{lipcylsymcla}] To see this, we make three observations.  Firstly, for any compactly supported Lipschitz function we have
$$\int_{\R^{n+1}} f(X',X'',t)d\mu_{\ell}\circ (\mathcal{R}^{\ell})^{-1}(X',X'',t) = \int_{\R^{n+1}} f(C_1\beta_{\ell}X', X'',t)d\sigma_{\ell}(X',X'',t)
$$
But now, if $f\in \Lip_0(B_\Lambda(\mathbf 0))$, then
$$\lim_{\ell\to \infty}\int_{\R^{n+1}} f(C_1\beta_{\ell}X', X'',t)d\sigma_{\ell}(X',X'',t)=\int_{\R^{n+1}}f(0, X'',t)d\sigma(X',X'',t).$$
Finally, since $\mathcal{R}^{\ell}\to \mathcal{O}$,
$$\lim_{\ell\to \infty}\int_{\R^{n+1}} f(X',X'',t)d\mu_{\ell}\circ (\mathcal{R}^{\ell})^{-1}(X',X'',t) = \int_{\R^{n+1}} f(X',X'',t)d\,(\mu\circ \mathcal{O}^{-1})(X',X'',t)$$
As $L = P\times \R$ and $\mathcal{O}(L) = \{0_{n-k}\}\times \R^k\times \R$, the claim (\ref{lipcylsym}) follows.\end{proof}

We would like to replace function $f(0,X'', t)$ in (\ref{lipcylsym}) by a product function $\varphi(X'')\xi(t)$ for suitable functions $\varphi$ and $\xi$.  This is slightly subtle in the $\mathbb{H}^1$ case (when $\eta$ is not identically zero) as such a function not typically Lipschitz as a function in $\mathbb{H}^1$.  However, the following lemma provides a substitute for our purposes:

\begin{lem}\label{productextension} If $\varphi\in \Lip(B^{(k)}_{\En/12}(\0))$ and $\xi\in \Lip_0([-\Lambda^2/144, \Lambda^2/144])$, then the function
$\varphi\xi : \{0\}\times B^{(k)}_{\En/12}(\0)\times [-\En^2/144, \En^2/144] \to \R$ has a compactly supported Lipschitz extension $\Phi$ to $B_{\Lambda/6}(\0)$.  Moreover, if $\varphi$ is even, then we can choose $\Phi\in \Lipeven$.
\end{lem}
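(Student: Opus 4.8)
The plan is to obtain $\Phi$ as a McShane-type Lipschitz extension, after isolating the one structural fact that makes everything work. Write $S=\{0\}\times B^{(k)}_{\En/12}(\0)\times[-\En^2/144,\En^2/144]$ for the set on which $\varphi\xi$ is prescribed, viewed inside $\R^{n+1}$, and let $\bar S$ denote its closure. The key observation is that, \emph{on $S$, the ambient metric $d$ degenerates to the parabolic metric}: if $\X=(0,X'',t)$ and $\Y=(0,Y'',s)$ lie in $\bar S$, then property (2) of $\eta$ forces $\eta((0,X''),(0,-Y''))=0$, so $d(\X,\Y)=\sqrt{|X''-Y''|^{2}+|t-s|}$. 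Hence $\varphi\xi$, as the product of the bounded Euclidean-Lipschitz function $X''\mapsto\varphi(X'')$ (which extends to $\bar B^{(k)}_{\En/12}(\0)$ with the same constant) and the bounded $\sqrt{\,\cdot\,}$-Lipschitz function $t\mapsto\xi(t)$, is Lipschitz on $(\bar S,d|_{\bar S})$, with constant controlled by $\En$ and the Lipschitz and sup norms of $\varphi$ and $\xi$.

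First I would set, with $C_0$ a fixed multiple of that Lipschitz constant,
\[
G(\X):=\inf_{\Z\in\bar S}\bigl(\varphi\xi(\Z)+C_0\,d(\X,\Z)\bigr),\qquad \X\in\R^{n+1}.
\]
Because $d|_{\bar S}$ is literally the restriction of $d$ and $\varphi\xi$ is $C_0$-Lipschitz on $(\bar S,d|_{\bar S})$, the standard McShane argument (valid in any metric space) gives that $G$ is $C_0$-Lipschitz on $(\R^{n+1},d)$ and $G|_{\bar S}=\varphi\xi$; the infimum is finite since $\varphi\xi$ is bounded. To localize, note $\bar S\subset B_{\En/8}(\0)$ (on $S$, $\|(0,X'',t)\|^{2}=|X''|^{2}+|t|<\En^{2}/72$), choose a radial cutoff $\zeta=\zeta(\|\,\cdot\,\|)$ with $0\le\zeta\le1$, $\zeta\equiv1$ on $\bar B_{\En/8}(\0)$, $\supp\zeta$ compactly contained in $B_{\En/6}(\0)$, and $\|\zeta\|_{\Lip}\lesssim1/\En$ (using that $\X\mapsto\|\X\|=d(\X,\0)$ is $1$-Lipschitz), and put $\Phi:=G\zeta$. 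Then $\Phi$ is supported in $B_{\En/6}(\0)$, and since $\zeta\equiv1$ on $\bar S\supset S$ it agrees with $\varphi\xi$ on $S$. Global Lipschitz-ness of $\Phi$ is routine: on the bounded set $\bar B_{\En/6}(\0)$ it is a product of two bounded $d$-Lipschitz functions, hence $d$-Lipschitz there; and if $\X\in\supp\Phi$ while $\Y\notin\bar B_{\En/6}(\0)$ then $d(\X,\Y)\ge\|\Y\|-\|\X\|\gtrsim\En$, so $|\Phi(\X)-\Phi(\Y)|=|\Phi(\X)|\le\|\Phi\|_{\infty}\lesssim\En^{-1}\|\Phi\|_{\infty}\,d(\X,\Y)$; the remaining case is trivial.

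For the ``moreover'' statement, let $\iota(X,t)=(-X,t)$. Since $-\mathrm{id}_{\R^{n}}\in O(n)$, $\iota$ is a horizontal rotation, hence a $d$-isometry fixing $\0$ by Lemma \ref{horizontalrotation}; in particular it preserves $B_{\En/6}(\0)$ and maps $\bar S$ onto itself. When $\varphi$ is even, $\varphi\xi$ is $\iota$-invariant on $\bar S$, and the substitution $\Z\mapsto\iota\Z$ in the defining infimum (using $\iota(\bar S)=\bar S$, $\iota^{2}=\mathrm{id}$, and that $\iota$ is a $d$-isometry) shows $G\circ\iota=G$; as $\zeta$ is radial, $\zeta\circ\iota=\zeta$, so $\Phi\circ\iota=\Phi$, i.e. $\Phi\in\Lipeven$. (Alternatively one may replace $\Phi$ by $\tfrac12(\Phi+\Phi\circ\iota)$, which stays Lipschitz precisely because $\iota$ is an isometry.)

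The only genuinely nonroutine step is the first paragraph. In $\mathbb H^{1}$ the product $\varphi(X'')\xi(t)$ is \emph{not} Lipschitz on $B_{\En/6}(\0)$ as a function on the group — the twist $\eta$ allows $|t-s|$ to be of order $\En\, d(\X,\Y)$ as $d(\X,\Y)\to0$ — so one cannot simply multiply $\varphi\xi$ by a cutoff in the $X'$-variable. What saves the argument is that this obstruction disappears \emph{on the plane} $\{0\}\times\R^{k}\times\R$, where $d$ reduces to the parabolic metric; this is exactly what makes an honest metric-space (McShane) extension both applicable and the right tool. Everything downstream of that observation — the cutoff and the symmetrization — is standard bookkeeping.
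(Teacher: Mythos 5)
Your proposal is correct and follows essentially the same route as the paper: the crucial observation that $\eta$ vanishes on the vertical slab $\{0\}\times\R^k\times\R$ (so $\varphi\xi$ is Lipschitz there for the ambient metric), the explicit McShane infimum formula for the extension, evenness via the invariance of the distance under $(X,t)\mapsto(-X,t)$, and a final cutoff. Your treatment of the cutoff and of the symmetrization is merely more detailed than the paper's, not different in substance.
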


\begin{proof} The function $\varphi\xi$ is Lipschitz (notice crucially that $\eta\equiv 0$ on the support of $\varphi\xi$).  By the McShane extension theorem, this extends to a Lipschitz function $\Phi$ on $B_{\Lambda/6}(\0)$.  In particular, with $A = \{0\}\times B^{(k)}_{\En/12}(\0)\times [-\En^2/144, \En^2/144]$, an extension is given by
$$\Phi(\X) = \inf_{\Y=(0, Y'',s)\in A}\Bigl\{\varphi(Y'')\xi(s)+\lambda\|\Y^{-1}\cdot \X\|\Bigl\},
$$
where $\lambda$ is the Lipschitz constant of $\varphi\xi$ on $A$.  Since $\varphi$ is even (and $\|(-Y,-Y'',s))^{-1}\cdot (-X',-X'',t)\|=\|(Y', Y'', s)^{-1}\cdot (X', X'', t)\|$, we have that $\Phi(-X,t)=\Phi(X,t)$.  Multiplying by a suitable cut-off function completes the proof.
\end{proof}

Observe that $[-1,1]^{n-k}\times B^{(k)}_{\En/12}(0)\times [-\En^2/144,\En^2/144]\subset B_{\En/6}(\0)$ for $\En \geq 3000$, so, since $\sigma$ is supported in $[-1,1]\times \R^k\times \R$, we infer from (\ref{lipcylsym}) and Lemma \ref{productextension}, that for $\varphi$, $\xi$ as in Lemma \ref{productextension}

\begin{equation}\label{cyllip}\int\limits_{\R^{n-k}\times B^{(k)}_{\En/12}(0)\times [-\Lambda^2/144, \Lambda^2/144]}\!\!\!\!\!\!\!\!\!\!\varphi(X'')\xi(t)d[\sigma(X', X'',t)-\mathcal{H}^k_{|\{0\}\times \R^k}(X',X'')\gamma(t)]=0.\end{equation}
Now, from (\ref{cyllip}), for every
open set $E \subset B^{(k)}_{\Lambda/12}(\0)$, and open interval $I\subset [-\Lambda^2/144, \Lambda^2/144]$, we have \begin{equation}\label{stripsmeas}\sigma(\mathbb R^{n-k} \times E \times I) 
= \sigma([-1,1]^{n-k} \times E \times I) = \mathcal H^k(E)\gamma(I).\end{equation}
Thus, for each $X'' \in B^{(k)}_{\Lambda/12}(\0)$ and $t \in [-\Lambda^2/144,\Lambda^2/144]\cap \supp(\gamma)$, there exists (at least one) $X' \in [-1,1]^{n-k}$ such that $(X',X'',t) \in \supp(\sigma).$

Choose $(X',X'',t) \in ([-1,1]^{n-k} \times B_{\Lambda/24}^{(k)}(0) \times [-\Lambda^2/288,\Lambda^2/288]) \cap \supp(\sigma)$ (in particular, using \ref{stripsmeas}, $t \in \supp(\gamma)$).  Denote by $$E_{\eps}(\zeta'',s) = [-2,2]^{n-k}\times B_{\eps}^{(k)}(\zeta'')\times (s-\epsilon,s+\epsilon).$$  Choose $u \in \supp(\gamma)\cap [-\Lambda^2/288,\Lambda^2/288]$.  Then, with $Z'' \in B^{(k)}_{\Lambda/24}(0)$, applying \eqref{nuvamp} with (Lipschitz approximations of) the kernel 
$$\chi_{ _{E_{\eps}(Z'',t+u)}} + \chi_{ _{E_{\eps}(-Z'',t+u)}},$$ we obtain, for $\epsilon$ sufficiently small
\begin{multline*}
\int \big(X'-Y')\big(\chi_{ _{E_{\eps}(Z'',t+u)}}(X'-Y',X''-Y'',t-s)
\\ + \chi_{ _{E_{\eps}(-Z'',t+u)}}(X'-Y',X''-Y'',t-s)\big) d\sigma(Y'',Y',s)
\\=\int_{E_{\eps}(X''-Z'',u)} (X'-Y')d\sigma(Y',Y'',s) + \int_{E_{\eps}(X''+Z'',u)}(X'-Y')d\sigma(Y',Y'',s) = 0.
\end{multline*}
Hence, using (\ref{stripsmeas}),
\begin{equation}\begin{split}X' = \frac{1}{2\mathcal{H}^{k}(B^{(k)}_\eps(0))\gamma ((u-\eps, u+\eps))}\bigg(&\int_{E_{\eps}(X''+Z'',u)} Y' d\sigma(Y',Y'',s)\\& + \int_{E_{\eps}(X''-Z'', u)} Y' d\sigma(Y',Y'',s)\bigg).\end{split}\end{equation}
Notice that if $\tilde{X'}$ is another point such that $(\tilde{X'},X'',t) \in \mathcal S \cap \supp(\sigma)$, then $\tilde{X'}$ satisfies the same inequality above, and hence $X' = \tilde{X'}$.  Hence,
$\supp(\sigma) \cap ([-1,1]^{n-k}\times B^{(k)}_{\En/24}(0)\times [-\En^2/288, \En^2/288])$ is a subset of a graph $\{(A(X'',t),X'',t)\}_{(X'',t) \in \mathbb R^k \times \mathbb R}$.  But then we also have (letting $\eps$ tend to zero)
\begin{equation}\label{phiaffine}A(X'',t) = \frac{1}{2} \bigl(A(X''+Z'',u) + A(X''-Z'',u)\bigl)\end{equation}
for every $X'', Z''\in B^{(k)}_{\En/24}(\0)$ and $t,u\in [-\En^2/288, \En^2/288] \cap \supp(\gamma)$.  \ref{phiaffine} tells us two things.  First, taking $t=u$, we see that for every $t \in \supp(\gamma) \cap [-\Lambda^2/288,\Lambda^2/288]$, $A(\cdot,t)$ is an affine functions in the spatial components.  But, we also will have that the $A(X'',t)=A(X'',u)$ for all $t,u \in \supp(\gamma) \cap [-\Lambda^2/288,\Lambda^2/288]$, so we can take $A$ to be an affine functions which is independent of time.  In other words, there is an affine function independent of time $A:B^{(k)}_{\Lambda^2/288}(0) \times \mathbb R \to [-1,1]^{n-k}$ such that $\supp(\sigma)\cap \big([-1,1]^{n-k}\times B^{(k)}_{\En/24}(0)\times [-\En^2/288,\En^2/288]\big)$ equals
$$\big\{(A(X''),X'',t): (X'',t)\in B^{(k)}_{\En/24}(0)\times \big([-\En^2/288,\En^2/288]) \cap \supp(\gamma)\big)\big\}
$$
Now select $\En$ so that 
$$[-1,1]^{n-k}\times B^{(k)}_{\En/24}(0)\times [-\En^2/288,\En^2/288]\supset B_1(\0).
$$
We have
$$\int_{B_1(\0)} |X'-A(X'',t)|^2 d\sigma(X,t)=0, $$
so
$$\int_{B_1(\0)\cap \{|X'|\leq C_1\beta_{\ell}\}} \frac{|X'-\beta_\ell A(X'',t)|^2}{\beta_\ell^2}d\mu_\ell\circ(\mathcal{R}^{(\ell)})^{-1}(X',X'',t)\to 0.
$$
If we define $\wt{L}_{\ell} = (\mathcal{R}^{\ell})^{-1}\{(\beta_{\ell}A(X'',t), X'', t)\}$, then as $L_{\ell}$ passes close to the origin
$$\int_{B_1(\0)\cap L_{\ell, C_1\beta_{\ell}}} \frac{\dist(\X, \wt L_{\ell})^2}{\beta_\ell^2}d\mu_\ell(\X)\to 0.
$$
The display above implies that, for large $\ell$,  $\wt{L}_{\ell}\cap B_2(\0)$ lies in the $C_1\beta_{\ell}$-neighbourhood of $L_{\ell}\cap B_2(\0)$.  Hence, (\ref{notmuchoutsidestrip}) ensures that
$$\int_{B_1(\0) \setminus L_{\ell,C_1\beta_{\ell}}} \frac{\text{dist}(\X,\wt{L}_{\ell})^2}{\beta_{\ell}^2}
d\mu_{\ell} \xrightarrow{\ell \rightarrow \infty} 0.$$
Hence,
$\frac{\beta_{\mu_\ell}(\0,1)}{\beta_{\ell}} \to 0$ as $\ell\to \infty$.

However, this statement contradicts the inequality (\ref{enbeta}) if $\ell$ is taken sufficiently large.  The proposition is proved.

\section{Vampiric measures}\label{vampsec}

In light of Theorem \ref{step1thm}, the task of proving Theorems \ref{parabolicthm} and \ref{heisenbergthm} have been reduced to proving Carleson packing conditions on the collection of cubes $E\in \dy(\mu)$ with noticeable $\alpha_{\mu, k}(E)$ coefficient.  These theorems are provided by studying \emph{vampiric measures}.  The majority of the analysis takes place in the parabolic setting.  The situation with the Heisenberg group will then be reduced to the parabolic case by appealing to the analysis in Orponen \cite{Orp}.  It is important to note that we are not only interested in the support of a vampiric measure but also its distribution.

We will call a Borel measure $\mu$ on $\mathbb R^{n+1}$ \emph{vampiric}\footnote{These measures correspond to the local symmetry condition introduced by David and Semmes \cite{DS1}, and so it might be natural to refer to them as symmetric measures.  However, Mattila has already introduced a (wider) class of measures in the context of understanding the geometric consequences of the existence of principal value singular integrals called symmetric measures, so we decided not to use this terminology.} if 
$$\psi \ast \mu (\X) = \int_{\R^{n+1}}\psi(\Y^{-1}\cdot \X)d\mu(\Y)=0 \text{ for every }\X\in\supp \mu$$ for every $\psi \in \Lipodd(\mathbb R^{n+1})$.  

By definition we observe that the weak limit of vampiric measures is a vampiric measure.

Let's first derive an elementary consequence of the vampiric measure condition.  For $\X = (X,t)\in \R^{n+1}$, put $\overline{\X} = (-X,t)$.  If $U\subset \R^{n+1}$ is a bounded open set, then its characteristic function $\chi_U$ can be pointwise approximated by a sequence $\varphi_n\in \Lip_0(U)$ with $\varphi_n\leq \chi_U$.  Therefore, $f=\chi_U - \chi_{\overline{U}}$ can be pointwise approximated by $\psi_n(\X) = \varphi_n(\X) - \varphi_n(\overline{\X})$ with $|\psi_n(\X)|\leq |f|.$ It therefore follows that if $\mu$ is vampiric, then
$$\mu(\{\Y:\Y^{-1}\cdot \X\in U\}) = \mu(\{\Y:\Y^{-1}\cdot \X\in \overline{U}\}),$$
or,
$$\mu(\X\cdot \{\Z^{-1}:\Z\in U\}) = \mu(\X\cdot \{\Z^{-1}:\Z\in \overline{U}\}).$$
Given then the operation $\overline{\X}$ is a group homomorphism of the parabolic space and the Heisenberg group, we conclude that
\begin{equation}\label{vampcor}
    \mu(\X\cdot \{\Z^{-1}:\Z\in U\}) = \mu(\X\cdot \{\overline{\Z}^{-1}:\Z\in U\}).
\end{equation}

Finally, we record a simple result we'll use in compactness arguments. 

\begin{lem}\label{vampdense} Suppose $\mu$ satisfies
$\varphi*\mu\equiv 0\text{ on }\supp(\mu)$ for every $\varphi$ belonging to a dense subset of $\Lipodd(\R^{n+1})$.  Then $\mu$ is vampiric.
\end{lem}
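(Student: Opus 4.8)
The plan is to prove Lemma \ref{vampdense} by a routine density-plus-continuity argument. Fix $\mu$ satisfying $\varphi*\mu\equiv 0$ on $\supp(\mu)$ for every $\varphi$ in a dense subset $\mathcal{D}\subset\Lipodd(\R^{n+1})$, and fix an arbitrary $\psi\in\Lipodd(\R^{n+1})$ together with a point $\X\in\supp(\mu)$. The goal is to show $\psi*\mu(\X)=0$. Since $\psi$ has compact support, say $\supp(\psi)\subset B(\0,\Lambda)$, the quantity $\psi*\mu(\X)=\int_{\R^{n+1}}\psi(\Y^{-1}\cdot\X)\,d\mu(\Y)$ only involves the values of $\mu$ on the bounded set $\{\Y:\Y^{-1}\cdot\X\in B(\0,\Lambda)\}$, which has finite $\mu$-measure because $\mu$ is locally finite (indeed ADR).

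First I would pick a sequence $\varphi_j\in\mathcal{D}$ with $\varphi_j\to\psi$ in the appropriate topology on $\Lipodd(\R^{n+1})$ — by which I mean uniformly, with the supports of all $\varphi_j$ contained in a fixed ball $B(\0,2\Lambda)$ and with a uniform bound on $\|\varphi_j\|_{\Lip}$ (this is exactly what density in $\Lipodd$ provides, or can be arranged by multiplying by a fixed cutoff and truncating). Next I would estimate, for the same $\X\in\supp(\mu)$,
\begin{equation*}
|\psi*\mu(\X)-\varphi_j*\mu(\X)|\leq \int_{\{\Y:\,\Y^{-1}\cdot\X\in B(\0,2\Lambda)\}}|\psi(\Y^{-1}\cdot\X)-\varphi_j(\Y^{-1}\cdot\X)|\,d\mu(\Y)\leq \|\psi-\varphi_j\|_{\infty}\,\mu(K_\X),
\end{equation*}
where $K_\X=\{\Y:\Y^{-1}\cdot\X\in B(\0,2\Lambda)\}$ has finite $\mu$-measure, uniformly bounded in terms of the ADR constant and $\Lambda$ by \eqref{ADRdef} (since $K_\X$ is contained in a ball of radius comparable to $\Lambda$ about $\X$). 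As $\varphi_j*\mu(\X)=0$ for every $j$ by hypothesis, letting $j\to\infty$ gives $\psi*\mu(\X)=0$. Since $\X\in\supp(\mu)$ and $\psi\in\Lipodd(\R^{n+1})$ were arbitrary, $\mu$ is vampiric.

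This argument is entirely elementary and there is no real obstacle; the only point requiring a word of care is ensuring the approximating sequence can be taken with uniformly compactly supported members and uniformly bounded Lipschitz norms, so that the $L^1(\mu)$-convergence of $\varphi_j(\Y^{-1}\cdot\X)$ to $\psi(\Y^{-1}\cdot\X)$ is genuinely uniform in the (fixed) base point $\X$ — but this is built into what it means for $\mathcal{D}$ to be dense in $\Lipodd(\R^{n+1})$ in the relevant sense, and in any case it follows from the local finiteness of $\mu$ on the relevant bounded region. Thus the lemma holds.
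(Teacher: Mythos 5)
Your argument is correct and is precisely the routine density-plus-local-finiteness argument the paper has in mind (the lemma is stated there without proof as a simple recorded fact); the only point of care, which you already flag, is that the approximating sequence must have uniformly bounded supports so that $\mu(K_\X)<\infty$ can be used uniformly, and this is the intended meaning of density in $\Lipodd(\R^{n+1})$ as the paper uses it (approximants with supports in a fixed ball and bounded Lipschitz norms).
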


\subsection{The parabolic case}

In this section, we fix a vampiric measure $\mu$
which is $(k+2)$-ADR in the sense of \eqref{ADRdef} for some $k \in \{1,...,n-1\}$.

Our main goal of this section is to prove the following result:

\begin{prop}\label{vampprop} For every $\lambda>0$ there exist $M_1>0$ and $M_2$ such that the following holds.  Suppose that a vampiric measure $\mu$ is $(k+2)$-ADR, $\0\in \supp(\mu)$ and \begin{equation}\label{notplaneunit}
\alpha_{k,\mu}(\0,r)>0.
\end{equation}
Then, either
\begin{equation}\label{alphak1small}
\alpha_{k+1,\mu}(\0,M_1r)<\lambda
\end{equation}
or
\begin{equation}\label{alphak2small}
\alpha_{k+2,\mu}(\0,M_2r)<\lambda.
\end{equation}
\end{prop}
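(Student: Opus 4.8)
The plan is to reduce Proposition~\ref{vampprop} to a rigidity theorem for vampiric $(k+2)$-ADR measures in parabolic space, and then read off the conclusion.

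\textbf{Structure theorem.} I would first establish: if $\mu$ is vampiric and $(k+2)$-ADR on parabolic space, then $\mu\in\mathcal{M}_m$ for some $m\in\{k,k+1,\dots,\min(k+2,n)\}$; equivalently, up to a horizontal rotation and a translation, $\mu=\mathcal{H}^m|_W\times\nu$ where $W\subseteq\mathbb{R}^n$ is an affine $m$-plane and $\nu$ is a $(k+2-m)$-ADR measure on $(\mathbb{R},\sqrt{|\cdot|})$. The engine is the reflection identity~\eqref{vampcor}: specialised to the parabolic group law (where $\bar{\mathbf{Z}}^{-1}$ is the spatial reflection of $\mathbf{Z}^{-1}$), it says that for every $\mathbf{X}=(X,t)\in\supp\mu$ the measure $\mu$ is invariant under the spatial reflection $R_{\mathbf{X}}:(Y,s)\mapsto(2X-Y,s)$. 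Since a composition $R_{\mathbf{X}}\circ R_{\mathbf{X}'}$ is the spatial translation by $2(X-X')$, the measure $\mu$ is invariant under spatial translation by every vector in $2(\operatorname{Proj}_{\mathbb{R}^n}\supp\mu-\operatorname{Proj}_{\mathbb{R}^n}\supp\mu)$; together with the observation that invariance of $\mu$ under a spatial translation forces invariance of $\supp\mu$, this pins $\supp\mu$ into $W\times\mathbb{R}$ for an affine plane $W$ parallel to $W_0:=\operatorname{span}(\operatorname{Proj}_{\mathbb{R}^n}\supp\mu-\operatorname{Proj}_{\mathbb{R}^n}\supp\mu)$ and makes $\mu$ invariant under all spatial translations along $W_0$. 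Disintegrating $\mu$ over its projection to the time axis and using translation invariance along $W_0$ to identify the slices then yields $\mu=\mathcal{H}^m|_W\times\nu$ with $m=\dim W_0$, and the $(k+2)$-ADR property of $\mu$ is equivalent to $\nu$ being $(k+2-m)$-ADR, which in particular forces $0\le k+2-m\le 2$, i.e.\ $k\le m\le \min(k+2,n)$.

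\textbf{Conclusion.} Granting the structure theorem, let $\mu$ satisfy the hypotheses of Proposition~\ref{vampprop}, so $\mu\in\mathcal{M}_m$ for some such $m$. If $m=k$ then $\mu\in\mathcal{M}_k$, so taking $\omega=\mu$ in the definition~\eqref{alphak} gives $\alpha_{\mu,k}(B(\0,r))=0$, contradicting~\eqref{notplaneunit}; hence $m\ge k+1$. If $m=k+1$ then $\mu\in\mathcal{M}_{k+1}$, so $\alpha_{\mu,k+1}(B(\0,\rho))=0$ for every $\rho$ and~\eqref{alphak1small} holds; if $m=k+2$ (which forces $k+2\le n$) then $\mu\in\mathcal{M}_{k+2}$ and~\eqref{alphak2small} holds. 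Thus the conclusion holds already with $M_1=M_2=1$, the latitude in the statement being retained only for later convenience (and to absorb any loss if the structure theorem is ultimately produced in a merely quantitative form).

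\textbf{Main obstacle.} The substance is entirely in the structure theorem, and within it the step of converting the discrete family of reflection symmetries furnished by~\eqref{vampcor} into a genuine continuous translation symmetry of the correct dimension. Producing translations from reflections is formal; the delicate points, where Ahlfors regularity is essential, are (i) excluding a ``lattice part'' of the invariance group---one must observe that a spatially $\Lambda$-periodic $(k+2)$-ADR measure has full spatial support in the periodic directions, so reflections through a dense set of support points upgrade the symmetry to full translation invariance---and (ii) pinning the spatial dimension $m$ by noting that $(\mathbb{R},\sqrt{|\cdot|})$ carries no $s$-ADR measure with $s>2$. In the Heisenberg case the corresponding step is genuinely harder because the twist $\eta$ does not vanish, and there one invokes Orponen's analysis of vampiric measures instead; in the parabolic setting treated here $\eta\equiv0$, which is precisely what lets the reflection argument run cleanly.
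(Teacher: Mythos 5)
There is a genuine gap: your proposed structure theorem is false, and the paper's own Example (B) in Section \ref{examples} is a counterexample. The measure $\mu=\bigl[\sum_{j\in\mathbb{Z}}\mathcal{H}^{k}|_{L+je}\bigr]\times\nu$, with $\nu(t-r,t+r)\approx\min(r,r^2)$, is a vampiric $(k+2)$-ADR measure, yet it belongs to no $\mathcal{M}_m$: its spatial support is a lattice of parallel $k$-planes, not a single plane. The precise point where your argument breaks is step (i) of your ``main obstacle'': it is \emph{not} true that a spatially periodic $(k+2)$-ADR measure must have full spatial support in the periodic directions. In the example above the time measure changes its local homogeneity across the lattice-spacing scale ($\nu$ is $2$-regular at small scales and $1$-regular at large scales), which exactly compensates for the spatial gaps and preserves $(k+2)$-regularity. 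The reflection identity \eqref{vampcor} genuinely only yields what the paper extracts in Lemmas \ref{lattice} and \ref{planestructure}: a linear subspace $L\subset\supp(\mu)_0$ of dimension $m\in\{k,k+1,k+2\}$ together with a \emph{discrete} lattice of translates $L+\sum_j a_je_j$ in $\supp(\mu)_0$, with no continuous translation invariance in the $e_j$ directions.

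This also invalidates your conclusion that one may take $M_1=M_2=1$ and that the enlargement constants are mere convenience. For the lattice example, $\alpha_{\mu,k+1}(B(\0,r))$ is bounded below at scales comparable to the lattice spacing; smallness of the transport distance to $\mathcal{M}_{k+1}$ (or $\mathcal{M}_{k+2}$) is only recovered after zooming out by a large factor, so that the spacing becomes negligible relative to the scale. This is precisely what the paper's compactness arguments (Lemmas \ref{oneorthogonal} and \ref{twoorthogonal}) accomplish: if no large $M_1,M_2$ worked, blowing down a sequence of counterexamples would produce a limiting vampiric measure whose zero-time slice is a full $(m+1)$- or $(k+2)$-plane, whence by Lemma \ref{planeallslices} the limit is an exact product $\mathcal{H}^{m+1}|_P\times\nu$ (or $\mathcal{H}^{k+2}|_P\times\nu$), contradicting the lower bound on the $\alpha$-number inherited in the limit. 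So the quantitative, large-scale nature of the conclusion is essential, not an artifact; a correct proof must either follow this two-step route (discrete lattice structure, then a compactness/blow-down to get smallness of $\alpha_{\mu,k+1}$ or $\alpha_{\mu,k+2}$ at enlarged scales) or supply a quantitative replacement for it. Your final dimension-counting step ($m\le\min(k+2,n)$, exclusion of $m=k$ via $\alpha_{\mu,k}>0$) is fine, but it rests on the false rigidity statement.
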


This proposition holds with $\0$ replaced by any point in the support of the measure: all of the properties the measure in the statement are translation invariant.  This translation invariance property will be used routinely in this section.

The proof of this proposition will require a fair amount of preparatory work. Recall the definitions of cylinders $C_r(\X)$ and spatial cubes $\wt{Q}_r(X)$ from Section \ref{ballnotation}.

\begin{lem}\label{reflections}
    Suppose $(X,t), (Y,s) \in \supp(\mu)$.
    With $Z=Y-X$, the points $(X + 2\ell Z,t)$, $\ell \in \mathbb Z$,
    are contained in $\supp(\mu)$.
    Moreover, for all $0<r<\infty$ we have
    $$\mu(C_r(X+2\ell(Y-X),t)) = \mu(C_r(X,t)).$$
\end{lem}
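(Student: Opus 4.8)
The plan is to exploit the reflection symmetry \eqref{vampcor} of a vampiric measure. Fix $(X,t),(Y,s)\in\supp(\mu)$ and set $Z=Y-X$. The key idea is that \eqref{vampcor}, applied with a suitable small ball $U$ centered near $\mathbf Y^{-1}\cdot\mathbf X$, will let us ``reflect'' mass located near $(Y,s)$ through the point $(X,t)$ to produce mass near $(2X-Y,\cdot)$, and then iterate. First I would record the following consequence of \eqref{vampcor}: taking $U$ to be a cylinder $C_r(\mathbf W)$, we get $\mu\big(\mathbf X\cdot\{\mathbf Z^{-1}:\mathbf Z\in C_r(\mathbf W)\}\big)=\mu\big(\mathbf X\cdot\{\overline{\mathbf Z}^{-1}:\mathbf Z\in C_r(\mathbf W)\}\big)$. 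In the parabolic case the group law is just addition, so $\mathbf X\cdot\{\mathbf Z^{-1}:\mathbf Z\in C_r(\mathbf W)\}$ is the cylinder $C_r(\mathbf X-\mathbf W)$ (since $\overline{\mathbf Z}=(-W',s')$ maps to $\mathbf X\cdot\overline{(W',s')}^{-1}=(X+W',t-s')$, i.e. $C_r((X,t)+(W,-s))$ after a reflection in the spatial variable — one must track the spatial reflection carefully).

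Concretely, I would argue as follows. Since $(Y,s)\in\supp(\mu)$, for every $r>0$ we have $\mu(C_r(Y,s))>0$. Apply the reflection identity with $\mathbf X=(X,t)$ and with $U$ chosen so that $\mathbf X\cdot\{\mathbf Z^{-1}:\mathbf Z\in U\}$ is a cylinder around $(Y,s)$; then \eqref{vampcor} shows the reflected cylinder, which is centered at the point obtained by reflecting the spatial part of $(Y,s)$ through $X$ — that is, at $(2X-Y,\cdot)=(X-Z,\cdot)$ — has equal $\mu$-mass. This gives both that $(X-Z,\cdot)\in\supp(\mu)$ and the equality of cylinder masses. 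To land the reflected point at the correct \emph{time} coordinate $t$ (rather than some reflected time), one uses that in the parabolic setting $\eta\equiv 0$, so the norm is $\|(X,t)\|^2=|X|^2+|t|$ and the time variable behaves additively and symmetrically; choosing the center $\mathbf W$ appropriately places the reflected cylinder at $(X-Z,t)$ — and similarly, reflecting $(X,t)$ itself through $(Y,s)$ lands at $(X+2Z,t)$. Wait — I should be careful: reflecting $(Y,s)$ through $(X,t)$ spatially gives $2X-Y = X-Z$, so to reach $X+2lZ$ with $l\geq 1$ I need to reflect in the opposite direction, i.e. reflect the point $(X,t)$ through $(Y,s)$, obtaining $(2Y-X,t)=(X+2Z,t)\in\supp(\mu)$ with equal cylinder mass. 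Then iterate: having $(X+2Z,t),(X+4Z,t),\ldots\in\supp(\mu)$ with successive spacing $2Z$ allows reflection of each through the next, and likewise reflecting backwards produces $(X-2Z,t),(X-4Z,t),\dots$; an induction on $|l|$ gives the full family $(X+2lZ,t)$, $l\in\mathbb Z$.

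For the mass equality $\mu(C_r(X+2l(Y-X),t))=\mu(C_r(X,t))$: once $(X+2(l-1)Z,t)$ and $(X+2lZ,t)$ are both in $\supp(\mu)$, reflecting the point $(X+2(l-1)Z,t)$ through $(X+(2l-1)Z,t)$ — note this midpoint need not be in $\supp(\mu)$, so instead one reflects through a point of $\supp(\mu)$; the clean statement is that reflection of $(X+2mZ,t)$ through $(X+(2m+1)Z + \text{(time adjustment)})$... This is the point where I expect the main obstacle: the identity \eqref{vampcor} only reflects through points of $\supp(\mu)$, so one cannot directly reflect through the midpoint $(X+(2l-1)Z,\cdot)$ unless it lies in $\supp(\mu)$. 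The resolution is that one reflects through $(X,t)$ and through $(Y,s)$ only, and composes these two reflections: the composition $R_{(Y,s)}\circ R_{(X,t)}$ is a translation by $2Z$ in space preserving time, and each $R$ preserves $\mu$-mass of cylinders by \eqref{vampcor}, hence the composition does too; iterating the composition gives $\mu(C_r(X+2lZ,t))=\mu(C_r(X,t))$ for all $l\in\mathbb Z$ and simultaneously that all these centers lie in $\supp(\mu)$ (since $\mu$-positivity of every cylinder around a point characterizes membership in $\supp(\mu)$). So the plan is: (i) establish the single-reflection identity from \eqref{vampcor} in the parabolic coordinates, being careful about the spatial sign flip and the time coordinate; (ii) compose two reflections to get a mass-preserving, support-preserving translation by $2Z$; (iii) iterate. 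The main subtlety is bookkeeping the spatial reflection $X\mapsto -X$ inside \eqref{vampcor} versus the geometric reflection through a point, and confirming the time coordinate is genuinely fixed at $t$ throughout.
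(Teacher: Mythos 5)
Your proposal is correct, and it rests on the same key identity as the paper: from \eqref{vampcor} one extracts that for any $(X,t)\in\supp(\mu)$, $\mu\big((X,t)+C_r(\xi,\tau)\big)=\mu\big((X,t)+C_r(-\xi,\tau)\big)$ (this is \eqref{measconv} in the paper), i.e.\ spatial reflection of cylinder centers through $X$, with the time coordinate untouched, preserves the $\mu$-mass of \emph{every} cylinder, not only those centered on $\supp(\mu)$. Where you differ is in the iteration. The paper runs an induction in which, at step $k$, it reflects through the newly obtained support point $(Y+2k(Y-X),s)$, and so must carry both families $(X+2\ell Z,t)$ and $(Y+2\ell Z,s)$ in the induction hypothesis. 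You instead compose the two reflections through the fixed points $(X,t)$ and $(Y,s)$: since each preserves all cylinder masses, the composition $(W,u)\mapsto (W+2Z,u)$ is a mass-preserving translation, and iterating it and its inverse immediately gives $\mu(C_r(X+2\ell Z,t))=\mu(C_r(X,t))>0$ for all $\ell\in\mathbb Z$ and $r>0$, hence also the support statement. This buys slightly cleaner bookkeeping (no intermediate support points are needed), and it works precisely because the reflection identity applies to arbitrary cylinders --- the same fact the paper exploits inside its induction step. One remark: your mid-argument worry about ``landing the reflected point at time $t$'' by choosing $\mathbf W$ suitably is a red herring; as your own computation of \eqref{vampcor} shows, the reflection never alters the time coordinate of the reflected cylinder (a neighborhood of $(Y,s)$ reflected through $X$ lands at $(2X-Y,s)$, at time $s$), and in your final composition argument all cylinders stay at time $t$, so the conclusion is unaffected.
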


\begin{proof}

We first notice that, for any cylinder $C_r(\xi,\tau)$ and $(X,t) \in \supp(\mu)$
\begin{equation}\label{measconv}
    \mu((X,t)+C_r(\xi,\tau)) = \mu((X,t)+C_r(-\xi,\tau))
\end{equation}
This is a consequence of (\ref{vampcor}) with $U = C_r(-\xi,-\tau)$. 

With a view to arguing by induction, consider the following statement: Suppose that $j\in \{0,1,2,\dots\,\}$ and that $(X+2\ell(Y-X),t)\in \supp(\mu)$ and $(Y+2\ell(Y-X), s)\in \supp(\mu)$ for every $\ell\in \mathbb{Z}$ with $|\ell|\leq j$, and $\mu(C_r(X+2l(Y-X),t)) = \mu(C_r(X,t))$ for $\ell\in \mathbb{Z}$ with $|\ell|\leq j$. 

This statement holds trivially for $j=0$.  Suppose now it holds for some fixed $j$, then
$$C_r(X+2(j+1)(Y-X),t)=(Y+2j(Y-X),s) + C_r((Y-X),t-s),
$$
so equation (\ref{measconv}) with $(X,t)$ replaced by $(Y+2j(Y-X),s)\in \supp(\mu)$, $\xi=Y-X$, and $\tau= t-s$, yields
\begin{align*}
\mu (C_r(X+2(j+1)(Y-X),t)) 
&= \mu((Y+2j(Y-X),s) + C_r((X-Y),t-s)) \\
&= \mu (C_r(X+2j(Y-X),t) )>0.
\end{align*}
But now
$$C_r(X+2(j+1)(Y-X),t) = (X,t)+C_r(2(j+1)(Y-X),0),$$
so employing (\ref{measconv}), we deduce that
$\mu(C_r(X+2(j+1)(Y-X),t))=\mu(C_r(X-2(j+1)(Y-X),t))=\mu(C_r(X,t))>0.$
Interchanging the roles of $(X,t)$ and $(Y,s)$ in the above argument now yields that $(Y\pm 2(j+1)(Y-X),s)\in \supp(\mu)$.  This completes the induction step, and proves the lemma.
\end{proof}

\begin{lem}\label{reflections2}
    Suppose $(X,t), (Y,t) \in \supp(\mu)$.
    With $Z=Y-X$, the points $(X + \ell Z,t)$, $\ell \in \mathbb Z$,
    are contained in $\supp(\mu)$.
\end{lem}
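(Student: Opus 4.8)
The plan is to deduce Lemma \ref{reflections2} directly from Lemma \ref{reflections}, using crucially that here the two base points $(X,t)$ and $(Y,t)$ share the \emph{same} time coordinate. Lemma \ref{reflections} only produces the \emph{even} multiples of $Z=Y-X$ (because each step of its induction is a reflection, and a reflection of a point about another changes the offset by an even multiple of $Z$), but applying it twice — once about $X$ and once about $Y$ — will interleave two arithmetic progressions of even multiples into the full progression, provided the two progressions live in the same time slice, which is exactly the hypothesis $s=t$ here.

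Concretely, first I would apply Lemma \ref{reflections} to the pair $(X,t),(Y,t)$ as given, obtaining $(X+2lZ,t)\in\supp(\mu)$ for every $l\in\mathbb Z$. Then I would apply Lemma \ref{reflections} a second time with the roles of the two points interchanged: the hypotheses of that lemma are symmetric in its two input points, so taking $(Y,t)$ first and $(X,t)$ second, with relevant vector $X-Y=-Z$, yields $(Y+2l(X-Y),t)=(Y+2lZ,t)\in\supp(\mu)$ for all $l\in\mathbb Z$. (One could also read this off directly from the induction in the proof of Lemma \ref{reflections}, which simultaneously tracks reflections of both base points.) Since $Y+2lZ=X+(2l+1)Z$ and all of these points lie in the slice $\{\text{time}=t\}$, combining the two families gives $(X+mZ,t)\in\supp(\mu)$ for every $m\in\mathbb Z$: the even $m$ from the first application, the odd $m$ from the second. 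This is precisely the claim.

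I do not expect a genuine obstacle in this argument; it is a short corollary. The only points requiring care are that Lemma \ref{reflections} is legitimately symmetric in its two inputs (so swapping them is allowed) and that the equality of the time coordinates is what permits the even-multiple progressions about $X$ and about $Y$ to merge — if the two base points had different times this merging would fail, which is why the earlier lemma only states the even-multiple conclusion. If desired, the same two applications also give the measure identity $\mu\big(C_r(X+lZ,t)\big)=\mu\big(C_r(X,t)\big)$ for all $l$, though this is not needed for the statement as phrased.
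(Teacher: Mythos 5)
Your proposal is correct and takes essentially the same route as the paper: the paper also obtains the even multiples from Lemma \ref{reflections} and the odd ones by noting $(Y+2\ell Z,t)=(X+(2\ell+1)Z,t)\in\supp(\mu)$, which is exactly your second application of the lemma with the two base points swapped (legitimate because here $s=t$). One tiny nitpick: applying Lemma \ref{reflections} to the pair $(Y,t),(X,t)$ literally gives $(Y+2l(X-Y),t)=(Y-2lZ,t)$ rather than $(Y+2lZ,t)$, but since $l$ ranges over all of $\mathbb Z$ the two families coincide, so nothing is lost.
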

\begin{proof}
For $\ell\in \mathbb{Z}$, Lemma \ref{reflections} ensures that
$(X+2\ell(Y-X),t)\in \supp(\mu)$  and $(Y+2\ell(Y-X),t)=(X+(2\ell+1)(Y-X),t)\in \supp(\mu).$
This proves the lemma.
\end{proof}

We will need the following variant of Lemma \ref{reflections}:

\begin{lem}\label{reflections3} 
    Suppose $(X,t),(Y,s) \in \supp(\mu)$, $r>0$, and $I$ is an open interval in $\mathbb R$.  With $\wt Q_r(Z):= \{Z' \in \mathbb R^n: \|Z-Z'\|_\infty < r\}$, 
    for each $\ell \in \mathbb Z$ we have 
    $$\mu(\wt Q_r(X + 2\ell(Y-X))\times I) = \mu(\wt Q_r(Y) \times I).$$
\end{lem}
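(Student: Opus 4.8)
The plan is to mimic the inductive argument of Lemma~\ref{reflections}, but now tracking the measure of spatial cubes crossed with a fixed time interval $I$ rather than cylinders. The key structural input is again the reflection identity coming from the vampiric property: for any spatial cube $\wt Q_r(\xi)$, time interval $I$, and $(X,t)\in\supp(\mu)$,
\begin{equation}\label{measconvcube}
\mu\big((X,t) + (\wt Q_r(\xi)\times I)\big) = \mu\big((X,t) + (\wt Q_r(-\xi)\times I)\big),
\end{equation}
which follows from \eqref{vampcor} applied with $U = \wt Q_r(-\xi)\times(-I)$ (using $\overline{\wt Q_r(\xi)\times I} = \wt Q_r(-\xi)\times I$ and the fact that $\eta\equiv 0$ in parabolic space, so translation by a spatial vector commutes with the set $\wt Q_r(\xi)\times I$ cleanly). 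Here I would be slightly careful about the exact meaning of ``$+ I$'' — since $I$ is an arbitrary open interval, not necessarily centered at the relevant time, I would either phrase the identity in terms of the reflection $\X\mapsto\overline\X$ directly, or reduce to centered intervals by translating in time, which is harmless because the reflection fixes the time coordinate.

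Next I would set $Z = Y-X$ and run the induction on $k\ge 0$ with the statement: $(X+2\ell(Y-X),t),\,(Y+2\ell(Y-X),s)\in\supp(\mu)$ for $|\ell|\le k$ (this part is already supplied by Lemma~\ref{reflections}), and $\mu(\wt Q_r(X+2\ell(Y-X))\times I) = \mu(\wt Q_r(Y)\times I)$ for... — wait, I should align the base case carefully: Lemma~\ref{reflections} gives equality of the \emph{cylinder} measures at $X$ and its translates, but here I want the cube-times-interval measure to be constant and equal to the value at $Y$, not at $X$. So the cleanest route is: first establish by the same telescoping argument (using \eqref{measconvcube} with the base point shifted to $(Y+2k(Y-X),s)\in\supp(\mu)$) that $\mu(\wt Q_r(X+2(k+1)(Y-X))\times I) = \mu(\wt Q_r(X+2k(Y-X))\times I)$, exactly as in the proof of Lemma~\ref{reflections} with cylinders replaced by cubes-times-$I$; then, interchanging the roles of $X$ and $Y$, get $\mu(\wt Q_r(Y+2\ell(Y-X))\times I) = \mu(\wt Q_r(Y)\times I)$ for all $\ell$; and finally observe that $Y = X + 2\cdot\frac12(Y-X)$ does not help directly, so instead note that the chain of equalities from $X$ to $Y$ is built by combining the two families. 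Concretely, $\mu(\wt Q_r(X)\times I)$ and $\mu(\wt Q_r(Y)\times I)$ get linked because $\mu(\wt Q_r(X+2\ell(Y-X))\times I)$ is independent of $\ell$ and, by reflecting about the point $Y$ instead of $X$ (legitimate since $(Y,s)\in\supp(\mu)$), the value at $\ell=1$ (i.e.\ at $2Y-X$) must also equal the value at $\ell=0$ for the $Y$-centered family, forcing all the constants to coincide. I would write this out as the symmetric two-step telescoping already present in Lemma~\ref{reflections}, just carrying the extra $\times I$ factor through unchanged.

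The main obstacle — really the only non-routine point — is making sure \eqref{measconvcube} genuinely holds for an \emph{arbitrary} interval $I$ and arbitrary spatial cube, i.e.\ that the reflection $\X\mapsto\overline\X = (-X,t)$ maps $\wt Q_r(\xi)\times I$ onto $\wt Q_r(-\xi)\times I$ after the appropriate group translation, and that \eqref{vampcor} applies to this (non-centered, non-symmetric in time) set. Since $\overline{(\cdot)}$ is the identity on the time coordinate, $\overline{\wt Q_r(\xi)\times I} = \wt Q_r(-\xi)\times I$ with no change to $I$, so \eqref{vampcor} with $U$ the reflected set gives exactly \eqref{measconvcube}; the only thing to double-check is that $U$ is a bounded open set, which it is. Everything else is a verbatim repeat of the telescoping in Lemma~\ref{reflections}, with $C_r(\cdot)$ replaced by $\wt Q_r(\cdot)\times I$ throughout, and the support statements for the iterated points are already granted by Lemma~\ref{reflections}. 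So I expect the proof to be short: cite \eqref{vampcor} / Lemma~\ref{reflections} to get \eqref{measconvcube} and the support information, then run the induction exactly as before.
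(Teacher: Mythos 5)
Your first two steps coincide with the paper's own argument: the paper likewise applies \eqref{vampcor} with $U=\wt Q_r(-\xi)\times(-I)$ to get $\mu((Y,s)+\wt Q_r(\xi)\times I)=\mu((Y,s)+\wt Q_r(-\xi)\times I)$, handles the non-centred interval by replacing $I$ with $I-s$, and then telescopes using the support points supplied by Lemma~\ref{reflections}. What this yields (and all that the paper's proof yields) is $\mu(\wt Q_r(X+2l(Y-X))\times I)=\mu(\wt Q_r(X)\times I)$ for every $l\in\mathbb Z$, together with the analogous statement for the family $\{Y+2l(Y-X)\}$ with the value at $Y$. That version is also exactly what is used later: in Lemma~\ref{planeallslices} the two cube centres lie on a plane $L$ contained in the support, so their \emph{midpoint} is again a support point and one applies the lemma with the midpoint in the role of $Y$. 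In other words, the right-hand side in the printed statement should be read as $\mu(\wt Q_r(X)\times I)$; the ``relabelling'' in the paper's proof only ever moves you within the family $X+2\mathbb Z(Y-X)$.

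The step where you ``link the two families'' is a genuine gap, and it cannot be repaired. Reflecting about the spatial point $Y$ (i.e.\ using the support point $(Y,s)$ in the reflection identity) sends the cube centred at $2Y-X$ to the cube centred at $2Y-(2Y-X)=X$, \emph{not} to the cube centred at $Y$. More generally, every reflection at your disposal is about a point of the form $X+m(Y-X)$ (for some time coordinate), and such a reflection maps $X+j(Y-X)$ to $X+(2m-j)(Y-X)$, preserving the parity of $j$; so the constant value along the even family and the constant value along the odd family are never forced to agree by this argument. Indeed the statement with $\wt Q_r(Y)$ on the right is false for general ADR vampiric measures: take $\mu=\bigl[\sum_{j\in\mathbb Z}a_j\,\mathcal H^k\vert_{L+je}\bigr]\times\nu$ as in example (B) of Section~\ref{examples}, but with the weights $a_j$ alternating between two distinct values $a\neq b$ according to the parity of $j$. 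Spatial reflection about any support point lying over $L+j_0e$ maps $L+je$ to $L+(2j_0-j)e$, which has the same parity, so this measure is still vampiric (and still ADR, with a slightly worse constant); yet for $X\in L$, $Y\in L+e$ and small $r$ one has $\mu(\wt Q_r(X)\times I)=a(2r)^k\nu(I)\neq b(2r)^k\nu(I)=\mu(\wt Q_r(Y)\times I)$. So you should stop at the identity you actually proved—equality along the family $\{X+2l(Y-X)\}$—which is what the paper proves and what the subsequent applications require.
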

\begin{proof}
    Appealing to (\ref{vampcor}) with $U = Q_r(-\xi)\times (-I)$ yields
    \begin{align*}
    \mu((Y,s) + \wt Q_r(\xi) \times I) = \mu((Y,s) + \wt Q_r(-\xi)\times I).
    \end{align*}
    So, for an interval $I$, the previous display with $I-s$ playing the role of $I$ yields
    \begin{multline*}
        \mu(\wt Q_r(X+2(Y-X)) \times I ) =\mu((Y,s) + \wt Q_r(Y-X) \times (I-s)) \\
        = \mu((Y,s) + \wt Q_r(X-Y) \times (I-s)) = \mu(\wt Q_r(X) \times I).
    \end{multline*}
    Given that $(Y+2\ell(Y-X), s)\in \supp(\mu)$ for all $\ell\in \mathbb{Z}$ (see Lemma \ref{reflections}), the statement of the lemma now follows (for instance) from repetition of the induction step in the proof of Lemma \ref{reflections}.
\end{proof}

\begin{lem}\label{lattice}
Given an integer $\ell>0$ and $(\ell+1)$ points
$(Y_0,s_0),...,(Y_{\ell},s_{\ell}) \in \supp(\mu)$, for any
$a_1,...,a_{\ell} \in 2^\ell \mathbb Z$ the point
$$\Bigl(Y_0+\sum_{i=1}^\ell a_i(Y_i-Y_0),s_0\Bigl)$$
is in $\supp(\mu)$.

\end{lem}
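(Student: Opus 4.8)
The plan is to induct on $l$, using Lemma \ref{reflections2} (the one-dimensional lattice generation at fixed time) together with the group translation structure of the problem. The base case $l = 1$ is almost immediate: given $(Y_0, s_0), (Y_1, s_1) \in \supp(\mu)$, Lemma \ref{reflections} tells us that $(Y_0 + 2m(Y_1 - Y_0), s_0) \in \supp(\mu)$ for every $m \in \mathbb Z$; since $a_1 \in 2\mathbb Z$, writing $a_1 = 2m$ gives the claim. (In fact one would prefer to record a slightly stronger base statement: for any two points of $\supp(\mu)$ at a \emph{common} time level one has the full lattice $(Y_0 + a(Y_1 - Y_0), s_0) \in \supp(\mu)$ for all $a \in \mathbb Z$ by Lemma \ref{reflections2}, whereas at different time levels one only gets even multiples from Lemma \ref{reflections}; the factor $2^l$ in the statement is exactly what absorbs these repeated halvings.)

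For the inductive step, assume the statement holds for $l - 1$, and suppose we are given $(Y_0, s_0), \dots, (Y_l, s_l) \in \supp(\mu)$ and coefficients $a_1, \dots, a_l \in 2^l \mathbb Z$. First I would apply Lemma \ref{reflections} to the pair $(Y_0, s_0)$, $(Y_l, s_l)$ to generate the point $P := (Y_0 + a_l(Y_l - Y_0), s_0) \in \supp(\mu)$ (this uses only $a_l \in 2\mathbb Z$, which holds since $2^l \mathbb Z \subset 2\mathbb Z$). Now $P$ lies at time level $s_0$. Next, for each $i \in \{1, \dots, l-1\}$, I would similarly use Lemma \ref{reflections} on the pair $(Y_0, s_0)$, $(Y_i, s_i)$ to produce translated copies, but more efficiently: consider the $l-1$ points $(Y_1, s_1), \dots, (Y_{l-1}, s_{l-1})$ together with the anchor $P$, and apply the inductive hypothesis. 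Since $a_1, \dots, a_{l-1} \in 2^l\mathbb Z \subset 2^{l-1}\mathbb Z$, the induction gives
$$\Bigl(Y_0 + a_l(Y_l - Y_0) + \sum_{i=1}^{l-1} a_i\bigl(Y_i - (Y_0 + a_l(Y_l - Y_0))\bigr), s_0\Bigr) \in \supp(\mu).$$
The difficulty is that this is not quite the desired point: the inner differences are taken relative to $P$ rather than $Y_0$. The resolution is to observe that $Y_i - P = (Y_i - Y_0) - a_l(Y_l - Y_0)$, so the displayed point equals
$$\Bigl(Y_0 + \sum_{i=1}^{l-1} a_i(Y_i - Y_0) + a_l\bigl(1 - \textstyle\sum_{i=1}^{l-1} a_i\bigr)(Y_l - Y_0), s_0\Bigr),$$
which has the wrong coefficient on $(Y_l - Y_0)$. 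To correct this I would then peel off one more reflection: applying Lemma \ref{reflections} again along the $(Y_l - Y_0)$ direction — legitimate because the point just constructed and a suitable translate of $Y_l$ both lie in $\supp(\mu)$ at time $s_0$, so Lemma \ref{reflections2} applies with all its integer multiples — allows me to adjust the $(Y_l - Y_0)$-coefficient by any integer, in particular back to $a_l$. Here is where the extra factor of $2$ in $2^l\mathbb Z$ versus $2^{l-1}\mathbb Z$ gets consumed: the correction term $a_l \sum_{i=1}^{l-1} a_i$ is an integer (indeed divisible by a high power of $2$), so it can be cancelled.

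The main obstacle, as the sketch above indicates, is bookkeeping: naively anchoring the induction at a shifted point introduces cross-terms $a_i a_l (Y_l - Y_0)$ that must be removed, and one has to be careful that every reflection invoked is legitimate — i.e. that the two points it is applied to genuinely lie in $\supp(\mu)$ and (for the stronger odd-multiple version) at a common time level. The cleanest route is probably to prove the lemma by building up the point coordinate-by-coordinate: start with $(Y_0, s_0)$, and for $i = 1, \dots, l$ successively move it by $a_i(Y_i - Y_0)$, at each stage checking that the current point and the original $(Y_i, s_i)$ (which differ in time by $s_i - s_0$) allow a Lemma \ref{reflections}-type even-multiple shift. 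Since each step only ever requires $a_i$ to be an even integer, and we are granted $a_i \in 2^l\mathbb Z$, there is plenty of room; the power $2^l$ is a generous (non-optimal) bound that makes the induction go through without tracking the exact parity loss at each stage. I would present the final proof in the coordinate-by-coordinate form to avoid the cross-term cancellations entirely.
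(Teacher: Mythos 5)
Your base case is fine, and you have correctly located the difficulty (the cross terms created by anchoring reflections at a point other than $Y_0$), but neither of your two proposed resolutions actually closes it. The ``correction'' step in your first route is unjustified: after constructing $W=\bigl(Y_0+\sum_{i<l}a_i(Y_i-Y_0)+a_l(1-\sum_{i<l}a_i)(Y_l-Y_0),s_0\bigr)$, Lemma \ref{reflections2} only lets you translate $W$ along the direction $V-W$ for some \emph{other} point $V\in\supp(\mu)$ at time $s_0$, and the points you actually have at time $s_0$ on the $(Y_l-Y_0)$ line are of the form $Y_0+2m(Y_l-Y_0)$; their differences from $W$ are not parallel to $Y_l-Y_0$ unless the other coefficients vanish, so you cannot ``adjust the $(Y_l-Y_0)$-coefficient by any integer.'' The coordinate-by-coordinate route you propose as the clean version has the same defect, not a lesser one: Lemma \ref{reflections} applied to the current point $W_{i-1}$ and $(Y_i,s_i)$ shifts along $Y_i-W_{i-1}$, not along $Y_i-Y_0$, so each step reads $W_i=(1-b_i)W_{i-1}+b_iY_i$ with $b_i$ even, and matching the target coefficients forces $a_j=b_j\prod_{i>j}(1-b_i)$; since $1-b_i$ is odd and generally does not divide $a_j$, the required $b_j$ need not be integers, so divisibility of the $a_i$ by $2^l$ does not rescue this scheme. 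In short, the claim that this version ``avoids the cross-term cancellations entirely'' is false.

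The missing idea is the halving/midpoint trick, which is how the paper runs the induction (and is the real reason for the factor $2^l$). Normalize $(Y_0,s_0)=(0,0)$. Given $a_1,\dots,a_{j+1}\in 2^{j+1}\mathbb Z$, the coefficients $a_i/2$ lie in $2^{j}\mathbb Z$, so the inductive hypothesis gives $\bigl(\tfrac12\sum_{i=1}^{j}a_iY_i,0\bigr)\in\supp(\mu)$; separately, Lemma \ref{reflections} applied to $(0,0)$ and $(Y_{j+1},s_{j+1})$ gives $(-a_{j+1}Y_{j+1},0)\in\supp(\mu)$. A single reflection of the latter point through the former (both at time $0$) produces $2\cdot\tfrac12\sum_{i=1}^{j}a_iY_i-(-a_{j+1}Y_{j+1})=\sum_{i=1}^{j+1}a_iY_i$, with no cross terms at all, because every coefficient is taken relative to the fixed origin and the point reflection through the half-sum reproduces the full sum exactly. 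I would rework your inductive step along these lines rather than trying to repair the correction argument.
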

\begin{proof}
We use induction.  Without
loss of generality, we may assume $(Y_0,s_0)=(0,0)$.  Notice that
the base case $\ell=1$ is a direct consequence of Lemma 
\ref{reflections}.

Assume the Proposition is known for $\ell=j$.  We will show it holds
for $\ell=j+1$.  Choose $a_1,...,a_{j+1} \in 2^{j+1} \mathbb Z$.  
By our induction hypothesis, the point 
$\bigl(\frac{1}{2}\sum_{i=1}^j a_iY_i,0\bigl)$ belongs to $\supp(\mu)$.  Moreover, by Lemma \ref{reflections}, 
$(-a_{j+1}Y_{j+1},0) \in \supp(\mu)$.  So, an application of 
Lemma \ref{reflections} yields that
$\bigl(\sum_{i=1}^{j+1} a_iY_i,0 \bigl) \in \supp(\mu),$ as required.
\end{proof}

\begin{lem}\label{orthogplan}
Suppose a (spatial) linear subspace $V\subset \R^n$ and 
$(X_0,t_0) \in \supp(\mu)$ are such that
$(V+X_0) \times \{t_0\} \subseteq \supp(\mu)$.
Then, for any $(Y,t_0) \in \supp(\mu)$, 
$(V+Y) \times \{t_0\} \subseteq \supp(\mu).$
\end{lem}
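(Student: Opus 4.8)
The plan is to read the hypothesis as a statement about a single time-slice. Set $S := \{X \in \R^n : (X,t_0)\in\supp(\mu)\}$; by assumption $X_0+V\subseteq S$ and $Y\in S$, and the goal is to show $Y+V\subseteq S$. The only input needed is the reflection principle of Lemma \ref{reflections2} (equivalently Lemma \ref{reflections} with $l=1$): if $a,b\in S$, then the point-reflection $2b-a$ of $a$ through $b$ again lies in $S$. Thus $S$ is invariant under reflecting any of its points through any other of its points.

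Concretely, I would proceed as follows. Fix an arbitrary $w\in V$; it suffices to show $Y+2w\in S$, since once this holds for every $w\in V$ the identity $2V=V$ yields $Y+V\subseteq S$, which is the desired conclusion. First reflect $Y$ through $X_0$: as $Y,X_0\in S$, Lemma \ref{reflections2} gives $2X_0-Y\in S$. Then reflect this point through $X_0+w$, which lies in $X_0+V\subseteq S$: Lemma \ref{reflections2} again yields
\[
2(X_0+w)-(2X_0-Y)=Y+2w\in S.
\]
Letting $w$ range over the linear subspace $V$ and using $2V=V$, we conclude $Y+v\in S$ for every $v\in V$, that is, $(V+Y)\times\{t_0\}\subseteq\supp(\mu)$. (Alternatively one could invoke Lemma \ref{lattice} with $l=2$ on the triple $Y,X_0,X_0+w$ to reach $Y+4w\in S$ in one step, again absorbing the constant via $4V=V$; the two-reflection argument is cleaner.)

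I do not expect a genuine obstacle here. The only point requiring (minor) care is the \emph{order} of the two reflections, chosen so that the composite map $a\mapsto 2(X_0+w)-(2X_0-a)$ is a \emph{translation} by $2w\in V$ rather than a reflection, and so that its image is the translate of $Y$ we want and not some other point of $S$; the use of $2V=V$ to absorb the factor of $2$ produced by each reflection is likewise routine. Note also that only the two points $X_0$ and $X_0+w$ of the plane $X_0+V$ are used, so the full strength of "$(V+X_0)\times\{t_0\}\subseteq\supp(\mu)$" enters only through the fact that $X_0+w\in\supp(\mu)$ for every $w\in V$.
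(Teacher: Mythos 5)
Your proof is correct and is essentially the paper's own argument: the paper likewise composes two point-reflections obtained from Lemma \ref{reflections2} (there, after normalizing $X_0=0$, one reflects $Y$ through $v/2\in V$ and then reflects $-v-Y$ through $0$), with the factor of $2$ absorbed by the linearity of $V$ exactly as in your use of $2V=V$. No gap; the two write-ups differ only in the choice and order of the reflection centers.
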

\begin{proof}
    Without loss of generality, suppose that $(X_0,t_0) = (0,0).$
    We first claim that, because $(Y,0) \in \supp(\mu)$, we have
    $(V-Y) \times \{0\} \subseteq \supp(\mu).$  To see this, fix $(v-Y,0) \in (V-Y) \times \{0\}$.  Then, since $(Y,0) = (v/2+(Y-v/2),0)\in \supp(\mu)$ and $(v/2,0)\in \supp(\mu)$, Lemma \ref{reflections2} implies that
    $$(v-Y,0) = (v/2 - (Y-v/2),0) \in \supp(\mu).$$  Finally, given $(v+Y,0) \in (V +Y) \times \{0\}$, we observe that $(0,0)$ and $(-v-Y,0)$ belong to $\supp(\mu)$.  Therefore, using Lemma \ref{reflections2} once again,
    $$(v + Y,0) = (0-(-v-Y),0) 
    \in \supp(\mu),$$
    as required.
\end{proof}

Analyzing time slices will play an important role in our analysis.  Let $\pi_{sp}$ be the projection of 
$\mathbb R^n \times \mathbb R$ onto the spatial component $\mathbb R^n$.  Given $t \in \mathbb R$ and $S \subset \mathbb R^{n+1}$, we let
$S_t = \pi_{sp}(S \cap (\mathbb R^{n} \times \{t\}))$.
In particular, for $t\in \R$,
$$\supp(\mu)_t = \{X\in \R^n: \,(X,t)\in \supp(\mu)\}$$

\begin{lem}\label{zerodistancelem2}
Suppose $(X_0,t_0) \in supp(\mu)$ and $V$ is a (spatial) linear 
subspace such that $(V + X_0) \times \{t_0\} \subseteq \supp(\mu)$.
If there exists a sequence of points
$\{(X_j,t_j)\}_{j \geq 1} \subset \supp(\mu)$
such that $X_j \notin V + X_0$ and
\begin{equation}\label{zerodistance2}
\lim_{j \rightarrow \infty}
\dist\big((V+X_0),X_j\big)=0,
\end{equation}
then there exists a unit vector $e$ orthogonal to $V$ such that 
$$\big( X_0 + \operatorname{span}(e,V)\big)\times \{t_0\}\subseteq
\supp(\mu).$$
\end{lem}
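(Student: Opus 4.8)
\textbf{Proof plan for Lemma \ref{zerodistancelem2}.}

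The plan is to extract a direction of approach from the sequence $\{(X_j,t_j)\}$ and then use the reflection/lattice machinery (Lemmas \ref{reflections}, \ref{reflections2}, \ref{lattice}, and \ref{orthogplan}) to promote that single direction into a full extra dimension of the support, lying in a single time slice. Without loss of generality I would first normalize so that $(X_0,t_0)=(\0,0)$, so that $V\times\{0\}\subseteq\supp(\mu)$ and we are given $X_j\notin V$ with $\dist(V,X_j)\to 0$. Writing $X_j = v_j + w_j$ with $v_j\in V$ and $w_j\perp V$, $w_j\neq 0$, the hypothesis says $|w_j|\to 0$. After passing to a subsequence, I would assume $w_j/|w_j|\to e$ for some unit vector $e\perp V$; the goal is to show $\operatorname{span}(e,V)\times\{0\}\subseteq\supp(\mu)$. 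Since we already know $V\times\{0\}\subseteq\supp(\mu)$ and $\supp(\mu)$ is closed, by Lemma \ref{orthogplan} it suffices to produce, for a dense set of scalars $\lambda\in\R$, a point $(\lambda e, 0)\in\supp(\mu)$ (then $(\lambda e + V)\times\{0\}\subseteq\supp(\mu)$ for each such $\lambda$, and closedness fills in the rest).

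The key mechanism is this: fix a target $\lambda e$. For each $j$, $(X_j,t_j)\in\supp(\mu)$ and $(v_j,0)\in V\times\{0\}\subseteq\supp(\mu)$. The problem is that $X_j$ and $v_j$ sit in different time slices ($t_j$ versus $0$), so I cannot directly reflect within a slice. Instead I would use Lemma \ref{reflections}: with $(X,t) = (v_j,0)$ and $(Y,s)=(X_j,t_j)$, the points $(v_j + 2l(X_j - v_j), 0) = (v_j + 2l w_j, 0)$ lie in $\supp(\mu)$ for all $l\in\mathbb Z$ — crucially these are all in the time slice $\{0\}$. Now subtract off the $V$-component: since $(v_j,0)\in\supp(\mu)$ as well, Lemma \ref{reflections2} applied within the slice $\{t=0\}$ to the pair $(v_j + 2l w_j, 0)$ and $(v_j,0)$ shows that $(v_j + m\cdot(2l w_j - 0)\cdots)$—more cleanly, I would first translate: the set $\{2l w_j : l\in\mathbb Z\}\times\{0\}$ can be shown to lie in $\supp(\mu)$ by combining the above with the fact that $V\times\{0\}\subseteq\supp(\mu)$ and Lemma \ref{reflections2} (using $(v_j,0)$ and $(v_j+2lw_j,0)$ to generate $(2lw_j,0) = (v_j + (2lw_j - v_j),0)$ via the point $(-v_j + 2lw_j,0)$, which lies in $(V+2lw_j)\times\{0\}$ by Lemma \ref{orthogplan} since $(2lw_j,0)$—hmm, this needs care). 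Rather than belabor the bookkeeping, the point is: integer multiples of $w_j$ (or at least $2^{\mathbb Z}$-multiples, which is all Lemma \ref{lattice} gives and all we need) in the slice $\{0\}$ belong to $\supp(\mu)$. Choosing $l = l_j$ so that $2l_j|w_j|$ is as close as possible to $|\lambda|$, we get points $(2l_j w_j, 0)\in\supp(\mu)$ (adjusting the sign of $l_j$ so this approximates $\lambda e$, using $w_j/|w_j|\to e$ and Lemma \ref{reflections2} to also reach the negatives), and $2l_j w_j \to \lambda e$ since $|w_j|\to 0$ forces $2l_j|w_j|\to|\lambda|$. By closedness of $\supp(\mu)$, $(\lambda e,0)\in\supp(\mu)$.

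The main obstacle, and the step requiring the most care, is the time-slice issue: the reflection lemmas either work across slices (Lemma \ref{reflections}, reflecting $(X,t)$ about $(Y,s)$ but landing back in slice $t$) or within a single slice (Lemmas \ref{reflections2}, \ref{orthogplan}), and I must chain them correctly so that the approximating points $2l_j w_j$ end up in the fixed slice $\{t=0\}$ and are genuinely scalar multiples of (something converging to) $e$, with the $V$-contamination $v_j$ removed. I would handle the removal of $v_j$ by invoking Lemma \ref{orthogplan} with base point $(2l_j w_j + v_j, 0)$ in place of $(X_0,t_0)$—but that requires knowing $(V + (2l_jw_j+v_j))\times\{0\}\subseteq\supp(\mu)$, which follows from Lemma \ref{orthogplan} applied to the known slab $V\times\{0\}$ together with the single point $(2l_jw_j + v_j,0)\in\supp(\mu)$—and then the point $(2l_j w_j, 0) = (2l_jw_j + v_j) + (-v_j)$ with $-v_j\in V$ lies in $\supp(\mu)$. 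A secondary subtlety is that Lemma \ref{lattice} only grants $2^l\mathbb Z$-combinations, so to get a dense set of scalars $\lambda$ I should let $j\to\infty$ (hence $|w_j|\to 0$) and allow $l_j\to\infty$; since $2l_j$ ranges over all even integers for fixed $j$, and $|w_j|\to 0$, the attainable values $2l_j|w_j|$ become dense in $[0,\infty)$, which together with sign flips (Lemma \ref{reflections2}) and closedness of $\supp(\mu)$ yields all of $\R e$.
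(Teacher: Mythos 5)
Your proposal is correct and follows essentially the same route as the paper's proof: extract $e$ as a subsequential limit of the normalized orthogonal components $w_j/|w_j|$, apply Lemma \ref{reflections} to the pair $(v_j,0)$, $(X_j,t_j)$ to place the points $(v_j+2lw_j,0)$, $l\in\mathbb{Z}$, in the slice $\{t=0\}$, and then let $|w_j|\to 0$ and use closedness of $\supp(\mu)$ to fill in $\R e$ modulo $V$. Your explicit removal of the $V$-component $v_j$ via Lemma \ref{orthogplan} just spells out a step the paper leaves implicit (it asserts $(2kd_je_j,0)\in\supp(\mu)$ directly from Lemma \ref{reflections}), and the worry about Lemma \ref{lattice} giving only $2^l\mathbb{Z}$-combinations is unnecessary, since Lemma \ref{reflections} already provides all even integer multiples of $w_j$.
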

\begin{proof}
    Without loss of generality, suppose $(X_0,t_0) = (0,0)$.  For each $j \geq 1$, let $
    Z_j$ be the point of $V$ closest to $X_j$, and
    set $d_j = |X_j-Z_j|$, $e_j:=(X_j-Z_j)/d_j.$  Then, perhaps after passing to a subsequence if necessary, $e_j \rightarrow e$ for some
    unit vector $e$ such that $e \perp V$.  By Lemma
    \ref{reflections} we see that
    $(2kd_je_j,0) \in \supp(\mu)$ for every 
    $k \in \mathbb Z$ and every $j \geq 1$.  Because
    $d_j \xrightarrow{j \rightarrow 0} 0$ and
    $e_j \xrightarrow{j \rightarrow 0} e$, we 
    conclude that $(V+\lambda e) \times \{0\} \subset \supp(\mu)$ for every $\lambda \in \mathbb R$.
\end{proof}

\begin{lem}\label{planeallslices}  Suppose that $\mu$ is a $(k+2)$-ADR vampiric measure, and $L$ is an $m$-dimensional spatial plane such that $\supp(\mu)_{t}\cap \wt{Q}_{3r}(X)=L$ with $X\in L$, $\X=(X,t) \in \supp(\mu)$.    Then
$$\mu|_{Q_r(\X)}= \bigl(\mathcal{H}^{m}|_L\times \nu\bigl)|_{Q_r(\X)}
$$
for a $(k+2-m)$-ADR measure $\nu$ on $\R$.
\end{lem}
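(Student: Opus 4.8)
The plan is to reduce to the case $\X=\0$ and then extract the product structure from the reflection lemmas. Since we are in the parabolic setting the group law is ordinary addition on $\R^{n+1}$, and the vampiric property, $(k+2)$-ADR, and the hypothesis are all translation invariant; so, translating by $-\X$, we may assume $\X=\0$, $\0\in\supp(\mu)$, and that $L$ is now a \emph{linear} $m$-dimensional subspace of $\R^n$ with $\supp(\mu)_0\cap\wt Q_{3r}(\0)=L\cap\wt Q_{3r}(\0)$ (that $\0=\X\in\supp(\mu)$ is forced, since $X$ is the center of $\wt Q_{3r}(\X)$ and $X\in L$). At the very end we translate back; the resulting time-measure is just a translate of the one produced here.

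First I would pin down the support of $\mu$ inside $Q(\0,r)$. If $(Y,s)\in\supp(\mu)\cap Q(\0,r)$, then applying Lemma~\ref{reflections} to the pair $(0,0),(Y,s)$ (so $Z=Y$) gives $(2Y,0)\in\supp(\mu)$; since $\|2Y\|_\infty<2r<3r$, we get $2Y\in\supp(\mu)_0\cap\wt Q_{3r}(\0)=L\cap\wt Q_{3r}(\0)$, hence $2Y\in L$ and therefore $Y\in L$ because $L$ is linear. Thus $\supp(\mu)\cap Q(\0,r)\subseteq L\times\R$, i.e.\ $\mu|_{Q(\0,r)}$ is concentrated on $(L\cap\wt Q_r(\0))\times(-r^2,r^2)$. (One can go further: for $v\in L$ with $\|v\|_\infty<r$, taking $w=(v+Y)/2\in L\cap\wt Q_r(\0)\subseteq\supp(\mu)_0$ and applying Lemma~\ref{reflections} to $(Y,s),(w,0)$ with $l=1$ gives $(2w-Y,s)=(v,s)\in\supp(\mu)$, so every nonempty time slice of $\supp(\mu)$ in $Q(\0,r)$ equals $L\cap\wt Q_r(\0)$; this is not needed below but clarifies the picture.)

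Next I would establish the product structure of the \emph{measure}. Fix an open interval $I\subseteq(-r^2,r^2)$ and $\rho>0$. For $w,w'\in L\cap\wt Q_r(\0)$ the midpoint $m_1=(w+w')/2$ again lies in $L\cap\wt Q_r(\0)\subseteq\supp(\mu)_0$, both $(w',0)$ and $(m_1,0)$ are in $\supp(\mu)$, and $w=w'+2(m_1-w')$; so Lemma~\ref{reflections3} (with $X=w'$, $Y=m_1$, $l=1$), which says precisely that $\mu(\wt Q_\rho(\,\cdot\,)\times I)$ is unchanged when the center is replaced by its reflection across another support point, yields $\mu(\wt Q_\rho(w)\times I)=\mu(\wt Q_\rho(w')\times I)$. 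Restricting to centers $w$ with $\wt Q_\rho(w)\subseteq\wt Q_r(\0)$ (so that $\wt Q_\rho(w)\times I\subseteq Q(\0,r)$ and no mass off $L$ is counted), we learn that the spatial marginal $A\mapsto\mu(A\times I)$, which is supported on $L$, gives equal mass to all side-$\rho$ cubes contained in $\wt Q_r(\0)$. Subdividing a cube into $N^m$ dyadic subcubes and iterating upgrades this to $\mu(\wt Q_\rho(w)\times I)=c(I)\,\mathcal{H}^m(L\cap\wt Q_\rho(w))$ for all sufficiently small $\rho$ and all admissible $w$, with $c(I)\ge0$ independent of $w$ and $\rho$; letting $\rho\to0^+$ gives $\mu(A\times I)=c(I)\,\mathcal{H}^m(A\cap L)$ for every Borel $A\subseteq\wt Q_r(\0)$. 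The set function $I\mapsto\nu(I):=c(I)$ is finitely and countably additive on intervals of $(-r^2,r^2)$ (by additivity and continuity from below of $\mu$), hence extends to a Borel measure $\nu$; since rectangles $A\times I$ form a generating $\pi$-system for the Borel $\sigma$-algebra of $Q(\0,r)=\wt Q_r(\0)\times(-r^2,r^2)$, we conclude $\mu|_{Q(\0,r)}=(\mathcal{H}^m|_L\times\nu)|_{Q(\0,r)}$. Finally, $\nu$ is $(k+2-m)$-ADR: for $(X_0,s_0)\in\supp(\mu)\cap Q_{r/2}(\0)$ (so $X_0\in L$) and small $\rho$, the ball $B_\rho(X_0,s_0)\subseteq Q(\0,r)$ is comparable, up to fixed constants, to $(L\cap B(X_0,\rho))\times(s_0-\rho^2,s_0+\rho^2)$, so $\rho^{k+2}\approx\mu(B_\rho(X_0,s_0))\approx\rho^m\,\nu((s_0-\rho^2,s_0+\rho^2))$, giving $\nu((s_0-\rho^2,s_0+\rho^2))\approx\rho^{\,k+2-m}$ with constants controlled by $C_0$. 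Translating back to $\X$ yields the statement.

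I expect the main obstacle to be the third paragraph: turning the reflection identity of Lemma~\ref{reflections3} (which a priori only gives a symmetry of $\mu(\,\cdot\,\times I)$) into a genuine product decomposition of $\mu$, while handling edge effects near $\partial\wt Q_r(\0)$ and verifying that the constant $c(I)$ is well defined and $\sigma$-additive in $I$. The support reduction is short, and everything after the midpoint observation is measure-theoretic routine.
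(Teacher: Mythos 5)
Your argument is correct and follows essentially the same route as the paper: confine the support to $L\times\R$ inside $Q(\X,r)$ via Lemma~\ref{reflections}, use Lemma~\ref{reflections3} (reflecting through midpoints of support points on $L$) to see that $\mu(\wt Q_\rho(w)\times I)$ is independent of the center $w\in L\cap\wt Q_r(\0)$, identify the spatial marginal as a constant multiple of $\mathcal H^m|_L$, and conclude by uniqueness of product measures, with the ADR property of $\nu$ being the standard fact the paper records in its preliminaries. The only (harmless) looseness is the ``$N^m$ dyadic subcubes'' step, since the traces $L\cap\wt Q_\rho(w)$ are not cubes when $L$ is not axis-parallel; but equal mass of all translated windows at every scale gives the constant-multiple conclusion by a routine differentiation argument, which is exactly the step the paper also asserts without detail.
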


\begin{proof} Without loss of generality, assume that $X=0\in L$ and $t=0$.   Define $T = \{t\in \R: \supp(\mu)_t\neq \varnothing\}$. 

If there exists $t\in T$  and 
$Y \in \wt{Q}_r(0)\cap \supp(\mu)_t\setminus L$, then Lemma
\ref{reflections} shows that $2Y \in \supp(\mu)_0$.  Since $2Y\in \wt{Q}_{3r}(0)$, this
 contradicts our assumption that
$\supp(\mu)_0\cap \wt{Q}_{3r}(0) = L\cap \wt{Q}_{3r}(0)$.  Therefore 
\begin{equation}\label{localmut}\supp(\mu)_t\cap \wt{Q}_r(0)\subset L\cap\wt{Q}_r(0)\text{ for every }t\in T.\end{equation}

On the other hand, Lemma \ref{reflections} ensures that since $L\cap \wt{Q}_{3r}(0)\subset \supp(\mu)_0\cap \wt{Q}_{3r}(0)$, and hence \begin{equation}\label{globalmu0}L\subset \supp(\mu)_0.\end{equation}

For $t\in T$, there exists $X\in L$ such that $(X,t)\in \supp(\mu)$ (recall (\ref{localmut})).  Using (\ref{globalmu0}), we infer from Lemma \ref{reflections} that the point $(2Y-X, t)\in \supp(\mu)$ for every $Y\in L(\subset \supp(\mu)_0)$.  Since $L$ is a linear subspace this ensures that $L\subset \supp(\mu)_t$, and therefore $\supp(\mu)_t\cap \wt{Q}_r(0)= L\cap\wt{Q}_r(0)$ for every $t\in T$.

Now, for an interval $I\subset (-r^2,r^2)$, and a spatial cube $\wt{Q}$ centered on $L\cap \wt{Q}_r(\0)$ and contained in $\wt{Q}_r(\0)$, the quantity $\mu(\wt{Q}\times I)$ does not depend on the location of $\wt{Q}$ (recall Lemma \ref{reflections3}).   It follows that for Borel measurable $E \subset \wt Q_r(\0) \cap L$, $\mu(E\times I) = c_m\mathcal{H}^{m}(E)\mu(\wt Q_1(\0)\times I)$ from standard measure theoretic considerations\footnote{For instance by first obtaining the equality for a (Euclidean) dyadic cube, then for open sets, and finally for Borel sets.}.

Therefore, well-known uniqueness results for Borel measures ensure that, when restricted to $Q_r(\0)$, $\mu= \mathcal{H}^{m}\times \nu$ where $\nu(\,\cdot\,)= c_m\mu(\wt{Q_1}\times \,\cdot\,)$. The lemma is proved.\end{proof}

\begin{lem}\label{maxplane} Suppose $\mu$ is a $(k+2)$-ADR vampiric measure, and $\0\in \supp(\mu).$ Let $L\subset\supp(\mu)$ be the spatial linear subspace through $\0$ with $m = \dim(L)$ maximal.  Then $k\leq m\leq k+2$.
\end{lem}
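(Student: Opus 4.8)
The plan is to derive the upper bound $m\le k+2$ and the lower bound $m\ge k$ from the same principle: a $(k+2)$-ADR measure cannot be supported, even locally, near a set of the wrong parabolic dimension. The role of the maximality of $L$ is to confine $\supp(\mu)$ to $L\times\R$ near the origin; Ahlfors regularity then does the rest.

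\textbf{Step 1 (the gap).} Normalize so that $\0=(0,0)$ and write $m=\dim L$. If there were a sequence $(X_j,t_j)\in\supp(\mu)$ with $X_j\notin L$ and $\dist(X_j,L)\to 0$, then, since $\0\in\supp(\mu)$ and $L\times\{0\}\subset\supp(\mu)$, Lemma \ref{zerodistancelem2} (with $V=L$, $X_0=0$, $t_0=0$) would produce a unit vector $e\perp L$ with $\operatorname{span}(e,L)\times\{0\}\subset\supp(\mu)$, a spatial subspace through $\0$ of dimension $m+1$, contradicting maximality. Hence there is $\delta_0>0$ with $\supp(\mu)\cap\{(X,t):0<\dist(X,L)<\delta_0\}=\varnothing$. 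Consequently, for $0<r<\delta_0/(3\sqrt n)$, any $(X,t)\in\supp(\mu)$ with $X\in\wt Q(\0,3r)$ satisfies $\dist(X,L)\le|X|<\delta_0$ and hence $X\in L$; together with $L\times\{0\}\subset\supp(\mu)$ this gives $\supp(\mu)_0\cap\wt Q(\0,3r)=L\cap\wt Q(\0,3r)$.

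\textbf{Step 2 (product structure and dimension count).} Lemma \ref{planeallslices}, applied at $\X=\0$, now yields $\mu|_{Q(\0,r)}=(\mathcal H^m|_L\times\nu)|_{Q(\0,r)}$ for a measure $\nu$ on $(\R,\sqrt{|\cdot|})$ which is $(k+2-m)$-ADR (fibering $B_\rho(\0)$ over $L$ transfers the ADR estimates of $\mu$ to $\nu$). It remains to note that a nonzero, locally finite, $d$-ADR measure on $(\R,\sqrt{|\cdot|})$ exists only when $0\le d\le 2$: if $d<0$ then $\nu(\{a\})=\lim_{\rho\to0^+}\nu((a-\rho,a+\rho))=+\infty$ for $a\in\supp\nu$, contradicting local finiteness, while if $d>2$, covering a metric ball of radius $\sigma$ about a point of $\supp\nu$ by $N$ metric balls of radius $\sigma/\sqrt N$ and using upper ADR on each gives $\sigma^{d}\lesssim N(\sigma/\sqrt N)^{d}=N^{1-d/2}\sigma^{d}$, impossible as $N\to\infty$. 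With $d=k+2-m$ this forces $k\le m\le k+2$.

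One can instead avoid Lemma \ref{planeallslices}: for the upper bound, the reflection symmetry of vampiric measures (Lemma \ref{reflections3}) shows that $\mu(\wt Q_\delta(P)\times(-\delta^2,\delta^2))$ is independent of $P$ along a lattice in $L$ of spacing $\sim\sqrt n\,\delta$, and it is $\gtrsim\delta^{k+2}$ because $\wt Q_\delta(0)\times(-\delta^2,\delta^2)\supset B_\delta(\0)$; packing $\gtrsim\delta^{-m}$ of these pairwise-disjoint sets into a fixed ball and comparing with the ADR upper bound gives $\delta^{k+2-m}\lesssim 1$ for all small $\delta$, hence $m\le k+2$. For the lower bound one covers $\supp(\mu)\cap B_r(\0)\subset L\times\R$ (from Step 1) by parabolic balls of radius $\rho$ and lets $\rho\to0$. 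I expect Step 1 to be the only nonroutine point: turning the maximality of $L$ into the geometric gap that traps $\supp(\mu)$ inside $L\times\R$ near $\0$. After that, the argument is the standard fact that Ahlfors regularity pins down the dimension.
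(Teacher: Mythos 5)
Your proposal is correct, and no step in it fails; but it is organized differently from the paper's proof, so a brief comparison is in order. The paper proves the two bounds by two separate direct arguments: for $m\geq k$ it assumes $m<k$, notes that the $(k+2)$-ADR lower bound then forces points of $\supp(\mu)$ with spatial component off $L$ in every ball $B_r(\0)$ (since a set contained in $L\times\R$ with $\dim L<k$ is too small to carry a $(k+2)$-ADR measure locally), and then invokes Lemma \ref{zerodistancelem2} to enlarge $L$, contradicting maximality; for $m\leq k+2$ it packs $\approx 2^{mj}$ disjoint balls of radius $2^{-j}$ centered on $L\times\{0\}\subset\supp(\mu)$ inside a fixed ball and sums the lower ADR bound. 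You instead run the maximality argument in contrapositive form first, obtaining the spatial gap around $L$ via Lemma \ref{zerodistancelem2}, which traps $\supp(\mu)$ inside $L\times\R$ on a fixed spatial neighborhood of $\0$; you then route both inequalities through Lemma \ref{planeallslices} (which the paper proves just before this lemma but does not use in its proof of it), reducing everything to the statement that a nontrivial locally finite measure on $(\R,\sqrt{|\cdot|})$ can only be $d$-regular for $0\leq d\leq 2$. This is legitimate (no circularity: Lemma \ref{planeallslices} does not rely on the present lemma), your verification of its slice hypothesis from the gap is the only nonroutine point and is done correctly, and your fibering argument for the ADR exponent of $\nu$ and the two impossibility cases $d<0$, $d>2$ are fine. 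What your packaging buys is a single dimension count delivering both bounds and, as a by-product, the local product structure $\mu=\mathcal H^m|_L\times\nu$ near $\0$ (a stronger conclusion the paper only extracts later, in Lemma \ref{planestructure} and the proof of Proposition \ref{vampprop}); what the paper's version buys is brevity and independence from Lemma \ref{planeallslices}. Your sketched alternative in the last paragraph is essentially the paper's own packing argument for $m\leq k+2$ (the paper needs only lower ADR on balls centered on $L\times\{0\}$, not the translation invariance from Lemma \ref{reflections3}), and its covering argument for $m\geq k$ makes explicit the step the paper leaves implicit.
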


\begin{proof}By way of contradiction, assume
$m<k$.  Because $\mu$ is $(k+2)$-ADR, every ball $B_r(\0)$ contains a point $(X,t) \in \supp(\mu)$ such that $X \notin L$.  So, we can produce a sequence $\{(X_j,t_j)\}_{j \geq 1}
\subseteq \supp(\mu)$ such that $X_j \notin L$ for each $j$, and so that $(X_j,t_j)$ converges to $\0$.  An application of Lemma 
\ref{zerodistancelem2} then shows that $L$ cannot be of maximal dimension, a contradiction.

Now we assume that $m:=\dim L >k+2$.  For each integer
$j \geq 0$, we can pack in $B_1(\0) \cap \supp(\mu) $ approximately
$2^{mj}$ disjoint balls $\{B^j_\ell\cap \supp(\mu)\}_{\ell\geq 1}$ of radius $2^{-j}$ centred on $\supp(\mu)$.  So
$$\mu(B_1(\0)) \geq \sum_\ell \mu(B^j_\ell )\gtrsim 2^{m j} 2^{-j(k+2)}.$$
Because $m > k+2$, the right hand side tends to infinity as $j\to \infty$, which is absurd.\end{proof}

\begin{lem}\label{planestructure}
Suppose $\mu$ is a $(k+2)$-ADR vampiric measure with $\0\in \supp(\mu)$.  Then
\begin{enumerate}
    \item There is a $m$-dimensional linear subspace $L\subset\supp(\mu)_0$ with $m=\dim L\in \{k, k+1, k+2\},$ 
    \item a (possibly empty) set of linearly independent vectors $e_1, \dots, e_q$ orthogonal to $L$, where $m+q\leq k+2$, and
\begin{enumerate}
\item $\supp(\mu)_0\subset \operatorname{span}(L, e_1,\dots, e_q)$,
\item $\supp(\mu)_0\cap \wt Q_{|e_1|/\sqrt{n}}(0)= L\cap \wt{Q}_{|e_1|/\sqrt{n}}(0), $
    and
   \item $ \supp(\mu)_0\supset \Bigl\{L+\sum_{j=1}^q a_je_j\text{ for any }(a_1,..,a_q)\in \big(2^q\mathbb{Z}\big)^q\Bigl\}.$
   \end{enumerate}
\end{enumerate}
\end{lem}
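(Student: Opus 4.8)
The plan is to describe $\supp(\mu)_0$ exactly as $L+S$, where $L$ is the largest spatial subspace through $\0$ inside $\supp(\mu)$ and $S$ is a closed subset of $L^{\perp}$ that is bounded away from $0$. First I would invoke Lemma \ref{maxplane}: let $L\subset\R^n$ be the spatial linear subspace through $\0$ contained in $\supp(\mu)$ of maximal dimension $m$; then $k\le m\le k+2$, and since $L$ passes through $\0$ it lies in $\supp(\mu)_0$, which is conclusion (1). Writing $Y=Y_L+Y^{\perp}$ (for $Y\in\R^n$) for the orthogonal decomposition relative to $L$, set $S=\{s\in L^{\perp}:(s,0)\in\supp(\mu)\}$. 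Because $L\times\{0\}\subset\supp(\mu)$, Lemma \ref{orthogplan} gives $(L+Y)\times\{0\}\subset\supp(\mu)$ whenever $(Y,0)\in\supp(\mu)$, so $\supp(\mu)_0=L+S$ and $S$ equals the orthogonal projection of $\supp(\mu)_0$ onto $L^{\perp}$. The set $S$ is closed, being the intersection of $L^{\perp}$ with the preimage of the closed set $\supp(\mu)$ under $s\mapsto(s,0)$; it contains $0$; and, applying Lemma \ref{reflections2} to the time-zero pair $(s,0),(s',0)$, it is stable under $(s,s')\mapsto s+\ell(s'-s)$ for $\ell\in\mathbb Z$ (so in particular $\ell s\in S$ and $-s\in S$).

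Next I would extract the vectors $e_1,\dots,e_q$. The crucial point is that $\delta_0:=\inf\{|s|:s\in S\setminus\{0\}\}>0$; otherwise there are $s_j\in S\setminus\{0\}$ with $|s_j|\to 0$, and then $(s_j,0)\in\supp(\mu)$, $s_j\notin L$, $\dist(L,s_j)=|s_j|\to 0$, so Lemma \ref{zerodistancelem2} produces a unit vector $e\perp L$ with $\operatorname{span}(e,L)\times\{0\}\subset\supp(\mu)$, contradicting the maximality of $L$. Since $S$ is closed, this infimum is attained, so I fix $e_1\in S$ with $|e_1|=\delta_0$. If $S=\{0\}$ then $q=0$ and conclusions (a)--(c) are trivial, so assume $S\ne\{0\}$, set $q=\dim\operatorname{span}(S)\ge 1$, and extend $\{e_1\}$ to a basis $e_1,\dots,e_q$ of $\operatorname{span}(S)$ with every $e_i\in S$; these are linearly independent and orthogonal to $L$.

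Conclusion (a) is then immediate: $\supp(\mu)_0=L+S\subset L+\operatorname{span}(S)=\operatorname{span}(L,e_1,\dots,e_q)$. For (b), if $Y\in\supp(\mu)_0$ with $\|Y\|_{\infty}<|e_1|/\sqrt n$, then $|Y^{\perp}|\le|Y|\le\sqrt n\,\|Y\|_{\infty}<|e_1|=\delta_0$; since $Y^{\perp}\in S$ and every nonzero element of $S$ has norm at least $\delta_0$, we get $Y^{\perp}=0$, i.e.\ $Y\in L$, while the reverse inclusion holds because $L\subset\supp(\mu)_0$. For (c), the points $(0,0),(e_1,0),\dots,(e_q,0)$ all lie in $\supp(\mu)$ (the $e_i$ because $e_i\in S$), so Lemma \ref{lattice} gives $(\sum_j a_je_j,0)\in\supp(\mu)$ for every $(a_1,\dots,a_q)\in(2^q\mathbb Z)^q$, and one further application of Lemma \ref{orthogplan} upgrades this to $(L+\sum_j a_je_j)\times\{0\}\subset\supp(\mu)$.

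The remaining claim $m+q\le k+2$ is the step I expect to be the real obstacle, since $L+\operatorname{span}(e_1,\dots,e_q)$ is genuinely \emph{not} contained in $\supp(\mu)$ once $q\ge 1$ (that would contradict (b)), so Lemma \ref{maxplane} cannot be applied to $\mu$ directly. I would argue by blow-down. Let $W=\operatorname{span}(e_1,\dots,e_q)$ and let $\Lambda=\{\sum_j a_je_j:a_j\in 2^q\mathbb Z\}$, a full-rank lattice in $W$; by (c), $L+\Lambda\subset\supp(\mu)_0$. The rescalings $\mu_R(E):=R^{-(k+2)}\mu(RE)$ are $(k+2)$-ADR with the same constant and are still vampiric, and $\supp(\mu_R)_0\supset L+R^{-1}\Lambda$, with $R^{-1}\Lambda$ becoming arbitrarily dense in $W$ as $R\to\infty$. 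Passing to a weak subsequential limit $\mu_R\rightharpoonup\mu_\infty$ (which is $(k+2)$-ADR by Lemma \ref{adrwl} and vampiric, since weak limits of vampiric measures are vampiric) and using the ADR lower bound at the approximating support points, one obtains $(L+W)\times\{0\}\subset\supp(\mu_\infty)$; as $W\subset L^{\perp}$, this spatial subspace through $\0$ has dimension $m+q$, so Lemma \ref{maxplane} applied to $\mu_\infty$ forces $m+q\le k+2$. (A direct packing count would do as well: for large $R$, $B_R(\0)$ contains $\gtrsim R^{q}$ well-separated translates $L+p$, $p\in\Lambda$, each contributing $\mu$-mass $\gtrsim R^{m}$, whence $R^{m+q}\lesssim\mu(B_R(\0))\lesssim R^{k+2}$.) Apart from this, the only delicate point is the closedness of $S$, which is exactly what lets $e_1$ be a genuine shortest vector; the rest is mechanical from Lemmas \ref{orthogplan}, \ref{reflections2}, \ref{zerodistancelem2}, and \ref{lattice}.
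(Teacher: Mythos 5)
Your argument is correct, and it reaches the same conclusions (1), (2)(a)--(c) via the same structural lemmas (Lemmas \ref{maxplane}, \ref{orthogplan}, \ref{zerodistancelem2}, \ref{reflections2}, \ref{lattice}), but it is organized differently from the paper in two places. First, you describe the slice exactly as $\supp(\mu)_0=L+S$ with $S=\supp(\mu)_0\cap L^{\perp}$ closed and bounded away from $0$, and then take \emph{any} basis of $\operatorname{span}(S)$ inside $S$ containing a shortest vector $e_1$; the paper instead runs a greedy selection of orthogonal vectors of minimal length and argues that it terminates, deducing (a) from the termination. Your version is cleaner: only $e_1$ needs to be length-minimal (that is all (b) uses), the attainment of the minimum is justified by closedness of $S$, and (a) is immediate from $\operatorname{span}(e_1,\dots,e_q)=\operatorname{span}(S)$. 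Second, for the bound $m+q\leq k+2$ your primary route is a blow-down: parabolic rescalings $\mu_R$ are uniformly ADR and vampiric, $\supp(\mu_R)_0\supset L+R^{-1}\Lambda$ densifies to $L+W$, and Lemma \ref{maxplane} applied to a weak limit (via Lemma \ref{adrwl} and weak stability of the vampiric condition, or just the ADR upper-bound half of Lemma \ref{maxplane}) gives the bound; the paper instead does the direct count you relegate to a parenthesis, packing $\approx K^{m+q}\kap^{-m}$ disjoint $\kap$-cubes around the lattice of translated planes inside $Q_{3K}(\0)$ and comparing with the upper ADR bound. The compactness route buys a softer argument with no bookkeeping of cube families, at the cost of invoking the weak-limit machinery; the paper's count is elementary and quantitative. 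Two small points to tighten in your write-up: in the blow-down, $RE$ must mean the parabolic dilation $(X,t)\mapsto(RX,R^2t)$ of $E$, since only then is the uniform $(k+2)$-ADR claim for $\mu_R$ true; and in the parenthetical packing count the per-plane mass in $B_R(\0)$ is $\gtrsim_{\rho}R^{m}\rho^{k+2-m}$ only after fixing a slab thickness $\rho$ below the separation of the translates (this is exactly the paper's parameter $\kap$), which still yields the contradiction $R^{m+q}\lesssim_{\rho}R^{k+2}$ for large $R$.
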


\begin{proof}
Let $L \subset \mathbb R^n$ 
be the spatial linear subspace of maximal dimension such that $L \subset \supp(\mu)_0$.  
By Lemma \ref{maxplane}, $\dim(L)=m\in \{k,k+1,k+2\}$.

Suppose $\supp(\mu)_0\backslash L\neq \varnothing$.  Lemma \ref{zerodistancelem2} ensures that, if $e$ is a vector orthogonal to $L$ with $e\in \supp(\mu)_0$, then $|e|$ is bounded away from $0$.  Select $e_1\in \supp(\mu)_0$ to be a vector orthogonal to $L$ of minimal length, and then, given $e_1, e_2, \dots, e_{\ell}$, select (if possible) a vector $e_{\ell+1}\in \supp(\mu)_0$ of minimal length that is orthogonal $L$ and linearly independent of $e_1, e_2,\dots, e_{\ell}$.  This process must terminate with a finite set of vectors $e_1,\dots, e_q$.   By construction we have that property (b) holds.

Lemma \ref{reflections} ensures that if $X\in \supp(\mu)_0$, then $X+L\in \supp(\mu)_0$, from which we infer that if there exists $X\in \supp(\mu)_0\backslash \text{span}(L, e_1,\dots, e_{q})$, then there exists $e\perp L$ with $e\in \supp(\mu)_0$ that is linearly independent of $e_1,\dots, e_q$, contradicting the termination of the selection algorithm. So (a) holds.

On the other hand, since $L$ is a linear subspace,  Lemma \ref{lattice} ensures that 
$$L+\sum_{j=1}^q a_je_j \subset \supp(\mu)_0, \text{ for }a_j\in 2^q\mathbb{Z}.
$$

It remains to check that $q\leq k+2-m$.  Fix $K>1$ large.   Consider $A:=\{(a_1,...,a_q): a_j \in 2^q\mathbb{Z} \cap [-K,K])\}$, and set $\overline e = (e_1,...,e_q)$.  Notice that, because $q\leq n$,
$\# A \approx K^q$.  

For a small parameter $\kap$ to be specified (chosen depending on $e_1,\dots, e_q$), let $\{\tilde Q_{j}\}_1^M$ be a collection of 
cubes the spatial component $\mathbb R^n$ of side-length $\kap$ which are pairwise
disjoint and such that the union of the closures of the spatial cubes $\wt{Q}_j$ covers $L\cap \wt Q_{K}(0)$, and so that each cube $\wt{Q}_j$ is centered on $L \cap \wt Q_{K}(0)$.  We may ensure these cubes are contained in $\wt Q_{2K}(0)$, in which case $M \approx K^{m}\kap^{-m}$.

For $\kap$ sufficiently small depending on $e_1,\dots, e_q$ (their lengths and the angles between them),
the collection
$$\bigcup_{\overline a \in A} \{\overline a \cdot \overline e+ \wt Q_j\}_1^M$$
has cardinality on the order of $K^{m+q}\kap^{-m}$, and consists of cubes which are pairwise disjoint and
contained in $\wt Q_{3K}(0)$.  Now, the $\R^{n+1}$ cubes $(\overline a \cdot \overline e+ \wt Q_j) \times (-\kap^2,\kap^2)$ are pairwise disjoint, contained in $Q_{3K}(\0)$, and centered on $\supp(\mu)$.  So, using the ADR 
property of $\mu$, we have
$$\mu(Q_{3K}(\0)) 
\gtrsim K^{m+q}\kap^{-m}\kap^{k+2}$$
But now, if $q>k+2-m$, then for $K$ sufficiently large depending on $\kap$, we violate the
$(k+2)$-ADR property of $\mu$. 
\end{proof}

We now have all the pieces ready to begin the proof of the theorem, but there will be cases depending on the number vectors $\{e_1,\dots e_q\}$.  These cases require seperate compactness arguments, so we present them as lemmas.  The reader should recall the statement of Lemma \ref{planestructure}.

\begin{lem}\label{oneorthogonal} For every $\lambda>0$ there exists $M_1>0$ such that the following holds for every $M\geq M_1$:  Suppose that $\mu$ is a $(k+2)$-ADR vampiric measure such that $\0 \in \supp(\mu)$, there is a $m$-dimensional linear subspace $L\subset\supp(\mu)_0$ with $m=\dim L\in \{k, k+1\},$ and a vector $e$ orthogonal to $L$ with $|e|\leq 3\sqrt{n}$, and
    \begin{equation}\begin{split}\label{localplansupport}\wt Q_{5M}(0)\cap \operatorname{span}(L,e)&\supset \wt Q_{5M}(0)\cap\supp(\mu)_0
    \\&\supset \wt Q_{5M}(0)\cap\Bigl\{L+ae\text{ for any }a\in 4\mathbb{Z}\Bigl\}.\end{split}\end{equation}
    Then,
$$\alpha_{m+1,\mu}(\0,M)<\lambda.$$
\end{lem}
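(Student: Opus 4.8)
The plan is to argue by contradiction and compactness, the crucial ingredient being a rescaling that crushes the (fixed) sheet–spacing $4|e|$ in \eqref{localplansupport} against the (growing) size $M_1$ of the region it controls. So suppose the statement fails for some $\lambda>0$: for each $j$ there are a $(k+2)$-ADR vampiric measure $\mu_j$ with $\0\in\supp(\mu_j)$, an $m$-plane $L_j\subset\supp(\mu_j)_0$ (with $m\in\{k,k+1\}$ fixed after passing to a subsequence), and $e_j\perp L_j$ with $|e_j|\le 3\sqrt n$, such that \eqref{localplansupport} holds with $M_1=j$ while $\alpha_{\mu_j,m+1}(B(\0,j))\ge\lambda$; I aim to contradict the last inequality. (Here the subscript should be $m+1$, i.e. $k+1$ or $k+2$ according as $m=k$ or $m=k+1$: the span $\operatorname{span}(L,e)$ is $(m+1)$-dimensional and this is the class we will approach.)

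The first step is to upgrade the hypothesis, which only controls the time-$0$ slice, to a statement about the whole support: applying Lemma~\ref{reflections} to $\0$ and an arbitrary $(Y,s)\in\supp(\mu_j)$ gives $(2Y,0)\in\supp(\mu_j)$, so if $2Y\in\wt Q(\0,5j)$ then $2Y\in\supp(\mu_j)_0\subset\operatorname{span}(L_j,e_j)$, hence $Y\in\operatorname{span}(L_j,e_j)$; thus $\supp(\mu_j)\cap\bigl(\wt Q(\0,\tfrac{5j}{2})\times\R\bigr)\subset\operatorname{span}(L_j,e_j)\times\R$. Next I would rescale: let $\hat\mu_j$ be the parabolic $1/j$-dilate of $\mu_j$, renormalised to keep the ADR constant. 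It is again $(k+2)$-ADR and vampiric, and by the dilation invariance of the normalised transportation coefficients $\alpha_{\hat\mu_j,m+1}(B(\0,1))\ge\lambda$; crucially, dilation leaves the subspace $\operatorname{span}(L_j,e_j)$ alone but turns the sheet–spacing into $4|e_j|/j\to 0$, so for $\hat\mu_j$ the stack of $m$-planes in \eqref{localplansupport} becomes asymptotically dense in $\operatorname{span}(L_j,e_j)$.

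Then I would pass to a weak limit $\hat\mu_j\rightharpoonup\hat\mu$ (possible by uniform ADR; $\hat\mu$ is $(k+2)$-ADR by Lemma~\ref{adrwl} and is vampiric since vampiric measures are closed under weak limits), together with $L_j\to L$ and $e_j/|e_j|\to u$ for some unit $u\perp L$, and set $P:=\operatorname{span}(L,u)$, an $(m+1)$-plane through $\0$. The target of this step is the slice identity $\supp(\hat\mu)_0\cap\wt Q(\0,3r_0)=P\cap\wt Q(\0,3r_0)$ for an absolute constant $r_0>1$: the inclusion ``$\subseteq$'' is the limit of the first step, while ``$\supseteq$'' comes from the asymptotic density of the sheets — every point of $P$ inside the cube is a limit of points of $\supp(\hat\mu_j)_0$ — combined with the ADR lower bound for $\hat\mu_j$, which forces these limit points into $\supp(\hat\mu)$. (Here the numerical constant $5$ in \eqref{localplansupport} should be read as ``a sufficiently large absolute constant'', chosen so that the cube $Q(\0,r_0)$ needed below is available for the particular normalisation of $\alpha$ being used.)

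With the slice identity in hand, Lemma~\ref{planeallslices} applied to the $(m+1)$-plane $P$ gives $\hat\mu|_{Q(\0,r_0)}=\bigl(\mathcal H^{m+1}|_{P}\times\nu\bigr)|_{Q(\0,r_0)}$ for some measure $\nu$ on $\R$, which the $(k+2)$-ADR property of $\hat\mu$ forces to be $(k+1-m)$-ADR on the time interval it occupies; extending $\nu$ to a globally $(k+1-m)$-ADR measure $\tilde\nu$ agreeing with it there, the measure $\mathcal H^{m+1}|_{P}\times\tilde\nu\in\mathcal M_{m+1}$ coincides with $\hat\mu$ on the ball defining $\alpha_{\hat\mu,m+1}(B(\0,1))$, so $\alpha_{\hat\mu,m+1}(B(\0,1))=0$. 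Then Lemma~\ref{alphawc} gives $\alpha_{\hat\mu_j,m+1}(B(\0,1))\to 0$, contradicting $\alpha_{\hat\mu_j,m+1}(B(\0,1))\ge\lambda$. I expect the main obstacle to be precisely the ``$\supseteq$'' half of the slice identity: at any fixed scale the sandwich \eqref{localplansupport} only produces a $4|e|$-separated stack of $m$-planes in $\supp(\mu)_0$, and ADR by itself does not fill the gaps — it is exactly the rescaling by $1/M_1$, legitimate because the conclusion ball has radius $M_1$, that collapses the stack into a full $(m+1)$-plane and lets the soft limiting machinery (Lemmas~\ref{planeallslices} and \ref{alphawc}) take over.
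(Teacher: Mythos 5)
Your proposal is correct and follows essentially the same route as the paper: argue by contradiction, rescale parabolically by $1/M_1$ so the sheet spacing $4|e|/M_1$ collapses, pass to a weak limit of ADR vampiric measures, identify the time-zero slice with the $(m+1)$-plane $P=\operatorname{span}(L,e)$, apply Lemma~\ref{planeallslices} to get the product structure, and contradict $\alpha\geq\lambda$ via the weak continuity of $\alpha$-numbers (Lemma~\ref{alphawc}). Your extra step using Lemma~\ref{reflections} to promote the time-zero hypothesis to a containment of the whole support near the spatial cube is a sensible refinement that justifies the inclusion $\supp(\hat\mu)_0\subset P$ (which the paper asserts more tersely), and your reading of the subscript as $m+1$ and of the constant $5$ as a sufficiently large absolute constant are both consistent with how the lemma is actually proved and used in the paper.
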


\begin{proof} Suppose not. Then there exists $\lambda>0$ and a sequence of $(k+2)$-ADR vampiric $\{\wt\mu_\ell\}_{\ell \in \mathbb N}$ measures with the following properties.
\begin{itemize}
    \item There exists a sequence $\{m_\ell\}_{\ell \in \mathbb N}$ with $m_\ell \in \{k,k+1\}$ along with a sequence of $m_{\ell}$-dimensional subspaces $\{L_{\ell}\}_{\ell \in \mathbb N}$ with $L_\ell \subset \supp(\wt\mu_{\ell})_0$, and a sequence of vectors $\{e_{\ell}\}_{\ell \in \mathbb N}$ satisfying $e_\ell \perp L_\ell$ and $|e_{\ell}|\leq 3\sqrt{n}$.
    \item There exists an increasing sequence $\{M_\ell\}_{\ell \in \mathbb N} \subset \mathbb R$ such that $\alpha_{\wt\mu_\ell, m_\ell+1}(\0,M_\ell)\geq \lambda$.
    \item For each $\ell$, display \ref{localplansupport} holds with $\wt \mu_{\ell}$, $M_\ell$, $e_\ell$, and $L_\ell$ in place of $\mu$, $M_1$, $e$ and $L$, respectively.
\end{itemize}

Since $m_{\ell}\in \{k, k+1\}$ we may pass to a subsequence if necessary and assume that $m_{\ell}$ is constant (equal to $m$).     Additionally, by passing to a further subsequence we may assume that $e_{\ell}/|e_{\ell}|\to e\in \R^n$ as $\ell \to 0$, and that the planes $L_{\ell}$ converge to some plane $L$ in the local Hausdorff metric.

Now consider the measures $\mu_{\ell} = \frac{\wt\mu_{\ell}(M_\ell\,\cdot\,)}{M_\ell^{k+2}}$.  These measures are $(k+2)$-ADR  and vampiric, so by passing to another subsequence we may assume converge weakly to a $(k+2)$-ADR measure (Lemma \ref{adrwl}) $\mu$ that is vampiric.  By weak continuity of the $\alpha$-numbers, we have that $\alpha_{\mu,m+1}(B_1(\0))\geq \lambda$.  In $\wt Q_5(\0)$, we obtain from (\ref{localplansupport}) that $\supp(\mu_{\ell})_0$ is contained $\text{span}(L_{\ell}, e_{\ell})$, and contains $\bigl\{L_{\ell}+\frac{a}{M_{\ell}}e_{\ell}\,:\,a\in 4\mathbb{Z}\bigl\}$. From this, along with the Ahlfors regularity of $\mu_{\ell}$ and $\mu$, we conclude that $\supp(\mu)_0\cap \wt Q_5(\0) = P\cap \wt Q_5(\0)$, where $P=\text{span}(L,e)$.  Notice that this is where we use the assumption $|e_\ell|\leq 3\sqrt{n}$.    But now from Lemma \ref{planeallslices}, we have that, in $Q_1(\0)$,
$$\mu = \mathcal{H}^{m+1}_{|P}\times \nu,$$
but then $\alpha_{m+1,\mu}(\0,M_1)=0$, which is absurd.
\end{proof}

\begin{lem}\label{twoorthogonal} For every $\lambda>0$ there exists $M_2>0$ such that the following holds for every $M\geq M_2$:  Suppose that $\mu$ is a $(k+2)$-ADR vampiric measure such that $\0 \in \supp(\mu)$ and there is a $k$-dimensional linear subspace $L\subset\supp(\mu)_0$ and linearly independent vectors $e_1, e_2$ which are orthogonal to $L$ with $|e_1|\leq 3
\sqrt{n}$ satisfying
\begin{equation}\label{2veccontain}
\supp(\mu)_0\cap \wt Q_{|e_2|/\sqrt{n}}\bigl(0\bigl)\subset \operatorname{span}(L, e_1)\cap \wt Q_{|e_2|/\sqrt{n}}\bigl(0\bigl),
\end{equation}
\begin{equation}\label{2vecplane}
    \supp(\mu)_0\subset \operatorname{span}(L, e_1, e_2),
\end{equation}

%\begin{equation}\label{2veclattice1}\Bigl\{L+\sum_{j=1}^2 a_je_j\text{ for any }a_j\in \frac {1}{2}\mathbb{Z}\Bigl\}\supset \supp(\mu)_0\end{equation}
and
    \begin{equation}\label{2vecstruc2}\supp(\mu)_0\supset \Bigl\{L+\sum_{j=1}^2 a_je_j\text{ for any }a_j\in 4\mathbb{Z}\Bigl\},\end{equation}
    then 
\begin{equation}\label{2vecconc}\alpha_{k+1,\mu}(\0,M_1)<\lambda \text{ or }\alpha_{k+2,\mu}(\0,M)<\lambda,\end{equation}
where $M_1 = M_1(\lambda)$ is the constant produced in Lemma \ref{oneorthogonal}.
\end{lem}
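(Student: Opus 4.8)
The plan is to argue by contradiction and compactness, in the spirit of the proof of Lemma \ref{oneorthogonal}, splitting into two cases according to whether the (possibly long) vector $e_2$ stays bounded. Suppose the conclusion fails for some $\lambda>0$, and let $M_1$ be the constant supplied by Lemma \ref{oneorthogonal} for this $\lambda$. Then for $M_2=\ell$, $\ell\to\infty$, there are $(k+2)$-ADR vampiric measures $\wt\mu_\ell$ with $\0\in\supp(\wt\mu_\ell)$, $k$-planes $L_\ell$, and vectors $e_1^\ell,e_2^\ell\perp L_\ell$ with $|e_1^\ell|\le 3\sqrt n$, satisfying \eqref{2veccontain}--\eqref{2vecstruc2}, but with $\alpha_{\mu,k+1}(B(\0,M_1))\ge\lambda$ and $\alpha_{\mu,k+2}(B(\0,\ell))\ge\lambda$ for $\mu=\wt\mu_\ell$. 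Passing to a subsequence, either $|e_2^\ell|\to\infty$ or $\sup_\ell|e_2^\ell|<\infty$.

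\textbf{Case 1: $|e_2^\ell|\to\infty$.} Here I would not rescale. By Lemma \ref{adrwl} a subsequence of $\wt\mu_\ell$ converges weakly to a $(k+2)$-ADR vampiric measure $\mu$ with $\0\in\supp(\mu)$, and Lemma \ref{alphawc} gives $\alpha_{\mu,k+1}(B(\0,M_1))\ge\lambda$. Passing to further subsequences with $L_\ell\to L$ and $\operatorname{span}(L_\ell,e_1^\ell)\to P'$ (necessarily a $(k+1)$-plane containing $L$, since $e_1^\ell/|e_1^\ell|$ is a unit vector orthogonal to $L_\ell$), I would use \eqref{2veccontain} together with $|e_2^\ell|\to\infty$: for each fixed $R$, $\supp(\wt\mu_\ell)_0\cap\wt Q(\0,R)\subset\operatorname{span}(L_\ell,e_1^\ell)$ for $\ell$ large, hence $\supp(\mu)_0\subset P'$. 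By Lemma \ref{maxplane}, the maximal spatial subspace $L''\subset\supp(\mu)_0$ has dimension $k$ or $k+1$. If $\dim L''=k+1$ then $L''=P'=\supp(\mu)_0$, and Lemma \ref{planeallslices} gives $\mu=\mathcal H^{k+1}\vert_{P'}\times\nu$ in a large cube, whence $\alpha_{\mu,k+1}(B(\0,M_1))=0$ — a contradiction. If $\dim L''=k$, then $L''=L$ (otherwise points of $L\setminus L''$ near $\0$ approach $L''$, forcing $L''$ to grow by Lemma \ref{zerodistancelem2}); Lemma \ref{planestructure} applied to $\mu$ produces orthogonal vectors $f_1,\dots,f_q$, and \eqref{2vecstruc2} (with $a_2=0$) gives $\{L+4ae_1^\ell\}_a\subset\supp(\wt\mu_\ell)_0$, so $\supp(\mu)_0\supsetneq L$ and $q\ge1$; since $\supp(\mu)_0\subset P'$, this forces $\operatorname{span}(L,f_1)=P'$ and $q=1$. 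Then $\mu$, $L$, $f_1$ satisfy the hypotheses of Lemma \ref{oneorthogonal} with $m=k$ (the length of $f_1$ is bounded by a dimensional constant), so $\alpha_{\mu,k+1}(B(\0,M_1))<\lambda$ — again a contradiction.

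\textbf{Case 2: $\sup_\ell|e_2^\ell|<\infty$.} Here I would rescale at scale $M_2=\ell$: set $\mu_\ell(E)=\ell^{-(k+2)}\wt\mu_\ell(\ell E)$, so $\alpha_{\mu_\ell,k+2}(B(\0,1))\ge\lambda$ and the rescaled vectors $e_i^\ell/\ell\to0$. Pass to a subsequence with $\mu_\ell\to\mu$ weakly ($(k+2)$-ADR, vampiric, by Lemma \ref{adrwl}) and $\operatorname{span}(L_\ell,e_1^\ell,e_2^\ell)\to P$ in the Grassmannian of $(k+2)$-dimensional subspaces; since $e_1^\ell,e_2^\ell$ are linearly independent modulo $L_\ell$, the limit $P$ is a genuine $(k+2)$-plane (dimension is preserved under limits in a fixed Grassmannian). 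By \eqref{2vecplane}, $\supp(\mu)_0\subset P$. Conversely, the rescaled lattice $\{L_\ell+a_1(e_1^\ell/\ell)+a_2(e_2^\ell/\ell):a_j\in4\mathbb Z\}\subset\supp(\mu_\ell)_0$ has covering radius (modulo $L_\ell$) at most $2(|e_1^\ell|+|e_2^\ell|)/\ell\to0$, so these lattices become dense in $\operatorname{span}(L_\ell,e_1^\ell,e_2^\ell)$, and lower semicontinuity of support under weak convergence gives $\supp(\mu)_0\supset P$. Thus $\supp(\mu)_0=P$, and Lemma \ref{planeallslices} with $m=k+2$ gives $\mu=\mathcal H^{k+2}\vert_P\times\nu$ in a large cube, so $\mu\in\mathcal M_{k+2}$ and $\alpha_{\mu,k+2}(B(\0,1))=0$ — contradicting $\alpha_{\mu,k+2}(B(\0,1))\ge\lambda$ (via Lemma \ref{alphawc}).

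I expect the main obstacle to be the bookkeeping rather than any single deep step: arranging that all the Grassmannian and vector limits exist along one common subsequence, keeping the case split exhaustive, and — chiefly in Case 1 — correctly distinguishing when the limiting spatial slice $\supp(\mu)_0$ already fills a $(k+1)$-plane (handled directly by Lemma \ref{planeallslices}) from when one must recurse into Lemma \ref{oneorthogonal}, as well as verifying the hypotheses of Lemmas \ref{planestructure}, \ref{planeallslices} and \ref{oneorthogonal} for the limit measure (including the harmless enlargement of the dimensional bound on the length of the orthogonal vector).
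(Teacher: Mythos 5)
Your Case 2 (bounded $|e_2^\ell|$) is essentially the paper's argument verbatim: rescale by $M_2=\ell$, pass to a weak limit using Lemmas \ref{adrwl} and \ref{alphawc}, use the lattice \eqref{2vecstruc2} becoming dense together with \eqref{2vecplane} to identify the time-zero slice with a $(k+2)$-plane $P$, and then apply Lemma \ref{planeallslices} to contradict $\alpha_{\mu,k+2}(B(\0,1))\geq\lambda$. (Both you and the paper gloss over transferring the time-zero slice information through the weak limit; this is fine because, by Lemma \ref{reflections}, vampirism propagates the $t=0$ constraint to all time slices, so I do not hold that against you.) Where you diverge is the large-$|e_2^\ell|$ case, and there your argument as written has a genuine gap at its final step.

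In Case 1, sub-case $\dim L''=k$, you apply Lemma \ref{oneorthogonal} to the limit measure with the minimal orthogonal vector $f_1$ produced by Lemma \ref{planestructure}. But the only length bound available for $f_1$ comes from $4e_1\in\supp(\mu)_0$, i.e.\ $|f_1|\leq 12\sqrt n$, which violates the hypothesis $|e|\leq 3\sqrt n$ of Lemma \ref{oneorthogonal}; and the ``harmless enlargement'' is not harmless, because an enlarged-constant variant of Lemma \ref{oneorthogonal} yields a different constant $M_1'$ and the conclusion $\alpha_{\mu,k+1}(B(\0,M_1'))<\lambda$ does not contradict your standing assumption $\alpha_{\mu,k+1}(B(\0,M_1))\geq\lambda$: the $\alpha$-numbers are scale-normalized, so smallness at the larger scale $M_1'$ says nothing at scale $M_1$, and it is precisely the scale $M_1$ from Lemma \ref{oneorthogonal} that appears in \eqref{2vecconc}. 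The repair is easy and shows the detour through Lemmas \ref{maxplane}, \ref{zerodistancelem2} and \ref{planestructure} is unnecessary: in this sub-case the limit vector $e_1=\lim e_1^\ell$ is nonzero, orthogonal to $L$, satisfies $|e_1|\leq 3\sqrt n$, and the $4\mathbb Z$-lattice containment \eqref{2vecstruc2} (with $a_2=0$) passes to the limit, so \eqref{localplansupport} holds for $\mu$ with $e=e_1$ and Lemma \ref{oneorthogonal} applies verbatim at scale $M_1$. Better still --- and this is how the paper disposes of this case, with no compactness at all --- as soon as $|e_2^\ell|>5M_1\sqrt n$, hypotheses \eqref{2veccontain} and \eqref{2vecstruc2} give \eqref{localplansupport} for $\wt\mu_\ell$ itself inside $\wt Q(\0,5M_1)$, so Lemma \ref{oneorthogonal} applied directly to $\wt\mu_\ell$ gives $\alpha_{\wt\mu_\ell,k+1}(B(\0,M_1))<\lambda$, contradicting the choice of $\wt\mu_\ell$ as a counterexample; thus one may assume $|e_2^\ell|\leq 5M_1\sqrt n$ and proceed straight to your Case 2.
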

    
\begin{proof} Suppose that the statement is false.  
Then there exists some $\lambda>0$ and a sequence of $(k+2)$-ADR vampiric measures measures $\{\wt\mu_\ell\}_{\ell \in \mathbb Z}$ with the following properties.
\begin{itemize}
    \item There exists an increasing sequence $\{M_{2,\ell}\}_{\ell \in \mathbb N}$ which tends to infinity such that, for each $l \in \mathbb N$,
    $$\alpha_{k+1,\wt\mu_\ell}(\0,M_1(\lambda)) \geq \lambda \text{ and } \alpha_{k+2,\wt\mu_\ell}(\0,M_{2,\ell}) \geq \lambda.$$
    \item There exists a sequence of $k$-dimensional subspaces $\{L_\ell\}_{\ell \in \mathbb N}$ with $L_\ell \subset \supp(\wt\mu_\ell)_0$ and sequences $\{e_{1,\ell}\}_{\ell \in \mathbb N}$, 
    $\{e_{2,\ell}\}_{\ell \in \mathbb N}$, with $e_{i,\ell} \perp L_\ell$ for $i \in \{1,2\}$, and such that $|e_{1,\ell}| \leq 3 \sqrt{n}.$
    \item Displays \ref{2veccontain}--\ref{2vecstruc2} hold with $\mu$, $L$, $e_1$, and $e_2$ replaced by $\wt \mu_\ell$, $L_{\ell}$, $e_{1,\ell}$, $e_{2,\ell}$.
\end{itemize}
If $|e^\ell_2| > 5M_1 \sqrt{n}$ for infinitely many $l$, then, for such $\ell$, we observe from \ref{2veccontain} that 
$$\supp(\wt \mu_\ell)_0 \cap \wt Q_{5M_1}(\0) \subset \text{span}(L_\ell,e_1^\ell) \cap \wt Q_{5M_1}(\0)$$
and hence
$$\wt Q_{5M_1}(\0) \cap \{ L_l +ae_1^\ell, \,\, \forall a \in 4 \mathbb Z\} \subset \wt Q_{5M_1}(\0) \cap \supp(\mu_{\ell})_0.$$
So, we can apply Lemma \ref{oneorthogonal} to show that $\alpha_{k+1,\wt \mu_{\ell}}(0,M_1) < \lambda.$  We are free to pass to a subsequence such that $|e^\ell_2| > 5 M_1 \sqrt{n}$ for all $\ell$ and which is weakly convergent to a measure $\mu$ which, by Lemma \ref{adrwl}.  This measure will satisfy the hypotheses of the Lemma, and will (by Lemma \ref{alphawc}) satisfy $\alpha_{k+1,\mu}(\0,M_1)< \lambda.$  
Thus, we may assume that $|e_2^\ell| > 5M_1 \sqrt{n}$ for only finitely many $n$, and (after passing to a subsequence), assume that $|e_2^\ell| \leq  5M_1 \sqrt{n}$ for all $n$.

Now we are in a position to repeat the analysis in Lemma \ref{oneorthogonal}.  Consider the measures $\mu_{\ell} = \frac{\wt\mu_{\ell}(\,\cdot\, M_{2,\ell})}{M_{2,\ell}^{k+2}}$.  These measures are $(k+2)$-ADR and vampiric, so by passing to a subsequence we may assume converge weakly to an ADR measure (Lemma \ref{adrwl}) $\mu$ that is vampiric.  We are also free to pass to a subsequence so that $e_{i,\ell}/|e_{i,\ell}| \rightarrow e_i$ for $i=1,2$ and so that $L_l$ converges to a $k$-plane $L$ in the local Hausdorff metric.  By weak continuity of $\alpha$-numbers (Lemma \ref{alphawc}), we have that $\alpha_{\mu, k+2}(Q_1(\0))\geq \lambda$.  In $\wt Q_5(\0)$, we obtain from (\ref{localplansupport}) that $\supp(\mu_{\ell})_0$ contains $\bigl\{L+\frac{a_1}{M_{2,\ell}}e_1+\frac{a_2}{M_{2,\ell}}e_2: a_1,a_2\in 4\mathbb{Z}\bigl\}\cap \wt Q_5(\0)$. From this, along with the Ahlfors regularity of $\mu_{\ell}$, we conclude that $\supp(\mu)_0\cap \wt Q_5(\0) \supset P\cap \wt Q_5(\0)$, where $P=\text{span}(L,e_1, e_2)$, but additionally, from (\ref{2vecplane}) we find that $\supp(\mu)\subset P$ so $\supp(\mu)_0\cap \wt Q_5(\0) = P\cap \wt Q_5(\0)$.  Observe that $P$ is a $(k+2)$-dimensional plane.    But now from Lemma \ref{planeallslices}, we have that, in $Q_1(\0)$,
$$\mu = \mathcal{H}^{k+2}_{|P}\times \nu,$$
but then $\alpha_{k+2,\mu}(\0,1)=0$, which is absurd.
\end{proof}

We may now complete the proof of Proposition \ref{vampprop}.

\begin{proof}[Proof of Proposition \ref{vampprop}] 
The proof is a case analysis based upon Lemma \ref{planestructure}.  We rescale to assume, without loss of generality, that $r=1$.

Suppose $\mu$ is an $(k+2)$-ADR vampiric measure with $\alpha_{k, \mu}(\0,1)>0$. Consider the linear subspace $L\subset\supp(\mu)_0$ of dimension $m\in\{k,k+1,k+2\}$  given in Lemma \ref{planestructure}.  

First suppose that $\dim L=k$.  Then, there exists $e_1\perp L$, $|e_1|<3\sqrt{n}$ with $L+e_1\subset \supp(\mu)_0$.  Indeed, otherwise $\supp(\mu)_0\cap \wt{Q}_3(\0)=L$, and so Lemma \ref{planeallslices} ensures that $\mu|_{Q_3(\0)} = (\mathcal{H}^k|_{L}\times \nu)|_{
Q_3(\0)},$ for some $2$-ADR measure $\nu$ on $\mathbb R$, which yields that $\alpha_{k,\mu}(B_1(\0))=0$, a contradiction.  Now, depending on whether or not there is a further orthogonal vector $e_2$, either we can apply Lemma \ref{oneorthogonal} to get that $\alpha_{k+1,\mu}(\0,M_1)<\lambda$, or Lemma \ref{oneorthogonal} which yields that either $\alpha_{k+1,\mu}(\0,M_1) < \lambda$ or $\alpha_{k+2,\mu}(\0,M) < \lambda$ for every $M\geq M_2$.

Next suppose $\dim L=k+1$.  If $\supp(\mu)_0\cap \wt{Q}_{5M_1}(\0)=L$, then Lemma \ref{planeallslices} ensures that $\mu|_{Q_{M_1}(\0)} = (\mathcal{H}^{k+1}|_{L}\times \nu)|_{Q_{M_1}(\0)}$ for some $1$-ADR measure $\nu$ on $\mathbb R$, which yields that $\alpha_{k+1,\mu}(\0,M_1)=0$. Otherwise, $\supp(\mu)_0 \cap \wt Q_{5M_1}(\0) \neq L$.  There exists $e_1\in \supp(\mu)_0$ orthogonal to $L$ which satisfies the conclusions of Lemma \ref{planestructure}.  Notice that, because part (2)(b) of Lemma \ref{planestructure} holds, we may assume that $|e_1| < 5M_1$.  In this case we apply Lemma \ref{oneorthogonal}
to $\wt \mu  = \frac{\mu(\,\cdot\,\, 5M_1)}{(5M_1)^{k+2}}$ and $\widehat e_1 = (5M_1)^{-1}e_1$ to conclude $\alpha_{k+1,\wt \mu}(\0,M) < \lambda$ for every $M\geq M_1$.  This immediately implies 
$\alpha_{k+2,\mu}(\0,5MM_1) < \lambda$ for every $M\geq M_1$.

Finally, suppose $\dim L = k+2$.  Then we must have $\supp(\mu)_0=L$, and so Lemma \ref{planeallslices} ensures that $\mu= \mathcal{H}^{k+1}_{|L}\times \nu$ and hence that $\alpha_{k+2,\mu}(\0,r)=0$ for any $r>0$.

Selecting $\wh{M_2}\geq \max(M_2, 5M_1^2)$ now yields that, in every case, either
$$\alpha_{k+1,\mu}(\0,M_1)<\lambda\text{ or }\alpha_{k+2,\mu}(\0,\wh M_2) < \lambda,$$
and the proposition is proved.
%In the case that $L$ is $k$-dimensional and there exists $e_1 \in \supp(\mu)_0 \setminus L$, but there exists no $e_2 \in \supp(\mu)_0\setminus L$ with $e_1$ linearly independent from $e_2$, we have shown that $\alpha_{k+1,\mu}(\0,M_1)<\lambda$.  Otherwise, one of the three holds:
%    \begin{itemize}
 %       \item $L$ is $k$ dimensional, there are two linearly independent vectors $e_1,e_2 \in \supp(\mu)_0 \setminus L$, and $\alpha_{k+2,\mu}(\0,M_2) < \lambda.$
 %       \item $L$ is $(k+1)$-dimensional, there exists a vector $e_1 \in \supp(\mu)\setminus L$, and $\alpha_{k+2,\mu}(\0,5M_2)<\lambda$.
 %       \item $L$ is $(k+2)$-dimensional and $\alpha_{k+2,\mu}(\0,r) = 0$ for all $r>0$.
 %   \end{itemize}
 %   Using Lemmas \ref{2mono} and \ref{1mono}, we can ensure the existence of a uniform $\widehat M_2>0$ such that 
  %  $\alpha_{k+2,\mu}(\0,\widehat M_2)< \lambda$.  Another application of Lemmas \ref{2mono} and \ref{1mono} ensures the existence of a number $M>0$ such that, no matter what case we find ourselves in, either $\alpha_{k+1,\mu}(\0,M)<\lambda$ or $\alpha_{k+2,\mu}(\0,M) < \lambda$.
\end{proof}

\subsection{The Heisenberg group case}

For $\X=(X,t_X)\in \mathbb{H}$, define $\overline{\X} = (-X, t_X)$.  Recall that
$$\X\cdot \Y = (X+Y, t_X+t_Y+\frac{1}{2}(X_1Y_2-Y_1X_2))$$

Given $\X=(X,t_X), \Y=(Y,t_Y) \in \mathbb H$, define
$$\Sigma_\X(\Y) = (2X-Y,t_Y+Y_1X_2-X_1Y_2)$$

\begin{lem}\label{vampsym}
    Suppose $\X, \Y\in \supp(\mu)$.  Then 
    $\Sigma_\X(\Y) \in \supp(\mu)$.  Moreover, for all $0<r<\infty$ we have
    $$\mu(B_r(\Sigma_\X(\Y))) = \mu(B_r(\Y))$$
\end{lem}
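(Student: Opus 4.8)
The plan is to recognise $\Sigma_\X(\Y)$ as the ``reflection of $\Y$ through $\X$'' built from the spatial sign-flip automorphism $\overline{\,\cdot\,}$, and then to read off both claims directly from the vampiric identity \eqref{vampcor}. Concretely, I would first establish the algebraic identity $\Sigma_\X(\Y)=\X\cdot\overline{\X^{-1}\cdot\Y}$; once this is known, \eqref{vampcor}, written in terms of balls, says exactly that $\mu$ is invariant under the map $\Z\mapsto\X\cdot\overline{\X^{-1}\cdot\Z}$ on balls.

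First I would record the relevant preliminaries. The map $\overline{\,\cdot\,}\colon\mathbb H\to\mathbb H$, $\overline{(X,t)}=(-X,t)$, is a group automorphism (as used already in the derivation of \eqref{vampcor}), and it leaves the homogeneous norm $\|\cdot\|$ invariant, since $\|\cdot\|$ depends only on $|X|$ and $|t|$; in particular it commutes with inversion. Because $d$ is left-invariant and $\overline{\,\cdot\,}$ is a norm-preserving bijection, metric balls transform cleanly:
$$\mathbf W\cdot B_r(\mathbf V)=B_r(\mathbf W\cdot\mathbf V)\qquad\text{and}\qquad\overline{B_r(\mathbf V)}=B_r(\overline{\mathbf V})$$
for all $\mathbf V,\mathbf W\in\mathbb H$ and $r>0$.

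Next I would check, by a short computation in coordinates, the identity $\Sigma_\X(\Y)=\X\cdot\overline{\X^{-1}\cdot\Y}$: writing $\X=(X,t_X)$ and $\Y=(Y,t_Y)$, one finds $\X^{-1}\cdot\Y=\bigl(Y-X,\ t_Y-t_X+\tfrac12(X_2Y_1-X_1Y_2)\bigr)$, hence $\overline{\X^{-1}\cdot\Y}=\bigl(X-Y,\ t_Y-t_X+\tfrac12(X_2Y_1-X_1Y_2)\bigr)$, and left-multiplying by $\X$ gives $\bigl(2X-Y,\ t_Y+X_2Y_1-X_1Y_2\bigr)=\Sigma_\X(\Y)$. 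Now set $\mathbf W:=\X^{-1}\cdot\Y$ and apply \eqref{vampcor} with $U=B_r(\mathbf W)^{-1}$ (a bounded open set). Since $\overline{\,\cdot\,}$ commutes with inversion, $\{\Z^{-1}:\Z\in U\}=B_r(\mathbf W)$ and $\{\overline{\Z}^{-1}:\Z\in U\}=\overline{B_r(\mathbf W)}=B_r(\overline{\mathbf W})$, so \eqref{vampcor} becomes $\mu\bigl(\X\cdot B_r(\mathbf W)\bigr)=\mu\bigl(\X\cdot B_r(\overline{\mathbf W})\bigr)$. By the ball transformation rules and the identity above this is precisely $\mu(B_r(\Y))=\mu(B_r(\Sigma_\X(\Y)))$, which gives the second assertion; note that the hypothesis $\X\in\supp(\mu)$ is exactly what is needed to invoke \eqref{vampcor}. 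Finally, since $\Y\in\supp(\mu)$ is equivalent to $\mu(B_r(\Y))>0$ for all $r>0$, the equality of measures forces $\mu(B_r(\Sigma_\X(\Y)))>0$ for all $r>0$, so $\Sigma_\X(\Y)\in\supp(\mu)$.

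The argument is essentially mechanical once the right objects are in place, so I do not anticipate a real obstacle; the only computation that needs genuine care is the coordinate identity $\Sigma_\X(\Y)=\X\cdot\overline{\X^{-1}\cdot\Y}$ --- i.e.\ recognising the somewhat opaque formula defining $\Sigma_\X$ as the $\overline{\,\cdot\,}$-conjugation reflection --- together with verifying that metric balls are preserved under left translation and under $\overline{\,\cdot\,}$, which is what lets one apply \eqref{vampcor} in the ball form of the statement.
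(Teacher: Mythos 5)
Your proof is correct and follows essentially the same route as the paper: both arguments reduce the claim to the identity \eqref{vampcor} applied to a ball, using left-invariance of the metric and the fact that the spatial reflection $\overline{\,\cdot\,}$ is a norm-preserving automorphism, with your identity $\Sigma_\X(\Y)=\X\cdot\overline{\X^{-1}\cdot\Y}$ being exactly the coordinate computation the paper carries out inside its chain of ball equalities. Your explicit treatment of the support statement (positivity of $\mu(B_r(\cdot))$ for all $r>0$) is a small completeness bonus, since the paper leaves that step implicit.
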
   
    \begin{proof}
First observe that if $X \in \supp(\mu)$, $\Z \in \mathbb H$, and $r>0$, then the condition (\ref{vampcor}) implies that
        $$\mu(\X\cdot B_r(\Z)) = \mu(\X \cdot\overline{B_r(\Z)}).$$ Hence,
        \begin{align*}
        \mu(B_r(\Sigma_\X(\Y))&=\mu(B_r(2X-Y,t_Y+Y_1X_2-X_1Y_2))\\
        &=\mu(\X\cdot B_r(X-Y,t_Y-t_X + \frac{1}{2}(Y_1X_2-X_1Y_2))\\
        &= \mu(\X \cdot B_r(Y-X,t_Y-t_X + \frac{1}{2}(Y_1X_2-X_1Y_2))\\
        & = \mu(B_r(Y,t_Y)) =\mu(B_r(\Y)),
        \end{align*}
        as required.
    \end{proof}

Orponen \cite{Orp} called a Borel measure $\mu$ on $\mathbb{H}$ \textit{symmetric} measure if, for each pair
$\X,\Y \in \supp(\mu)$, $\Sigma_\X(\Y) \in \supp(\mu)$, and proved the following result:
\begin{thm}\cite{Orp}
    The support of a $3$-regular, symmetric measure on $\mu$
    on $\mathbb H$ is contained in a vertical plane.
\end{thm}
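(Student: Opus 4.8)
The plan is to prove the equivalent assertion that the spatial projection $\pi(\supp\mu)\subset\R^{2}$, $\pi(X,t)=X$, is contained in a line $\ell$; since $\pi^{-1}(\ell)$ is precisely a vertical plane, this gives $\supp\mu\subset\pi^{-1}(\ell)$. A direct computation shows $\Sigma_{\mathbf A\cdot\X}(\mathbf A\cdot\Y)=\mathbf A\cdot\Sigma_{\X}(\Y)$, so being a symmetric $3$-regular measure is preserved by left translations; as these are isometries carrying vertical planes to vertical planes, we may assume $\0\in\supp\mu$. Everything rests on Lemma~\ref{vampsym}: $\supp\mu$ is invariant under the involutive isometries $\Sigma_{\X}$, $\X\in\supp\mu$, and $\mu(B_{r}(\Sigma_{\X}\Y))=\mu(B_{r}(\Y))$, which (measures agreeing on all small balls coincide) upgrades to $(\Sigma_{\X})_{\#}\mu=\mu$.

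First I would extract the structure of the time-zero slice $E_{0}:=\{X:(X,0)\in\supp\mu\}$. From $\Sigma_{\0}(\Y)=(-Y,t_{Y})$, $\Sigma_{\X}(\0)=(2X,0)$, and $\Sigma_{(A,0)}((B,0))=(2A-B,0)$ one reads off that $E_{0}$ contains $0$, is closed under Euclidean midpoint reflection, and satisfies $2\,\pi(\supp\mu)\subset E_{0}$; in particular $\pi(\supp\mu)$ is non-collinear iff $E_{0}$ is. Assuming it is, the key claim is that $\supp\mu$ must then contain $\Gamma\times\{0\}$ for a grid $\Gamma$ of equally spaced parallel lines in $\R^{2}$. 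One shows (i) $E_{0}$ is not uniformly discrete: otherwise, with separation $\delta$, for $(X_{*},t_{*})\in\supp\mu$ and $0<r<\delta$ the ball $B_{r}(X_{*},t_{*})$ meets $\supp\mu$ only inside $\{X_{*}\}\times\R$, forcing the fibre measure $s\mapsto\mu(\{X_{*}\}\times(s-\ell,s+\ell))$ to be Ahlfors regular of exponent $3/2>1$ on $\R$, which is impossible (cover a fixed interval by $\sim N$ subintervals centred on the fibre support to bound its mass by $\lesssim N^{-1/2}\to0$); hence $E_{0}$ has chords $v=b-a$ of arbitrarily small length, and from $a+\Z v\subset E_{0}$ we get $\Sigma_{a+nv}(\0)=2a+2nv\in E_{0}$ and then $\Sigma_{a+mv}(2a+2nv)=2(m-n)v\in E_{0}$, so $2\Z v\subset E_{0}$; letting $|v|\to0$ along a limiting direction $e$, and using that $\supp\mu$ is closed, gives $\R e\times\{0\}\subset\supp\mu$; finally, choosing $u\in E_{0}\setminus\R e$ and reflecting the (dense-in-$\R e$) part of $E_{0}$ through the points $(ku,0)\in\supp\mu$, $k\in\Z$, produces $(\R e+2\Z u)\times\{0\}\subset\supp\mu$, the asserted grid.

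Now I would blow down: rescaling $\mu$ by the dilations $\delta_{1/R}(X,t)=(X/R,t/R^{2})$ with $R\to\infty$ and passing to a weak limit gives, by Lemma~\ref{adrwl} and since symmetry passes to weak limits, a symmetric $3$-regular measure $\nu$ with $\0\in\supp\nu$; because the rescaled grids $\tfrac1R\Gamma$ have spacing tending to $0$, we obtain $\R^{2}\times\{0\}\subset\supp\nu$, hence $E_{0}(\nu)=\R^{2}$. There are two cases. If $\supp\nu=\R^{2}\times\{0\}$, then $(\Sigma_{(A,0)})_{\#}\nu=\nu$ for all $A\in\R^{2}$, and $\Sigma_{(A,0)}$ restricts to the Euclidean reflection $X\mapsto2A-X$ of $\R^{2}\times\{0\}$, so $\nu$ is invariant under every Euclidean translation of that plane, i.e.\ $\nu=c\,\mathcal L^{2}$ on $\R^{2}\times\{0\}$; but then $\nu(B_{\rho}(\0))=c\pi\rho^{2}$ cannot be comparable to $\rho^{3}$, a contradiction. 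Otherwise some $(Z,s)\in\supp\nu$ has $s\neq0$, and $\Sigma_{(A,0)}((Z,s))=(2A-Z,\,s(1-A_{1}))$, $A\in\R^{2}$, forces $\supp\nu$ to contain a non-vertical $2$-plane $P=\{(p_{1},p_{2},\alpha+\beta p_{1})\}$ with $\beta\neq0$; reflecting $P$ through $\R^{2}\times\{0\}$ once more, the time coordinate of the reflected point over a fixed spatial $(y_{1},y_{2})$ is an affine, non-constant function of the free parameters $(p_{1},p_{2})$ as long as $(y_{1},y_{2})\neq(0,-2\beta)$, so $\supp\nu$ contains the nonempty open set $(\R^{2}\setminus\{(0,-2\beta)\})\times\R$; packing $\gtrsim\rho^{-4}$ disjoint $\rho$-balls centred in this open set inside a fixed ball then yields $\nu(B)\gtrsim\rho^{-1}\to\infty$, impossible since the homogeneous dimension of $\mathbb H$ is $4>3$. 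Hence $\pi(\supp\mu)$ lies on a line. The most delicate part is the middle step --- the reflection-chasing that upgrades mere non-collinearity of the time-zero slice to a full line, and then a grid of lines, inside $\supp\mu$ --- and the bookkeeping that ensures this grid survives the blow-down as a plane-filling set.
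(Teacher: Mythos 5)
Two things up front: the paper does not prove this statement at all --- it is quoted from Orponen \cite{Orp} --- so your argument can only be judged on its own terms; and your overall scheme (reduce to the spatial projection lying on a line; rule out uniform discreteness by the fibre/$3$-regularity argument; generate progressions and then a grid by iterated reflections; blow down to a symmetric $3$-regular $\nu$ with $\R^2\times\{0\}\subset\supp\nu$; conclude by packing, since the homogeneous dimension of $\mathbb H$ is $4>3$) is viable. The genuine gap is that you repeatedly compute the reflections as if the Heisenberg cross terms were absent. With the paper's definition, $\Sigma_{(A,0)}\bigl((B,0)\bigr)=(2A-B,\,A_2B_1-A_1B_2)$, \emph{not} $(2A-B,0)$; so the time-zero slice $E_0$ is not closed under Euclidean midpoint reflection (this is precisely what distinguishes $\mathbb H$ from the parabolic case), and the steps ``$a+\mathbb{Z}v\subset E_0$'', ``$\Sigma_{a+mv}(2a+2nv)=2(m-n)v\in E_0$'' and ``$(\R e+2\mathbb{Z}u)\times\{0\}\subset\supp\mu$'' are unjustified as written. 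The same error breaks your endgame: $\Sigma_{(A,0)}$ does not restrict to the Euclidean reflection of $\R^2\times\{0\}$, so the translation-invariance argument giving $\nu=c\mathcal{L}^2$ in Case 1 collapses (and the push-forward identity $(\Sigma_\X)_{\#}\mu=\mu$ you invoke comes from Lemma \ref{vampsym}, which is proved for vampiric measures, whereas the theorem's hypothesis is only the support condition); in Case 2 the formula $\Sigma_{(A,0)}((Z,s))=(2A-Z,\,s(1-A_1))$ is simply wrong, the correct time coordinate being $s+A_2Z_1-A_1Z_2$.

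These defects are repairable, and the repaired argument stays close to your outline. What is true is that the spatial projection $\pi(\supp\mu)$ \emph{is} closed under $(x,y)\mapsto 2x-y$ (apply $\Sigma_{\X}$ to arbitrary preimages of $x,y$), and that the doubling identity $\Sigma_{(X,t)}(\0)=(2X,0)$ sends $\pi(\supp\mu)$ into $E_0$ exactly. Running your chain with these two facts gives $a+\mathbb{Z}v\subset\pi(\supp\mu)$, then $2\mathbb{Z}v\subset\pi(\supp\mu)$, hence $4\mathbb{Z}v\subset E_0$ for arbitrarily short chords $v$, and in the limit $\R e\subset E_0$; reflecting $(xe,0)$ through $(ku,0)$ produces points with time coordinate $kx(u_2e_1-u_1e_2)\neq 0$, but one more doubling still yields a grid $\R e+4\mathbb{Z}u\subset E_0$, which is all the blow-down needs. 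Finally, once $\R^2\times\{0\}\subset\supp\nu$ you need no case split at all: taking $B=2A-y$, the reflections $\Sigma_{(A,0)}((B,0))=(y,\,A_1y_2-A_2y_1)$ sweep out the whole vertical line $\{y\}\times\R$ for every fixed $y\neq 0$, so $\supp\nu=\mathbb H$ by closedness and your packing argument finishes. So the strategy stands, but as submitted the proof contains concrete false formulas at exactly the steps you yourself flag as delicate, and it must be corrected along these lines (or one follows Orponen's original argument, which the paper cites rather than reproduces).
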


Lemma \ref{vampsym} shows that vampiric measures on $\mathbb H$ are symmetric measures, and therefore:

\begin{cor}\label{vampplane}
    The support of a $3$-regular, vampiric measure on $\mathbb H$
    is contained in a vertical plane.
\end{cor}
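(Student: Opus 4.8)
The plan is to read this corollary off directly from Lemma~\ref{vampsym} together with the theorem of Orponen quoted just above. First I would observe that Lemma~\ref{vampsym} says exactly that a $3$-ADR vampiric measure $\mu$ on $\mathbb{H}$ satisfies the defining property of an Orponen \emph{symmetric} measure: whenever $\X,\Y\in\supp(\mu)$, the reflected point $\Sigma_\X(\Y)$ again lies in $\supp(\mu)$. Lemma~\ref{vampsym} in fact gives more --- namely $\mu(B_r(\Sigma_\X(\Y)))=\mu(B_r(\Y))$ --- but only the support statement is needed here; the measure-preservation part is what will be used later, when one wants information about the \emph{distribution} of a vampiric measure and not merely about its support. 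Since our $3$-ADR hypothesis on $\mu$ is precisely $3$-regularity with respect to the gauge $\|\cdot\|$ in which Orponen works, the hypotheses of Orponen's theorem are met.

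Then I would simply invoke that theorem: the support of a $3$-regular symmetric measure on $\mathbb{H}$ is contained in a vertical plane. Combining these two facts yields the corollary. There is no substantial obstacle at this stage --- all the work is already packaged inside Lemma~\ref{vampsym}, whose proof translates the antisymmetric-kernel (vampiric) condition into the pointwise reflection symmetry by an application of \eqref{vampcor}, and inside Orponen's analysis of symmetric measures, both of which we are entitled to use as black boxes. The only point worth double-checking is the purely notational one that the map $\Sigma_\X$ in Lemma~\ref{vampsym} coincides with the symmetry operation in Orponen's definition, which it does by construction; so the proof amounts to two sentences citing Lemma~\ref{vampsym} and the theorem of Orponen in turn.
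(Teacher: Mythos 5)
Your proposal is correct and matches the paper's argument exactly: the corollary is obtained by noting that Lemma~\ref{vampsym} shows a vampiric measure is a symmetric measure in Orponen's sense and then invoking the quoted theorem of Orponen. Nothing further is needed.
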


Our analysis of parabolic vampiric measures enables us to build upon this result and prove more refined information:

\begin{thm}\label{Hvampchar}
    Suppose $\mu$ is a $3$-regular, vampiric measure on $\mathbb H$ whose support contains $(0,0,0)$.
    Then $\mu = \mathcal H^1\vert_{L}\times \nu$ for some
    line $L \subset \mathbb R^2$ and a $2$-ADR measure $\nu$ on $\mathbb R.$  Conversely, any measure of the form $\mu = \mathcal H^1\vert_{L}\times \nu$ is a vampiric measure on $\mathbb H$.
\end{thm}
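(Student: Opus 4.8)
The plan is to reduce the Heisenberg statement to the parabolic analysis of this section. First I would invoke Corollary \ref{vampplane} to put $\supp(\mu)$ inside a vertical plane; since $(0,0,0)\in\supp(\mu)$ this plane passes through the origin, so it has the form $V=L_0\times\mathbb R$ with $L_0\subset\mathbb R^2$ a line through $0$. The crucial structural observation is that any two vectors of $L_0$ are parallel, so $\eta\equiv 0$ on $L_0\times L_0$; consequently the Heisenberg group law and the Heisenberg metric, restricted to $V$, coincide with the parabolic ones on $\mathbb R^{1+1}$, and for $\X,\Y\in\supp(\mu)$ one simply has $\Y^{-1}\cdot\X=\X-\Y$. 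Thus $V$ with its induced structure is a copy of parabolic $\mathbb R^{1+1}$ and $\mu$ is a $3$-ADR measure on it.

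The second step is to check that $\mu$ is a \emph{parabolic} vampiric measure on $V$. Here the only thing to verify is that $\int_V\psi(\X-\Y)\,d\mu(\Y)=0$ for every spatially odd Lipschitz $\psi$ on $V$ and every $\X\in\supp(\mu)$; this follows from the Heisenberg vampiric hypothesis once each such $\psi$ is exhibited as the restriction to $V$ of a spatially odd Lipschitz function $\tilde\psi$ on $\mathbb H$. For this I would take a McShane extension $\Psi$ of $\psi$ and set $\tilde\psi=\tfrac12\bigl(\Psi-\Psi\circ\overline{(\cdot)}\bigr)$, exactly as in the proof of Lemma \ref{productextension}, using that $\X\mapsto\overline\X=(-X,t)$ is an isometric automorphism of $\mathbb H$ preserving $V$; multiplying by a large even cutoff makes $\tilde\psi$ compactly supported. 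Since $\supp(\mu)\subset V$, plugging $\tilde\psi$ into the Heisenberg vampiric identity yields the parabolic one. I expect this transfer step to be the main obstacle: a naive attempt to run the reflection lemmas directly fails because left translates of cylinders in $\mathbb H$ are sheared and need not stay in $V$, so one genuinely has to pass through test functions and extend them.

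Once $\mu$ is known to be a parabolic vampiric measure on $V\cong\mathbb R^{1+1}$, the structural lemmas of this section finish the argument — and they apply here even though the formal restriction $k\le n-1$ fails ($n=1$, $k=1$), since their proofs never use it. By Lemma \ref{maxplane} the maximal spatial linear subspace $L\subseteq\supp(\mu)_0$ through the origin has $1\le\dim L\le 3$; but $L\subseteq L_0$ is at most one-dimensional, so $L=L_0$. Lemma \ref{planestructure} then produces no extra orthogonal vectors $e_i$ (there is no room for them in a one-dimensional spatial component), so $\supp(\mu)_0\subseteq\operatorname{span}(L)=L_0$, i.e. $\supp(\mu)_0\cap\wt Q(\0,3r)=L_0\cap\wt Q(\0,3r)$ for every $r>0$. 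Lemma \ref{planeallslices} now gives $\mu|_{Q(\0,r)}=(\mathcal H^1|_{L_0}\times\nu)|_{Q(\0,r)}$ for a $2$-ADR measure $\nu$ on $\mathbb R$; letting $r\to\infty$ (and using $\supp(\mu)\subset V$) the local descriptions glue to $\mu=\mathcal H^1|_{L_0}\times\nu$ globally, with $L_0$ a line through the origin.

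For the converse I would simply compute. Let $\mu=\mathcal H^1|_L\times\nu$ with $L$ a line through the origin; then $\supp(\mu)\subset V=L\times\mathbb R$, so for $\psi\in\Lipodd(\mathbb H)$ and $\X=(X_0,t_0)\in\supp(\mu)$ (hence $X_0\in L$), using $\eta\equiv0$ on $L\times L$ one gets $\psi\ast\mu(\X)=\int_{\mathbb R}\bigl(\int_L\psi(X_0-Y,\,t_0-s)\,d\mathcal H^1(Y)\bigr)\,d\nu(s)$. For each fixed $s$ the substitution $Z=X_0-Y$ (which ranges over $L$), together with the spatial oddness of $\psi$ and the invariance of $\mathcal H^1|_L$ under $Z\mapsto-Z$, forces the inner integral to vanish, so $\psi\ast\mu\equiv0$ on $\supp(\mu)$ and $\mu$ is vampiric.
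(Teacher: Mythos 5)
Your proof is correct and follows essentially the same route as the paper: Corollary \ref{vampplane} places $\supp(\mu)$ in a vertical plane, on which the Heisenberg operation and metric reduce to the parabolic ones, and then the parabolic structure lemmas (Lemmas \ref{maxplane}/\ref{planestructure} and \ref{planeallslices}) give the product form $\mathcal H^1\vert_L\times\nu$. The paper's own proof is just terser --- it does not spell out the transfer of the vampiric property to the plane via extension of spatially odd Lipschitz test functions, the harmlessness of the formal constraint $k\le n-1$, or the converse computation --- so your added details fill in steps the paper leaves implicit rather than constituting a different method.
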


\begin{proof}
 Orponen's theorem (Corollary \ref{vampplane} ensures that, for some vertical hyperplane $L \times \mathbb R\subset \mathbb{H}$, $\supp(\mu)\subset L \times \mathbb R$.  If $\X,\Y\in L$, then $\X\cdot \Y = (X+Y, t_X+t_Y)$, and so the Heisenberg group action becomes the Parabolic group action on $L$.  Therefore Lemma \ref{planestructure} ensures that $(\supp(\mu))_0=L$.  Appealing now to Lemma \ref{planeallslices} completes the proof (the converse direction is a trivial computation.)
 \end{proof}

\section{Packing bad cubes}

In this section we complete the proofs of Theorems \ref{parabolicthm} and \ref{heisenbergthm}.  We begin with Theorem \ref{parabolicthm} as the analysis is a little more involved.

\subsection{The proof of Theorem \ref{parabolicthm}}

To reiterate, for this subsection the group operation $\X\cdot \Y = (X+Y,t+s)$ is the parabolic group action.

Our main proposition is the following.  Recall the quantities $M_1(\lambda)$ and $M_2(\lambda)$ depending on $\lambda$ in Proposition \ref{vampprop}.

\begin{prop}\label{lipcoefnonflat} For each $\kap>0, \lambda>0$, there exists $\Delta>0$, $A>1$ and a finite collection $\mathcal{F}$ of functions $\varphi\in \Lipodd(\R^{n+1})\cap C^{\infty}(\R^{n+1})$ such that the following holds:  for every $(k+2)$-ADR measure $\mu$ and $E\in \dy(\mu)$ satisfying \begin{equation}\label{badcond}\alpha_{k,\mu}(\Z_E,\ell(E))\geq\kap, \;\alpha_{k+1,\mu}(\Z_E,M_1(\lambda)\ell(E))\geq\lambda,\,\text{ and }\, \alpha_{k+2,\mu}(\Z_E,M_2(\lambda)\ell(E))\geq\lambda, \end{equation}
then
$$\max_{\varphi \in \mathcal{F}}\Theta_{\mu, \varphi,A}(E)\geq \Delta \mu(E).
$$
\end{prop}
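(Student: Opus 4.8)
The plan is a compactness-and-contradiction argument resting on the structural classification of vampiric measures in Proposition \ref{vampprop}. First I would fix, once and for all, a countable family $\{\varphi_j\}_{j\geq 1}\subset \Lipodd(\R^{n+1})\cap C^{\infty}(\R^{n+1})$ that is dense in $\Lipodd(\R^{n+1})$ (with uniformly controlled Lipschitz norms on compacts), and set $\mathcal{F}_\ell=\{\varphi_1,\dots,\varphi_\ell\}$. Suppose the proposition were false for some $\kap>0$ and $\lambda>0$. Negating the statement with the choices $\Delta=1/\ell$, $A=\ell$, $\mathcal{F}=\mathcal{F}_\ell$ produces, for each $\ell$, a $(k+2)$-ADR measure $\wt\mu_\ell$ and a cube $\wt E_\ell\in\dy(\wt\mu_\ell)$ for which all three $\alpha$-number hypotheses in \eqref{badcond} hold (the third one with $\alpha_{\mu,k+2}$), yet
\[
\max_{\varphi\in\mathcal{F}_\ell}\Theta_{\wt\mu_\ell,\varphi,\ell}(\wt E_\ell)<\tfrac1\ell\,\wt\mu_\ell(\wt E_\ell).
\]
Rescaling and renormalizing exactly as in Section \ref{cylsection} (dilating about $\Z_{\wt E_\ell}$ so that $\ell(\wt E_\ell)$ becomes $1$ and normalizing so the measure stays $(k+2)$-ADR), I obtain measures $\mu_\ell$ and cubes $E_\ell\in\dy(\mu_\ell)$ with $\Z_{E_\ell}=\0$, $\ell(E_\ell)=1$, $\mu_\ell(E_\ell)\approx1$, $\0\in\supp(\mu_\ell)$, with $\alpha_{\mu_\ell,k}(B(\0,1))\geq\kap$, $\alpha_{\mu_\ell,k+1}(B(\0,M_1))\geq\lambda$, $\alpha_{\mu_\ell,k+2}(B(\0,M_2))\geq\lambda$, and with
\[
\max_{\varphi\in\mathcal{F}_\ell}\int_{B_\ell(\0)}\bigl(\varphi\ast\mu_\ell(\Y)\bigr)^2\,d\mu_\ell(\Y)\lesssim\tfrac1\ell.
\]

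Next, using the uniform ADR bounds I would pass to a subsequence along which $\mu_\ell\rightharpoonup\mu$; by Lemma \ref{adrwl} the limit $\mu$ is $(k+2)$-ADR, and $\0\in\supp(\mu)$ since $\0\in\supp(\mu_\ell)$ for all $\ell$ and the measures are uniformly lower regular. The crux is to show $\mu$ is vampiric. Fixing $j$, for all large $\ell$ we have $\varphi_j\in\mathcal{F}_\ell$ and $\ell>R$, so $\int_{B_R(\0)}(\varphi_j\ast\mu_\ell)^2\,d\mu_\ell\to0$ for every fixed $R$. Since $\varphi_j$ is bounded, Lipschitz, compactly supported, and the metric is left-invariant, the convolutions $\varphi_j\ast\mu_\ell$ are uniformly bounded and equi-Lipschitz on each fixed ball (using $\mu_\ell(B_\rho)\lesssim\rho^{k+2}$), so Arzel\`a--Ascoli plus a diagonal argument gives a further subsequence along which $\varphi_j\ast\mu_\ell$ converges locally uniformly; by weak convergence the limit must be $\varphi_j\ast\mu$. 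Hence $\int_{B_R(\0)}(\varphi_j\ast\mu)^2\,d\mu=0$ for all $R$, i.e.\ $\varphi_j\ast\mu\equiv0$ on $\supp(\mu)$; and since $\{\varphi_j\}$ is dense in $\Lipodd(\R^{n+1})$, Lemma \ref{vampdense} yields that $\mu$ is vampiric.

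Finally I would close the contradiction. Lemma \ref{alphawc} (weak continuity of $\alpha$-numbers) transfers the three inequalities above to the limit: $\alpha_{\mu,k}(B(\0,1))\geq\kap>0$, $\alpha_{\mu,k+1}(B(\0,M_1))\geq\lambda$, and $\alpha_{\mu,k+2}(B(\0,M_2))\geq\lambda$. But $\mu$ is a $(k+2)$-ADR vampiric measure with $\0\in\supp(\mu)$ and $\alpha_{\mu,k}(B(\0,1))>0$, so Proposition \ref{vampprop} (applied with $r=1$, with $M_1=M_1(\lambda)$ and $M_2=M_2(\lambda)$ the very constants appearing in \eqref{badcond}) forces $\alpha_{\mu,k+1}(B(\0,M_1))<\lambda$ or $\alpha_{\mu,k+2}(B(\0,M_2))<\lambda$ --- contradicting the preceding sentence, and completing the argument. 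The main obstacle is precisely the verification that the weak limit is vampiric: one must propagate the vanishing of the square-function coefficients through the weak limit, the delicate point being the local uniform convergence $\varphi_j\ast\mu_\ell\to\varphi_j\ast\mu$ together with the identification of the limit. Everything downstream --- the rescaling bookkeeping, the transfer of the $\alpha$-numbers, and the appeal to Proposition \ref{vampprop} --- is routine.
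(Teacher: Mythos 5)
Your proposal is correct and follows essentially the same route as the paper's own proof: negate the statement along the dense family $\mathcal{F}_\ell$ with $\Delta=1/\ell$, $A=\ell$, rescale so the bad cube is the unit cube centered at $\0$, pass to a weak limit which is $(k+2)$-ADR (Lemma \ref{adrwl}) and vampiric (Lemma \ref{vampdense}), transfer the $\alpha$-number lower bounds by Lemma \ref{alphawc}, and contradict Proposition \ref{vampprop}. The only difference is that you spell out the propagation of the vanishing square-function coefficients to the limit (equi-Lipschitz convolutions plus Arzel\`a--Ascoli), a detail the paper leaves implicit in its appeal to Lemma \ref{vampdense}.
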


Let's first see how this proposition enables us to conclude the proof of Theorem \ref{parabolicthm}.  We begin with a simple lemma, with follows immediately from Lemma \ref{alphaenlarge}.

\begin{lem}\label{hdsc1} Suppose that $\mu$ is $(k+2)$-ADR, and there exists $\eps$ such that $\eps$-$\HSDC$ cubes are Carleson.  Then there exists $\lambda>0$ such that the collection of cubes $E\in \dy(\mu)$ such that
$$\alpha_{k+1,\mu}(\Z_E,M_1\ell(E))<\lambda \text{ or }\alpha_{k+2,\mu}(\Z_E,M_2\ell(E))<\lambda$$
is Carleson.
\end{lem}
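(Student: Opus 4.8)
The plan is to reduce the statement to two separate Carleson estimates and then run the ball-enlargement argument behind Lemma~\ref{alphaenlarge}. Since a subfamily of a Carleson family is Carleson and a finite union of Carleson families is Carleson, it is enough to show that $\mathcal{B}_1:=\{E\in\dy(\mu):\alpha_{\mu,k+1}(B(\Z_E,M_1\ell(E)))<\lambda\}$ and $\mathcal{B}_2:=\{E\in\dy(\mu):\alpha_{\mu,k+2}(B(\Z_E,M_2\ell(E)))<\lambda\}$ are each Carleson. Before starting I would shrink $\lambda$: if the $\lambda'$-$\HSDC$ cubes are Carleson then so are the $\lambda$-$\HSDC$ cubes for every $\lambda\le\lambda'$ (and we update $M_1=M_1(\lambda)$, $M_2=M_2(\lambda)$ from Proposition~\ref{vampprop} accordingly), so we may assume $\lambda$ is small enough that $C\lambda<\lambda'$, where $C=C(n,k)\ge1$ is the dimensional constant produced below. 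With this choice, the ``unit-scale'' families $\mathcal{G}_1:=\{E:\alpha_{\mu,k+1}(B(\Z_E,\ell(E)))<\lambda'\}$ and $\mathcal{G}_2:=\{E:\alpha_{\mu,k+2}(B(\Z_E,\ell(E)))\le\lambda'\}$ are Carleson, being subfamilies of the $\lambda'$-$\HSDC$ cubes.

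The key step is to compare $\alpha_{\mu,k+1}$ at the enlarged ball $B(\Z_E,M_1\ell(E))$ with $\alpha_{\mu,k+1}$ at the natural ball of a slightly smaller ancestor. Given $E\in\mathcal{B}_1$, I would take $E'\supseteq E$ to be the ancestor with $\ell(E')=2^{j^\ast}\ell(E)$, where $2^{j^\ast}$ is the largest power of two not exceeding $\tfrac{12}{13}M_1$ (which is $\ge1$ since $M_1$ may be assumed large). Since $\Z_E\in E\subseteq E'$ and $\diam(E')\le\tfrac14\ell(E')$, one gets $B_{3\ell(E')}(\Z_{E'})\subseteq B_{3M_1\ell(E)}(\Z_E)$, so every competitor in $\dist_{B(\Z_{E'},\ell(E'))}(\mu,\omega)$ also competes in $\dist_{B(\Z_E,M_1\ell(E))}(\mu,\omega)$; taking the infimum over $\omega\in\mathcal{M}_{k+1}$ and dividing by $\ell(E')^{k+3}$ yields $\alpha_{\mu,k+1}(B(\Z_{E'},\ell(E')))\le (M_1\ell(E)/\ell(E'))^{k+3}\,\alpha_{\mu,k+1}(B(\Z_E,M_1\ell(E)))\le C\lambda<\lambda'$, with $C=(13/6)^{k+3}$ since $\tfrac{13}{12}\le M_1\ell(E)/\ell(E')<\tfrac{13}{6}$. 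Thus $E'\in\mathcal{G}_1$. The point to notice is that the loss here is only the \emph{absolute} constant $C$ (the two balls have comparable radii), not a power of $M_1$ --- which is precisely why one routes through the ancestor rather than comparing $\alpha$ at $B(\Z_E,M_1\ell(E))$ directly with $\alpha$ at $B(\Z_E,\ell(E))$, where the loss would be $M_1^{k+3}$ and could eat up all the room in $\lambda$.

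Finally I would transfer the Carleson property from $\mathcal{G}_1$ to $\mathcal{B}_1$ along $E\mapsto E'$, just as in the proof of Lemma~\ref{alphaenlarge}: this map lowers the generation by the fixed amount $j^\ast$, so for fixed $E'$ the cubes $E$ with $E\mapsto E'$ are disjoint, of a common generation, contained in $E'$, and number $\lesssim_{M_1}1$ by Ahlfors regularity, whence $\sum_{E\mapsto E'}\mu(E)\le\mu(E')$ and, for every $E_0\in\dy(\mu)$, $\sum_{E\in\mathcal{B}_1,E\subseteq E_0}\mu(E)\lesssim_{M_1}\sum_{E'\in\mathcal{G}_1,E'\cap E_0\ne\varnothing}\mu(E')\lesssim_{M_1}\mu(E_0)$ (the at most $j^\ast+1$ ancestors $E'\supsetneq E_0$ that can arise being harmless). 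The identical argument with $M_2$ and $\mathcal{G}_2$ gives that $\mathcal{B}_2$ is Carleson, and $\mathcal{B}_1\cup\mathcal{B}_2$ is then Carleson. I do not anticipate a genuine obstacle: the lemma is bookkeeping, and the only things to watch are the direction of the $\alpha$-monotonicity (smallness on a ball forces smallness on nested balls of comparable radius, with only a dimensional threshold loss) and the fact that $M_1,M_2$, though possibly large, are fixed once $\lambda$ is, so that all $M_i$-dependent constants are harmless.
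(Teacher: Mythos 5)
Your proposal is correct and is essentially the paper's own argument: the paper disposes of Lemma \ref{hdsc1} with a one-line appeal to Lemma \ref{alphaenlarge}, i.e. precisely the ancestor-transfer mechanism you spell out (compare the enlarged ball of $E$ with the unit-scale ball of the ancestor $E'$ at the nearest admissible dyadic scale, then transfer the Carleson property using that $E\mapsto E'$ has bounded multiplicity), with the inclusion of test-function supports correctly reversed since here one packs cubes with \emph{small} $\alpha$-number rather than large. The only place you deviate from the literal statement is the threshold: the unavoidable factor $(M_1\ell(E)/\ell(E'))^{k+3}>1$ means your argument gives the conclusion at a threshold of the form $\lambda/C$ (with the corresponding $M_1,M_2$ from Proposition \ref{vampprop}) rather than at the hypothesized $\lambda$ itself, and you handle this by shrinking $\lambda$, using that the $\lambda$-$\HSDC$ families increase with $\lambda$. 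This renaming is harmless: the proof of Theorem \ref{parabolicthm} only requires the statement for some threshold determined by $\lambda$, since Proposition \ref{lipcoefnonflat} is available for every choice of parameters, and the paper's one-line proof silently absorbs exactly this constant loss.
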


\begin{proof}[Proof of Theorem \ref{parabolicthm}]  Suppose that $\mu$ is a $(k+2)$-ADR measure for which is good for all spatially asymmetric Littlewood-Paley kernels, and the set of  $\eps$-$\HSDC$ cubes are Carleson for some $\eps>0$.  Fix $\kap$ to be equal to be the value of $\lambda$ in Theorem \ref{step1thm}.  Select $\lambda$ to be as in Lemma \ref{hdsc1}.  First, using Lemma \ref{findCarleson}, we find that, the collection cubes $E\in \dy(\mu)$ which satisfy (\ref{badcond}) are Carleson.  But now, since the $\eps$-HSDC cubes are Carleson, Proposition \ref{lipcoefnonflat} and Lemma \ref{hdsc1} ensure that the collection of cubes $E\in \dy(\mu)$ such that $\alpha_{k,\mu}(\Z_E,\ell(E))>\kap$ is Carleson.   But now Theorem \ref{step1thm} ensures that $\mu$ is uniformly recitifiable.\end{proof}

\begin{proof}[The proof of Proposition \ref{lipcoefnonflat}]
Fix $\kap>0$ and $\lambda>0$.  Let $\{\varphi_j\}_j$ be a countable, dense subset of $\Lipodd(\mathbb R^{n+1})$ 
with $\|\varphi\|_{\Lip} \leq 1$, such that for each $j$, $\varphi_j\in C^{\infty}(\R^{n+1})$ and $\supp(\varphi_j) \subseteq B_{j}(\0)$ for each $j$.  

For a $(k+2)$-ADR measure $\mu$, put
$$\mathcal{Q}(\mu) = \{E\in \dy(\mu): (\ref{badcond})\text{ holds}\}.$$ 

Now, suppose that the result we are trying to prove fails for $\kap>0$ and $\lambda>0$.  Then for each $\ell \in \mathbb N$, we 
can find a measure $\wt \mu_\ell$ and a dyadic cube $\wt E_\ell \in \mathcal{Q}(\wt \mu_{\ell})$ such that \ref{badcond} holds, but 
$$\max_{j \in \{1,2,...,\ell\}} \Theta^{\wt \mu_k}_{\varphi_j,\ell}(\wt E_\ell)
\leq \frac{1}{\ell} \ell(\wt{E}_{\ell})^{k+2}.$$
Set
$$\mu_\ell = \frac{\wt \mu_\ell\big(\ell(\wt E_\ell)(\,\cdot\, + \X_{\wt E_{\ell}})\big)}{\ell(\wt E_\ell)^{k+2}}.$$
We let $\mathbb D(\mu_{\ell})$ be the dyadic grid which is the image of $\mathbb D(\wt \mu_{\ell})$ under the mapping
$$\X \mapsto \bigg(\frac{X-X_{\wt E_{\ell}}}{\ell(\wt E_{\ell})},\frac{t-t_{\wt E_{\ell}}}{\ell(\wt E_\ell)^2}\bigg).$$
Let $E_\ell$ be the image of $\wt E_\ell$ under this mapping.  Then $\mu_\ell(E_\ell)\approx 1$ and $E_\ell$ is centered at the origin for each $\ell$ (i.e. $\Z_{Q_{\ell}}=\0$).  We have
$$\max_{j \in \{1,2,...,\ell\}} \Theta^{\mu_\ell}_{\varphi_j,\ell}(E_\ell)
\leq \frac{1}{\ell}$$
So, perhaps after passing to an appropriate subsequence, $\mu_\ell$ converges weakly to 
a $(k+2)$-ADR measure $\mu$ (Lemma \ref{adrwl}) which is vampiric (Lemma \ref{vampdense}) and which satisfies
$\alpha_{k,\mu}(\0,1)\geq \kap$, 
$\alpha_{k+1,\mu}(\0,M_1) \geq \lambda$, and $\alpha_{k+2, \mu}(\0, M_2)\geq \lambda$ (Lemma \ref{alphawc}).  But now, referring to Proposition \ref{vampprop}, this vampiric measure $\mu$ satisfies (\ref{notplaneunit}) but neither (\ref{alphak1small}) nor (\ref{alphak2small}), which is absurd.
\end{proof}

\subsection{The Heisenberg group case:  the proof of Theorem \ref{heisenbergthm}} The Heisenberg group $\mathbb{H}$ case is completely analogous, but a stronger statement is available due to the more restricted behavior of ADR vampiric measures (one just substitutes the use of Theorem \ref{vampprop} with the use of Theorem \ref{Hvampchar}).  Therefore we have the following proposition:

\begin{prop}\label{lipcoefnonflatheis}
    For each $\lambda>0$, there exists $\Delta>0$, $A>1$ and a finite collection $\mathcal{F}$ of functions $\varphi\in \Lipodd(\mathbb{H})$ such that the following holds:  For every $3$-ADR measure $\mu$ on $\mathbb{H}$ and $E\in \dy(\mu)$ satisfying $$\alpha_{\mu, k}(\Z_E,\ell(E))>\lambda,  $$
then
$$\max_{f\in \mathcal{F}}\Theta_{\mu, \varphi,A}(E)\geq \Delta \mu(E).
$$
\end{prop}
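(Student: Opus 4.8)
The plan is to run the compactness scheme from the proof of Proposition~\ref{lipcoefnonflat} essentially verbatim; it is even a little shorter here, because in the Heisenberg setting $n=2$ and $k=1$, so the only coefficient entering the hypothesis is $\alpha_{\mu,1}$ (there are no auxiliary scales $M_1$, $M_2$), and the rigid structure theorem for vampiric measures, Theorem~\ref{Hvampchar}, takes over the role played by Proposition~\ref{vampprop} in the parabolic argument. Fix $\lambda>0$ and enumerate a sequence $\{\varphi_j\}_{j\geq1}$ that is dense in the unit ball of $\Lipodd(\mathbb H)$, with each $\varphi_j$ smooth, $\|\varphi_j\|_{\Lip}\leq1$, and $\supp(\varphi_j)\subset B_j(\0)$. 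Suppose the proposition fails for this $\lambda$. Instantiating the failure with $\mathcal F=\{\varphi_1,\dots,\varphi_\ell\}$, $A=\ell$ and $\Delta=1/\ell$, we obtain for every $\ell\in\mathbb N$ a $3$-ADR measure $\wt\mu_\ell$ on $\mathbb H$ and a dyadic cube $\wt E_\ell\in\dy(\wt\mu_\ell)$ with
$$\alpha_{\wt\mu_\ell,1}\bigl(B(\Z_{\wt E_\ell},\ell(\wt E_\ell))\bigr)>\lambda
\qquad\text{and}\qquad
\max_{1\leq j\leq\ell}\Theta_{\wt\mu_\ell,\varphi_j,\ell}(\wt E_\ell)\lesssim\frac{1}{\ell}\,\ell(\wt E_\ell)^{3},$$
the latter because $\wt\mu_\ell(\wt E_\ell)\approx\ell(\wt E_\ell)^{3}$ by the dyadic cube property.

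Next I would rescale using the Heisenberg group structure. Writing $r=\ell(\wt E_\ell)$, and letting $\delta_\rho(X,t)=(\rho X,\rho^2 t)$ denote the anisotropic dilation, let $\mu_\ell$ be the push-forward of $r^{-3}\wt\mu_\ell$ under $\X\mapsto\delta_{1/r}\bigl(\Z_{\wt E_\ell}^{-1}\cdot\X\bigr)$, and let $\dy(\mu_\ell)$ and $E_\ell$ be the images of $\dy(\wt\mu_\ell)$ and $\wt E_\ell$ under this map. Since left translation is an isometry of $\mathbb H$ and each $\delta_\rho$ is a group automorphism with $\|\delta_\rho\X\|=\rho\|\X\|$, a change of variables --- in which the translation is absorbed into the substitution, so the fixed functions $\varphi_j$ are never moved --- gives $\Theta_{\mu_\ell,\varphi_j,A}(E_\ell)=r^{-3}\Theta_{\wt\mu_\ell,\varphi_j,A}(\wt E_\ell)$ for every $A>0$. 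Hence $\mu_\ell$ is $3$-ADR with the same constant, $\Z_{E_\ell}=\0$, $\mu_\ell(E_\ell)\approx1$, $\alpha_{\mu_\ell,1}(B(\0,1))>\lambda$, and $\max_{1\leq j\leq\ell}\Theta_{\mu_\ell,\varphi_j,\ell}(E_\ell)\lesssim1/\ell$. Passing to a weakly convergent subsequence $\mu_\ell\rightharpoonup\mu$, Lemma~\ref{adrwl} shows $\mu$ is $3$-ADR, and since $\0\in\supp\mu_\ell$ for all $\ell$, also $\0\in\supp\mu$. For each fixed $j$ and each $R>0$, once $\ell>R$ we have $\int_{B_R(\0)}(\varphi_j\ast\mu_\ell)^2\,d\mu_\ell\lesssim1/\ell\to0$; since $\varphi_j\ast\mu_\ell\to\varphi_j\ast\mu$ uniformly on compact sets and $\mu_\ell\rightharpoonup\mu$, the standard argument (exactly as in the proof of Proposition~\ref{lipcoefnonflat}) forces $\varphi_j\ast\mu\equiv0$ on $\supp\mu\cap B_R(\0)$, and, letting $R\to\infty$, on all of $\supp\mu$. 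By Lemma~\ref{vampdense}, $\mu$ is vampiric, and by Lemma~\ref{alphawc}, $\alpha_{\mu,1}(B(\0,1))\geq\lambda>0$.

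Finally I would invoke Theorem~\ref{Hvampchar}: since $\mu$ is a $3$-regular vampiric measure on $\mathbb H$ with $\0\in\supp\mu$, it equals $\mathcal H^1\vert_L\times\nu$ for a line $L$ through the origin and a $2$-ADR measure $\nu$ on $\mathbb R$; hence $\mu\in\mathcal M_1$, so $\alpha_{\mu,1}(B(\0,1))=0$, contradicting the previous paragraph. This contradiction proves the proposition, with $\mathcal F=\{\varphi_1,\dots,\varphi_N\}$, $A=N$ and $\Delta=1/N$ for the first index $N$ at which the argument closes; then $\#\mathcal F$ and $\max_{\varphi\in\mathcal F}\|\varphi\|_{\Lip}$ are controlled by $N=N(\lambda)$. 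The only step requiring genuine care is the rescaling of the second paragraph --- checking that the non-commutative left translations and the non-isotropic dilations of $\mathbb H$ interact correctly with the convolution, the dyadic grid, and the vampiric condition --- but this is routine bookkeeping with no analytic content beyond the parabolic case; the substantive input has already been supplied by Theorem~\ref{step1thm} and by the structure theorem, Theorem~\ref{Hvampchar}.
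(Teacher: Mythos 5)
Your proposal is correct and follows essentially the same route as the paper, which does not write out a separate argument but declares the Heisenberg case ``completely analogous'' to Proposition \ref{lipcoefnonflat} with Theorem \ref{Hvampchar} substituted for Proposition \ref{vampprop}; your compactness/rescaling scheme, the passage to a vampiric $3$-ADR weak limit with $\alpha_{\mu,1}(B(\0,1))\geq\lambda$, and the contradiction via $\mu=\mathcal H^1\vert_L\times\nu\in\mathcal M_1$ are exactly that analogue. The only addition is your explicit bookkeeping with left translations and anisotropic dilations, which is the ``routine'' part the paper leaves implicit and which you carry out correctly.
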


We complete the proof of Theorem \ref{heisenbergthm} by mimicing the parabolic case:  If $\mu$ is as in the statement of the theorem, then  Proposition \ref{lipcoefnonflatheis} and Lemma \ref{findCarleson} combine to ensure that the collection of cubes $\{E\in \dy(\mu): \alpha_{\mu, k}(\Z_E,\ell(E))>\lambda\}$ is Carleson for every $\lambda>0$.  But then Theorem \ref{step1thm} ensures that $\mu$ is uniformly rectifiable.

\section{Examples}\label{examples}

The purpose of this section is to provide three pathological examples of $(k+2)$-regular measures in $\R^{n+1}$ which are not uniformly rectifiable but for which all CZOs are bounded in $L^2$.  These examples show the necessity to impose the additional condition.

We shall abbreviate ``$(k+2)$-dimensional spatially antisymmetric CZO" to just CZO.

\begin{thm} The following statements hold:\begin{itemize} 
\item[(A)] Suppose that $\nu$ is a $1$-regular on $(\R, \sqrt{|\,\cdot\,|})$ and $L\subset\R^n$ is an $(k+1)$-plane.  Then
$$\mu = \mathcal{H}^{k+1}|L\times \nu$$
is a $(k+2)$-regular measure for which all CZOs are bounded in $L^2(\mu)$.
\item[(B)] Suppose $\nu$ is a measure on $(\R, \sqrt{|\,\cdot\,|})$ satisfying
$$\nu(t-r, t+r)\approx \min(r, r^2) \text{ for every }t\in \supp(\nu) \text{ and }r>0.
$$
Suppose $L$ is a $k$-plane in $\R^n$ and $e$ is a non-zero normal vector to $L$, then
$$\mu=\Bigl[\sum_{k\in\mathbb{Z}}\mathcal{H}^{k}|(L+ke)\Bigl]\times\nu$$
is an $(k+2)$-regular measure for which all CZOs are bounded in $L^2(\mu)$.
\item[(C)] Suppose that $L$ is a $(k+2)$-plane in $\R^n$, and $\mu = \mathcal{H}^{k+2}_{|L}\times \delta_0$ (where $\delta_0$ is the Dirac measure on $\R$).  Then $\mu$ is a $(k+2)$-regular measure for which all CZOs are bounded in $L^2(\mu)$.
\end{itemize}
\end{thm}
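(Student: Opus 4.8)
For each of the three measures the support is a product — a countable union of products in (B) — and a parabolic ball $B_r(\mathbf X)$ decomposes, up to constants, as a Euclidean spatial ball of radius $r$ times a time interval of Euclidean length $\sim r^2$. Writing $\mu=\mathcal H^{m}|_L\times\nu$ ($m=k+1$ in (A), $m=k+2$ in (C), the sheeted variant in (B)), Fubini gives $\mu(B_r(\mathbf X))\approx\int_{|t-s|<r^2}(r^2-|t-s|)^{m/2}\,d\nu(s)\approx r^{m}\,\nu\big((t-r^2,t+r^2)\big)$, and the stated regularity of $\nu$ in the metric $\sqrt{|\cdot|}$ makes this $\approx r^{k+2}$ over the relevant range of scales; in (B) one separates $r\lesssim|e|$ (the slice meets a single sheet $L+je$) from $r\gtrsim|e|$ (it meets $\approx r/|e|$ sheets), matched against the two regimes of $\nu$. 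I would also record that each $\mu$ is \emph{vampiric} — its spatial support and spatial marginal are symmetric under $Z\mapsto -Z$, so antipodal cancellation kills $\int\psi\,d\mu$ for every spatially odd $\psi$, whence $T_\mu\mathbf 1=T_\mu^*\mathbf 1=0$ — and that $\operatorname{Proj}_{\mathbb R^n}\supp(\mu)$ is never contained in a $k$-plane, so $\beta_\mu\gtrsim 1$ identically and \eqref{flatsquare} fails spectacularly; this is exactly what makes the examples informative.

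\textbf{Reduction and partial Fourier transform.} Fix an admissible kernel $K$. Translating so that $L$ is a linear subspace, we have $\mathbf Y^{-1}\cdot\mathbf X=(X-Y,t-s)$ with $X-Y\in L$ for $\mathbf X,\mathbf Y\in\supp(\mu)$, so the operator only sees the restriction $K_0$ of $K$ to $L\times\mathbb R\cong\mathbb R^{d}\times\mathbb R$ (with $d=k+1$ in (A),(B) and $d=k+2$ in (C)); $K_0$ is still odd in the spatial variable and still satisfies the $(k+2)$-dimensional parabolic estimates, so relative to $L$ it carries \emph{more} spatial decay than $\dim L$. The measure is invariant under translations in the flat spatial directions ($\mathbb R^{k+1}$ in (A), $\mathbb R^{k}\times\mathbb Z$ in (B), $\mathbb R^{k+2}$ in (C)), so a partial Fourier transform there conjugates $T$ to the direct integral of the operators $S_\xi$ on $L^2(\nu)$ with kernel $\Phi(\xi,t-s)$, where $\Phi(\xi,\tau):=\widehat{K_0(\cdot,\tau)}(\xi)$; by Plancherel it suffices to prove $\sup_\xi\|S_\xi\|_{L^2(\nu)\to L^2(\nu)}<\infty$, after which a Cotlar inequality on the (doubling) space $(\supp\mu,\|\cdot\|,\mu)$ upgrades this to the boundedness of the maximal truncations demanded by the paper's definition. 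Case (C) is then immediate: $\nu=\delta_0$, so $S_\xi$ is multiplication by $\widehat{K_0(\cdot,0)}(\xi)$, and $K_0(\cdot,0)$ is an honest odd $(k+2)$-dimensional Euclidean Calderón--Zygmund kernel on $\mathbb R^{k+2}$, so its Fourier transform is bounded.

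\textbf{The Schur estimate.} For (A) and (B) I would bound $S_\xi$ by the Schur test on $(\supp\nu,\sqrt{|\cdot|},\nu)$. Here oddness is decisive: since $\Phi(0,\tau)=\int_L K_0(Z,\tau)\,dZ=0$, one has $\Phi(\xi,\tau)=\int_L K_0(Z,\tau)\big(e^{-2\pi i Z\cdot\xi}-1\big)\,dZ$, and combining $|e^{-2\pi iZ\cdot\xi}-1|\lesssim\min(1,|Z||\xi|)$ with $|K_0(Z,\tau)|\lesssim(|Z|^2+|\tau|)^{-(k+2)/2}$ gives $|\Phi(\xi,\tau)|\lesssim|\xi|\big(1+\log\tfrac{1}{|\xi|\sqrt{|\tau|}}\big)$ when $|\xi|^2|\tau|\lesssim 1$ — only a logarithmic singularity at $\tau=0$, which \emph{is} $\nu$-integrable, rather than the $|\tau|^{-1/2}$ blow-up a Calderón--Zygmund kernel on $(\mathbb R,\sqrt{|\cdot|})$ is entitled to — while differentiating $N$ times in $Z$ and using the parabolic smoothness of $K_0$ yields the matching decay $|\Phi(\xi,\tau)|\lesssim_N|\tau|^{-1/2}(|\xi|\sqrt{|\tau|})^{-N}$ when $|\xi|^2|\tau|\gtrsim1$. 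Rescaling $\tau=|\xi|^{-2}\sigma$ (under which $d\nu$ becomes $\approx|\xi|^{-1}$ times a fixed $1$-ADR-in-$\sqrt{|\cdot|}$ measure, by scale invariance of ADR) turns both parts of $\int|\Phi(\xi,t-s)|\,d\nu(s)$ into $\xi$-independent convergent integrals, so $\sup_\xi\sup_t\int|\Phi(\xi,t-s)|\,d\nu(s)<\infty$; the transposed estimate is the same because the bounds on $\Phi$ are even in $\tau$. Case (B) runs identically with a Fourier series in the $\mathbb Z$-direction adjoined and the estimate on $\Phi$ adjusted to the spatial support being $k$-dimensional below scale $|e|$ and $(k+1)$-dimensional above, matched against the two regimes of $\nu$.

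\textbf{The main obstacle.} The real difficulty is precisely this last uniform Schur bound: a priori $\Phi(\xi,\cdot)$ is merely a Calderón--Zygmund kernel on the $1$-ADR space $(\mathbb R,\sqrt{|\cdot|},\nu)$, and since $\nu$ may be highly irregular (for instance Hausdorff measure on a dimension-$\tfrac12$ Cantor set) a generic CZO there is \emph{not} $L^2(\nu)$-bounded; it is solely the cancellation forced by the spatial oddness of $K$ — which softens the diagonal singularity of $\Phi(\xi,\cdot)$ to logarithmic and confines its essential support to $|\tau|\lesssim|\xi|^{-2}$ — that rescues the Schur test. Everything else (the spatial reduction, Plancherel, the Cotlar argument for the maximal truncations, and the Fubini computation for $(k+2)$-regularity) is routine.
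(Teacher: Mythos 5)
Your proposal is correct in outline but follows a genuinely different route from the paper. The paper proves (A) and (B) softly, by verifying the hypotheses of the $T(1)$ theorem on spaces of homogeneous type (David--Journ\'e--Semmes, in the form presented by Deng--Han): the bilinear form $\langle T\varphi,\psi\rangle_\mu$ is defined by antisymmetrizing in the spatial variable (possible because $K$ is spatially odd and $\mu$ has a product structure $\sigma\times\nu$), which yields the weak boundedness property directly from $(k+2)$-regularity, and the vampiric property of these measures gives $\langle T(1),\psi\rangle=\langle T^*(1),\psi\rangle=0$ for $\mu$-mean-zero $\psi$; part (C) is treated, as you do, as a classical Euclidean fact. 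You instead exploit the exact translation invariance of $\mu$ along the flat spatial directions: a partial Fourier transform reduces $T$ to a direct integral of operators $S_\xi$ on $L^2(\nu)$, and the spatial oddness enters as $\Phi(0,\tau)=0$, softening the time singularity to a logarithm so that a Schur test closes. Your route is more elementary and quantitative (no $T(1)$ machinery, explicit multiplier bounds showing precisely how oddness rescues boundedness), while the paper's route is more robust: it uses only the vampiric and product structure, needs no translation invariance, and (as the authors remark) extends beyond product measures. Two steps in your sketch need more than a wave. First, case (B) is not ``identical'' to (A): for $|t-s|\gtrsim 1$ the crude bound $|\Phi|\lesssim |t-s|^{-1/2}$ is log-divergent against $\nu$ (whose mass on an interval of length $h\geq 1$ is $\approx \sqrt{h}$), so in the regime where the spatial frequency along $L$ vanishes you must extract decay from the discrete frequency across the sheets $L+je$ (Abel summation), in addition to the oddness cancellation near zero frequency; this works, but it is a genuinely new estimate rather than a rerun of (A). Second, since the paper defines boundedness via the maximal truncations, you should record that the multiplier-defined operator is associated with the kernel $K(\Y^{-1}\cdot\X)$ off the diagonal before invoking Cotlar's inequality on $(\supp(\mu),d,\mu)$; this is routine (and the paper's own argument leaves the analogous truncation point implicit), but it belongs in the proof.
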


The measure $\mu$ in parts (A), (B), and (C) are parabolic vampiric measures.  In fact, the following shows that any parabolic $(k+2)$-ADR vampiric measure $\mu$ with a product structure has the property that all CZOs are bounded in $L^2(\mu)$.  This result can be shown without the product structure, but our proof is long, and so we omit it here.  Notice that part (C) is just a classical result about singular integrals in $\R^{k+2}$, so we shall concentrate on (A) and (B).

To prove this result we shall verify the T(1)-theorem in spaces of homogeneous type by David-Journ\'{e}-Semmes, as presented in the book Deng-Han \cite{DH}.

In case (A) and (B), we can write $\mu = \sigma\times \nu$, where $\sigma$ is a Borel measure in $\R^n$.  Fix a CZO kernel $K$.  Since $K$ is odd in spatial variables, we follow a standard path to define a bilinear form $\langle T\varphi, \psi\rangle_{\mu}$ for $\varphi, \psi\in \Lip_0(\R^{n+1})$:
$$\langle T\varphi, \psi\rangle_{\mu} = \frac{1}{2}\iint K(X-Y, t-s)H(X,Y,t,s)d\sigma(Y)d\sigma(X)d\nu(s)d\nu(t)
$$
where 
$$H(X,Y, t,s) = 
\frac{1}{2}\Bigl[\varphi(Y,s)\psi(X,t)- \varphi(X,t)\psi(Y,s)\Bigl].$$
Since $\varphi, \psi \in \Lip_0(\R^{n+1})$,
$$H(X,Y,t,s)\lesssim_{\varphi,\psi} \|(X,t)-(Y,s)\|$$
and therefore the $(k+2)$-regularity of $\mu$ ensures that the integral defining $\langle T\varphi, \psi\rangle_{\mu}$ is absolutely convergent. 

If $R>0$ and $\varphi,\psi\in \Lip_0(B(x_0,R))$ with $\|\varphi\|_{\Lip}\leq 1/R$ and $\|\psi\|_{\Lip}\leq 1/R$, then we moreover have the estimate
$H(X,Y,t,s)\lesssim \frac{1}{R}\|(X,t)-(Y,s)\|$, and since $\mu$ is $(k+2)$-regular:

$$|\langle T(\varphi),\psi\rangle|\lesssim \frac{1}{R}\iint_{B(x_0,R)\times B(x_0,R)}\frac{1}{\|(X,t)-(Y,s)\|^n}d\mu(X,t)d\mu(Y,s)\lesssim \mu(B(x_0,R)).$$
 Consequently, the weak boundedness property holds (cf. Definition 1.15 of \cite{DH}).

Using the bilinear form, standard calculations (see p.19-20 of Deng-Han) allow one to consider $\langle T(1), \psi\rangle_{\mu}$ where $\psi\in \Lip_0(\R^{n+1})$ with $\mu$-mean zero (and similarly with $T^*(1)$).  Observe that the measures that appear in (A) and (B) are vampiric, whence
$$\int_{B(x, M)\backslash B(x,\eps)}K(x-y)d\mu(y)=0 \text{ for all }x\in \supp(\mu), \text{ and any }\eps, M>0.
$$
From this it is not difficult to deduce that
$$\langle T(1),\psi\rangle =0\text{  and }\langle T^*(1),\psi\rangle =0\text{ or all }\psi\in \Lip_0(\R^{n+1}) \text{ with }\mu\text{-mean }0.$$
The hypothesis of the homogeneous $T(1)$-theorem (Theorem 1.18 of \cite{DH}) have been verified.\\

\vspace{.25 in}

\noindent\textbf{Competing Interests:} The authors declare that they have no financial interests that are directly or indirectly related to the work submitted for publication.

\printbibliography

 \end{document}